\documentclass{article}

\usepackage{amssymb}
\usepackage{latexsym}
\usepackage{amsmath}
\usepackage{mathrsfs}
\usepackage{color}
\usepackage{CJK}

\newtheorem{theorem}{Theorem}[section]
\newtheorem{corollary}{Corollary}[section]
\newtheorem{lemma}{Lemma}[section]
\newtheorem{proposition}{Proposition}[section]
\newtheorem{remark}{Remark}[section]
\newtheorem{definition}{Definition}[section]

\numberwithin{equation}{section}
\newenvironment{proof}{\medskip\par\noindent{\bf Proof.}\ }{\qquad
\raisebox{-0.5mm}{\rule{1.5mm}{4mm}}\vspace{6pt}}

\newcommand{\bbr}{\mathbb{R}}
\newcommand{\h}{H^1(\bbr^4)}

\newcommand{\bbn}{\mathbb{N}}
\newcommand{\ve}{\varepsilon}

\newcommand{\bd}{\begin{definition}}
\newcommand{\ed}{\end{definition}}

\newcommand{\br}{\begin{remark}}
\newcommand{\er}{\end{remark}}

\newcommand{\be}{\begin{equation}}
\newcommand{\ee}{\end{equation}}

\newcommand{\bc}{\begin{corollary}}
\newcommand{\ec}{\end{corollary}}

\begin{document}

\title
{\Large\bf On a doubly critical Schr\"odinger system  in $\bbr^4$  with steep potential wells\thanks{Supported by NSFC(11371212, 11271386).
E-mails: wuyz850306@cumt.edu.cn (Y. Wu); wzou@math.tsinghua.edu.cn (W. Zou)}}

\author{\bf Yuanze Wu$^{a}$    \&    Wenming Zou$^b$    \\
\footnotesize$^a${\it College of Sciences, China
University of Mining and Technology,}\\
\footnotesize{\it Xuzhou 221116, P.R. China }\\
\footnotesize$^b${\it  Department of Mathematical Sciences, Tsinghua University, Beijing 100084, P.R. China }}
\date{}
\maketitle

\begin{center}
\begin{minipage}{120mm}

\noindent{\bf Abstract:} Study the following two-component elliptic system%
\begin{equation*}
\left\{\aligned&\Delta u-(\lambda a(x)+a_0)u+u^3+\beta v^2u=0\quad&\text{in }\bbr^4,\\%
&\Delta v-(\lambda b(x)+b_0)v+v^3+\beta u^2v=0\quad&\text{in }\bbr^4,\\%
&(u,v)\in\h\times\h,\endaligned\right.%
\end{equation*}
where $a_0,b_0\in\bbr$ are constants;  $\lambda>0$ and $\beta\in\bbr$ are parameters
and $a(x), b(x)\geq0$ are potential wells which are not necessarily to be radial symmetric.
By using the variational method, we investigate the existence of ground state solutions and
general ground state solutions (i.e., possibly   semi-trivial) to this system. Indeed, to the best of  our  knowledge,
even the existence of semi-trivial solutions is also unknown in the literature.
We   observe some concentration behaviors of ground state solutions and general
ground state solutions. The phenomenon of phase separations is also excepted. It seems that this is the  first result definitely  describing  the phenomenon of phase separation for critical system in the whole space  $\bbr^4$.  Note that both the cubic nonlinearities and the coupled terms  of the system are  all of critical growth with respect to the Sobolev critical exponent.

\vspace{6mm} \noindent{\bf Keywords:} Elliptic system; Ground state; Steep potential well; Critical Sobolev exponent; Variational method.%

\vspace{6mm}\noindent {\bf AMS} Subject Classification 2010: 35B38; 35B40; 35J10; 35J20.%

\end{minipage}
\end{center}

\vskip0.26in

\section{Introduction}

We study the following two-component elliptic system%
\begin{equation*}
\left\{\aligned&\Delta u-(\lambda a(x)+a_0)u+u^3+\beta v^2u=0, &x\text{ in }\bbr^4,\\
&\Delta v-(\lambda b(x)+b_0)v+v^3+\beta u^2v=0, &x\text{ in }\bbr^4,\\
&(u,v)\in\h\times\h,\endaligned\right.\eqno{(\mathcal{P}_{\lambda,\beta})}%
\end{equation*}
where $a_0,b_0\in\bbr$ are constants and $\lambda>0$, $\beta\in\bbr$ are parameters.  The potentials $a(x)$ and $b(x)$ satisfy some conditions to be specified later.%

\vskip0.12in

It is well known that the solutions of  $(\mathcal{P}_{\lambda,\beta})$  are  related to the solitary wave solutions to
 the following two-component system of nonlinear Schr\"odinger equations%
\begin{equation*}
\left\{\aligned&-i\frac{\partial}{\partial t}\Psi_1=\Delta \Psi_1-\lambda a(x)\Psi_1+|\Psi_1|^2\Psi_1+\beta |\Psi_2|^2\Psi_1=0,
 \\%
&-i\frac{\partial}{\partial t}\Psi_2=\Delta \Psi_2-\lambda b(x)\Psi_2+|\Psi_2|^2\Psi_2+\beta |\Psi_1|^2\Psi_2=0, \\%
&\quad x\text{ in }\bbr^4, t>0;\;\;  \Psi_j=\Psi_j(t,x)\in \mathbb{C},\ \ j=1,2,\\%
&\quad \Psi_j(t,x)\to 0  \hbox{ as }|x|\to+\infty,\ \ t>0.\endaligned\right.\eqno{(\mathcal{P}_{\lambda,\beta}^*)}%
\end{equation*}
Indeed, set $\Psi_1(t,x)=e^{-ita_0}u(x)$ and $\Psi_2(t,x)=e^{-itb_0}v(x)$, then $(\Psi_1, \Psi_2)$ is called
 the solitary wave solution of $(\mathcal{P}_{\lambda,\beta}^*)$ and $(u,v)$ is a solution of the  $(\mathcal{P}_{\lambda,\beta})$
 if and only if $(\Psi_1, \Psi_2)$ is a solution of the  $(\mathcal{P}_{\lambda,\beta}^*)$.

\vskip0.2in

In the literature, the System~$(\mathcal{P}_{\lambda,\beta}^*)$ defined on  an open set $\Omega$ (in $\bbr^2$ or $\bbr^3$) is called
 the Gross-Pitaevskii equations  (e.g.  \cite{HMEWC98,TV09}), which appears in many different physical problems.
 For example, in the Hartree-Fock theory, the Gross-Pitaevskii equations can be used to describe a
 binary mixture of Bose-Einstein condensates in two different hyperfine states $|1\rangle$ and
 $|2\rangle$ (cf. \cite{EGBB97}).  The solutions $\Psi_j(j=1,2)$ are the corresponding condensate amplitudes
 and $\beta$ is the interaction of the states $|1\rangle$ and $|2\rangle$.
 The interaction is attractive if $\beta>0$ and repulsive if $\beta<0$.  When the interaction is repulsive,
 it is expected that the phenomenon of phase separation  will happen, that is, the two components of the
  system tend to separate in different regions as the interaction tends to infinity.
   The Gross-Pitaevskii equation  also arises in nonlinear optics (cf. \cite{AA99}).
   Due to the important application in physics, the Gross-Pitaevskii equation  $(\mathcal{P}_{0,\beta}^*)$ has
    been studied extensively in the last  decades.  We refer the readers to \cite{BDW10,CLZ14,LW05,LW06,R14,S07}
    and the references therein, where various existence theorems  of the solitary wave solutions were established.%

\vskip0.23in

When we consider the   equation  $(\mathcal{P}_{\lambda,\beta}^*)$  or  $(\mathcal{P}_{\lambda,\beta})$ in $\bbr^4$, the cubic nonlinearities and
 the couple terms  are all of  critical growth, since the Sobolev critical exponent $2^\ast:=2N/(N-2)=4$ in $\bbr^N=\bbr^4$.
 By the Pohozaev identity, we can easily conclude that any solution of $(\mathcal{P}_{0,\beta})$
 satisfies $\int_{\bbr^4}a_0u^2+b_0v^2dx=0$ (cf. \cite{CZ121,CZ14}).  Thus, any solution
 of $(\mathcal{P}_{0,\beta})$ must be $(0, 0)$ in the case of $a_0b_0>0$.
 Due to this reason, to some extent, it seems that $\lambda\not=0$ is a necessary condition for
 the existence of non-zero or even non-trivial solutions to $(\mathcal{P}_{\lambda,\beta})$.

\bd We call that $(u, v)\in\h\times\h$ is a non-zero solution of $(\mathcal{P}_{\lambda,\beta})$ if $(u, v)$ is a
  solution of $(\mathcal{P}_{\lambda,\beta})$ with $(u,v)\not=(0,0)$;  we say $(u, v)\in\h\times\h$ is a
   non-trivial solution of $(\mathcal{P}_{\lambda,\beta})$ if $(u, v)$ is a non-zero solution
    with both $u\not=0$ and $v\not=0$.
\ed

    To the  best of our knowledge,  few result has been established for  the System~$(\mathcal{P}_{\lambda,\beta})$.
In this paper, we will study the System~$(\mathcal{P}_{\lambda,\beta})$ with $\lambda>0$ when $a(x),b(x)$ satisfy the following conditions:%
\begin{enumerate}
\item[${\bf (D_1)}$] $a(x), b(x)\in C(\bbr^4)$ and $a(x), b(x)\geq0$ on $\bbr^4$.%
\item[${\bf (D_2)}$] There exist $a_\infty, b_\infty\in(0, +\infty)$ such that $\displaystyle \lim_{|x|\to+\infty}a(x)=a_\infty$ and
$a(x)\leq a_\infty$ for all $x\in\bbr^4$ while $\displaystyle \lim_{|x|\to+\infty}b(x)=b_\infty$ and $b(x)\leq b_\infty$ for all $x\in\bbr^4$.%
\item[${\bf (D_3)}$] $\displaystyle \Omega_a:=\text{int } a^{-1}(0)$ and $\displaystyle \Omega_b:=\text{int } b^{-1}(0)$
are bounded  non-empty domains and have smooth boundaries.  Moreover, $\overline{\Omega}_a=a^{-1}(0)$,
 $\overline{\Omega}_b=b^{-1}(0)$ and $\overline{\Omega}_a\cap\overline{\Omega}_b=\emptyset$.%
\end{enumerate}
In the sequel, $\lambda a(x)$ and $\lambda b(x)$ are called   the steep potential wells under
the conditions $(D_1)$-$(D_3)$ if the parameter $\lambda$ is sufficiently large.
The depth of the wells is controlled by the parameter $\lambda$.
 An interesting phenomenon for this kind of Schr\"odinger equations is that, one can expect to
 find the solutions which are concentrated at the bottom of the wells as the depth goes to infinity.
 Due to this interesting property, such a topic for the scalar Schr\"odinger equations was studied extensively
 in the last  decades.  We refer the readers to \cite{AFS09, BW95, BT13,DT03,DS07,LHL11,ST09,WZ09} and the references therein.
 Most of the  papers  are devoted to the subcritical case.    In  recent years, the steep potential wells were also introduced to
 some other elliptic equations and systems, see for example \cite{FSX10,GT121,JZ11,SW14,WWZ15} and the references therein.
 In particular, in \cite{WWZ15}, the Gross-Pitaevskii equations in $\bbr^3$ (subcritical case) with steep potential wells were considered and some
 existence results of the solitary wave solutions were established.


 \vskip0.1in
  Under the conditions $(D_1)$-$(D_3)$, the System~$(\mathcal{P}_{\lambda,\beta})$ has a variational structure.  Indeed, let%
\begin{equation*}
E_{a}:=\{u\in D^{1,2}(\bbr^4)\mid\int_{\bbr^4}a(x)u^2dx<+\infty\};
\end{equation*}
\begin{equation*}
E_{b}:=\{u\in D^{1,2}(\bbr^4)\mid\int_{\bbr^4}b(x)u^2dx<+\infty\}.%
\end{equation*}
Then by the condition $(D_1)$, for every $a_0,b_0\in\bbr$ and $\lambda>\max\{0, \frac{-a_0}{a_\infty}, \frac{-b_0}{b_\infty}\}$,
$E_a$ and $E_b$ are the Hilbert spaces equipped with the following inner products
\begin{equation*}
\langle u,v\rangle_{a,\lambda}:=\int_{\bbr^4}\nabla u\nabla v+(\lambda a(x)+a_0)^+uvdx,
\end{equation*}
\begin{equation*}
\langle u,v\rangle_{b,\lambda}:=\int_{\bbr^4}\nabla u\nabla v+(\lambda b(x)+b_0)^+uvdx,%
\end{equation*}
 respectively, where $(\cdot)^+:=\max\{\cdot, 0\}$.  The corresponding norms are respectively given by%
\begin{equation*}
\|u\|_{a,\lambda}:=\bigg(\int_{\bbr^4}|\nabla u|^2+(\lambda a(x)+a_0)^+u^2dx\bigg)^{\frac12}
\end{equation*}
and
\begin{equation*}
 \|v\|_{b,\lambda}:=\bigg(\int_{\bbr^4}|\nabla v|^2+(\lambda b(x)+b_0)^+v^2dx\bigg)^{\frac12}.%
\end{equation*}
We  denote the Hilbert spaces $(E_a, \|\cdot\|_{a,\lambda})$ and $(E_b, \|\cdot\|_{b,\lambda})$ by $E_{a,\lambda}$ and $E_{b,\lambda}$ respectively.  Let $E_\lambda:=E_{a,\lambda}\times E_{b,\lambda}$ be  the  Hilbert space with the inner product
\begin{equation*}
\langle (u,v), (w,\sigma)\rangle_{\lambda}:=\langle u,w\rangle_{a,\lambda}+\langle v,\sigma\rangle_{b,\lambda}.
\end{equation*}
The corresponding norm is given by $\|(u,v)\|_\lambda:=(\|u\|_{a,\lambda}^2+\|v\|_{b,\lambda}^2)^{\frac12}$.
Then by the conditions $(D_1)$-$(D_2)$ and the H\"older and Sobolev inequalities,
 for every $\lambda>\max\{0, \frac{-a_0}{a_\infty}, \frac{-b_0}{b_\infty}\}$,  there exists $d_\lambda>0$ such that%
\begin{equation}\label{eq0001}%
\|u\|_{L^2(\bbr^4)}\leq d_\lambda\|u\|_{a,\lambda},\quad\|v\|_{L^2(\bbr^4)}\leq d_\lambda\|v\|_{b,\lambda}%
\end{equation}
and%
\begin{equation}
\|u\|_{L^4(\bbr^4)}\leq S^{-\frac12}\|u\|_{a,\lambda},\quad\|v\|_{L^4(\bbr^4)}\leq S^{-\frac12}\|v\|_{b,\lambda},\label{eq0003}%
\end{equation}
for  $(u,v)\in E_\lambda$, where $\|\cdot\|_{L^p(\bbr^4)}$ is the usual norm in $L^p(\bbr^4)$ for all $p\geq1$ and $S$ is the best Sobolev embedding constant from $D^{1,2}(\bbr^4)$ to $L^4(\bbr^4)$ and given by%
\begin{equation*}
S:=\inf\{\|\nabla u\|_{L^2(\bbr^4)}^2 \mid u\in D^{1,2}(\bbr^4), \|u\|_{L^4(\bbr^4)}^2=1\}.%
\end{equation*}
It follows that $E_\lambda$ is embedded continuously into $\h\times\h$ for $\lambda>\max\{0, \frac{-a_0}{a_\infty}, \frac{-b_0}{b_\infty}\}$.
Moreover, by \eqref{eq0001}--\eqref{eq0003}, the conditions $(D_1)$--$(D_2)$ and the H\"older inequality, the energy
functional $J_{\lambda,\beta}(u,v)$ given by%
\begin{align}\label{eq0131}
&J_{\lambda,\beta}(u,v)\nonumber \\
&:=\frac12\int_{\bbr^4}|\nabla u|^2+(\lambda a(x)+a_0)u^2dx+\frac12\int_{\bbr^4}|\nabla v|^2+(\lambda b(x)+b_0)v^2dx\notag\\
&\quad\quad  -\frac{1}{4}\int_{\bbr^4}u^4dx-\frac{1}{4}\int_{\bbr^4}v^4dx-\frac\beta2\int_{\bbr^4}u^2v^2dx
\end{align}
is well defined in $E_\lambda$ for $\lambda>\max\{0, \frac{-a_0}{a_\infty}, \frac{-b_0}{b_\infty}\}$ and $\beta\in\bbr$.
 Furthermore, by a standard argument, we can also show that $J_{\lambda,\beta}(u,v)$ is of $C^2$ in $E_\lambda$ and it
 is the corresponding energy  functional to   System~$(\mathcal{P}_{\lambda,\beta})$.
  For the sake of convenience, we re-write the energy functional $J_{\lambda,\beta}(u,v)$ by%
\begin{eqnarray*}
J_{\lambda,\beta}(u,v)=\frac12\mathcal{D}_\lambda(u,v)-\frac14\mathcal{L}_\beta(u,v),%
\end{eqnarray*}
where $\mathcal{D}_\lambda(u,v):=\mathcal{D}_{a,\lambda}(u,u)+\mathcal{D}_{b,\lambda}(v,v)$ with
$$\mathcal{D}_{a,\lambda}(u,v):=\int_{\bbr^4}(\nabla u\nabla v+(\lambda a(x)+a_0)uv)dx, $$
$$
\mathcal{D}_{b,\lambda}(u,v):=\int_{\bbr^4}(\nabla u\nabla v+(\lambda b(x)+b_0)uv)dx$$
and $$\mathcal{L}_\beta(u,v):=\|u\|^4_{L^4(\bbr^4)}+\|v\|^4_{L^4(\bbr^4)}+2\beta\|u^2v^2\|_{L^1(\bbr^4)}.$$
We are interested  in finding the ground state solutions of $(\mathcal{P}_{\lambda,\beta})$ for $\lambda$
sufficiently large.

\bd We say that $(u,v)$ is a ground state solution of $(\mathcal{P}_{\lambda,\beta})$ if $(u,v)$ is
a non-trivial solution of $(\mathcal{P}_{\lambda,\beta})$ and the energy   of $(u,v)$ given by \eqref{eq0131}
is  the least one among all that  of  the  non-trivial solutions to $(\mathcal{P}_{\lambda,\beta})$.
\ed

 To the best of our knowledge,  the existence of semi-trivial solution  to  $(\mathcal{P}_{\lambda,\beta})$ is also  unknown in
 the literature. Therefore,  we are also concerned with   finding the general ground state solutions to $(\mathcal{P}_{\lambda,\beta})$ for $\lambda$
 sufficiently large.

\bd We say $(u, v)\in\h\times\h$ is a semi-trivial solution of $(\mathcal{P}_{\lambda,\beta})$ if $(u, v)$
 is a non-zero solution to $(\mathcal{P}_{\lambda,\beta})$ of the type $(u,0)$ or  $(0,v)$;  we call  $(u,v)$   a general ground
 state solution of $(\mathcal{P}_{\lambda,\beta})$ if $(u,v)$ is a non-zero solution of $(\mathcal{P}_{\lambda,\beta})$ and its energy   is  the least one among all that of   the  non-zero solutions to $(\mathcal{P}_{\lambda,\beta})$.
 \ed

\bd Let $\mu_{a,1}$ and $\mu_{b,1}$
denote  the first eigenvalues of $(-\Delta, H^1_0(\Omega_a))$ and $(-\Delta, H^1_0(\Omega_b))$, respectively.

 We denote the  sets of
all eigenvalues of   $(-\Delta, H^1_0(\Omega_a))$ and $(-\Delta, H^1_0(\Omega_b))$ by
$\sigma(-\Delta, H_0^1(\Omega_a))$  and $\sigma(-\Delta, H_0^1(\Omega_b))$, respectively.
\ed

\br Without loss of generality, we always assume $a_0\leq b_0$ throughout this paper.
\er

\newpage

\subsection{The case of  $-\mu_{a,1}<a_0$ and $ -\mu_{b,1}<b_0$.}

Clearly, $J_{\lambda,\beta}(u,v)$ is heavily rely on  the properties of  $\mathcal{D}_\lambda(u,v)$,   $a_0$ and $b_0$.
Firstly, we note that  there exists $\Lambda_0\geq0$ such that $\mathcal{D}_{\lambda}(u,v)$ is positively definite
 on $E_\lambda$ for $\lambda>\Lambda_0$ provided that  $-\mu_{a,1}<a_0$ and $ -\mu_{b,1}<b_0$ (see Lemma~\ref{lem0110} below  for more details).
In particular, $\Lambda_0=0$ if $a_0\geq0$ and $b_0\geq0$.  Let%
\begin{equation*}
\mathcal{N}_{\lambda,\beta}:=\Big\{(u,v)\in E_\lambda\mid u\not=0,v\not=0, \langle D[J_{\lambda,\beta}(u,v)],(u,0)\rangle_{E_\lambda^*,E_\lambda}\\
\end{equation*}
\begin{equation*}
\quad\quad\quad =\langle D[J_{\lambda,\beta}(u,v)],(0,v)\rangle_{E_\lambda^*,E_\lambda}=0\Big\}%
\end{equation*}
and%
\begin{equation*}
\mathcal{M}_{\lambda,\beta}:=\Big\{(u,v)\in E_\lambda\backslash\{(0,0)\} \mid \langle D[J_{\lambda,\beta}(u,v)],(u,v)\rangle_{E_\lambda^*,E_\lambda}=0\Big\},%
\end{equation*}
where $D[J_{\lambda,\beta}(u,v)]$ is the Frech\'et derivative of the functional $J_{\lambda,\beta}$ in
$E_\lambda$ at $(u,v)$ and $E_\lambda^*$ is the dual space of $E_\lambda$.
It is easy to see that $\mathcal{N}_{\lambda,\beta}$ and $\mathcal{M}_{\lambda,\beta}$ are both
nonempty and contains all non-trivial solutions and non-zero solutions of the System~$(\mathcal{P}_{\lambda,\beta})$, respectively.
Such sets are the so-called Nehari type sets to $(\mathcal{P}_{\lambda,\beta})$ and they are extensively used for finding
 the ground state solution  to  nonlinear elliptic systems
  (cf. \cite{CZ121,CZ131,CLZ14,CLZ141,LW05,LW05,S07,WWZ15}). Define
\begin{equation}\label{zou-100} m_{\lambda,\beta}:=\inf_{(u,v)\in\mathcal{N}_{\lambda,\beta}}J_{\lambda,\beta}(u,v), \quad\quad
 m_{\lambda,\beta}^*:=\inf_{(u,v)\in\mathcal{M}_{\lambda,\beta}}J_{\lambda,\beta}(u,v).\end{equation}
   Since $\mathcal{D}_{\lambda}(u,v)$ is positively  definite
   on $E_\lambda$, it is also easy to show that $m_{\lambda,\beta}$ and $m_{\lambda,\beta}^*$ are both nonnegative
   for all $\lambda>\Lambda_0$ and $\beta\in\bbr$.

\begin{theorem}\label{thm0001}
Assume  $(D_1)$-$(D_3)$  and  $-\mu_{a,1}<a_0,  -\mu_{b,1}<b_0$.  If $\lambda>\Lambda_0$, then we have the following conclusions:
\begin{enumerate}
\item[$(1)$]  If  $0\leq a_0\leq b_0$, then $$m_{\lambda,\beta}=\frac{S^2}{2(1+\max\{\beta, 0\})}; \quad
m_{\lambda,\beta}^*=\frac{S^2}{2(1+\max\{1,\beta\})}\; \hbox{ for all }\beta\in\bbr.$$
Moreover, both  $m_{\lambda,\beta}$ and $m_{\lambda,\beta}^*$ can   not be attained.

\item[$(2)$] If  $a_0<0$, then  $m_{\lambda,\beta}^*$ can be attained by a general ground state solution of $(\mathcal{P}_{\lambda,\beta})$ for all $\beta\in\bbr$.  Moreover, there exists $\Lambda_\beta>0$ such that the general ground state solution of $(\mathcal{P}_{\lambda,\beta})$ must be semi-trivial  provided that one of the following conditions holds:
    \begin{itemize}
     \item   $a_0<0\leq b_0$, $\beta<1-\frac{|a_0|}{\mu_{a,1}}$ and $\lambda>\Lambda_\beta$;
      \item  $a_0\leq b_0<0$, $\beta<\beta_0$ and $\lambda>\Lambda_\beta$, where
    \begin{equation*}
    \beta_0:=\min\bigg\{\frac12(1-\frac{|a_0|}{\mu_{a,1}})(1-\frac{|b_0|}{\mu_{b,1}}),\;\;
    \frac{1-\frac{|b_0|}{\mu_{b,1}}}{1-\frac{|a_0|}{\mu_{a,1}}},\;\;
    \frac{1-\frac{|a_0|}{\mu_{a,1}}}{1-\frac{|b_0|}{\mu_{b,1}}}\bigg\}.%
    \end{equation*}%
    \end{itemize}

\item[$(3)$]  $m_{\lambda,\beta}$ can be attained by a
 ground state solution of $(\mathcal{P}_{\lambda,\beta})$ if one of the following additional conditions
 holds:
  \begin{itemize}
     \item   $a_0\leq b_0<0$ and $\beta\leq0$;
   \item   $a_0<0$ and $\beta>\beta_\lambda$  for some  $0<\beta_\lambda<+\infty$.
   \end{itemize}
   Moreover, if $a_0<0$, then  $m_{\lambda,\beta}=m_{\lambda,\beta}^*$ for $\beta>\beta_\lambda$.
\end{enumerate}
\end{theorem}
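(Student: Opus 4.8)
\medskip\noindent\textbf{Proof proposal.} My plan is to treat all three parts through one mechanism — comparing the relevant Nehari infimum against the critical Sobolev threshold $S^2/4$ — with compactness available only when $a_0<0$, via a Brezis--Nirenberg type effect.

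\emph{Part (1), the exact values.} Since $a_0,b_0\geq0$ and $\lambda>\Lambda_0=0$, one has $\mathcal{D}_{a,\lambda}(u,u)=\|u\|_{a,\lambda}^2\geq\int_{\bbr^4}|\nabla u|^2dx\geq S\|u\|_{L^4}^2$, and likewise for $v$. Writing $t_1=\|u\|_{a,\lambda}^2$ and $t_2=\|v\|_{b,\lambda}^2$, on $\mathcal{N}_{\lambda,\beta}$ the two Nehari identities together with H\"older's inequality $\int_{\bbr^4}u^2v^2dx\leq\|u\|_{L^4}^2\|v\|_{L^4}^2$ give $t_1+\beta t_2\geq S^2$ and $t_2+\beta t_1\geq S^2$ when $\beta\geq0$ (and directly $t_i\geq S^2$ when $\beta\leq0$). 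Since $J_{\lambda,\beta}=\frac14(t_1+t_2)$ on the Nehari set, summing yields $m_{\lambda,\beta}\geq\frac{S^2}{2(1+\max\{\beta,0\})}$. For the matching upper bound I would insert truncated Aubin--Talenti bubbles: the single-profile ansatz $(sw_\ve,sw_\ve)$ when $\beta>0$, and two bubbles concentrated at well-separated points of $\Omega_a$ and $\Omega_b$ (so that $\int_{\bbr^4}u^2v^2dx\to0$) when $\beta\leq0$. In $\bbr^4$ the potential and $L^2$ contributions of a truncated bubble are of order $\ve^2|\log\ve|$, negligible against the gradient term $S^2+o(1)$; projecting onto $\mathcal{N}_{\lambda,\beta}$ and letting $\ve\to0$ recovers the stated value, and the value of $m_{\lambda,\beta}^*$ follows by additionally comparing with the semi-trivial level $S^2/4$ produced by a single bubble in one component. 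Non-attainment is then forced: an optimizer would turn the Sobolev inequality into an equality for its nonzero component, making that component an Aubin--Talenti extremal, which is impossible because $\int_{\bbr^4}(\lambda a+a_0)u^2dx>0$ strictly when $a_0>0$, while for $a_0=0$ it would have to be supported in the bounded set $\Omega_a$, on which the Sobolev quotient has no extremal.

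\emph{Part (2), attainment and semi-triviality when $a_0<0$.} The negative constant $a_0$ is precisely what restores compactness. On $\Omega_a$ the semi-trivial equation reads $-\Delta u+a_0u=u^3$ with $-\mu_{a,1}<a_0<0$, so by the Brezis--Nirenberg phenomenon in dimension four its least-energy level $c_a$ lies \emph{strictly below} $S^2/4$; establishing this gap is exactly where the hypothesis $-\mu_{a,1}<a_0$ enters. I would then show $m_{\lambda,\beta}^*$ stays below the first bubbling threshold, so a minimizing sequence cannot lose mass to a bubble and, after the usual splitting, converges to a nonzero critical point — a general ground state. To decide whether it is semi-trivial, let $c_a\leq c_b$ denote the two semi-trivial least-energy levels (so $c_a<S^2/4\leq c_b$ when $b_0\geq0$); the general ground state is semi-trivial exactly when the non-trivial Nehari level stays above $\min\{c_a,c_b\}$. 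Bounding that level from below by the inequalities $t_1+\beta t_2\geq\cdots$ of Part (1), but now with the shifted constants $1-|a_0|/\mu_{a,1}$ and $1-|b_0|/\mu_{b,1}$ coming from the Brezis--Nirenberg correction, shows that the thresholds $\beta<1-|a_0|/\mu_{a,1}$ (when $b_0\geq0$) and $\beta<\beta_0$ (when $b_0<0$) are exactly what keeps the non-trivial level above the semi-trivial one, forcing one component to vanish.

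\emph{Part (3), attainment of $m_{\lambda,\beta}$ by a genuine two-component solution.} For $a_0\leq b_0<0$ and $\beta\leq0$, both components enjoy the Brezis--Nirenberg gap and the repulsive coupling prevents a minimizing sequence from annihilating either component, so a concentration-compactness argument below the bubbling threshold produces a non-trivial minimizer. For $a_0<0$ and $\beta$ large, strong attraction drags the non-trivial level strictly below $\min\{c_a,c_b\}$, which both yields a non-trivial ground state and forces $m_{\lambda,\beta}=m_{\lambda,\beta}^*$; I would define $\beta_\lambda$ as the infimum of the couplings for which this strict inequality holds. The crux throughout is the \emph{doubly critical} growth: both $u^4$ and $u^2v^2$ are critical, so $J_{\lambda,\beta}$ fails the global Palais--Smale condition and one must locate the exact energy of the first bubble and verify that the relevant infimum lies strictly beneath it. This threshold itself depends on $\beta$ and on whether bubbling occurs in one or both components, and in the borderline dimension $N=4$ the decisive estimates are the sharp $\ve^2|\log\ve|$ bounds for truncated bubbles, which simultaneously create the strict Brezis--Nirenberg gap and furnish the matching test-function upper bounds.
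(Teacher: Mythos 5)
Your outline is sound, and essentially coincides with the paper's argument, only in the regime $\beta\le 1$: the Nehari/Sobolev/H\"older lower bounds, truncated bubbles with the dimension-four $\ve^2|\log\ve|$ corrections, the Br\'ezis--Nirenberg gap $m_a<\frac14S^2$ when $a_0<0$, and the Talenti non-attainment argument on bounded domains are exactly the paper's ingredients in Lemma~\ref{lem0100} and Propositions~\ref{prop0004}, \ref{prop0003}. The genuine gap is the regime $\beta>1$, which the theorem also covers and where each of your mechanisms fails. First, on $\mathcal{M}_{\lambda,\beta}$ only the single identity $\mathcal{D}_\lambda=\mathcal{L}_\beta$ holds and a component may vanish, so your ``sum the two Nehari identities'' device is unavailable; your coupled ansatz $(sw_\ve,sw_\ve)$ gives the upper bound $m^*_{\lambda,\beta}\le\frac{S^2}{2(1+\beta)}$, but you supply no lower-bound mechanism at all for $m^*_{\lambda,\beta}$ when $\beta>1$. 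The paper obtains it from Lemma~\ref{lem0010}, i.e.\ from the identification $m^0_\beta=m^{**}_\beta=\frac{S^2}{2(1+\beta)}$ for the limit functional $\mathcal{E}_\beta$, which rests on the classification of ground states of the doubly critical limit system in \cite{CZ14}. (An elementary substitute exists, e.g.\ $\mathcal{L}_\beta(u,v)\le\frac{1+\beta}{2}(\|u\|_{L^4(\bbr^4)}^2+\|v\|_{L^4(\bbr^4)}^2)^2$ for $\beta\ge1$, with equality forcing $\|u\|_{L^4(\bbr^4)}=\|v\|_{L^4(\bbr^4)}$ and equality in H\"older, but you would have to state and exploit it.) Second, your non-attainment argument assumes a minimizer turns the Sobolev inequality into an equality ``for its nonzero component''; for $\beta>1$ a putative minimizer is genuinely coupled, both components nonzero, and neither is forced to be a Sobolev extremal by your reasoning. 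The paper instead projects onto $\mathcal{M}^*_\beta$ and invokes the uniqueness theorem of \cite{CZ14} (the two rescaled components coincide with a common Talenti profile) before reaching a contradiction through the potential terms; see Step~4 of Proposition~\ref{prop0004}.

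The most serious gap is attainment of $m^*_{\lambda,\beta}$ in part (2), which is asserted for \emph{all} $\beta\in\bbr$. Your compactness mechanism --- keep the level strictly below ``the first bubbling threshold'' via the Br\'ezis--Nirenberg gap $c_a<\frac14S^2$ --- breaks for $\beta>1$, because the cheapest bubble for the system is then the coupled pair at level $\frac{S^2}{2(1+\beta)}$, which lies below $\frac14S^2$ and tends to $0$ as $\beta\to+\infty$, while the semi-trivial level $m_{a,\lambda}$ stays bounded away from $0$; moreover no strict sub-threshold inequality of BN type is available here, since for a coupled test pair concentrated in $\Omega_a$ the gain $a_0\|\psi_\ve\|_{L^2(\bbr^4)}^2\sim-|a_0|\ve^2|\log\ve|$ competes against the positive contribution $\lambda\int_{\bbr^4}b(x)\psi_\ve^2dx$ of the \emph{same} order, because $b>0$ on $\overline{\Omega}_a$ by $(D_3)$ and $\overline{\Omega}_a\cap\overline{\Omega}_b=\emptyset$. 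The paper does not use a level gap at this point: in Proposition~\ref{prop0003}, a minimizing sequence with vanishing weak limit is shown to force $\beta>1$, is renormalized into a minimizing sequence of $\mathcal{E}_\beta$ at the exact level $m^0_\beta$, and is then excluded by the compactness analysis of the limit system (Case~2 of Lemma~\ref{lem0010}, ultimately \cite[Lemma~2.5]{CZ14}). Since the $\beta>\beta_\lambda$ branch of part (3) is deduced from precisely this attainment result --- Lemma~\ref{lem1003}(3) shows $m^*_{\lambda,\beta}\to0<m_{a,\lambda}$ as $\beta\to+\infty$, so the already-attained general ground state cannot be semi-trivial, whence $m_{\lambda,\beta}=m^*_{\lambda,\beta}$ --- that branch does not follow from your outline either. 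A smaller point: the paper proves the semi-triviality thresholds in part (2) only for $\lambda>\Lambda_\beta$, by a contradiction along $\lambda_n\to+\infty$ (strong convergence to a semi-trivial limit against the uniform lower bounds $\|v\|_{L^4(\bbr^4)}^2\ge c>0$ of Lemmas~\ref{lem5010} and~\ref{lem6010}); your fixed-$\lambda$ level comparison is the right heuristic but, as stated, neither produces those quantitative lower bounds nor explains why largeness of $\lambda$ is needed.
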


The next is a by-product of the previous theorem.

\bc\label{zou++}
Assume  $(D_1)$-$(D_3)$  and  $-\mu_{a,1}<a_0<0,  -\mu_{b,1}<b_0<0$.  If $\lambda>\Lambda_0$, then  the following equation
\be\label{zou++a}
-\Delta u+(\lambda a(x)+a_0)u=u^3, \quad u\in \h,
\ee
\be\label{zou++b}
-\Delta v+(\lambda b(x)+b_0)v=v^3, \quad v\in \h,
\ee
have   ground state solutions, respectively.
\ec

\br\label{zou++c} The Corollary \ref{zou++} can be viewed as the generalization of the
celebrated   results in \cite{BN=1983}  obtained  by Br{\'e}zis and Nirenberg, where the equation is defined
 on   the bounded smooth domain.  On the other hand, let us  recall the following
 equation which  was studied in \cite{Benci-Cerami} by Benci and Cerami:
\be\label{bcerami}-\Delta u+V(x)u=u^{(N+2)/(N-2)},  \;\;\; u  \in  H^1(\bbr^N),\ee
where $N\geq 3$ and $V(x)$ is a nonnegative function. It was observed when
$V(x)\equiv constant \not=0$, then    (\ref{bcerami}) has only trivial solution $u=0$.  Moreover,  if $\|V(x)\|_{L^{N/2}}$ is sufficiently
small, then  (\ref{bcerami}) has at least one solution.

\er

\vskip0.23in

\subsection{The case of $a_0\leq-\mu_{a,1}$ or $b_0\leq-\mu_{b,1}$}

If either  $a_0\leq-\mu_{a,1}$ or $b_0\leq-\mu_{b,1}$, then there exists $\Lambda_1>0$ such that $\mathcal{D}_{\lambda}(u,v)$ is
indefinite on $E_\lambda$ and has finite augment Morse index for $\lambda>\Lambda_1$
(also see Lemma~\ref{lem0110} below  for more details).  In this case, $\mathcal{N}_{\lambda,\beta}$ and
 $\mathcal{M}_{\lambda,\beta}$ are not the good choice for finding the ground state solution and
 the general ground state solution of $(\mathcal{P}_{\lambda,\beta})$.  For  $\lambda>\Lambda_1,$  let $\widehat{\mathcal{F}}_{a,\lambda}^{\perp}$
  and $\widehat{\mathcal{F}}_{b,\lambda}^{\perp}$ be the negative part of $\mathcal{D}_{a,\lambda}(u,u)$
  on $E_{a,\lambda}$ and $\mathcal{D}_{b,\lambda}(v,v)$ on $E_{b,\lambda}$, respectively.
   Then we can modify $\mathcal{M}_{\lambda,\beta}$ to the following set%
\begin{eqnarray}
\mathcal{G}_{\lambda,\beta}&:=&\bigg\{(u,v)\in\widetilde{E}_\lambda \mid\langle D[J_{\lambda,\beta}(u,v)],(u,v)\rangle_{E_\lambda^*,E_\lambda}=0,\notag\\%
&&\langle D[J_{\lambda,\beta}(u,v)],(w,\sigma)\rangle_{E_\lambda^*,E_\lambda}=0,  \forall  (w,\sigma)\in\widehat{\mathcal{F}}_{a,\lambda}^{\perp}\times\widehat{\mathcal{F}}_{b,\lambda}^{\perp}\bigg\},\label{eq6001}%
\end{eqnarray}
where $\widetilde{E}_\lambda:=E_\lambda\backslash(\widehat{\mathcal{F}}_{a,\lambda}^{\perp}\times\widehat{\mathcal{F}}_{b,\lambda}^{\perp})$.
 This kind of set is the so-called Nehari-Pankov type set to $(\mathcal{P}_{\lambda,\beta})$,
 which was introduced by Pankov in \cite{P05} for   the scalar Schr\"odinger equations with indefinite potentials and
 was further studied by  Szulkin and Weth \cite{SW09}.
 For other papers devoted to the indefinite problems, we would like to refer the readers to
 \cite{BR09,BSR11,EW11} and the references therein.
 Define

\be\label{zou-300}c_{\lambda,\beta}:=\inf_{\mathcal{G}_{\lambda,\beta}}J_{\lambda,\beta}.\ee
Evidently,  $c_{\lambda,\beta}\geq 0$ whenever  $\beta\geq-1$ since $\mathcal{L}_\beta(u,v)$ is  positively definite in this case.%
\begin{theorem}\label{thm0004}
Assume $(D_1)$-$(D_3)$.    Suppose  either $a_0\leq-\mu_{a,1}$ with $-a_0\not\in\sigma(-\Delta, H_0^1(\Omega_a))$ or
$b_0\leq-\mu_{b,1}$ with $-b_0\not\in\sigma(-\Delta, H_0^1(\Omega_b))$.
If $\lambda>\Lambda_1$, then $c_{\lambda,\beta}$ can be attained by a general
ground state solution of   $(\mathcal{P}_{\lambda,\beta})$ for $0\leq\beta<1$.
Furthermore, if  $a_0\leq-\mu_{a,1}<0\leq b_0$, then there exists $\Lambda^*_\beta\geq\Lambda_1$ such that the general ground state
solution of  $(\mathcal{P}_{\lambda,\beta})$ must be semi-trivial and be of the
 type $(u_{\lambda,\beta}, 0)$ for  all $\lambda\geq\Lambda^*_\beta$.
In particular, where $u_{\lambda,\beta}$ is the ground state solution to the equation
\begin{equation}\label{eq5001}
-\Delta u+(\lambda a(x)+a_0)u=u^3,\quad u\in \h.%
\end{equation}
\end{theorem}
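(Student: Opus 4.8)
The plan is to treat $J_{\lambda,\beta}$ as a strongly indefinite functional and to realise $c_{\lambda,\beta}$ as a minimax value over the Nehari--Pankov set $\mathcal{G}_{\lambda,\beta}$, following the linking reduction of Pankov \cite{P05} and Szulkin--Weth \cite{SW09}. Fix $\lambda>\Lambda_1$ so that, by Lemma~\ref{lem0110}, $\mathcal{D}_\lambda$ is indefinite with finite-dimensional negative part $\widehat{\mathcal{F}}_{a,\lambda}^{\perp}\times\widehat{\mathcal{F}}_{b,\lambda}^{\perp}$. The restriction $0\leq\beta<1$ makes $\mathcal{L}_\beta$ positive definite, so for each $(u,v)\in\widetilde{E}_\lambda$ the restriction of $J_{\lambda,\beta}$ to the cone $\mathbb{R}^{+}(u,v)\oplus(\widehat{\mathcal{F}}_{a,\lambda}^{\perp}\times\widehat{\mathcal{F}}_{b,\lambda}^{\perp})$ has a unique critical point, which is its global maximum and lies on $\mathcal{G}_{\lambda,\beta}$. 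This yields a homeomorphism between a sphere in the positive subspace and $\mathcal{G}_{\lambda,\beta}$, the identity $J_{\lambda,\beta}=\frac14\mathcal{L}_\beta$ on $\mathcal{G}_{\lambda,\beta}$, and the representation $c_{\lambda,\beta}=\inf\max J_{\lambda,\beta}$; applying Ekeland's principle to this minimax produces a Palais--Smale sequence $(u_n,v_n)$ at level $c_{\lambda,\beta}$, whose boundedness follows from $\beta<1$ and the positivity of $\mathcal{L}_\beta$.

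The decisive step is to recover compactness at the level $c_{\lambda,\beta}$. Since all nonlinearities are of critical growth, the loss-of-compactness value is $\frac{S^2}{4}$, the same number that appears as the non-attained infimum $m_{\lambda,\beta}^{*}$ in Theorem~\ref{thm0001}(1) for $0\leq\beta\leq1$. I would establish the strict inequality $c_{\lambda,\beta}<\frac{S^2}{4}$ by inserting truncated Aubin--Talenti bubbles concentrated inside $\Omega_a$ (where $a_0\leq-\mu_{a,1}<0$ supplies an energy-lowering linear term) into the minimax family, exactly in the spirit of Br\'ezis--Nirenberg \cite{BN=1983}; in dimension four the negative coefficient pushes the maximal energy strictly below the single-bubble threshold. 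With $c_{\lambda,\beta}<\frac{S^2}{4}$ in hand, a concentration-compactness analysis of the bounded Palais--Smale sequence excludes vanishing and dichotomy, so $(u_n,v_n)$ converges strongly to a non-zero critical point realising $c_{\lambda,\beta}$; this is the sought general ground state solution.

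For the second assertion, assume $a_0\leq-\mu_{a,1}<0\leq b_0$ and compare $c_{\lambda,\beta}$ with the least energy $m_a(\lambda)$ of the scalar equation \eqref{eq5001}, which exists by the argument behind Corollary~\ref{zou++}. The semi-trivial pair $(u_{\lambda,\beta},0)$ belongs to $\mathcal{G}_{\lambda,\beta}$ and gives $c_{\lambda,\beta}\leq m_a(\lambda)$, while the Br\'ezis--Nirenberg effect forces $m_a(\lambda)<\frac{S^2}{4}$ for every $\lambda>\Lambda_1$. To exclude a fully non-trivial minimiser I would exploit the steep-well structure: as $\lambda\to+\infty$ the components concentrate on $\Omega_a$ and $\Omega_b$, and since $\overline{\Omega}_a\cap\overline{\Omega}_b=\emptyset$ by $(D_3)$, a non-vanishing $v$ asymptotically solves the uncoupled critical problem on $\Omega_b$ with the non-negative potential $b_0$; by the Pohozaev/Benci--Cerami obstruction recalled in Remark~\ref{zou++c} this forces $J_{\lambda,\beta}(u,v)\geq\frac{S^2}{4}-o(1)$. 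Hence there is $\Lambda^{*}_\beta\geq\Lambda_1$ such that for $\lambda\geq\Lambda^{*}_\beta$ the strict ordering $m_a(\lambda)<\frac{S^2}{4}\leq J_{\lambda,\beta}(u,v)$ is incompatible with $v\neq0$, so the general ground state is semi-trivial of type $(u_{\lambda,\beta},0)$.

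The main obstacle is the coupling of the two critical estimates with the parameter $\lambda$: in the indefinite setting the test family for $c_{\lambda,\beta}<\frac{S^2}{4}$ must be built over the whole cone $\mathbb{R}^{+}(u,v)\oplus(\widehat{\mathcal{F}}_{a,\lambda}^{\perp}\times\widehat{\mathcal{F}}_{b,\lambda}^{\perp})$, so one must simultaneously control the interaction of the concentrating bubble with the finite-dimensional negative modes and with the $\lambda$-dependent potential, and keep every estimate uniform in $\lambda$ so that the same threshold $\Lambda^{*}_\beta$ governs both the existence and the energy comparison underlying the semi-trivial conclusion.
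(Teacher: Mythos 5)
Your reduction and existence architecture is essentially the paper's: the cone maximization (their Lemma~\ref{lem0004}), the unique projection onto $\mathcal{G}_{\lambda,\beta}$ and its continuity and differentiability (Lemmas~\ref{lem0011}--\ref{lem0013} and \ref{lem0012}), Ekeland on the sphere $\mathbb{B}_1^+$, a bounded $(PS)_{c_{\lambda,\beta}}$ sequence, and a dichotomy at the weak limit with Fatou closing the nonzero case (Proposition~\ref{prop0001}). Where you genuinely diverge is the threshold estimate $c_{\lambda,\beta}<\frac14S^2$: you propose uniform-in-$\lambda$ truncated Aubin--Talenti bubbles inserted over the indefinite cone, whereas the paper sidesteps all bubble estimates by quoting \cite{CSZ12,SWW09} for a least-energy solution $u_a$ of the indefinite critical scalar problem on $\Omega_a$ with $0<I_a(u_a)<\frac14S^2$, and then transporting it into $\mathcal{G}_{\lambda,\beta}$ via the projection of Lemma~\ref{lem0011} to get $\limsup_{\lambda\to+\infty}c_{\lambda,\beta}\leq I_a(t_0(u_a-\widehat{u}_a))<I_a(u_a)<\frac14S^2$ (Lemma~\ref{lem0009}). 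Your route is viable but amounts to re-proving the cited scalar result, and it is precisely there that the hypothesis $-a_0\not\in\sigma(-\Delta,H_0^1(\Omega_a))$ must enter --- in dimension $4$ the linking estimate genuinely needs non-resonance --- yet your sketch never uses this assumption anywhere. Two smaller points: boundedness of the $(PS)$ sequence does not follow from positivity of $\mathcal{L}_\beta$ alone; one needs $J_{\lambda,\beta}=\frac14\mathcal{L}_\beta$ on $\mathcal{G}_{\lambda,\beta}$ plus the measure bound $|\mathcal{A}_\lambda|<+\infty$ to absorb $(\lambda a(x)+a_0)^-$ (Step~1 of Proposition~\ref{prop0001}); and your comparison $c_{\lambda,\beta}\leq m_a(\lambda)$ presupposes a ground state of \eqref{eq5001} at fixed $\lambda$, which in the indefinite case is a \emph{conclusion} of this theorem (cf.\ Remark following Theorem~\ref{thm0004}), not an available input --- the paper instead compares with the projected bounded-domain solution.

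The genuine gap is in your semi-trivial argument. You claim that a non-vanishing $v$-component "asymptotically solves the uncoupled critical problem on $\Omega_b$" and hence carries energy $\geq\frac14S^2-o(1)$. This covers only the case in which the weak limit $v_{0,\beta}$ is nonzero (and even there the correct obstruction is the strict Sobolev inequality/Talenti non-attainment on the bounded domain $\Omega_b$, not Pohozaev or Benci--Cerami, which require star-shapedness or constant potentials on $\bbr^4$). It says nothing about the dangerous case $v_{\lambda_n}\not\equiv0$ but $v_{\lambda_n}\to0$: then the $v$-energy tends to zero and your energy comparison excludes nothing. The paper closes exactly this case (Proposition~\ref{prop0002}(2)) by testing the $v$-equation with Sobolev and H\"older to get
\begin{equation*}
S\leq\|v_{\lambda_n,\beta}\|_{L^4(\bbr^4)}^2+\beta\|u_{\lambda_n,\beta}\|_{L^4(\bbr^4)}^2,
\end{equation*}
then using the Brez\'is--Lieb splitting together with $-a_0\not\in\sigma(-\Delta,H_0^1(\Omega_a))$ and \cite{CSZ12} to identify the limit $u_{0,\beta}$ as a least-energy critical point of $I_a$, which forces $u_{\lambda_n,\beta}\to u_{0,\beta}$ strongly in $L^4$ and hence $\|u_{0,\beta}\|_{L^4(\bbr^4)}^2\geq S/\beta>S$; this contradicts $\|u_{0,\beta}\|_{L^4(\bbr^4)}^4=4I_a(u_{0,\beta})<S^2$. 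Without some version of this coupled inequality (or an equivalent device), your proof cannot rule out fully non-trivial minimizers whose second component vanishes asymptotically, so the semi-triviality claim for $\lambda\geq\Lambda_\beta^*$ remains unproved for $0<\beta<1$.
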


\begin{remark}\quad
\begin{enumerate}
\item[$(a)$] Theorem~\ref{thm0004} only gives the existence of the general ground state solution to
  $(\mathcal{P}_{\lambda,\beta})$ for $0\leq\beta<1$ and $\lambda$
 sufficiently large in the case of $a_0\leq-\mu_{a,1}$ with $-a_0\not\in\sigma(-\Delta, H_0^1(\Omega_a))$ or
  $b_0\leq-\mu_{b,1}$ with $-b_0\not\in\sigma(-\Delta, H_0^1(\Omega_b))$. However,  it is still open for us
   that whether $(\mathcal{P}_{\lambda,\beta})$ has the general ground state solution for other $\beta$ in
   such cases.  Indeed, since $\mathcal{L}_\beta(u,v)$ is not symmetric in $E_\lambda$ due to the conditions $(D_1)$--$(D_3)$
   and even indefinite on $\h\times\h$ for $\beta<-1$, Lemmas~\ref{lem0004} and \ref{lem0011} which are
   crucial in the proof of Theorem~\ref{thm0004} are invalid for $\beta\in(-\infty, 0)\cup[1, +\infty)$
   in the case of $a_0\leq-\mu_{a,1}$ with $-a_0\not\in\sigma(-\Delta, H_0^1(\Omega_a))$ or $b_0\leq-\mu_{b,1}$
    with $-b_0\not\in\sigma(-\Delta, H_0^1(\Omega_b))$.%
\item[$(b)$] By Theorem~\ref{thm0004}, it is easy to show that $(\mathcal{P}_{\lambda,0})$
has a ground state solution in the case of $a_0\leq-\mu_{a,1}$ with $-a_0\not\in\sigma(-\Delta, H_0^1(\Omega_a))$
and $b_0\leq-\mu_{b,1}$ with $-b_0\not\in\sigma(-\Delta, H_0^1(\Omega_b))$.  However, since the dimension of the set
for the semi-trivial solutions to $(\mathcal{P}_{\lambda,\beta})$ might be infinite, we do not know how to modify
the Nehari type set $\mathcal{N}_{\lambda,\beta}$ to some Nehari-Pankov type sets as $\mathcal{G}_{\lambda,\beta}$.
Therefore, it is also open to us that whether $(\mathcal{P}_{\lambda,\beta})$ has a ground state solution for
 $\beta\not=0$ in the case of $a_0\leq-\mu_{a,1}$ with $-a_0\not\in\sigma(-\Delta, H_0^1(\Omega_a))$ and
 $b_0\leq-\mu_{b,1}$ with $-b_0\not\in\sigma(-\Delta, H_0^1(\Omega_b))$.%
\item[$(c)$] To the  best of our knowledge, it seems that Theorems~\ref{thm0004} is   the first
existence result for \eqref{eq5001} in the indefinite case.  By checking the proof of Theorem~\ref{thm0001}
(more precisely, Lemma~\ref{lem1002}), we can also see that \eqref{eq5001} has a ground state solution in some
definite case but  might not have solutions in the case of $a_0\geq0$.%
\end{enumerate}
\end{remark}

\subsection{The concentration phenomenon   as
 $\lambda\to+\infty$. }

Since $a(x),b(x)$ have the potential wells, it is natural to ask whether the ground state solution and the general
ground state solution of $(\mathcal{P}_{\lambda,\beta})$ will concentrate at the bottom of $a(x),b(x)$ as
 $\lambda\to+\infty$.  Our results on this aspect can be stated as follows.%
\begin{theorem}\label{thm0002}
Let $(u_{\lambda,\beta}, v_{\lambda,\beta})$ be the solution of $(\mathcal{P}_{\lambda,\beta})$ obtained by
Theorems~\ref{thm0001} and \ref{thm0004}.  Then we have the following conclusions.
\begin{enumerate}
\item[$(1)$] If $(u_{\lambda,\beta}, v_{\lambda,\beta})$ is a ground state solution of $(\mathcal{P}_{\lambda,\beta})$
with $\beta\leq0$ in the case of $a_0\leq b_0<0$, then up to a subsequence $(u_{\lambda,\beta}, v_{\lambda,\beta})\to(u_{0,\beta}, v_{0,\beta})$
strongly in $\h\times\h$ as $\lambda\to+\infty$.
 Furthermore, $(u_{0,\beta}, v_{0,\beta})$ is also a ground state solution of the system:%
    \begin{equation}\label{eq9001}
    \left\{\aligned&\Delta u-a_0u+u^3=0\quad&\text{in }\Omega_a,\\%
    &\Delta v-b_0v+v^3=0\quad&\text{in }\Omega_b,\\%
    &(u,v)\in H_0^1(\Omega_a)\times H_0^1(\Omega_b).\endaligned\right.%
    \end{equation}
\item[$(2)$] If $(u_{\lambda,\beta}, v_{\lambda,\beta})$ is a general ground state
solution of $(\mathcal{P}_{\lambda,\beta})$ in the case of $a_0<0$,
then up to a subsequence $(u_{\lambda,\beta}, v_{\lambda,\beta})\to(u_{0,\beta}, v_{0,\beta})$ strongly
in $\h\times\h$ as $\lambda\to+\infty$. Furthermore, $(u_{0,\beta}, v_{0,\beta})$ is a semi-trivial general ground state solution of \eqref{eq9001}.%
\end{enumerate}
\end{theorem}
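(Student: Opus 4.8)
The plan is to run the standard steep–potential–well concentration scheme, with the extra care that the critical Sobolev exponent $2^*=4$ forces me to control a possible loss of compactness. I treat the two parts in parallel, abbreviating $(u_\lambda,v_\lambda):=(u_{\lambda,\beta},v_{\lambda,\beta})$ for the solution produced by Theorem~\ref{thm0001} (resp. Theorem~\ref{thm0004}), writing $J_0$ for the limiting energy functional attached to the uncoupled system \eqref{eq9001}, and letting $m_{0,\beta}$ denote the corresponding (general) ground state level. First I would establish a uniform energy bound. Taking a (general) ground state of \eqref{eq9001}, which lives on $H_0^1(\Omega_a)\times H_0^1(\Omega_b)$, and extending it by zero to all of $\bbr^4$, I obtain a competitor whose $\|\cdot\|_{a,\lambda}$– and $\|\cdot\|_{b,\lambda}$–norms are independent of $\lambda$, precisely because $a(x)=b(x)=0$ on $\Omega_a\cup\Omega_b$ by $(D_3)$. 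After projecting this competitor onto the relevant Nehari (or Nehari–Pankov) constraint, this yields $\limsup_{\lambda\to\infty}m_{\lambda,\beta}\le m_{0,\beta}$ and in particular a bound uniform in $\lambda$. Since on the constraint one has $J_{\lambda,\beta}(u_\lambda,v_\lambda)=\tfrac14\mathcal{D}_\lambda(u_\lambda,v_\lambda)$, this bounds $\|(u_\lambda,v_\lambda)\|_\lambda$ and hence, via \eqref{eq0001}--\eqref{eq0003}, bounds $(u_\lambda,v_\lambda)$ in $\h\times\h$. Up to a subsequence I may then assume $(u_\lambda,v_\lambda)\rightharpoonup(u_0,v_0)$ weakly in $\h\times\h$, strongly in $L^p_{loc}$ for $p<4$, and pointwise a.e.

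Next I localize the weak limit. The uniform bound on $\lambda\int_{\bbr^4}a(x)u_\lambda^2\,dx$, a consequence of the energy bound, together with Fatou's lemma forces $\int_{\bbr^4}a(x)u_0^2\,dx=0$, so $u_0=0$ a.e. on $\bbr^4\setminus\overline{\Omega}_a=\{a>0\}$; a standard truncation argument then upgrades this to $u_0\in H_0^1(\Omega_a)$, and likewise $v_0\in H_0^1(\Omega_b)$. Because $\overline{\Omega}_a\cap\overline{\Omega}_b=\emptyset$ by $(D_3)$, the coupling decouples in the limit: $u_0^2v_0^2\equiv 0$ on $\bbr^4$, so passing to the limit in the weak formulation of $(\mathcal{P}_{\lambda,\beta})$ shows that $(u_0,v_0)$ is a solution of the uncoupled limit system \eqref{eq9001}. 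Once $(u_0,v_0)$ is known to be admissible this gives the lower barrier $J_0(u_0,v_0)\ge m_{0,\beta}$.

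The crux is upgrading weak to strong convergence while excluding bubbling. Writing $w_\lambda:=u_\lambda-u_0$ and applying the Br\'ezis--Lieb lemma to the $L^4$ and coupling terms, together with the weak convergence of the gradients, I would show that $w_\lambda$ (and its counterpart for $v$) carries all the defect of the norm, and that by the equation this defect behaves like a sum of critical bubbles whose energy is at least $\tfrac{S^2}{4}$. The energy sandwich
$$m_{0,\beta}\le J_0(u_0,v_0)\le\liminf_{\lambda\to\infty}J_{\lambda,\beta}(u_\lambda,v_\lambda)=\liminf_{\lambda\to\infty}m_{\lambda,\beta}\le\limsup_{\lambda\to\infty}m_{\lambda,\beta}\le m_{0,\beta}$$
then forces $J_0(u_0,v_0)=m_{0,\beta}$ and simultaneously kills the defect, provided I know the strict inequality $m_{0,\beta}<\tfrac{S^2}{4}$, so that no bubble may split off and, a fortiori, $(u_0,v_0)$ cannot vanish identically. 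This strict gap is exactly where the hypotheses $a_0<0$ (and $b_0<0$ in part~(1)) enter, through the Br\'ezis--Nirenberg mechanism underlying Corollary~\ref{zou++}: the negative linear term drags the limit ground state level strictly below the first critical level. Establishing this strict inequality, and hence ruling out concentration in the critical regime, is the main obstacle; once it is in hand, $\|(u_\lambda,v_\lambda)\|_\lambda^2\to\|(u_0,v_0)\|^2$ gives strong convergence in $\h\times\h$ and the sandwich identifies $(u_0,v_0)$ as a (general) ground state of \eqref{eq9001}.

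For part~(2), the argument simplifies: part~(2) of Theorem~\ref{thm0001} tells me that in the regime $a_0<0$ the general ground state $(u_\lambda,v_\lambda)$ is already semi-trivial, say of type $(u_\lambda,0)$. The whole scheme above then reduces to the scalar equation \eqref{eq5001}/\eqref{zou++a}, where the steep-well localization places the limit in $H_0^1(\Omega_a)$ and the Br\'ezis--Nirenberg strict-gap estimate again prevents bubbling, so $u_\lambda\to u_0$ strongly in $\h$ and $(u_0,0)$ is a semi-trivial general ground state of \eqref{eq9001}, as claimed.
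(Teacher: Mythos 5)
Your overall scheme (upper bound via zero-extension competitors using $(D_3)$, localization of the weak limit into $H_0^1(\Omega_a)\times H_0^1(\Omega_b)$, Br\'ezis--Lieb splitting plus an energy sandwich to kill the defect) is the same one the paper runs, but two of your key steps fail as stated. First, the strict gap you invoke to exclude bubbling, $m_{0,\beta}<\frac14S^2$, is generally false in part (1): since $\overline{\Omega}_a\cap\overline{\Omega}_b=\emptyset$, the limit system \eqref{eq9001} decouples and $m_{0,\beta}=m_a+m_b$, and the Br\'ezis--Nirenberg mechanism only gives $m_a<\frac14S^2$ and $m_b<\frac14S^2$ \emph{separately}, so $m_a+m_b$ may well exceed $\frac14S^2$. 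The exclusion must be done componentwise, as in the paper's existence proposition following Lemma~\ref{lem7001} and in Proposition~\ref{prop0002}: if, say, the $u$-component loses compactness ($u_0=0$ but $u_\lambda\not\to0$), then by $(D_2)$ the negative potential part vanishes and the Nehari constraint with $\beta\le0$ forces $\|u_\lambda\|_{L^4(\bbr^4)}^2\geq S+o(1)$, while the $v$-part still pays at least $m_{b,\lambda}$; hence the level is at least $\frac14S^2+m_{b,\lambda}+o(1)$, contradicting $m_{\lambda,\beta}\le m_a+m_b$ for $\lambda$ large precisely because $m_a<\frac14S^2$ and $m_{b,\lambda}\to m_b$ (Lemma~\ref{lem7001}). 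Relatedly, your ``a fortiori $(u_0,v_0)\neq(0,0)$'' is weaker than what part (1) needs: the limit must be non-trivial in \emph{both} components to sit in the Nehari set of \eqref{eq9001}, which the paper obtains from the quantitative $L^4$ lower bounds of Lemmas~\ref{lem0020} and \ref{lem5010}/\ref{lem6010}, not from the sandwich alone.

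Second, your treatment of part (2) misreads Theorem~\ref{thm0001}(2): semi-triviality of the approximating family $(u_{\lambda,\beta},v_{\lambda,\beta})$ is asserted there only for $\beta$ below the explicit thresholds ($\beta<1-\frac{|a_0|}{\mu_{a,1}}$, resp.\ $\beta<\beta_0$) and $\lambda>\Lambda_\beta$, whereas Theorem~\ref{thm0002}(2) covers \emph{every} $\beta$ for which the general ground state exists --- including $\beta\geq\beta_\lambda$, where by Theorem~\ref{thm0001}(3) the general ground state is actually non-trivial. So you cannot reduce to the scalar equation \eqref{eq5001}: it is the \emph{limit}, not the approximants, that is semi-trivial. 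The paper's route is: the limit solves the decoupled system \eqref{eq9001}, so if both components were nonzero its energy would be at least $m_a+m_b$, contradicting $\limsup_{\lambda\to+\infty}m_{\lambda,\beta}^*\leq\min\{m_a,m_b\}$ from Lemma~\ref{lem5003}; and the limit cannot be $(0,0)$ because, via Lemma~\ref{lem0008} and \eqref{eq1003}, $m_{\lambda,\beta}^*$ is nondecreasing and strictly positive in $\lambda$, so the limiting energy stays bounded away from zero. With these two repairs --- componentwise bubble exclusion against the levels $m_{a,\lambda},m_{b,\lambda}$, and an energy-based (rather than structural) proof of semi-triviality of the limit --- your argument matches the paper's.
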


\begin{remark}
By checking the proof of Theorem~\ref{thm0001}, we may have $\beta_\lambda\to+\infty$ as $\lambda\to+\infty$
(see Lemmas~\ref{lem5003} and \ref{lem1003} and Proposition~\ref{prop0006} for more details).
 Thus, the concentration behaviors described in Theorem~\ref{thm0002}   may not hold in the case of $\beta\geq\beta_\lambda$.%
\end{remark}

\subsection{Phase separation }

Note that the ground state solution and the general ground state solution of $(\mathcal{P}_{\lambda,\beta})$ are
also depending  on the parameter $\beta$, it is   natural that  we are concerned with  the phenomenon of phase separation as
 $\beta\to-\infty$. Our results on this topic  now read as %
\begin{theorem}\label{thm0003}
Let $(u_{\lambda,\beta}, v_{\lambda,\beta})$ be the ground state solution of $(\mathcal{P}_{\lambda,\beta})$
obtained by Theorem~\ref{thm0001} with $\beta\leq0$.  Then there exists $\Lambda_2>0$ such
 that $\beta\int_{\bbr^4}u_{\lambda,\beta}^2v_{\lambda,\beta}^2dx\to0$ as $\beta\to-\infty$
 for each  $\lambda\geq\Lambda_2$.  Furthermore, for every $\beta_n\to-\infty$, up to a subsequence,
 we also have the following
\begin{enumerate}
\item[$(1)$] $(u_{\lambda,\beta_n}, v_{\lambda,\beta_n})\to (u_{\lambda,\infty}, v_{\lambda,\infty})$ strongly
 in $\h\times\h$ as $n\to\infty$ with $u_{\lambda,\infty}\not=0$ and $v_{\lambda,\infty}\not=0$;
\item[$(2)$] $u_{\lambda,\infty}$ is the ground state solution of the following equation%
\begin{equation*}
-\Delta u+(\lambda a(x)+a_0)u=u^3,\quad u\in H_0^1(\{u_{\lambda,\infty}>0\})%
\end{equation*}
while $v_{\lambda,\infty}$ is the ground state solution to  the following equation%
\begin{equation*}
-\Delta v+(\lambda b(x)+b_0)v=v^3,\quad v\in H_0^1(\{v_{\lambda,\infty}>0\});
\end{equation*}
\item[$(3)$]  both $\{u_{\lambda,\infty}>0\}$ and $\{v_{\lambda,\infty}>0\}$ are   connect domains and $\{u_{\lambda,\infty}>0\}=\bbr^4\backslash\overline{\{v_{\lambda,\infty}>0\}}$.%
\end{enumerate}
\end{theorem}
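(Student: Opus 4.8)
The plan is to follow the standard strategy for phase separation: control the ground-state level uniformly in $\beta$, extract a segregated limit, upgrade to strong convergence, and finally analyze the free boundary. First I would record the Nehari identity that on $\mathcal{N}_{\lambda,\beta}$ one has $J_{\lambda,\beta}(u,v)=\frac14\mathcal{D}_\lambda(u,v)$, so that $m_{\lambda,\beta}=\frac14\|(u_{\lambda,\beta},v_{\lambda,\beta})\|_\lambda^2$. Using condition $(D_3)$, namely $\overline{\Omega}_a\cap\overline{\Omega}_b=\emptyset$, I would build a test pair $(\varphi,\psi)$ with $\mathrm{supp}\,\varphi$ near $\Omega_a$ and $\mathrm{supp}\,\psi$ near $\Omega_b$ disjoint, so that $\int_{\bbr^4}\varphi^2\psi^2\,dx=0$ and the associated Nehari value is independent of $\beta$; projecting onto $\mathcal{N}_{\lambda,\beta}$ yields a bound $m_{\lambda,\beta}\le C$ uniform in $\beta\le0$ and $\lambda\ge\Lambda_2$. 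Since $m_{\lambda,\beta}$ is an infimum of functions affine in $\beta$, it is concave and monotone as $\beta\to-\infty$, hence converges; combined with the uniform bound and positive definiteness of $\mathcal{D}_\lambda$ this gives a uniform $\h\times\h$ bound on $(u_{\lambda,\beta},v_{\lambda,\beta})$.

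For the first assertion I would use the envelope identity $\frac{d}{d\beta}m_{\lambda,\beta}=-\frac12\int_{\bbr^4}u_{\lambda,\beta}^2v_{\lambda,\beta}^2\,dx$, valid since the Nehari constraints are stationary, so that $g(\beta):=\int_{\bbr^4}u_{\lambda,\beta}^2v_{\lambda,\beta}^2\,dx$ is nonnegative, monotone in $\beta$ by concavity of $m_{\lambda,\beta}$, and integrable on $(-\infty,0]$ because $m_{\lambda,\beta}$ is bounded and monotone. An integrable monotone function satisfies $|\beta|g(\beta)\to0$ (comparing $g(\beta)$ with $\int_\beta^{\beta/2}g\,ds$), which is exactly $\beta\int_{\bbr^4}u_{\lambda,\beta}^2v_{\lambda,\beta}^2\,dx\to0$. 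In particular $g(\beta)\to0$, so any weak limit $(u_{\lambda,\infty},v_{\lambda,\infty})$ of $(u_{\lambda,\beta_n},v_{\lambda,\beta_n})$ along $\beta_n\to-\infty$ satisfies $\int_{\bbr^4}u_{\lambda,\infty}^2v_{\lambda,\infty}^2\,dx=0$, i.e. the limiting components are segregated.

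Next I would establish strong convergence and nontriviality (items (1) and (2)). Testing the equations gives $\|u_{\lambda,\beta}\|_{a,\lambda}^2=\|u_{\lambda,\beta}\|_{L^4}^4+\beta\int_{\bbr^4}u^2v^2\,dx$, and with the Sobolev inequality \eqref{eq0003} this forces $\|u_{\lambda,\beta}\|_{a,\lambda}^2\ge S^2-o(1)$, and likewise for $v$; thus neither component degenerates at the level of norms. The delicate point, owing to the doubly critical growth, is to rule out loss of compactness: here I would use that the uniform bound keeps $m_{\lambda,\beta}$ strictly below the critical threshold $\frac{S^2}{2}$ coming from $a_0\le b_0<0$ (the Br\'ezis-Nirenberg effect already exploited in the proof of Theorem~\ref{thm0001}), which excludes concentration and, together with the steep-well compactness for $\lambda\ge\Lambda_2$, yields $(u_{\lambda,\beta_n},v_{\lambda,\beta_n})\to(u_{\lambda,\infty},v_{\lambda,\infty})$ strongly in $\h\times\h$ with both limits nonzero. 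Passing to the limit in the weak formulation and using $\int_{\bbr^4}u_{\lambda,\infty}^2v_{\lambda,\infty}^2\,dx=0$, the coupling drops out on each positivity set, so $u_{\lambda,\infty}$ solves $-\Delta u+(\lambda a(x)+a_0)u=u^3$ on $H_0^1(\{u_{\lambda,\infty}>0\})$ and $v_{\lambda,\infty}$ the analogous $b$-equation, each as a ground state by minimality.

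The hard part will be item (3), the structure of the free boundary. The plan is to characterize $(u_{\lambda,\infty},v_{\lambda,\infty})$ as a minimizer of the associated optimal-partition problem $\inf(c_a(A)+c_b(B))$ over partitions $\{A,B\}$ of $\bbr^4$, where $c_a(A),c_b(B)$ are the ground-state levels of the two scalar equations on $A,B$. Strict monotonicity of these levels under domain inclusion forces the optimal partition to leave no gaps, giving $\{u_{\lambda,\infty}>0\}=\bbr^4\setminus\overline{\{v_{\lambda,\infty}>0\}}$; connectedness follows because a disconnected positivity set could be replaced by its lowest-energy component, contradicting minimality, after invoking the maximum principle to keep each component positive. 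The technical crux, which I expect to be the main obstacle, is the regularity of the interface in this critical regime: establishing uniform Lipschitz bounds for $u_{\lambda,\beta}-v_{\lambda,\beta}$ and an Almgren-type monotonicity formula so that the limiting difference $w_{\lambda,\infty}=u_{\lambda,\infty}-v_{\lambda,\infty}$ is a genuine solution across the interface and cannot vanish on an open set. This segregation regularity, standard in the subcritical theory, must be adapted to the doubly critical exponent $2^\ast=4$, and is where the argument requires the most care.
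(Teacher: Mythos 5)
Your overall architecture (uniform level bound via a segregated test pair, extraction of a segregated limit, energy comparisons below critical thresholds, then free-boundary analysis) is the right one, and your uniform bound is exactly the paper's Lemma~\ref{lem5003} ($\mathcal{N}_a\times\mathcal{N}_b\subset\mathcal{N}_{\lambda,\beta}$ with disjoint supports from $(D_3)$). But there is a genuine gap in your proof of the first assertion, $\beta\int_{\bbr^4}u_{\lambda,\beta}^2v_{\lambda,\beta}^2dx\to0$. Your argument rests on concavity of $\beta\mapsto m_{\lambda,\beta}$, justified by calling it ``an infimum of functions affine in $\beta$''. It is not: the Nehari constraint moves with $\beta$, and the correct fixed-pair representation is $m_{\lambda,\beta}=\inf_{(u,v)}\max_{t,s\geq0}J_{\lambda,\beta}(tu,sv)$, where the inner maximum is a supremum of affine functions of $\beta$, hence \emph{convex} in $\beta$; an infimum of convex functions carries no concavity. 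Without concavity you lose monotonicity of $g(\beta)=\int_{\bbr^4}u_{\lambda,\beta}^2v_{\lambda,\beta}^2dx$, and integrability alone yields only $\liminf_{\beta\to-\infty}|\beta|g(\beta)=0$, not the limit. The envelope identity $\frac{d}{d\beta}m_{\lambda,\beta}=-\frac12 g(\beta)$ is likewise asserted without proof: $m_{\lambda,\beta}$ is monotone, hence differentiable only a.e., and you would additionally need absolute continuity to integrate the identity. The paper needs none of this machinery: it first proves strong $\h\times\h$ convergence along $\beta_n\to-\infty$, and the first assertion then falls out of the Nehari identities, since $\mathcal{D}_{a,\lambda}(u_n,u_n)=\|u_n\|_{L^4(\bbr^4)}^4+\beta_n\|u_n^2v_n^2\|_{L^1(\bbr^4)}$ and the segregated limit solves the decoupled equation, so $\beta_n\|u_n^2v_n^2\|_{L^1(\bbr^4)}\to \mathcal{D}_{a,\lambda}(u_{\lambda,\infty},u_{\lambda,\infty})-\|u_{\lambda,\infty}\|_{L^4(\bbr^4)}^4=0$; a subsequence argument upgrades this to the full limit.

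Your compactness step is also glossed precisely where the work lies. ``Steep-well compactness'' is not available here: for fixed $\lambda$ the embedding of $E_\lambda$ into $L^4(\bbr^4)$ is never compact at the critical exponent, and this is exactly where $\Lambda_2$ enters. The paper's mechanism is: if, say, $u_{\lambda,\infty}=0$, then $\|u_{\lambda,\beta_n}\|_{L^4(\bbr^4)}^4\geq S^2+o_n(1)$, and scaling $v_{\lambda,\beta_n}$ into $\mathcal{N}_{b,\lambda}$ gives $m_{\lambda,\beta_n}\geq\frac14S^2+m_{b,\lambda}+o_n(1)$, which contradicts $m_{\lambda,\beta}\leq m_a+m_b$ for $\lambda$ large because $m_{b,\lambda}\to m_b$ (Lemma~\ref{lem7001}) and $m_a<\frac14S^2$ when $a_0<0$; strong convergence is then obtained not by a compact embedding but by projecting the segregated limit back onto $\mathcal{N}_{\lambda,\beta_n}$ with parameters $t_n^*,s_n^*\in(0,1]$ (Lemma~\ref{lem0002}) and comparing $\mathcal{D}_\lambda$-values with weak lower semicontinuity. (Minor slip in the same passage: testing gives $\mathcal{D}_{a,\lambda}(u,u)=\|u\|_{L^4(\bbr^4)}^4+\beta\int u^2v^2dx$, not an identity for $\|u\|_{a,\lambda}^2$, and with $\beta<0$ it yields the lower bound $\|u\|_{L^4(\bbr^4)}^2\geq(1-\frac{1}{\alpha_{a,1}(\lambda)})S$, not $\|u\|_{a,\lambda}^2\geq S^2-o(1)$.) Finally, for item $(3)$ you propose rebuilding Almgren monotonicity and uniform Lipschitz theory at the critical exponent; the paper short-circuits this by first deriving uniform $L^\infty$ bounds (via \cite{AFS09}), after which the optimal uniform Lipschitz estimate of \cite{SZ14} applies as a black box, the positivity sets have locally Lipschitz boundaries, and the partition structure (connectedness and $\{u_{\lambda,\infty}>0\}=\bbr^4\backslash\overline{\{v_{\lambda,\infty}>0\}}$) follows as in \cite{CZ121,WWZ15}. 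So the criticality obstacle you flag is resolved by the a priori $L^\infty$ bound, not by new regularity theory, and your item $(3)$ plan, while plausible, is substantially harder than necessary.
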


\begin{remark}
For the Schr\"odinger system  in $\bbr^4$ with critical Sobolev exponent defined  in the whole space,  Theorem \ref{thm0003} seems to be  the first
 result getting the  phase separation.
\end{remark}

We point out that such phenomenon for the ground state solution of the Gross-Pitaevskii equations was
observed in \cite{CLZ14,NTTV10,WW08}   on a bounded
domain of  $\bbr^2$ or $\bbr^3$;  and \cite{TV09,WWZ15} on the whole space $\bbr^2$ or $\bbr^3$.
Such phenomenon for the ground state solution of the elliptic systems with
critical Sobolev exponent on a bounded domain in $\bbr^N(N\geq4)$ was involved  in \cite{CZ121,CZ131,CLZ141}.
In fact,   the authors    of  \cite{CZ121} study the system  in $\bbr^4$ and only get an alternative theorem
which can not assert that the phase separation must happen. In  \cite{CZ131}  (see also  \cite{CLZ141}),
the phase separation is observed when the dimension $N$ of $\bbr^N$ is $\geq 6$ and the system is defined on the bounded domains.
For other kinds of elliptic systems with strong competition, the phenomenon of phase separations has also been
 well studied;  we refer the readers  to  \cite{caf-1, caf-2, con-1}  and references therein.

\subsection{Concentration behaviors  as $\lambda\to+\infty$ and $\beta\to-\infty$}

We also study the concentration behaviors of the ground state solution  obtained by Theorem~\ref{thm0001} as $\lambda\to+\infty$ and $\beta\to-\infty$.
\begin{theorem}\label{thm0007}
Let $(u_{\lambda,\beta}, v_{\lambda,\beta})$ be the ground state solution of $(\mathcal{P}_{\lambda,\beta})$ obtained by Theorem~\ref{thm0001} with $\beta\leq0$.  Then for every $\{(\lambda_n,\beta_n)\}$ satisfying $\lambda_n\to+\infty$ and $\beta_n\to-\infty$ as $n\to\infty$, we have that $(u_{\lambda_n,\beta_n}, v_{\lambda_n,\beta_n})\to(u_{0,0}, v_{0,0})$ strongly in $\h\times\h$ as $n\to\infty$ up to a subsequence for some $(u_{0,0}, v_{0,0})\in H_0^1(\Omega_a)\times H_0^1(\Omega_b)$.  Furthermore, $(u_{0,0}, v_{0,0})$ is also a ground state solution of \eqref{eq9001}.
\end{theorem}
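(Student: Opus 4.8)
The plan is to run, simultaneously, the steep-well concentration argument behind Theorem~\ref{thm0002} and the phase-separation argument behind Theorem~\ref{thm0003}, exploiting the disjointness $\overline{\Omega}_a\cap\overline{\Omega}_b=\emptyset$: as $\lambda_n\to+\infty$ the two components localize on $\Omega_a$ and $\Omega_b$, which are already disjoint, so the phase separation is essentially automatic in the limit and the coupling term disappears. Write $(u_n,v_n):=(u_{\lambda_n,\beta_n},v_{\lambda_n,\beta_n})$, let $I_0$ denote the energy functional of \eqref{eq9001}, and let $c_0=\inf_{\mathcal N_0}I_0$ be its ground state energy, where $\mathcal N_0$ is the (decoupled) Nehari manifold of \eqref{eq9001}. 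Because $-\mu_{a,1}<a_0<0$ and $-\mu_{b,1}<b_0<0$, each scalar equation in \eqref{eq9001} is a Br\'ezis--Nirenberg problem in its compact range, so $c_0=c_0^{(a)}+c_0^{(b)}<\frac{S^2}2$, the non-compactness threshold of the system. Fixing scalar ground states $\phi_a\in H_0^1(\Omega_a)$ and $\phi_b\in H_0^1(\Omega_b)$ and extending them by zero, the disjointness gives $\phi_a\phi_b\equiv0$, so $(\phi_a,\phi_b)\in\mathcal N_{\lambda_n,\beta_n}$ for every $n$ with $J_{\lambda_n,\beta_n}(\phi_a,\phi_b)=c_0$ (the well terms vanish on $a^{-1}(0),b^{-1}(0)$ and the coupling term vanishes by disjoint support). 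Hence $m_{\lambda_n,\beta_n}\leq c_0$ for all $n$.

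Next I would extract limits. On $\mathcal N_{\lambda_n,\beta_n}$ one has $m_{\lambda_n,\beta_n}=\frac14\mathcal D_{\lambda_n}(u_n,v_n)$, so the uniform upper bound together with the uniform coercivity of $\mathcal D_{\lambda_n}$ for large $\lambda_n$ (Lemma~\ref{lem0110}) bounds $\|(u_n,v_n)\|_{\lambda_n}$, hence $(u_n,v_n)$ in $\h\times\h$; passing to a subsequence, $(u_n,v_n)\rightharpoonup(u_0,v_0)$. Two groups of estimates follow. First, the boundedness of $\mathcal D_{\lambda_n}(u_n,v_n)$ gives $\lambda_n\int_{\bbr^4}a u_n^2+\lambda_n\int_{\bbr^4}b v_n^2\leq C$, so $\int_{\bbr^4}a u_n^2,\int_{\bbr^4}b v_n^2\to0$; since $a^{-1}(0)=\overline{\Omega}_a$, $b^{-1}(0)=\overline{\Omega}_b$ and $a,b$ are bounded below away from the wells, this forces $u_0\in H_0^1(\Omega_a)$, $v_0\in H_0^1(\Omega_b)$ and, via Rellich on bounded sets plus smallness at infinity, strong $L^2(\bbr^4)$ convergence. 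Second, from $m_{\lambda_n,\beta_n}=\frac14(\|u_n\|_{L^4}^4+\|v_n\|_{L^4}^4)+\frac{\beta_n}{2}\int_{\bbr^4}u_n^2v_n^2$, the bound $m_{\lambda_n,\beta_n}\leq c_0$, and the a priori boundedness of $\|u_n\|_{L^4},\|v_n\|_{L^4}$, I obtain $|\beta_n|\int_{\bbr^4}u_n^2v_n^2\leq C$, whence $\int_{\bbr^4}u_n^2v_n^2\to0$ and $\beta_n\int_{\bbr^4}u_n^2v_n^2\to0$.

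The main obstacle is the critical exponent: I must rule out a loss of mass in $L^4$, and the ceiling $m_{\lambda_n,\beta_n}\leq c_0<\frac{S^2}2$ is decisive here. Writing the Brezis--Lieb splitting $\|u_n\|_{L^4}^4=\|u_0\|_{L^4}^4+\|u_n-u_0\|_{L^4}^4+o(1)$ and the analogue for $v_n$, a concentration-compactness analysis shows that any nonvanishing bubble in either component would contribute at least $\frac14S^2$ to the energy and push $\liminf_n m_{\lambda_n,\beta_n}$ above $\frac{S^2}2$, contradicting $m_{\lambda_n,\beta_n}\leq c_0<\frac{S^2}2$. Hence no bubbling occurs and $\|u_n\|_{L^4}^4\to\|u_0\|_{L^4}^4$, $\|v_n\|_{L^4}^4\to\|v_0\|_{L^4}^4$. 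Simultaneously handling this critical compactness together with the blow-up $\beta_n\to-\infty$ (controlled by the estimate $|\beta_n|\int u_n^2v_n^2\leq C$ above) is the delicate part of the argument. Nontriviality then comes for free: the component Nehari identities and the uniform positive definiteness of $\mathcal D_{a,\lambda},\mathcal D_{b,\lambda}$ with $\beta_n\leq0$ give $\|u_n\|_{L^4}^2,\|v_n\|_{L^4}^2\geq\gamma S>0$, so $u_0\not=0$ and $v_0\not=0$.

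Finally I would close by a squeeze. Passing to the $\liminf$ in the component Nehari identities $\int_{\bbr^4}|\nabla u_n|^2+a_0u_n^2+\lambda_n\int_{\bbr^4} a u_n^2=\|u_n\|_{L^4}^4+\beta_n\int_{\bbr^4} u_n^2v_n^2$ (using $\lambda_n\int a u_n^2\geq0$, the strong $L^2$ and $L^4$ limits, and $\beta_n\int u_n^2v_n^2\to0$) yields the sub-Nehari inequalities $\int_{\Omega_a}|\nabla u_0|^2+a_0u_0^2\leq\|u_0\|_{L^4}^4$ and $\int_{\Omega_b}|\nabla v_0|^2+b_0v_0^2\leq\|v_0\|_{L^4}^4$. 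Since $u_0,v_0\not=0$, each component projects onto $\mathcal N_0$ with a parameter $\leq1$, so comparing with the scalar ground state energies gives $c_0=c_0^{(a)}+c_0^{(b)}\leq\frac14\big(\int_{\Omega_a}|\nabla u_0|^2+a_0u_0^2+\int_{\Omega_b}|\nabla v_0|^2+b_0v_0^2\big)$. On the other hand, dropping the nonnegative well terms and using weak lower semicontinuity of the gradient with the strong $L^2$ convergence, $\frac14\big(\int_{\Omega_a}|\nabla u_0|^2+a_0u_0^2+\int_{\Omega_b}|\nabla v_0|^2+b_0v_0^2\big)\leq\liminf_n m_{\lambda_n,\beta_n}$. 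Chaining these with $\limsup_n m_{\lambda_n,\beta_n}\leq c_0$ forces every inequality to be an equality: the projection parameters equal $1$ (so $(u_0,v_0)\in\mathcal N_0$ and solves \eqref{eq9001} as a natural-constraint critical point), the gradient norms converge and $\lambda_n\int a u_n^2,\lambda_n\int b v_n^2\to0$ (giving strong convergence $(u_n,v_n)\to(u_0,v_0)$ in $\h\times\h$), and $I_0(u_0,v_0)=c_0$, i.e.\ $(u_0,v_0)$ is a ground state of \eqref{eq9001}.
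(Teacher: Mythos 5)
Your overall architecture is in fact the paper's own: the authors prove Theorem~\ref{thm0007} by pinching $m_{\lambda,\beta}$ between $m_{a,\lambda}+m_{b,\lambda}$ (Lemma~\ref{lem5002}, valid precisely for $\beta\leq0$) and $m_a+m_b$ (Lemma~\ref{lem5003}, which is exactly your test pair $(\phi_a,\phi_b)$ with disjoint supports), noting $m_{a,\lambda_n}+m_{b,\lambda_n}\to m_a+m_b$ by Lemma~\ref{lem7001}, and then rerunning the concentration argument of Theorem~\ref{thm0002}. Your closing squeeze through the weak limit $(u_0,v_0)$ reproduces the content of Lemmas~\ref{lem5002} and~\ref{lem7001} at the limit level and is sound, as are the boundedness, localization and strong $L^2$ steps.

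There are, however, two misstated steps, the first of which is a genuine gap as written. Your no-bubbling threshold is wrong: a single bubble contributes only $\frac14S^2$, so it does not push $\liminf_n m_{\lambda_n,\beta_n}$ above $\frac{S^2}{2}$, and since $c_0=m_a+m_b$ may itself exceed $\frac14S^2$ (both $m_a$ and $m_b$ can be arbitrarily close to $\frac14S^2$), comparing a lone bubble with $\frac{S^2}{2}$ yields no contradiction. The correct exclusion is asymmetric, and it is exactly what the paper does in Step~1 of the proof of Theorem~\ref{thm0003} and in Cases~1--3 of the unlabeled existence proposition for $\beta\leq0$ following Lemma~\ref{lem7001}: if, say, only the $u$-component loses compactness, the other component cannot vanish in $L^4$ (its Nehari identity with $\beta_n\leq0$ and the uniform coercivity from Lemmas~\ref{lem0003} and~\ref{lem0110} give $\|v_n\|_{L^4(\bbr^4)}^2\geq\gamma S$), and since $\beta_n<0$ there is $0<s_n\leq1+o_n(1)$ with $s_nv_n\in\mathcal{N}_{b,\lambda_n}$ (Lemma~\ref{lem0002}), whence $m_{\lambda_n,\beta_n}\geq\frac14S^2+m_{b,\lambda_n}+o_n(1)$; this exceeds $c_0$ because $m_a<\frac14S^2$ (the Br\'ezis--Nirenberg strict inequality), with Lemma~\ref{lem7001} supplying $m_{b,\lambda_n}\to m_b$, while the double-bubble case gives $\geq\frac12S^2>c_0$. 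So the contradiction must be run against $c_0$ with the mixed thresholds $\frac14S^2+m_b$, $m_a+\frac14S^2$ and $\frac12S^2$, not against $\frac{S^2}{2}$ alone. The second slip is harmless but should be repaired: from $|\beta_n|\int_{\bbr^4}u_n^2v_n^2dx\leq C$ you may conclude $\int_{\bbr^4}u_n^2v_n^2dx\to0$, but not $\beta_n\int_{\bbr^4}u_n^2v_n^2dx\to0$; where you invoke the latter (the sub-Nehari inequalities) the sign $\beta_n\int_{\bbr^4}u_n^2v_n^2dx\leq0$ already suffices, and the full statement only drops out a posteriori from your final squeeze, via $m_{\lambda_n,\beta_n}=\frac14\big(\|u_n\|_{L^4(\bbr^4)}^4+\|v_n\|_{L^4(\bbr^4)}^4\big)+\frac{\beta_n}{2}\int_{\bbr^4}u_n^2v_n^2dx\to c_0$ together with $\|u_n\|_{L^4(\bbr^4)}^4+\|v_n\|_{L^4(\bbr^4)}^4\to4c_0$. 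With these two repairs your proof closes and coincides in substance with the paper's.
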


\vskip0.3in

 The structure of the current paper is organized as follows. In section 2, we study the functionals $\mathcal{D}_\lambda(u,v)$ and $\mathcal{L}_\beta(u,v)$.   In section 3, we  explore the ``manifolds"  $\mathcal{N}_{\lambda,\beta}$, $\mathcal{M}_{\lambda,\beta}$ and $\mathcal{G}_{\lambda,\beta}$.   The section 4 will be devoted to the   existence results.   The last section is about the concentration behaviors.
Throughout  this paper, $C$ and $C'$ will be indiscriminately used to denote generic  positive constants
and $o_n(1)$ will   denote the quantities tending  to  zero as $n\to\infty$.

\vskip0.3in

\section{The functionals $\mathcal{D}_\lambda(u,v)$ and $\mathcal{L}_\beta(u,v)$}
In this section, we   give some properties of $\mathcal{D}_\lambda(u,v)$ and $\mathcal{L}_\beta(u,v)$.
We begin with the study of $\mathcal{L}_\beta(u,v)$. Clearly, $\mathcal{L}_\beta(u,v)$ is  positively definite
on $\h\times\h$ if $\beta\geq0$.  For $\beta<0$, let%
\begin{equation}\label{eq1020}
\mathcal{V}_{\beta}=\{(u,v)\in \h\times\h\mid \|u\|_{L^4(\bbr^4)}^4\|v\|_{L^4(\bbr^4)}^4-\beta^2\|u^2v^2\|^2_{L^1(\bbr^4)}>0\},%
\end{equation}
then $\mathcal{V}_{\beta}\not=\emptyset$ and it is easy to see that $\mathcal{L}_\beta(u,v)>0$ if and only if $(u,v)\in\mathcal{V}_{\beta}$.  In what follows, we will make some observations on the functional $\mathcal{D}_{\lambda}(u,v)=\mathcal{D}_{a,\lambda}(u,u)+\mathcal{D}_{b,\lambda}(v,v)$, which are inspired by \cite{BT13} and \cite{DS07}.  We first study the functional $\mathcal{D}_{a,\lambda}(u,u)$.  By the condition $(D_1)$, $\int_{\bbr^4}(\lambda a(x)+a_0)u^2dx\geq0$ for all $u\in E_\lambda$ with $\lambda>0$ in the case of $a_0\geq0$.  It follows that
$\mathcal{D}_{a,\lambda}(u,u)$ is positively definite on $E_\lambda$ with $\lambda>0$ in the case of $a_0\geq0$.
 When $a_0<0$, by the condition $(D_3)$, we have $\Omega_a\subset\mathcal{A}_\lambda$, which is given by%
\begin{equation}  \label{eq0055}
\mathcal{A}_\lambda:=\{x\in\bbr^4\mid \lambda a(x)+a_0<0\}.%
\end{equation}
Thus, $\mathcal{A}_\lambda\not=\emptyset$ for every $\lambda>0$.
Let%
\begin{equation}  \label{eq0056}
\Lambda_{a,0}:=\inf\{\lambda>0\mid|\mathcal{A}_\lambda|<+\infty\}.%
\end{equation}
By conditions $(D_1)$-$(D_2)$, we can see that $0<\Lambda_{a,0}=\frac{-a_0}{a_\infty}$.  For $\lambda>\Lambda_{a,0}$, we define%
\begin{equation*}
\mathcal{F}_{a,\lambda}:=\{u\in E_{a,\lambda}\mid\text{supp}u\subset\bbr^4\backslash\mathcal{A}_\lambda\}.%
\end{equation*}
Then by the conditions $(D_1)$--$(D_2)$, $\mathcal{F}_{a,\lambda}$ is nonempty and $\mathcal{F}_{a,\lambda}\not=E_{a,\lambda}$.  Hence, $E_{a,\lambda}=\mathcal{F}_{a,\lambda}\oplus\mathcal{F}_{a,\lambda}^\perp$ and $\mathcal{F}_{a,\lambda}^\perp\not=\emptyset$ for $\lambda>\Lambda_{a,0}$ in the case of $a_0<0$, where $\mathcal{F}_{a,\lambda}^\perp$ is the orthogonal complement of $\mathcal{F}_{a,\lambda}$ in $E_{a,\lambda}$.  Now, consider the operator $(-\Delta+(\lambda a(x)+a_0)^+)^{-1}(\lambda a(x)+a_0)^-$, where $(\lambda a(x)+a_0)^-=\max\{-(\lambda a(x)+a_0), 0\}$.  Clearly, $(-\Delta+(\lambda a(x)+a_0)^+)^{-1}(\lambda a(x)+a_0)^-$ is linear and self-conjugate on $\mathcal{F}_{a,\lambda}^\perp$ for $\lambda>\Lambda_{a,0}$ in the case of $a_0<0$.  By the definition of $\Lambda_{a,0}$, we can easily show that $(-\Delta+(\lambda a(x)+a_0)^+)^{-1}(\lambda a(x)+a_0)^-$ is also compact on $\mathcal{F}_{a,\lambda}^\perp$ for $\lambda>\Lambda_{a,0}$ in the case of $a_0<0$.
Thus, by \cite[Theorems~4.45 and~4.46]{W95}, the eigenvalue problem%
\begin{equation}\label{eq0042}
-\Delta u+(\lambda a(x)+a_0)^+u=\alpha(\lambda a(x)+a_0)^-u\quad\text{on }\mathcal{F}_{a,\lambda}^\perp%
\end{equation}
has a sequence of positive eigenvalues $\{\alpha_{a,j}(\lambda)\}$ satisfying
$$0<\alpha_{a,1}(\lambda)\leq\alpha_{a,2}(\lambda)\leq\cdots\leq\alpha_{a,j}(\lambda)\to+\infty, \hbox{ as } j\to+\infty.$$
  Furthermore, $\{\alpha_{a,j}(\lambda)\}$ can be characterized by%
\begin{equation}   \label{eq0029}
\alpha_{a,j}(\lambda):=\inf_{\text{dim}M\geq j, M\subset\mathcal{F}_{a,\lambda}^{\perp}}\sup_{u\in M\backslash\{0\}}\frac{\int_{\bbr^4}(|\nabla u|^2+(\lambda a(x)+a_0)^+u^2)dx}{\int_{\bbr^4}(\lambda a(x)+a_0)^-u^2dx}%
\end{equation}
for all $j\in\bbn$ and the corresponding eigenfunctions $\{e_{a,j}(\lambda)\}$ can be chosen
 so that $\int_{\bbr^3}(\lambda a(x)+a_0)^-e_{a,j}^2(\lambda)dx=1$ for all $j\in\bbn$ and are
 a basis of $\mathcal{F}_{a,\lambda}^{\perp}$.%

\vskip0.212in

\begin{lemma}\label{lem0003}
Assume  $(D_1)$--$(D_3)$   and $a_0<0$.  Then $\alpha_{a,j}(\lambda)$ are nondecreasing in $(\Lambda_{a,0}, +\infty)$ for all $j\in\bbn$ and $\lim_{\lambda\to+\infty}\alpha_{a,j}(\lambda)=\alpha_{a,j}^0$, where $\alpha_{a,j}^0$ are the eigenvalues of the following equation%
\begin{equation}\label{eq0130}
-\Delta u=\alpha |a_0|u,\quad u\in H^1_0(\Omega_a).%
\end{equation}
In particular, $\alpha_{a,1}^0$ is the first eigenvalue of \eqref{eq0130}.%
\end{lemma}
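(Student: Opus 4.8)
The plan is to treat the two assertions—monotonicity in $\lambda$ and the limit as $\lambda\to+\infty$—separately, the recurring nuisance being that the min-max formula \eqref{eq0029} is taken over the subspace $\mathcal{F}_{a,\lambda}^\perp$, which itself moves with $\lambda$. First I would remove this dependence by rewriting \eqref{eq0029} as a min-max over $j$-dimensional subspaces $N$ of the whole space $E_a$ (which, as a set, is independent of $\lambda$),
\[
\alpha_{a,j}(\lambda)=\inf_{\substack{N\subset E_a,\ \dim N=j}}\ \sup_{u\in N\setminus\{0\}}\frac{\int_{\bbr^4}(|\nabla u|^2+(\lambda a(x)+a_0)^+u^2)dx}{\int_{\bbr^4}(\lambda a(x)+a_0)^-u^2dx},
\]
with the convention that the supremum is $+\infty$ once some nonzero $u\in N$ has vanishing denominator. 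The justification is that the numerator is exactly $\|u\|_{a,\lambda}^2$, that $(\lambda a(x)+a_0)^-$ vanishes outside $\mathcal{A}_\lambda$, and that every element of $\mathcal{F}_{a,\lambda}$ carries zero denominator; hence the $\langle\cdot,\cdot\rangle_{a,\lambda}$-projection onto $\mathcal{F}_{a,\lambda}^\perp$ leaves the denominator unchanged while it can only lower the numerator, so projecting never raises the Rayleigh quotient and the infimum is realized on subspaces of $\mathcal{F}_{a,\lambda}^\perp$.

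With this $\lambda$-free formulation the monotonicity is almost immediate. Since $a(x)\geq0$, the map $\lambda\mapsto(\lambda a(x)+a_0)^+$ is pointwise nondecreasing and $\lambda\mapsto(\lambda a(x)+a_0)^-$ is pointwise nonincreasing, so for \emph{each fixed} $u$ the Rayleigh quotient is nondecreasing in $\lambda$. Given $\Lambda_{a,0}<\lambda_1<\lambda_2$, I would pick a near-optimal $N$ for $\alpha_{a,j}(\lambda_2)$; because $(\lambda_1a+a_0)^-\geq(\lambda_2a+a_0)^-$ pointwise, every nonzero $u\in N$ still has positive denominator at $\lambda_1$, so $N$ is admissible there, and the pointwise inequality of quotients gives $\alpha_{a,j}(\lambda_1)\leq\sup_N R_{\lambda_1}\leq\sup_N R_{\lambda_2}\leq\alpha_{a,j}(\lambda_2)+\varepsilon$. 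Letting $\varepsilon\to0$ yields the claim. Monotonicity then guarantees convergence to some $\overline{\alpha}_{a,j}\in(0,+\infty]$.

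For the upper bound $\overline{\alpha}_{a,j}\leq\alpha_{a,j}^0$ I would test \eqref{eq0029} with the span $M$ of the first $j$ Dirichlet eigenfunctions of $(-\Delta,H^1_0(\Omega_a))$, extended by zero: on $\Omega_a$ one has $a(x)\equiv0$, hence $(\lambda a+a_0)^+\equiv0$ and $(\lambda a+a_0)^-\equiv|a_0|$, so the quotient of any $u\in M$ equals $|a_0|^{-1}\int_{\Omega_a}|\nabla u|^2/\int_{\Omega_a}u^2$ \emph{independently of} $\lambda$, and its supremum over $M$ is $\mu_{a,j}/|a_0|$, where $\mu_{a,j}$ is the $j$-th eigenvalue of $(-\Delta,H_0^1(\Omega_a))$. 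This both identifies $\alpha_{a,j}^0=\mu_{a,j}/|a_0|$ (so $\alpha_{a,1}^0=\mu_{a,1}/|a_0|$ is indeed the first eigenvalue of \eqref{eq0130}) and gives $\overline{\alpha}_{a,j}\leq\alpha_{a,j}^0$.

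The matching lower bound is the heart of the matter and is a concentration argument, which I expect to be the main obstacle. Normalizing $\int(\lambda a+a_0)^-e_{a,i}^2(\lambda)dx=1$ gives $\|e_{a,i}(\lambda)\|_{a,\lambda}^2=\alpha_{a,i}(\lambda)\leq\alpha_{a,j}^0$ for $i\leq j$, so the family is bounded in $D^{1,2}(\bbr^4)$; along $\lambda_n\to+\infty$ I would extract, by a diagonal argument, weak limits $e_{a,i}(\lambda_n)\rightharpoonup u^{(i)}$ with $L^2_{loc}$ and a.e.\ convergence. Two passages must be controlled. First, each $u^{(i)}\in H^1_0(\Omega_a)$: on $\{a\geq\delta\}$ one has $(\lambda_na+a_0)^+\geq\tfrac12\lambda_n\delta$ for large $n$, so $\int_{\{a\geq\delta\}}e_{a,i}^2\leq2(\lambda_n\delta)^{-1}\alpha_{a,j}^0\to0$, forcing $u^{(i)}=0$ on $\bbr^4\setminus\overline{\Omega}_a$, whence $u^{(i)}\in H^1_0(\Omega_a)$ as $\partial\Omega_a$ is smooth. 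Second, the bi-orthogonality survives: splitting integrals over $\Omega_a$ and over $\mathcal{A}_{\lambda_n}\setminus\overline{\Omega}_a=\{0<a<|a_0|/\lambda_n\}$, the latter has measure tending to $0$, so H\"older and the uniform $L^4(\bbr^4)$ bound kill its contribution while $\Omega_a$ gives strong $L^2$ convergence, yielding $|a_0|\int_{\Omega_a}u^{(i)}u^{(k)}dx=\delta_{ik}$; thus $V:=\mathrm{span}\{u^{(i)}\}$ is $j$-dimensional in $H^1_0(\Omega_a)$. Finally, for $w=\sum_ic_iu^{(i)}$ I would compare with $w_n=\sum_ic_ie_{a,i}(\lambda_n)$: orthogonality in $\langle\cdot,\cdot\rangle_{a,\lambda_n}$ gives $\|w_n\|_{a,\lambda_n}^2=\sum_ic_i^2\alpha_{a,i}(\lambda_n)\leq\alpha_{a,j}(\lambda_n)\sum_ic_i^2$, so weak lower semicontinuity of $\int|\nabla\cdot|^2$ yields $\int_{\Omega_a}|\nabla w|^2\leq\overline{\alpha}_{a,j}\,|a_0|\int_{\Omega_a}w^2$; taking the supremum over $V$ and using the Dirichlet min-max gives $\alpha_{a,j}^0\leq\overline{\alpha}_{a,j}$, which together with the upper bound closes the proof. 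The genuine difficulty throughout these limit passages is preventing the $j$ eigenfunctions from leaking mass away from $\Omega_a$ while keeping their limits independent, since the normalization lives on the shrinking wells $\mathcal{A}_{\lambda_n}$.
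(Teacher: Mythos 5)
Your proof is correct, but it takes a genuinely different route from the paper's in the key identification step, so a comparison is worthwhile. For the monotonicity the paper stays with the min-max \eqref{eq0029} on the moving subspaces and uses the inclusion $\mathcal{F}_{a,\lambda_1}^{\perp}\subset\mathcal{F}_{a,\lambda_2}^{\perp}$ together with the pointwise monotonicity of the Rayleigh quotient; your $\lambda$-free reformulation over all $j$-dimensional subspaces of $E_a$ (with the $+\infty$ convention when the denominator vanishes, justified by the projection argument that fixes the denominator and lowers the numerator) achieves the same and in fact sidesteps the mildly delicate point that the paper's complements are taken with respect to $\lambda$-dependent inner products. The real divergence is in the limit $\lambda\to+\infty$: the paper bounds $\alpha_{a,j}(\lambda)$ by a fixed test space of disjointly supported functions in $C_0^\infty(\Omega_a)$, extracts weak limits of the eigenfunctions $e_{a,j}(\lambda)$, and passes to the limit in the weak Euler--Lagrange equation (Fatou plus a dominated-convergence variant) to conclude that each $\alpha_{a,j}^0$ solves \eqref{eq0130}, i.e.\ is \emph{some} Dirichlet eigenvalue, identifying it as the \emph{first} eigenvalue only for $j=1$ via the variational characterization of $\alpha_{a,1}(\lambda)$. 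You instead run a two-sided variational sandwich: testing with the span of the first $j$ Dirichlet eigenfunctions gives $\overline{\alpha}_{a,j}\leq\mu_{a,j}/|a_0|$ (sharper than the paper's crude bound $\alpha_{a,j}^*$), and your bi-orthogonality-preserving limit of $\{e_{a,i}(\lambda_n)\}_{i\leq j}$ combined with Courant--Fischer gives the reverse inequality, so you obtain the stronger statement $\alpha_{a,j}^0=\mu_{a,j}/|a_0|$ for \emph{every} $j$ --- more than the paper's proof of this lemma establishes, the ordering for general $j$ being reached in the paper only through the subsequent Lemma~\ref{lem0001}. What the paper's PDE-limit route buys is the limit eigenfunctions $e_{a,j}$ themselves, which are reused in Lemma~\ref{lem0001} (orthogonality of the limits, definition of $j_{a,0}^*$), so it is not redundant; your route buys the sharp eigenvalue identification at the cost of a longer lower-bound argument, whose only implicit ingredients --- the $\langle\cdot,\cdot\rangle_{a,\lambda}$-orthogonality of the $e_{a,i}(\lambda)$ within eigenspaces and the strong $L^2(\Omega_a)$ convergence via Rellich on the bounded domain $\Omega_a$ --- are standard and correctly handled in your sketch.
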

\begin{proof}
Let $\lambda_1\geq\lambda_2>\Lambda_{a,0}$, then by the definition of $E_{a,\lambda}$, we have $E_{a,\lambda_1}=E_{a,\lambda_2}$ in the sense of sets.  It follows from the condition $(D_1)$ that $\mathcal{F}_{a,\lambda_2}\subset\mathcal{F}_{a,\lambda_1}$, which implies $\mathcal{F}_{a,\lambda_1}^{\perp}\subset\mathcal{F}_{a,\lambda_2}^{\perp}$.  Thanks to the condition $(D_1)$ and $a_0<0$ once more, we have%
\begin{equation*}
\frac{\int_{\bbr^4}(|\nabla u|^2+(\lambda_1 a(x)+a_0)^+u^2)dx}{\int_{\bbr^4}(\lambda_1 a(x)+a_0)^-u^2dx}\geq\frac{\int_{\bbr^4}(|\nabla u|^2+(\lambda_2 a(x)+a_0)^+u^2)dx}{\int_{\bbr^4}(\lambda_2 a(x)+a_0)^-u^2dx}%
\end{equation*}
for all $u\in \mathcal{F}_{a,\lambda_1}^{\perp}$.  Thus, by the definitions of $\alpha_{a,j}(\lambda_1)$ and $\alpha_{a,j}(\lambda_2)$, we can see that $\alpha_{a,j}(\lambda_2)\leq\alpha_{a,j}(\lambda_1)$, that is, $\alpha_{a,j}(\lambda)$ are nondecreasing in $(\Lambda_{a,0}, +\infty)$ for all $j\in\bbn$.  In what follows, we will show that $\lim_{\lambda\to+\infty}\alpha_{a,j}(\lambda)=\alpha_{a,j}^0$, where $\alpha_{a,j}^0$ is an eigenvalue of \eqref{eq0130}.  Indeed, by the condition $(D_3)$, for every $j\in\bbn$, there exists $\{\varphi_m\}_{1\leq m\leq j}\subset C_0^\infty(\Omega_a)$ such that supp$\varphi_m\cap$ supp$\varphi_n=\emptyset$ for $m\not=n$.  Let $M_0=$span$\{\varphi_1, \cdots, \varphi_j\}$.  Then $M_0\subset\mathcal{F}_{a,\lambda}^{\perp}$ for $\lambda>\Lambda_{a,0}$ due to $a_0<0$ and the condition $(D_3)$ once more.  It follows from \eqref{eq0029} that $\alpha_{a,j}(\lambda)\leq \alpha_{a,j}^*$, where%
\begin{equation*}
\alpha_{a,j}^*:=\sup\bigg\{\int_{\Omega_a}|\nabla u|dx\mid u\in M_0\text{ and }\int_{\Omega_a}|a_0|u^2dx=1\bigg\}.%
\end{equation*}
Since $\alpha_{a,j}(\lambda)$ are positive and nondecreasing in $(\Lambda_{a,0}, +\infty)$ for all $j\in\bbn$, we have
\begin{equation*}
\lim_{\lambda\to+\infty}\alpha_{a,j}(\lambda)=\alpha_{a,j}^0\quad\text{with some }\alpha_{a,j}^0>0\text{ for all }j\in\bbn.
\end{equation*}
Meanwhile, by the choice of $\{e_{a,j}(\lambda)\}$, we have%
\begin{equation}\label{eq1011}
\int_{\bbr^4}(|\nabla e_{a,j}(\lambda)|^2+(\lambda a(x)+a_0)^+[e_{a,j}(\lambda)]^2)dx\leq\alpha_{a,j}^*,%
\end{equation}
which then implies that  $\{e_{a,j}(\lambda)\}$ is bounded in $D^{1,2}(\bbr^4)$ for $\lambda>\Lambda_{a,0}$.  Therefore, up to a subsequence, $e_{a,j}(\lambda)\rightharpoonup e_{a,j}$ weakly in $D^{1,2}(\bbr^4)$ and $e_{a,j}(\lambda)\to e_{a,j}$ a.e. in $\bbr^4$ as $\lambda\to+\infty$.  By the Fatou lemma and the condition $(D_1)$, we have $\int_{\bbr^4}a(x)e_{a,j}^2dx=0$.  This together with the condition $(D_3)$, implies $e_{a,j}=0$ outside $\Omega_a$ and $e_{a,j}\in H_0^1(\Omega_a)$.  It follows from the condition $(D_2)$, the Sobolev embedding theorem and \eqref{eq1011} once more that, up to a subsequence, $e_{a,j}(\lambda)\to e_{a,j}$ strongly in $L^2(\bbr^4)$ as $\lambda\to+\infty$.
Now, by the condition $(D_3)$, for every $\psi\in C_0^\infty(\Omega_a)\subset\mathcal{F}_{a,\lambda}^{\perp}$, we can see from a variant of the
Lebesgue dominated convergence theorem (cf. \cite[Theorem~2.2]{PK74}) that%
\begin{eqnarray*}
\int_{\Omega_a}\nabla e_{a,j}\nabla \psi dx&=&\lim_{\lambda\to+\infty}\int_{\bbr^4}\nabla e_{a,j}(\lambda)\nabla \psi dx\\%
&=&\lim_{\lambda\to+\infty}\alpha_{a,j}(\lambda)\int_{\bbr^4}(\lambda a(x)+a_0)^-e_{a,j}(\lambda)\psi dx\\%
&=&\alpha_{a,j}^0\int_{\Omega_a}|a_0|e_{a,j}\psi dx.%
\end{eqnarray*}
Hence, $(e_{a,j},\alpha_{a,j}^0)$ satisfies \eqref{eq0130} and $\alpha_{a,j}^0$ are the eigenvalues of \eqref{eq0130}.  Note that%
\begin{equation*}
\alpha_{a,1}(\lambda)=\inf_{u\in\mathcal{F}_{a,\lambda}^{\perp}}\bigg\{\int_{\bbr^4}(|\nabla u|^2+(\lambda a(x)+a_0)^+u^2)dx\mid \int_{\bbr^4}(\lambda a(x)+a_0)^-u^2dx=1\bigg\}.%
\end{equation*}
By the definition of the first eigenvalue to \eqref{eq0130} and the condition $(D_3)$, we can easily see that $\alpha_{a,1}^0$ is the first eigenvalue to \eqref{eq0130}.%
\end{proof}

Let $\{\alpha_{a,j}\}$ be the eigenvalues of \eqref{eq0130} and $\{e^*_{a,j}\}$ be the corresponding eigenfunctions.  Then it is well known that $\alpha_{a,j}=\frac{\mu_{a,j}}{|a_0|}$, where $\{\mu_{a,j}\}$ is the eigenvalues of the operator $-\Delta$ in $H_0^1(\Omega_a)$.  Furthermore, for every $a_0<0$, $k_a=$dim$(\text{span}\{e^*_{a,j}\mid\alpha_{a,j}\leq1\})$ is finite.  Let%
\begin{equation*}
\widehat{\mathcal{F}}_{a,\lambda}^{\perp}=\text{span}\{e_{a,j}(\lambda)\mid \alpha_{a,j}(\lambda)\leq1\}\quad\text{and}\quad
\widetilde{\mathcal{F}}_{a,\lambda}^{\perp}=\text{span}\{e_{a,j}(\lambda)\mid \alpha_{a,j}(\lambda)>1\}.%
\end{equation*}
Then dim$(\widehat{\mathcal{F}}_{a,\lambda}^{\perp})<+\infty$ and $E_{a,\lambda}=\mathcal{F}_{a,\lambda}\oplus\widehat{\mathcal{F}}_{a,\lambda}^{\perp}\oplus\widetilde{\mathcal{F}}_{a,\lambda}^{\perp}$ for all $\lambda>\Lambda_{a,0}$ in the case of $a_0<0$.
Furthermore, by Lemma~\ref{lem0003}, $\widehat{\mathcal{F}}_{a,\lambda}^{\perp}=\emptyset$ for $\lambda>\Lambda_{a,0}$ sufficiently large, say $\lambda>\overline{\Lambda}_a>\Lambda_{a,0}$, in the case of $-\mu_{a,1}<a_0<0$ and $\widehat{\mathcal{F}}_{a,\lambda}^{\perp}\not=\emptyset$ for all $\lambda>\Lambda_{a,0}$ in the case of $a_0\leq-\mu_{a,1}$, where $\mu_{a,1}$ is the first eigenvalue of $(-\Delta, H^1_0(\Omega_a))$.%
\begin{lemma}\label{lem0001}
Let the conditions $(D_1)$--$(D_3)$ hold and $a_0<0$.  Then there exists $\Lambda_{a}^*\geq\overline{\Lambda}_{a}$ such that dim$(\widehat{\mathcal{F}}_{a,\lambda}^{\perp})$ is independent of $\lambda\geq\Lambda_{a}^*$ and dim$(\widehat{\mathcal{F}}_{a,\lambda}^{\perp})\leq k_a$ for all $\lambda\geq\Lambda_{a}^*$.%
\end{lemma}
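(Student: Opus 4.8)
The plan is to read $\dim(\widehat{\mathcal{F}}_{a,\lambda}^{\perp})$ off directly as an eigenvalue count and then to freeze that count for large $\lambda$ using the monotonicity and limit supplied by Lemma~\ref{lem0003}. Since $\{e_{a,j}(\lambda)\}$ forms a basis of $\mathcal{F}_{a,\lambda}^{\perp}$, these functions are linearly independent, so $\dim(\widehat{\mathcal{F}}_{a,\lambda}^{\perp})=\#\{j\in\bbn\mid\alpha_{a,j}(\lambda)\leq1\}$, and likewise $k_a=\#\{j\in\bbn\mid\alpha_{a,j}^0\leq1\}$, where $\alpha_{a,j}^0=\mu_{a,j}/|a_0|$ are the eigenvalues of \eqref{eq0130}. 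So the task reduces entirely to comparing, for large $\lambda$, the indices $j$ with $\alpha_{a,j}(\lambda)\leq1$ against those with $\alpha_{a,j}^0\leq1$.

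First I would record the basic consequence of Lemma~\ref{lem0003}: each $\alpha_{a,j}(\lambda)$ is nondecreasing in $\lambda$ on $(\Lambda_{a,0},+\infty)$ and increases to $\alpha_{a,j}^0$, whence $\alpha_{a,j}(\lambda)\leq\alpha_{a,j}^0$ for every $\lambda>\Lambda_{a,0}$. Because $k_a$ is finite and the limit eigenvalues are ordered nondecreasingly, the very definition of $k_a$ (as counting \emph{all} indices with $\alpha_{a,j}^0\leq1$, multiplicities included) yields the strict separation $\alpha_{a,k_a}^0\leq1<\alpha_{a,k_a+1}^0$ when $k_a\geq1$, and $\alpha_{a,1}^0>1$ when $k_a=0$. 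The key step is then to isolate the single index $k_a+1$: since $\alpha_{a,k_a+1}(\lambda)\uparrow\alpha_{a,k_a+1}^0>1$, there is $\Lambda_a^*\geq\overline{\Lambda}_a$ (taking $\overline{\Lambda}_a:=\Lambda_{a,0}$ in the case $a_0\leq-\mu_{a,1}$, where $\overline{\Lambda}_a$ was not defined) with $\alpha_{a,k_a+1}(\lambda)>1$ for all $\lambda\geq\Lambda_a^*$. The ordering $\alpha_{a,j}(\lambda)\geq\alpha_{a,k_a+1}(\lambda)$ for $j\geq k_a+1$ forces \emph{every} eigenvalue of index exceeding $k_a$ to lie above $1$ simultaneously, so only this one threshold must be tracked; this is the observation that keeps the construction finite. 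Conversely, for $1\leq j\leq k_a$ the bound $\alpha_{a,j}(\lambda)\leq\alpha_{a,j}^0\leq1$ shows $e_{a,j}(\lambda)\in\widehat{\mathcal{F}}_{a,\lambda}^{\perp}$. Hence for all $\lambda\geq\Lambda_a^*$ one has $\widehat{\mathcal{F}}_{a,\lambda}^{\perp}=\mathrm{span}\{e_{a,j}(\lambda)\mid1\leq j\leq k_a\}$, of dimension exactly $k_a$, so $\dim(\widehat{\mathcal{F}}_{a,\lambda}^{\perp})$ is independent of $\lambda\geq\Lambda_a^*$ and, in particular, $\leq k_a$ as claimed.

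I expect the only delicate point to be the borderline case $\alpha_{a,j}^0=1$: because both $\widehat{\mathcal{F}}_{a,\lambda}^{\perp}$ and $k_a$ are defined with the non-strict inequality $\leq1$ and the convergence $\alpha_{a,j}(\lambda)\uparrow\alpha_{a,j}^0$ is monotone \emph{from below}, such an index is counted on both sides and produces no discrepancy, so the equality $\dim(\widehat{\mathcal{F}}_{a,\lambda}^{\perp})=k_a$ is unaffected. Apart from this bookkeeping and the choice of $\Lambda_a^*$ from the single index $k_a+1$, the argument uses nothing beyond Lemma~\ref{lem0003}; there is no genuine analytic obstacle, the content being the spectral counting together with the uniform separation of the high modes.
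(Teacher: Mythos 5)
Your stabilization argument is sound and coincides with the paper's final step: since $\alpha_{a,j}(\lambda)$ is nondecreasing in $\lambda$ with $\alpha_{a,j}(\lambda)\uparrow\alpha_{a,j}^0$, every index with $\alpha_{a,j}^0\leq1$ satisfies $\alpha_{a,j}(\lambda)\leq1$ for all $\lambda$, while the first index with limit above $1$ (the paper's $j_{a,0}^*$) eventually crosses the threshold and, by the ordering in $j$, drags all higher indices with it; hence $\dim(\widehat{\mathcal{F}}_{a,\lambda}^{\perp})=j_{a,0}^*-1$ for all large $\lambda$. But there is a genuine gap at the very first step, where you assert $k_a=\#\{j\in\bbn\mid\alpha_{a,j}^0\leq1\}$, i.e.\ that the limits $\alpha_{a,j}^0$ enumerate the spectrum $\{\mu_{a,j}/|a_0|\}$ of \eqref{eq0130} with the correct multiplicities. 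Lemma~\ref{lem0003} only tells you that each $\alpha_{a,j}^0$ \emph{is some} eigenvalue of \eqref{eq0130} (and that $\alpha_{a,1}^0$ is the first one); it does not rule out that several distinct indices $i\neq j$ converge to the same limit eigenvalue more times than its multiplicity in \eqref{eq0130} allows, in which case $\#\{j\mid\alpha_{a,j}^0\leq1\}$ could exceed $k_a$ and the lemma would fail. This is precisely where the paper spends its effort: it first upgrades the convergence $e_{a,j}(\lambda)\to e_{a,j}$ to strong convergence in $D^{1,2}(\bbr^4)$ (via the dominated-convergence variant and Fatou), and then runs the dichotomy: if $\alpha_{a,j}^0=\alpha_{a,i}^0$ with $i\neq j$, either $e_{a,j}=e_{a,i}$ — excluded because the orthogonality of $e_{a,j}(\lambda)$ and $e_{a,i}(\lambda)$ would force $\alpha_{a,j}(\lambda)+\alpha_{a,i}(\lambda)\to0$, contradicting positivity — or $e_{a,j}\neq e_{a,i}$ with $\int_{\Omega_a}\nabla e_{a,j}\nabla e_{a,i}\,dx=0$. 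The orthogonal case bounds the number of indices sharing a given limit by the multiplicity of that eigenvalue of \eqref{eq0130}, which is what yields $j_{a,0}^*-1\leq k_a$. Without some version of this argument your counting identity is unsupported.

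A second, smaller point: your conclusion $\dim(\widehat{\mathcal{F}}_{a,\lambda}^{\perp})=k_a$ is strictly stronger than the lemma and is not provable from the stated ingredients even after the multiplicity issue is repaired, because nothing excludes the limits $\alpha_{a,j}^0$ from \emph{skipping} eigenvalues of \eqref{eq0130} below $1$; this is exactly why the paper asserts only $\dim(\widehat{\mathcal{F}}_{a,\lambda}^{\perp})=j_{a,0}^*-1\leq k_a$ rather than equality. Your treatment of the borderline case $\alpha_{a,j}^0=1$ (monotone convergence from below keeps such an index in $\widehat{\mathcal{F}}_{a,\lambda}^{\perp}$ for all $\lambda$) is correct and consistent with the paper, and your choice of the single threshold index is the right mechanism — you just need to apply it at $j_{a,0}^*$ and supply the orthogonality argument to compare $j_{a,0}^*-1$ with $k_a$.
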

\begin{proof}
In the proof of Lemma~\ref{lem0003}, we obtain that $e_{a,j}(\lambda)\rightharpoonup e_{a,j}$ weakly in $D^{1,2}(\bbr^4)$ and $e_{a,j}(\lambda)\to e_{a,j}\in H_0^1(\Omega_a)$ strongly in $L^2(\bbr^4)$ as $\lambda\to+\infty$ up to a subsequence and $\lim_{\lambda\to+\infty}\alpha_{a,j}(\lambda)=\alpha_{a,j}^0$, where $(e_{a,j}(\lambda), \alpha_{a,j}(\lambda))$ and $(e_{a,j}, \alpha_{a,j}^0)$ satisfy \eqref{eq0042} and \eqref{eq0130}, respectively.  Since $(\lambda a(x)+a_0)^-\leq|a_0|$ due to the condition $(D_1)$, by a variant of the Lebesgue dominated convergence theorem (cf. \cite[Theorem~2.2]{PK74}), we have%
\begin{equation*}
\lim_{\lambda\to+\infty}\alpha_{a,j}(\lambda)\int_{\bbr^4}(\lambda a(x)+a_0)^-[e_{a,j}(\lambda)]^2dx=\alpha_{a,j}^0\int_{\bbr^4}|a_0|e_{a,j}^2dx.%
\end{equation*}
This together with the Fatou's lemma and the conditions $(D_1)$--$(D_3)$, implies $e_{a,j}(\alpha)\to e_{a,j}$ strongly in $D^{1,2}(\bbr^4)$ as $\lambda\to+\infty$ up to a subsequence.  Now, suppose there exist $j\not=i$ such that $\alpha_{a,j}^0=\alpha_{a,i}^0=\alpha_{a,k}$ for some $k\in\bbn$.  Then one of the following two cases must happen:%
\begin{enumerate}
\item[$(1)$] $e_{a,j}=e_{a,i}$;%
\item[$(2)$] $e_{a,j}\not=e_{a,i}$ but $\int_{\Omega_a}\nabla e_{a,j}\nabla e_{a,i}dx=0$.%
\end{enumerate}
If case~$(1)$ happen, then by $a_0<0$, Lemma~\ref{lem0003} and the definition of $e_{a,j}(\lambda)$ and $e_{a,i}(\lambda)$, we have%
\begin{eqnarray*}
& &2\alpha_{a,k}\\
&&=\lim_{\lambda\to+\infty}(\alpha_{a,j}(\lambda)+\alpha_{a,i}(\lambda))\\
& & =\lim_{\lambda\to+\infty}\bigg(\int_{\bbr^4}(|\nabla e_{a,j}(\lambda)|^2+(\lambda a(x)+a_0)^+[e_{a,j}(\lambda)]^2)dx\\%
& &\quad +\int_{\bbr^4}(|\nabla e_{a,i}(\lambda)|^2+(\lambda a(x)+a_0)^+[e_{a,i}(\lambda)]^2)dx\bigg)\\%
& &=\lim_{\lambda\to+\infty}\int_{\bbr^4}(|\nabla (e_{a,j}(\lambda)-e_{a,i}(\lambda))|^2+(\lambda a(x)+a_0)^+[e_{a,j}(\lambda)-e_{a,i}(\lambda)]^2)dx\\%
& &=0,%
\end{eqnarray*}
which is impossible.  Therefore, we must have the case~$(2)$.  Let%
\begin{equation}  \label{eq1015}
j_{a,0}^*=\inf\{j\in\bbn\mid\alpha_{a,j}^0>1\}.%
\end{equation}
Then by Lemma~\ref{lem0003}, there exists $\Lambda_a^*>\overline{\Lambda}_{a}$ such that dim$(\widehat{\mathcal{F}}_{a,\lambda}^{\perp})=j_{a,0}^*-1$ is independent of $\lambda\geq\Lambda_a^*$ and less than or equal to $k_a$ for $\lambda\geq\Lambda_a^*$.%
\end{proof}

\begin{remark}\label{rmk0001}
Clearly, the functional $\mathcal{D}_{b,\lambda}(v,v)$ is also positive definite on $E_{b,\lambda}$ for $\lambda>0$ in the case of $b_0\geq0$.  In the case of $b_0<0$, we can similarly define $\mathcal{B}_\lambda$, $\Lambda_{b,0}$, $\mathcal{F}_{b,\lambda}$, $\alpha_{b,j}(\lambda)$, $\widehat{\mathcal{F}}_{b,\lambda}^\perp$, $\widetilde{\mathcal{F}}_{b,\lambda}^\perp$, $k_b$ and $j_{b,0}^*$.  Then dim$(\widehat{\mathcal{F}}_{b,\lambda}^{\perp})<+\infty$ and $E_{b,\lambda}=\mathcal{F}_{b,\lambda}\oplus\widehat{\mathcal{F}}_{b,\lambda}^{\perp}\oplus\widetilde{\mathcal{F}}_{b,\lambda}^{\perp}$ for all $\lambda>\Lambda_{b,0}$.
Furthermore, by a similar argument as used in Lemma~\ref{lem0003}, we have $\widehat{\mathcal{F}}_{b,\lambda}^{\perp}=\emptyset$ for $\lambda>\Lambda_{b,0}$ sufficiently large, say $\lambda>\overline{\Lambda}_b>\Lambda_{b,0}$, in the case of $-\mu_{b,1}<b_0<0$ and $\widehat{\mathcal{F}}_{b,\lambda}^{\perp}\not=\emptyset$ for all $\lambda>\Lambda_{b,0}$ in the case of $b_0\leq-\mu_{b,1}$, where $\mu_{b,1}$ is the first eigenvalue of $(-\Delta, H^1_0(\Omega_b))$.  By a similar argument as used in Lemma~\ref{lem0001}, we also can see that there exists $\Lambda_b^*>\overline{\Lambda}_{b}$ such that dim$(\widehat{\mathcal{F}}_{b,\lambda}^{\perp})$ is independent of $\lambda\geq\Lambda_{b}^*$ and dim$(\widehat{\mathcal{F}}_{b,\lambda}^{\perp})=j_{b,0}^*-1\leq k_b$ for all $\lambda\geq\Lambda_{b}^*$.%
\end{remark}

\vskip0.23in

Now, we have the following decomposition of $E_\lambda$:%
\begin{enumerate}
\item[$(1)$] $E_\lambda=\mathcal{F}_{a,\lambda}
\times\mathcal{F}_{b,\lambda}$ for $\lambda>0$ in the case of $b_0\geq a_0\geq0$.%
\item[$(2)$] $E_\lambda=(\widetilde{\mathcal{F}}_{a,\lambda}^{\perp}\oplus\mathcal{F}_{a,\lambda})
\times\mathcal{F}_{b,\lambda}$ for $\lambda>\overline{\Lambda}_{a}$ in the case of $-\mu_{a,1}<a_0<0\leq b_0$.%
\item[$(3)$] $E_{\lambda}=(\widetilde{\mathcal{F}}_{a,\lambda}^{\perp}\oplus\mathcal{F}_{a,\lambda})
\times(\widetilde{\mathcal{F}}_{b,\lambda}^{\perp}\oplus\mathcal{F}_{b,\lambda})$ for $\lambda>\max\{\overline{\Lambda}_a, \overline{\Lambda}_b\}$ in the case of $-\mu_{a,1}<a_0<0$ and $-\mu_{b,1}<b_0<0$.%
\item[$(4)$] $E_\lambda=(\widehat{\mathcal{F}}_{a,\lambda}^{\perp}\oplus\widetilde{\mathcal{F}}_{a,\lambda}^{\perp}\oplus\mathcal{F}_{a,\lambda})
\times\mathcal{F}_{b,\lambda}$ for $\lambda>\Lambda_{a,0}$ in the case of $a_0\leq-\mu_{a,1}<0\leq b_0$.%
\item[$(5)$] $E_\lambda=(\widehat{\mathcal{F}}_{a,\lambda}^{\perp}\oplus\widetilde{\mathcal{F}}_{a,\lambda}^{\perp}\oplus\mathcal{F}_{a,\lambda})
\times(\widetilde{\mathcal{F}}_{b,\lambda}^{\perp}\oplus\mathcal{F}_{b,\lambda})$ for $\lambda>\max\{\Lambda_{a,0}, \overline{\Lambda}_b\}$ in the case of $a_0\leq-\mu_{a,1}$ and $-\mu_{b,1}<b_0<0$.%
\item[$(6)$] $E_\lambda=(\widetilde{\mathcal{F}}_{a,\lambda}^{\perp}\oplus\mathcal{F}_{a,\lambda})
\times(\widehat{\mathcal{F}}_{b,\lambda}^{\perp}\oplus\widetilde{\mathcal{F}}_{b,\lambda}^{\perp}\oplus\mathcal{F}_{b,\lambda})$ for $\lambda>\max\{\Lambda_{b,0}, \overline{\Lambda}_a\}$ in the case of $-\mu_{a,1}<a_0\leq b_0\leq-\mu_{b,1}$.%
\item[$(7)$] $E_\lambda=(\widehat{\mathcal{F}}_{a,\lambda}^{\perp}\oplus\widetilde{\mathcal{F}}_{a,\lambda}^{\perp}\oplus\mathcal{F}_{a,\lambda})
\times(\widehat{\mathcal{F}}_{b,\lambda}^{\perp}\oplus\widetilde{\mathcal{F}}_{b,\lambda}^{\perp}\oplus\mathcal{F}_{b,\lambda})$ for $\lambda>\max\{\Lambda_{a,0}, \Lambda_{b,0}\}$ in the case of $a_0\leq-\mu_{a,1}$, $b_0\leq-\mu_{b,1}$.%
\end{enumerate}
Moreover, we have the following estimates.%
\begin{lemma}\label{lem0110}
Let the conditions $(D_1)$--$(D_3)$ hold and $a_0,b_0\in\bbr$.  Then%
\begin{enumerate}
\item[$(i)$] $\mathcal{D}_{a,\lambda}(u,u)=\|u\|_{a,\lambda}^2$ on $\mathcal{F}_{a,\lambda}$ and
$\mathcal{D}_{b,\lambda}(v,v)=\|v\|_{b,\lambda}^2$ on $\mathcal{F}_{b,\lambda}$ for all $\lambda>0$.%
\item[$(ii)$] $\mathcal{D}_{a,\lambda}(u,u)\geq(1-\frac{1}{\alpha_{a,j_{a,\lambda}}(\lambda)})\|u\|_{a,\lambda}^2$ on
 $\widetilde{\mathcal{F}}_{a,\lambda}^{\perp}$ and $$\mathcal{D}_{b,\lambda}(v,v)\geq(1-\frac{1}{\alpha_{b,j_{b,\lambda}}(\lambda)})\|v\|_{b,\lambda}^2$$
  on $\widetilde{\mathcal{F}}_{b,\lambda}^{\perp}$ for all $\lambda>\max\{\Lambda_{a,0}, \Lambda_{b,0}\}$, where $j_{a,\lambda}=$dim$(\widehat{\mathcal{F}}_{a,\lambda}^{\perp})+1$ and $j_{b,\lambda}=$dim$(\widehat{\mathcal{F}}_{b,\lambda}^{\perp})+1$.%
\item[$(iii)$] $\mathcal{D}_{a,\lambda}(u,u)\leq0$ on $\widehat{\mathcal{F}}_{a,\lambda}^{\perp}$ and $\mathcal{D}_{b,\lambda}(v,v)\leq0$ on $\widehat{\mathcal{F}}_{b,\lambda}^{\perp}$ for $\lambda>\max\{\Lambda_{a,0}, \Lambda_{b,0}\}$.%
\end{enumerate}
\end{lemma}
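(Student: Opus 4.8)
The plan is to reduce all three statements to the single algebraic identity
$$\mathcal{D}_{a,\lambda}(u,u)=\|u\|_{a,\lambda}^2-\int_{\bbr^4}(\lambda a(x)+a_0)^-u^2\,dx,$$
which follows immediately from writing $\lambda a(x)+a_0=(\lambda a(x)+a_0)^+-(\lambda a(x)+a_0)^-$ in the definition of $\mathcal{D}_{a,\lambda}$ and comparing with the definition of $\|\cdot\|_{a,\lambda}$; the same identity holds for the $b$-component. For part $(i)$ I would note that if $u\in\mathcal{F}_{a,\lambda}$ then $\mathrm{supp}\,u\subset\bbr^4\backslash\mathcal{A}_\lambda$, and on $\bbr^4\backslash\mathcal{A}_\lambda$ one has $\lambda a(x)+a_0\geq0$ by the definition \eqref{eq0055} of $\mathcal{A}_\lambda$, so that $(\lambda a(x)+a_0)^-\equiv0$ there. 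Hence the integral in the identity vanishes and $\mathcal{D}_{a,\lambda}(u,u)=\|u\|_{a,\lambda}^2$; the argument for $\mathcal{F}_{b,\lambda}$ is identical, and it is valid for every $\lambda>0$ for which the relevant subspace is defined.

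For parts $(ii)$ and $(iii)$ I would pass to the spectral decomposition of $\mathcal{F}_{a,\lambda}^\perp$ furnished by the eigenvalue problem \eqref{eq0042}. Using the normalization $\int_{\bbr^4}(\lambda a(x)+a_0)^-e_{a,j}^2(\lambda)\,dx=1$ fixed in the construction of $\{e_{a,j}(\lambda)\}$, the eigenvalue relation gives $\|e_{a,j}(\lambda)\|_{a,\lambda}^2=\alpha_{a,j}(\lambda)$, while the orthogonality of the eigenfunctions of the compact self-adjoint operator $(-\Delta+(\lambda a(x)+a_0)^+)^{-1}(\lambda a(x)+a_0)^-$ yields both $\langle e_{a,i}(\lambda),e_{a,j}(\lambda)\rangle_{a,\lambda}=0$ and $\int_{\bbr^4}(\lambda a(x)+a_0)^-e_{a,i}(\lambda)e_{a,j}(\lambda)\,dx=0$ for $i\neq j$. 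Consequently, for $u=\sum_j c_j e_{a,j}(\lambda)$ the two quadratic forms diagonalize simultaneously: $\|u\|_{a,\lambda}^2=\sum_j c_j^2\alpha_{a,j}(\lambda)$, and by the identity above $\mathcal{D}_{a,\lambda}(u,u)=\sum_j c_j^2(\alpha_{a,j}(\lambda)-1)$.

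With this diagonalization both estimates are immediate. On $\widehat{\mathcal{F}}_{a,\lambda}^\perp=\mathrm{span}\{e_{a,j}(\lambda)\mid\alpha_{a,j}(\lambda)\leq1\}$ every coefficient satisfies $\alpha_{a,j}(\lambda)-1\leq0$, so $\mathcal{D}_{a,\lambda}(u,u)\leq0$, giving part $(iii)$. On $\widetilde{\mathcal{F}}_{a,\lambda}^\perp$ the running index obeys $j\geq j_{a,\lambda}=\dim(\widehat{\mathcal{F}}_{a,\lambda}^\perp)+1$, and since $\{\alpha_{a,j}(\lambda)\}$ is nondecreasing in $j$ (as listed after \eqref{eq0042}) we have $\alpha_{a,j}(\lambda)\geq\alpha_{a,j_{a,\lambda}}(\lambda)>1$, whence $1-\tfrac{1}{\alpha_{a,j}(\lambda)}\geq1-\tfrac{1}{\alpha_{a,j_{a,\lambda}}(\lambda)}>0$; factoring the expansion $\mathcal{D}_{a,\lambda}(u,u)=\sum_{j\geq j_{a,\lambda}}(1-\tfrac{1}{\alpha_{a,j}(\lambda)})c_j^2\alpha_{a,j}(\lambda)$ against $\|u\|_{a,\lambda}^2=\sum_{j\geq j_{a,\lambda}}c_j^2\alpha_{a,j}(\lambda)$ then yields $\mathcal{D}_{a,\lambda}(u,u)\geq(1-\tfrac{1}{\alpha_{a,j_{a,\lambda}}(\lambda)})\|u\|_{a,\lambda}^2$, which is part $(ii)$. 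The same computations apply verbatim to the $b$-component via Remark~\ref{rmk0001}. The one point deserving care — and the main obstacle — is justifying the simultaneous diagonalization on the infinite-dimensional space $\widetilde{\mathcal{F}}_{a,\lambda}^\perp$: one must invoke the spectral theorem for the compact self-adjoint operator to guarantee that $\{e_{a,j}(\lambda)\}$ is a genuine Hilbert basis, so that the series expansions of $u$ and the term-by-term comparison of the two forms converge and are legitimate. Once this is established, the desired inequalities follow termwise.
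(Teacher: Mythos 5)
Your proof is correct and follows essentially the same route as the paper: the paper's proof of Lemma~\ref{lem0110} is the one-line assertion that the conclusions follow immediately from the definitions of $\mathcal{F}_{a,\lambda}$, $\widehat{\mathcal{F}}_{a,\lambda}^{\perp}$, $\widetilde{\mathcal{F}}_{a,\lambda}^{\perp}$ (and their $b$-counterparts), and what you have written out --- the splitting $\mathcal{D}_{a,\lambda}(u,u)=\|u\|_{a,\lambda}^2-\int_{\bbr^4}(\lambda a(x)+a_0)^-u^2dx$, the vanishing of the negative part on $\mathrm{supp}\,u\subset\bbr^4\backslash\mathcal{A}_\lambda$ for $(i)$, and the simultaneous diagonalization of the two quadratic forms in the eigenbasis $\{e_{a,j}(\lambda)\}$ of \eqref{eq0042} for $(ii)$ and $(iii)$ --- is precisely the argument the paper leaves implicit. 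The completeness of the eigenbasis that you rightly flag as the one delicate point is exactly what the paper's construction via \cite[Theorems~4.45 and~4.46]{W95} supplies (the $e_{a,j}(\lambda)$ are stated there to form a basis of $\mathcal{F}_{a,\lambda}^{\perp}$), so no gap remains.
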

\begin{proof}
The conclusions follow immediately from the definitions of $\widehat{\mathcal{F}}_{a,\lambda}^{\perp}$, $\widetilde{\mathcal{F}}_{a,\lambda}^{\perp}$, $\mathcal{F}_{a,\lambda}$ and $\widehat{\mathcal{F}}_{b,\lambda}^{\perp}$, $\widetilde{\mathcal{F}}_{b,\lambda}^{\perp}$, $\mathcal{F}_{b,\lambda}$.%
\end{proof}

By Lemma~\ref{lem0110}, we can see that the functional $\mathcal{D}_\lambda(u,v)$ is positively definite on $E_\lambda$ in the cases of $(1)$--$(3)$ and indefinite on $E_\lambda$ in the cases of $(4)$--$(7)$.  For the sake of convenience, we always denote $$E_\lambda=(\widetilde{\mathcal{F}}_{a,\lambda}^{\perp}\oplus\mathcal{F}_{a,\lambda})
\times(\widetilde{\mathcal{F}}_{b,\lambda}^{\perp}\oplus\mathcal{F}_{b,\lambda})$$ and
$$E_\lambda=(\widehat{\mathcal{F}}_{a,\lambda}^{\perp}\oplus\widetilde{\mathcal{F}}_{a,\lambda}^{\perp}\oplus\mathcal{F}_{a,\lambda})
\times(\widehat{\mathcal{F}}_{b,\lambda}^{\perp}\oplus\widetilde{\mathcal{F}}_{b,\lambda}^{\perp}\oplus\mathcal{F}_{b,\lambda})$$ in the definite case and the indefinite case, respectively.%

\vskip0.3in

\section{The sets $\mathcal{N}_{\lambda,\beta}$, $\mathcal{M}_{\lambda,\beta}$ and $\mathcal{G}_{\lambda,\beta}$}
In this section, we will drive some properties of the sets $\mathcal{N}_{\lambda,\beta}$, $\mathcal{M}_{\lambda,\beta}$
and $\mathcal{G}_{\lambda,\beta}$.  We start by the observations on $\mathcal{M}_{\lambda,\beta}$.  It is well known
 that $\mathcal{M}_{\lambda,\beta}$ is closely linked to the so-called fibering maps of $J_{\lambda,\beta}(u,v)$,
 which are the functions defined on $\bbr^+$ and given by $\overline{T}_{\lambda,\beta,u,v}(t)=J_{\lambda,\beta}(tu,tv)$ for
 each $(u,v)\in E_\lambda\backslash\{(0,0)\}$.  Clearly, $\overline{T}_{\lambda,\beta,u,v}(t)\in C^2(\bbr^+)$.
  Moreover, $\overline{T}_{\lambda,\beta,u,v}'(t)=0$ is equivalent to $(tu,tv)\in\mathcal{M}_{\lambda,\beta}$.
   In particular, $\overline{T}_{\lambda,\beta,u,v}'(1)=0$ if and only if $(u,v)\in\mathcal{M}_{\lambda,\beta}$.%

\vskip0.2in

\begin{lemma}\label{lem0008}
Assume  $(D_1)$-$(D_3)$ hold and $\mathcal{D}_{\lambda}(u,v)$ is positively definite on $E_{\lambda}$.  Then for every $(u,v)\in E_\lambda\backslash\{(0,0)\}$ with $\mathcal{L}_{\beta}(u,v)>0$, there exists a unique
$t_{u,v}=\bigg(\frac{\mathcal{D}_\lambda(u,v)}{\mathcal{L}_\beta(u,v)}\bigg)^{\frac12}$
such that $\overline{T}_{\lambda,\beta,u,v}'(t_{u,v})=0$, $(t_{u,v}u, t_{u,v}v)\in\mathcal{M}_{\lambda,\beta}$
 and $\overline{T}_{\lambda,\beta,u,v}(t_{u,v})=\max_{t\geq0}\overline{T}_{\lambda,\beta,u,v}(t)$.  Furthermore,
 for every $(u,v)\in E_\lambda\backslash\{(0,0)\}$ with $\mathcal{L}_{\beta}(u,v)\leq0$, we have
  $\mathcal{X}_{u,v}\cap\mathcal{M}_{\lambda,\beta}=\emptyset$, where $\mathcal{X}_{u,v}=\{(tu,tv)\mid t\in\bbr^+\}$.%
\end{lemma}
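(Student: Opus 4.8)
The plan is to analyze the fibering map $\overline{T}_{\lambda,\beta,u,v}(t)=J_{\lambda,\beta}(tu,tv)$ directly by expanding it in powers of $t$. Since $J_{\lambda,\beta}$ is homogeneous of degree $2$ in its quadratic part and degree $4$ in its quartic part, I would write
\begin{equation*}
\overline{T}_{\lambda,\beta,u,v}(t)=\frac{t^2}{2}\mathcal{D}_\lambda(u,v)-\frac{t^4}{4}\mathcal{L}_\beta(u,v),\qquad t\geq0,
\end{equation*}
using the rewriting of $J_{\lambda,\beta}$ given in the excerpt together with the fact that $\mathcal{D}_\lambda$ is quadratic and $\mathcal{L}_\beta$ is quartic (homogeneous of degree four). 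This reduces the whole statement to an elementary study of the one-variable function $t\mapsto \frac{A}{2}t^2-\frac{B}{4}t^4$ on $[0,+\infty)$, where I set $A=\mathcal{D}_\lambda(u,v)$ and $B=\mathcal{L}_\beta(u,v)$. Note $A>0$ for every $(u,v)\neq(0,0)$ because $\mathcal{D}_\lambda$ is assumed positively definite on $E_\lambda$.

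Next I would differentiate: $\overline{T}_{\lambda,\beta,u,v}'(t)=t\big(A-Bt^2\big)$. For the case $B=\mathcal{L}_\beta(u,v)>0$, the only positive critical point solves $t^2=A/B$, giving the claimed
\begin{equation*}
t_{u,v}=\Big(\frac{\mathcal{D}_\lambda(u,v)}{\mathcal{L}_\beta(u,v)}\Big)^{\frac12},
\end{equation*}
and uniqueness is immediate since $A-Bt^2$ has a single positive root. To confirm this is the global maximum on $\bbr^+$, I would observe that $\overline{T}'>0$ on $(0,t_{u,v})$ and $\overline{T}'<0$ on $(t_{u,v},+\infty)$, so $\overline{T}$ increases then decreases; equivalently one checks $\overline{T}''(t_{u,v})=A-3Bt_{u,v}^2=-2A<0$. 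The identity $\overline{T}'_{\lambda,\beta,u,v}(t_{u,v})=0$ is, by the remark preceding the lemma, exactly the statement that $(t_{u,v}u,t_{u,v}v)\in\mathcal{M}_{\lambda,\beta}$, and one should note $t_{u,v}u\neq0$ so the point genuinely lies in $\mathcal{M}_{\lambda,\beta}$.

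For the case $B=\mathcal{L}_\beta(u,v)\leq0$, I would argue that $\overline{T}'_{\lambda,\beta,u,v}(t)=t(A-Bt^2)>0$ for all $t>0$, since $A>0$ and $-Bt^2\geq0$. Hence $\overline{T}$ is strictly increasing on $(0,+\infty)$ and has no critical point there; equivalently $(tu,tv)\notin\mathcal{M}_{\lambda,\beta}$ for every $t>0$, which gives $\mathcal{X}_{u,v}\cap\mathcal{M}_{\lambda,\beta}=\emptyset$ (the point $t=0$ being excluded from $\mathcal{M}_{\lambda,\beta}$ by definition). I expect no real obstacle here: the content is entirely the scaling structure of the functional, and the only point requiring a moment's care is invoking the positive definiteness of $\mathcal{D}_\lambda$ to guarantee $A>0$, which is precisely the standing hypothesis of the lemma. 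The main thing to get right is bookkeeping — correctly identifying the homogeneity degrees so that the reduced single-variable polynomial has the stated form.
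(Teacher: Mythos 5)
Your proof is correct and is precisely the standard fibering-map computation that the paper itself omits (its proof reads only ``The proof is very standard, so we omit it here''): the reduction of $\overline{T}_{\lambda,\beta,u,v}$ to $t\mapsto \frac{A}{2}t^2-\frac{B}{4}t^4$ with $A=\mathcal{D}_\lambda(u,v)>0$ by positive definiteness, the unique positive critical point $t_{u,v}=(A/B)^{1/2}$ with the sign/second-derivative check, and the strict monotonicity of $\overline{T}_{\lambda,\beta,u,v}$ when $\mathcal{L}_\beta(u,v)\leq0$ are exactly the intended argument. One trivial wording fix: to conclude $(t_{u,v}u,t_{u,v}v)\in\mathcal{M}_{\lambda,\beta}$ you need only $(t_{u,v}u,t_{u,v}v)\neq(0,0)$, which holds because $(u,v)\neq(0,0)$; your clause ``$t_{u,v}u\neq0$'' is not guaranteed in the admissible case $u=0$, $v\neq0$.
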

\begin{proof}
The proof is very standard, so we omit it here.%
\end{proof}

Due to Lemma~\ref{lem0008}, we can see that%
\begin{equation}\label{eq1003}
m_{\lambda,\beta}^*=\inf_{E_\lambda\backslash\{(0,0)\}}\frac{\mathcal{D}_\lambda(u,v)^2}{4\mathcal{L}_\beta(u,v)}.%
\end{equation}
\begin{lemma}\label{lem5001}
Let   $(D_1)$-$(D_3)$ hold and $\mathcal{D}_{\lambda}(u,v)$ be positively  definite
on $E_{\lambda}$.  If $(u,v)$ is the minimizer of $J_{\lambda,\beta}(u,v)$ on
$\mathcal{M}_{\lambda,\beta}$, then we have $D[J_{\lambda,\beta}(u,v)]=0$ in $E_\lambda^*$.%
\end{lemma}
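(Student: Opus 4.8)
The plan is to realize $\mathcal{M}_{\lambda,\beta}$ as a regular level set and then apply the Lagrange multiplier rule, showing afterwards that the multiplier vanishes. First I would introduce the constraint functional
\[
G_{\lambda,\beta}(u,v):=\langle D[J_{\lambda,\beta}(u,v)],(u,v)\rangle_{E_\lambda^*,E_\lambda}=\mathcal{D}_\lambda(u,v)-\mathcal{L}_\beta(u,v),
\]
where the last equality uses that $\mathcal{D}_\lambda$ is a quadratic form and $\mathcal{L}_\beta$ is $4$-homogeneous, so that $\mathcal{M}_{\lambda,\beta}=\{(u,v)\in E_\lambda\setminus\{(0,0)\}\mid G_{\lambda,\beta}(u,v)=0\}$. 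Since $J_{\lambda,\beta}\in C^2(E_\lambda)$, we have $G_{\lambda,\beta}\in C^1(E_\lambda)$.

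The crucial preliminary step is to verify that the constraint is regular at the minimizer, i.e. that $\mathcal{M}_{\lambda,\beta}$ is a genuine $C^1$-manifold near $(u,v)$. Using the two homogeneities once more I would compute
\[
\langle D[G_{\lambda,\beta}(u,v)],(u,v)\rangle_{E_\lambda^*,E_\lambda}=2\mathcal{D}_\lambda(u,v)-4\mathcal{L}_\beta(u,v).
\]
On $\mathcal{M}_{\lambda,\beta}$ one has $\mathcal{D}_\lambda(u,v)=\mathcal{L}_\beta(u,v)$, so this quantity reduces to $-2\mathcal{D}_\lambda(u,v)$. Here the hypothesis that $\mathcal{D}_\lambda$ is positively definite enters decisively: since $(u,v)\neq(0,0)$, we have $\mathcal{D}_\lambda(u,v)>0$, and therefore $\langle D[G_{\lambda,\beta}(u,v)],(u,v)\rangle_{E_\lambda^*,E_\lambda}=-2\mathcal{D}_\lambda(u,v)<0$. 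In particular $D[G_{\lambda,\beta}(u,v)]\neq0$, so the level set is regular and the abstract multiplier theorem applies.

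Then I would invoke the Lagrange multiplier rule: a minimizer $(u,v)$ of $J_{\lambda,\beta}$ on $\mathcal{M}_{\lambda,\beta}$ satisfies $D[J_{\lambda,\beta}(u,v)]=\mu\, D[G_{\lambda,\beta}(u,v)]$ in $E_\lambda^*$ for some $\mu\in\bbr$. Pairing both sides with the direction $(u,v)$, the left-hand side equals $G_{\lambda,\beta}(u,v)=0$ by the very definition of $\mathcal{M}_{\lambda,\beta}$, while the right-hand side equals $\mu\cdot(-2\mathcal{D}_\lambda(u,v))$ by the computation above. Since $\mathcal{D}_\lambda(u,v)>0$, this forces $\mu=0$, and hence $D[J_{\lambda,\beta}(u,v)]=0$ in $E_\lambda^*$, as claimed.

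The only genuinely delicate point is the regularity of the constraint, which legitimizes the multiplier rule, and this is precisely where positive definiteness of $\mathcal{D}_\lambda$ is indispensable: without it one could not exclude $\mathcal{D}_\lambda(u,v)=0$, and both the regularity of $\mathcal{M}_{\lambda,\beta}$ and the final deduction $\mu=0$ would collapse. Everything else is the standard Nehari-manifold identity, so the regularity verification via the sign of $\langle D[G_{\lambda,\beta}(u,v)],(u,v)\rangle_{E_\lambda^*,E_\lambda}$ is the step I would treat most carefully.
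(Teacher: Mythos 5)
Your proof is correct and follows essentially the same route as the paper: apply the Lagrange multiplier rule with the constraint $\Psi_{\lambda,\beta}(u,v)=\langle D[J_{\lambda,\beta}(u,v)],(u,v)\rangle_{E_\lambda^*,E_\lambda}$, pair with $(u,v)$, and use the positive definiteness of $\mathcal{D}_\lambda$ to force the multiplier to vanish. Your explicit verification that $\langle D[G_{\lambda,\beta}(u,v)],(u,v)\rangle_{E_\lambda^*,E_\lambda}=-2\mathcal{D}_\lambda(u,v)<0$ (the paper states $2\nu\mathcal{D}_\lambda(u,v)=0$, an immaterial sign slip) is a welcome addition, since it makes precise the regularity of the constraint that the paper leaves implicit in the phrase ``method of Lagrange multipliers.''
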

\begin{proof}
The proof is standard.  Since $J_{\lambda,\beta}(u,v)$ is $C^2$ in $E_\lambda$, by the method of Lagrange multipliers,
 there exists $\nu\in\bbr$ such that $D[J_{\lambda,\beta}(u,v)]-\nu D[\Psi_{\lambda,\beta}(u,v)]=0$ in $E_\lambda^*$,
 where $\Psi_{\lambda,\beta}(u,v)=\langle D[J_{\lambda,\beta}(u,v)],(u,v)\rangle_{E_\lambda^*,E_\lambda}$.
  Multiplying this equation with $(u,v)$ and noting that $(u,v)\in\mathcal{M}_{\lambda,\beta}$, we have%
\begin{equation*}
\nu\langle D[\Psi_{\lambda,\beta}(u,v)],(u,v)\rangle_{E_\lambda^*,E_\lambda}=2\nu\mathcal{D}_\lambda(u,v)=0.%
\end{equation*}
Since $\mathcal{D}_{\lambda}(u,v)$ is positively definite on $E_{\lambda}$, we must have $\nu=0$.
 It follows that $D[J_{\lambda,\beta}(u,v)]=0$ in $E_\lambda^*$, which completes the proof.%
\end{proof}

We next look at  the set $\mathcal{N}_{\lambda,\beta}$.  From the point of the fibering maps,
 $\mathcal{N}_{\lambda,\beta}$ is closely linked to the functions defined on $\bbr^+\times\bbr^+$
 and given by $T_{\lambda,\beta,u,v}(t,s)=J_{\lambda,\beta}(tu,sv)$ for each
  $(u,v)\in (E_{a,\lambda}\backslash\{0\})\times(E_{b,\lambda}\backslash\{0\})$.
   $T_{\lambda,\beta,u,v}(t,s)\in C^2(\bbr^+\times\bbr^+)$ and
    $$\frac{\partial T_{\lambda,\beta,u,v}}{\partial t}(t,s)=\frac{\partial T_{\lambda,\beta,u,v}}{\partial s}(t,s)=0$$ is
     equivalent to $(tu,sv)\in\mathcal{N}_{\lambda,\beta}$.  In particular,
      $\frac{\partial T_{\lambda,\beta,u,v}}{\partial t}(1,1)=\frac{\partial T_{\lambda,\beta,u,v}}{\partial s}(1,1)=0$
       if and only if $(u,v)\in\mathcal{N}_{\lambda,\beta}$.%
\begin{lemma}        \label{lem0002}
Assume  $(D_1)$-$(D_3)$ hold and $\beta\leq0$.  If $\mathcal{D}_{a,\lambda}(u,u)$
and $\mathcal{D}_{b,\lambda}(v,v)$ are respectively definite on $E_{a,\lambda}$ and $E_{b,\lambda}$, then we have the following.%
\begin{enumerate}
\item[$(1)$] If $(u,v)\in\mathcal{V}_{\lambda,\beta}$, then there exists a unique $(t_{\lambda,\beta}(u,v),s_{\lambda,\beta}(u,v))\in\bbr^+\times\bbr^+$ such that%
    \begin{equation*}
    (t_{\lambda,\beta}(u,v)u,s_{\lambda,\beta}(u,v)v)\in\mathcal{N}_{\lambda,\beta},%
    \end{equation*}
    where $\mathcal{V}_{\lambda,\beta}=E_\lambda\cap\mathcal{V}_\beta$ and $\mathcal{V}_\beta$ is given by \eqref{eq1020} and $t_{\lambda,\beta}(u,v)$ and $s_{\lambda,\beta}(u,v)$ are respectively given by%
    \begin{equation}
    t_{\lambda,\beta}(u,v)=\bigg(\frac{\|v\|_{L^4(\bbr^4)}^4\mathcal{D}_{a,\lambda}(u,u)
    -\beta\|u^2v^2\|_{L^1(\bbr^4)}\mathcal{D}_{b,\lambda}(v,v)}
    {\|u\|_{L^4(\bbr^4)}^4\|v\|_{L^4(\bbr^4)}^4-\beta^2\|u^2v^2\|^2_{L^1(\bbr^4)}}\bigg)^{\frac12}\label{eq0005}%
    \end{equation}
    and
    \begin{equation}
    s_{\lambda,\beta}(u,v)=\bigg(\frac{\|u\|_{L^4(\bbr^4)}^4\mathcal{D}_{b,\lambda}(v,v)
    -\beta\|u^2v^2\|_{L^1(\bbr^4)}\mathcal{D}_{a,\lambda}(u,u)}
    {\|u\|_{L^4(\bbr^4)}^4\|v\|_{L^4(\bbr^4)}^4-\beta^2\|u^2v^2\|^2_{L^1(\bbr^4)}}\bigg)^{\frac12}.\label{eq0006}%
    \end{equation}
    Moreover, $T_{\lambda,\beta,u,v}(t_{\lambda,\beta}(u,v),s_{\lambda,\beta}(u,v))=\max_{t\geq0,s\geq0}T_{\lambda,\beta,u,v}(t,s)$.  In particular, we have%
    \begin{equation}\label{eq0024}
    T_{\lambda,\beta,u,v}(1,1)=\max_{t\geq0,s\geq0}T_{\lambda,\beta,u,v}(t,s)%
    \end{equation}
    for all $(u,v)\in\mathcal{N}_{\lambda,\beta}$.
\item[$(2)$] If $(u,v)\in E_\lambda\backslash\mathcal{V}_{\lambda,\beta}$, then $\mathcal{X}_{u,v}\cap\mathcal{N}_{\lambda,\beta}=\emptyset$, where $\mathcal{X}_{u,v}=\{(tu,sv)\mid (t,s)\in\bbr^+\times\bbr^+\}$.%
\end{enumerate}
\end{lemma}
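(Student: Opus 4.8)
The plan is to reduce the statement to an elementary analysis of the two-variable fibering map $T_{\lambda,\beta,u,v}(t,s)=J_{\lambda,\beta}(tu,sv)$. Using the splitting $J_{\lambda,\beta}=\frac12\mathcal{D}_\lambda-\frac14\mathcal{L}_\beta$, this is the quartic polynomial
$$T_{\lambda,\beta,u,v}(t,s)=\frac{t^2}{2}\mathcal{D}_{a,\lambda}(u,u)+\frac{s^2}{2}\mathcal{D}_{b,\lambda}(v,v)-\frac{t^4}{4}\|u\|_{L^4(\bbr^4)}^4-\frac{s^4}{4}\|v\|_{L^4(\bbr^4)}^4-\frac{\beta t^2s^2}{2}\|u^2v^2\|_{L^1(\bbr^4)}.$$
Setting $\partial_tT=\partial_sT=0$ and dividing by $t$ and $s$ (legitimate for $t,s>0$), the critical-point equations become, in the variables $X=t^2$, $Y=s^2$, the linear system with coefficient matrix $\left(\begin{smallmatrix}\|u\|_{L^4(\bbr^4)}^4 & \beta\|u^2v^2\|_{L^1(\bbr^4)}\\ \beta\|u^2v^2\|_{L^1(\bbr^4)} & \|v\|_{L^4(\bbr^4)}^4\end{smallmatrix}\right)$ and right-hand side $(\mathcal{D}_{a,\lambda}(u,u),\mathcal{D}_{b,\lambda}(v,v))$. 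Its determinant is exactly $\|u\|_{L^4(\bbr^4)}^4\|v\|_{L^4(\bbr^4)}^4-\beta^2\|u^2v^2\|_{L^1(\bbr^4)}^2$, the quantity governing membership in $\mathcal{V}_\beta$.

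For part $(1)$, when $(u,v)\in\mathcal{V}_{\lambda,\beta}$ this determinant is strictly positive, so Cramer's rule yields a unique solution $(X,Y)$, which is precisely the pair \eqref{eq0005}--\eqref{eq0006}; positivity of $X$ and $Y$ (so that $t_{\lambda,\beta},s_{\lambda,\beta}>0$ are well defined) follows from $\beta\le0$ together with the positive definiteness of $\mathcal{D}_{a,\lambda},\mathcal{D}_{b,\lambda}$, which makes each numerator a sum of nonnegative terms with a strictly positive leading one. This produces a unique interior critical point. To upgrade it to the global maximum over $\{t,s\ge0\}$, I would note that on $\mathcal{V}_\beta$ the quartic form $\|u\|_{L^4(\bbr^4)}^4X^2+2\beta\|u^2v^2\|_{L^1(\bbr^4)}XY+\|v\|_{L^4(\bbr^4)}^4Y^2$ is positive definite, since its discriminant $4(\beta^2\|u^2v^2\|_{L^1(\bbr^4)}^2-\|u\|_{L^4(\bbr^4)}^4\|v\|_{L^4(\bbr^4)}^4)$ is negative; hence $T_{\lambda,\beta,u,v}\to-\infty$ as $t^2+s^2\to\infty$ and a maximizer exists. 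A short sign check, $\partial^2_{tt}T_{\lambda,\beta,u,v}|_{t=0}=\mathcal{D}_{a,\lambda}(u,u)-\beta s^2\|u^2v^2\|_{L^1(\bbr^4)}>0$ and symmetrically in $s$, rules out the boundary of the quadrant, so the maximizer is the unique interior critical point, giving \eqref{eq0024}.

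The ``in particular'' assertion then follows because every $(u,v)\in\mathcal{N}_{\lambda,\beta}$ lies in $\mathcal{V}_{\lambda,\beta}$ when $\beta\le0$: substituting $\mathcal{D}_{a,\lambda}(u,u)=\|u\|_{L^4(\bbr^4)}^4+\beta\|u^2v^2\|_{L^1(\bbr^4)}$ and the analogue for $v$ yields $\|u\|_{L^4(\bbr^4)}^4\|v\|_{L^4(\bbr^4)}^4-\beta^2\|u^2v^2\|_{L^1(\bbr^4)}^2=\mathcal{D}_{a,\lambda}(u,u)\mathcal{D}_{b,\lambda}(v,v)+|\beta|\|u^2v^2\|_{L^1(\bbr^4)}(\mathcal{D}_{a,\lambda}(u,u)+\mathcal{D}_{b,\lambda}(v,v))>0$, so $(1,1)$ is that unique critical point. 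For part $(2)$ I would argue by contraposition: if $(tu,sv)\in\mathcal{N}_{\lambda,\beta}$ with $t,s>0$, then $(X,Y)=(t^2,s^2)$ solves the linear system with $X,Y>0$, and solving for $\|u\|_{L^4(\bbr^4)}^4$ and $\|v\|_{L^4(\bbr^4)}^4$ gives the identity $\|u\|_{L^4(\bbr^4)}^4\|v\|_{L^4(\bbr^4)}^4-\beta^2\|u^2v^2\|_{L^1(\bbr^4)}^2=\frac{1}{XY}\big(\mathcal{D}_{a,\lambda}(u,u)\mathcal{D}_{b,\lambda}(v,v)+|\beta|\|u^2v^2\|_{L^1(\bbr^4)}(\mathcal{D}_{a,\lambda}(u,u)X+\mathcal{D}_{b,\lambda}(v,v)Y)\big)>0$, forcing $(u,v)\in\mathcal{V}_\beta$; hence $(u,v)\notin\mathcal{V}_{\lambda,\beta}$ precludes any intersection of $\mathcal{X}_{u,v}$ with $\mathcal{N}_{\lambda,\beta}$. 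I expect the reduction and the linear algebra to be routine; the one delicate point is showing the interior critical point is a genuine global maximum rather than a saddle, and this is exactly where the hypotheses $\beta\le0$ and $(u,v)\in\mathcal{V}_\beta$ enter, through the positive-definiteness of the quartic form and the boundary sign computation.
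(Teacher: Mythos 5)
Your proposal is correct and follows what is essentially the paper's (omitted) argument: the paper defers to \cite[Lemma~3.1]{WWZ15}, and the formulas \eqref{eq0005}--\eqref{eq0006} in the statement are exactly the Cramer's-rule solution of the linear system in $(t^2,s^2)$ that you derive. Your additional details --- positive definiteness of the quartic form on $\mathcal{V}_{\lambda,\beta}$ giving coercivity, the sign check $\partial^2_{tt}T_{\lambda,\beta,u,v}(0,s)=\mathcal{D}_{a,\lambda}(u,u)-\beta s^2\|u^2v^2\|_{L^1(\bbr^4)}>0$ excluding boundary maximizers, and the identity $\|u\|_{L^4(\bbr^4)}^4\|v\|_{L^4(\bbr^4)}^4-\beta^2\|u^2v^2\|^2_{L^1(\bbr^4)}=\mathcal{D}_{a,\lambda}(u,u)\mathcal{D}_{b,\lambda}(v,v)+|\beta|\|u^2v^2\|_{L^1(\bbr^4)}(\mathcal{D}_{a,\lambda}(u,u)+\mathcal{D}_{b,\lambda}(v,v))$ showing $\mathcal{N}_{\lambda,\beta}\subset\mathcal{V}_{\lambda,\beta}$ for $\beta\leq0$ (and its scaled analogue for part $(2)$) --- are all accurate and correctly supply the verification the paper leaves to the reader.
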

\begin{proof}
Since $\mathcal{D}_{a,\lambda}(u,u)$ and $\mathcal{D}_{b,\lambda}(v,v)$ are respectively positively definite on $E_{a,\lambda}$ and $E_{b,\lambda}$, the proof is similar to that of \cite[Lemma~3.1]{WWZ15} and only some trivial modifications are needed, so we omit the details here.%
\end{proof}

The relation between $\mathcal{N}_{\lambda,\beta}$ and $T_{\lambda,\beta,u,v}(t,s)$ for $\beta>0$ is quite different from the case of $\beta\leq0$.  In the case of $0<\beta<1$, we have from the H\"older inequality that $\mathcal{V}_{\lambda,\beta}=E_\lambda\backslash\{0\}$.  However, the properties described in Lemma~\ref{lem0002} may not hold for all $(u,v)\in\mathcal{V}_{\lambda,\beta}=E_\lambda\backslash\{0\}$ except \eqref{eq0024}.%
\begin{lemma}\label{lem0006}
Assume $(D_1)$-$(D_3)$ hold and $\beta\in(0, 1)$.  If $\mathcal{D}_{a,\lambda}(u,u)$ and $\mathcal{D}_{b,\lambda}(v,v)$ are
 positively   definite on $E_{a,\lambda}$ and $E_{b,\lambda}$  respectively, then \eqref{eq0024} holds for every $(u,v)\in\mathcal{N}_{\lambda,\beta}$.%
\end{lemma}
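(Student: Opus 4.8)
The plan is to linearize the fibering map via the substitution $x=t^2\ge0$, $y=s^2\ge0$, turning the quartic $T_{\lambda,\beta,u,v}$ into a quadratic whose concavity can be read off directly. Writing $J_{\lambda,\beta}=\frac12\mathcal{D}_\lambda-\frac14\mathcal{L}_\beta$ and using the $2$- and $4$-homogeneity of the two pieces, one computes
$$T_{\lambda,\beta,u,v}(t,s)=\frac{t^2}{2}\mathcal{D}_{a,\lambda}(u,u)+\frac{s^2}{2}\mathcal{D}_{b,\lambda}(v,v)-\frac{t^4}{4}\|u\|_{L^4(\bbr^4)}^4-\frac{s^4}{4}\|v\|_{L^4(\bbr^4)}^4-\frac{\beta t^2s^2}{2}\|u^2v^2\|_{L^1(\bbr^4)}.$$
Under $x=t^2,\ y=s^2$ this becomes a quadratic polynomial $\widetilde{T}(x,y)$ on $\bbr^2$, and since $(t,s)\mapsto(t^2,s^2)$ is a bijection of $\{t,s\ge0\}$ onto $\{x,y\ge0\}$, proving \eqref{eq0024} is equivalent to showing that $\widetilde{T}$ attains its maximum over $\{x,y\ge0\}$ at $(1,1)$.

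Next I would examine the Hessian of $\widetilde{T}$, whose entries are the constants $-\frac12\|u\|_{L^4(\bbr^4)}^4$ and $-\frac12\|v\|_{L^4(\bbr^4)}^4$ on the diagonal and $-\frac{\beta}{2}\|u^2v^2\|_{L^1(\bbr^4)}$ off the diagonal. It is negative definite precisely when its determinant
$$\frac14\Big(\|u\|_{L^4(\bbr^4)}^4\|v\|_{L^4(\bbr^4)}^4-\beta^2\|u^2v^2\|_{L^1(\bbr^4)}^2\Big)$$
is positive. This discriminant inequality is the only substantive point, and it is exactly where the restriction $\beta\in(0,1)$ enters: by the H\"older inequality $\|u^2v^2\|_{L^1(\bbr^4)}\le\|u\|_{L^4(\bbr^4)}^2\|v\|_{L^4(\bbr^4)}^2$, so that $\beta^2\|u^2v^2\|_{L^1(\bbr^4)}^2\le\beta^2\|u\|_{L^4(\bbr^4)}^4\|v\|_{L^4(\bbr^4)}^4<\|u\|_{L^4(\bbr^4)}^4\|v\|_{L^4(\bbr^4)}^4$, the last inequality being strict because $\beta^2<1$ and, since $u,v\ne0$ on $\mathcal{N}_{\lambda,\beta}$, the product $\|u\|_{L^4(\bbr^4)}^4\|v\|_{L^4(\bbr^4)}^4$ is strictly positive. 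Hence $\widetilde{T}$ is strictly concave on $\bbr^2$.

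I would then invoke that a strictly concave quadratic has a unique critical point, which is its global maximum on all of $\bbr^2$. Because $(u,v)\in\mathcal{N}_{\lambda,\beta}$, the defining equations $\langle D[J_{\lambda,\beta}(u,v)],(u,0)\rangle=\langle D[J_{\lambda,\beta}(u,v)],(0,v)\rangle=0$ translate, via $\partial_t=2t\,\partial_x$ and $\partial_s=2s\,\partial_y$, into $\partial_x\widetilde{T}(1,1)=\partial_y\widetilde{T}(1,1)=0$; that is, $(1,1)$ is precisely that critical point. Since $(1,1)$ lies in the interior of the quadrant, it is in particular the maximum of $\widetilde{T}$ over $\{x,y\ge0\}$, and translating back gives $T_{\lambda,\beta,u,v}(1,1)=\max_{t,s\ge0}T_{\lambda,\beta,u,v}(t,s)$, which is \eqref{eq0024}. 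I would also note that, unlike in Lemma~\ref{lem0002}, this argument does not furnish a projection onto $\mathcal{N}_{\lambda,\beta}$ for an arbitrary $(u,v)$: the unique critical point of $\widetilde{T}$ may fall outside the positive quadrant, so only the maximality statement \eqref{eq0024}, which starts from a point already lying on $\mathcal{N}_{\lambda,\beta}$, survives for $\beta\in(0,1)$.
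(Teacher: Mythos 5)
Your proof is correct, and while it verifies the same key discriminant as the paper --- $\|u\|_{L^4(\bbr^4)}^4\|v\|_{L^4(\bbr^4)}^4-\beta^2\|u^2v^2\|_{L^1(\bbr^4)}^2>0$, from the H\"older inequality and $\beta^2<1$ with $u\not=0$, $v\not=0$ --- the mechanism by which you globalize it is genuinely different and cleaner. The paper works directly with $T_{\lambda,\beta,u,v}(t,s)$: it uses the linear system \eqref{eq0023} to show $(1,1)$ is the unique critical point in $\bbr^+\times\bbr^+$, checks the second-order conditions at $(1,1)$ to get a strict local maximum, and then invokes $T_{\lambda,\beta,u,v}>0$ near the origin together with $T_{\lambda,\beta,u,v}\to-\infty$ at infinity (this is where the positive definiteness of $\mathcal{D}_{a,\lambda}$ and $\mathcal{D}_{b,\lambda}$, Lemma~\ref{lem0110} and \eqref{eq0003} enter) to promote it to a global maximum on the quadrant. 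Your substitution $x=t^2$, $y=s^2$ instead turns the fibering map into a quadratic with \emph{constant} negative definite Hessian, so it is globally strictly concave on all of $\bbr^2$; its unique critical point, which the Nehari conditions place at $(x,y)=(1,1)$ via $\partial_t=2t\,\partial_x$, $\partial_s=2s\,\partial_y$, is then automatically the global maximum over the closed quadrant. This buys two things. First, you need neither the positive-definiteness hypothesis nor any asymptotic analysis. Second, the boundary of the quadrant is disposed of for free: the paper's chain ``unique interior critical point $+$ local max $+$ behavior at $0$ and $\infty$'' still tacitly has to exclude a maximizer on $\{t=0\}$ or $\{s=0\}$ (where $\partial_t T$ vanishes identically, so interior critical-point uniqueness alone does not apply), a point absorbed into its ``standard argument,'' whereas global concavity dominates all boundary values outright. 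Your closing remark is also accurate and matches the paper's surrounding discussion: for an arbitrary $(u,v)$ with $\beta\in(0,1)$ the critical point of the quadratic may leave the closed quadrant, which is precisely why the projection statement of Lemma~\ref{lem0002} fails for $\beta>0$ and only the maximality property \eqref{eq0024} survives.
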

\begin{proof}
Suppose $(u,v)\in\mathcal{N}_{\lambda,\beta}$ and consider the following two-component systems of algebraic equations%
\begin{equation}\label{eq0023}
\left\{\aligned \mathcal{D}_{a,\lambda}(u,u)-\|u\|_{L^4(\bbr^4)}^4t-\beta\|u^2v^2\|_{L^1(\bbr^4)}s&=0,\\%
\mathcal{D}_{b,\lambda}(v,v)-\|v\|_{L^4(\bbr^4)}^4s-\beta\|u^2v^2\|_{L^1(\bbr^4)}t&=0.\endaligned\right.%
\end{equation}
Since $(u,v)\in\mathcal{N}_{\lambda,\beta}$, we can see that \eqref{eq0023} has a unique solution $(1,1)$.  It follows that $(1,1)$ is the unique critical point of $T_{\lambda,\beta,u,v}(t,s)$ in $\bbr^+\times\bbr^+$.  By the fact that $\beta\in(0, 1)$, a direct calculation and the H\"older inequality, we have%
\begin{equation*}
\frac{\partial^2T_{\lambda,\beta,u,v}}{\partial t^2}(1,1)=-2\|u\|_{L^4(\bbr^4)}^4<0%
\end{equation*}
and%
\begin{eqnarray*}
&&\begin{vmatrix}
\frac{\partial^2T_{\lambda,\beta,u,v}}{\partial t^2}(1,1)&\frac{\partial^2T_{\lambda,\beta,u,v}}{\partial t\partial s}(1,1)\\%
\frac{\partial^2T_{\lambda,\beta,u,v}}{\partial s\partial t}(1,1)&\frac{\partial^2T_{\lambda,\beta,u,v}}{\partial s^2}(1,1)%
\end{vmatrix}\\
&&=4(\|u\|_{L^4(\bbr^4)}^4\|v\|_{L^4(\bbr^4)}^4-\beta^2\|u^2v^2\|^2_{L^1(\bbr^4)})>0.%
\end{eqnarray*}
Note that $\mathcal{D}_{a,\lambda}(u,u)$ and $\mathcal{D}_{b,\lambda}(v,v)$ are positively definite on $E_{a,\lambda}$ and $E_{b,\lambda}$
respectively and $\beta\in(0, 1)$, by Lemma~\ref{lem0110}, \eqref{eq0003} and a standard argument,
we can obtain that $T_{\lambda,\beta,u,v}(t,s)>0$ for $|(t,s)|$ sufficiently
small and $T_{\lambda,\beta,u,v}(t,s)\to-\infty$ as $|(t,s)|\to+\infty$.
These imply that $(1,1)$ is the global maximum point of $T_{\lambda,\beta,u,v}(t,s)$ in
 $\bbr^+\times\bbr^+$, i.e.,  \eqref{eq0024} holds.%
\end{proof}

\begin{remark}
The relation between $\mathcal{N}_{\lambda,\beta}$ and $T_{\lambda,\beta,u,v}(t,s)$ for $\beta\geq1$
 is much more complicated than that of $\beta<1$ and even $\eqref{eq0024}$ does  not hold for some $(u,v)\in\mathcal{N}_{\lambda,\beta}$ in this case.%
\end{remark}

\begin{lemma}\label{lem5005}
Suppose that $(D_1)$-$(D_3)$ hold and $\mathcal{D}_{\lambda}(u,v)$ is positively
 definite on $E_{\lambda}$.  If $(u,v)$ is the minimizer of $J_{\lambda,\beta}(u,v)$ on
 $\mathcal{N}_{\lambda,\beta}$ with $\beta<1$, then $D[J_{\lambda,\beta}(u,v)]=0$ in $E_\lambda^*$.%
\end{lemma}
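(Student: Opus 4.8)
The plan is to run the same Lagrange-multiplier argument as in Lemma~\ref{lem5001}, now adapted to the \emph{two} constraints cutting out $\mathcal{N}_{\lambda,\beta}$. Set
$$\Psi_1(u,v):=\langle D[J_{\lambda,\beta}(u,v)],(u,0)\rangle_{E_\lambda^*,E_\lambda},\qquad \Psi_2(u,v):=\langle D[J_{\lambda,\beta}(u,v)],(0,v)\rangle_{E_\lambda^*,E_\lambda},$$
so that $\mathcal{N}_{\lambda,\beta}=\{(u,v)\mid u\neq0,\ v\neq0,\ \Psi_1=\Psi_2=0\}$. Since $J_{\lambda,\beta}$ is $C^2$, both $\Psi_1$ and $\Psi_2$ are $C^1$ on $(E_{a,\lambda}\backslash\{0\})\times(E_{b,\lambda}\backslash\{0\})$. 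First I would invoke the method of Lagrange multipliers: once $\mathcal{N}_{\lambda,\beta}$ is known to be a manifold near the minimizer $(u,v)$, there exist $\nu_1,\nu_2\in\bbr$ with $D[J_{\lambda,\beta}(u,v)]=\nu_1 D[\Psi_1(u,v)]+\nu_2 D[\Psi_2(u,v)]$ in $E_\lambda^*$. The whole point is then to show $\nu_1=\nu_2=0$.

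To that end I would pair this identity with $(u,0)$ and with $(0,v)$. Because $(u,v)\in\mathcal{N}_{\lambda,\beta}$, the left-hand pairings $\langle D[J_{\lambda,\beta}(u,v)],(u,0)\rangle$ and $\langle D[J_{\lambda,\beta}(u,v)],(0,v)\rangle$ both vanish, so $(\nu_1,\nu_2)$ solves a homogeneous $2\times2$ linear system whose coefficient matrix has the entries $\langle D[\Psi_i(u,v)],(u,0)\rangle$ and $\langle D[\Psi_i(u,v)],(0,v)\rangle$. A direct differentiation along $(tu,v)$ and $(u,sv)$ at the point $(1,1)$, using $\Psi_1=\Psi_2=0$ to cancel the quadratic terms, identifies these four entries with the Hessian of $T_{\lambda,\beta,u,v}$ already computed in Lemma~\ref{lem0006}; explicitly the matrix is
$$\begin{pmatrix} -2\|u\|_{L^4(\bbr^4)}^4 & -2\beta\|u^2v^2\|_{L^1(\bbr^4)}\\[1mm] -2\beta\|u^2v^2\|_{L^1(\bbr^4)} & -2\|v\|_{L^4(\bbr^4)}^4\end{pmatrix},$$
whose determinant is $4\big(\|u\|_{L^4(\bbr^4)}^4\|v\|_{L^4(\bbr^4)}^4-\beta^2\|u^2v^2\|_{L^1(\bbr^4)}^2\big)$. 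Thus everything reduces to the strict positivity of this determinant, i.e.\ to $(u,v)\in\mathcal{V}_\beta$.

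This positivity is exactly where $\beta<1$ is used, and I would split it into cases. For $0<\beta<1$ the H\"older inequality gives $\|u^2v^2\|_{L^1(\bbr^4)}\le\|u\|_{L^4(\bbr^4)}^2\|v\|_{L^4(\bbr^4)}^2$, whence $\beta^2\|u^2v^2\|_{L^1(\bbr^4)}^2<\|u\|_{L^4(\bbr^4)}^4\|v\|_{L^4(\bbr^4)}^4$ since $u,v\neq0$; the case $\beta=0$ is immediate. For $\beta<0$ I would instead quote Lemma~\ref{lem0002}(2): its contrapositive says that any point of $\mathcal{N}_{\lambda,\beta}$ lies in $\mathcal{V}_{\lambda,\beta}=E_\lambda\cap\mathcal{V}_\beta$, which is precisely the assertion that the determinant is positive. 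Hence in every case $\beta<1$ the matrix is invertible, forcing $\nu_1=\nu_2=0$ and therefore $D[J_{\lambda,\beta}(u,v)]=0$ in $E_\lambda^*$.

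The main point requiring care is the justification of the multiplier rule itself, namely that $\mathcal{N}_{\lambda,\beta}$ is a $C^1$ submanifold of codimension two near $(u,v)$. This is not separate from the computation above: the same non-degeneracy shows $D[\Psi_1(u,v)]$ and $D[\Psi_2(u,v)]$ are linearly independent, since a relation $c_1 D\Psi_1+c_2 D\Psi_2=0$ paired against $(u,0)$ and $(0,v)$ produces the \emph{same} invertible homogeneous system, giving $c_1=c_2=0$. This legitimizes the multipliers with the coefficient of $DJ$ equal to one. I expect the only genuinely delicate bookkeeping to be the clean split between the regimes $0<\beta<1$ (where $\mathcal{V}_{\lambda,\beta}=E_\lambda\backslash\{0\}$ and the Hessian information from Lemma~\ref{lem0006} applies directly) and $\beta\le0$ (where one must fall back on the constraint $(u,v)\in\mathcal{V}_{\lambda,\beta}$ from Lemma~\ref{lem0002}); that $u\neq0$ and $v\neq0$ at the minimizer, which the argument needs, is built into the definition of $\mathcal{N}_{\lambda,\beta}$ and so costs nothing.
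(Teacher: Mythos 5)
Your proof is correct and takes essentially the same route as the paper: the identical Lagrange-multiplier setup with the two constraint functionals, pairing against $(u,0)$ and $(0,v)$ to get the homogeneous $2\times2$ system, and concluding $\nu_1=\nu_2=0$ from the positivity of $\|u\|_{L^4(\bbr^4)}^4\|v\|_{L^4(\bbr^4)}^4-\beta^2\|u^2v^2\|_{L^1(\bbr^4)}^2$, which the paper likewise obtains from the H\"older inequality together with Lemma~\ref{lem0002} (giving $\mathcal{N}_{\lambda,\beta}\subset\mathcal{V}_{\lambda,\beta}$ for $\beta<1$). Your additional verification that $D[\Psi_1(u,v)]$ and $D[\Psi_2(u,v)]$ are linearly independent only makes explicit a point the paper leaves implicit in invoking the multiplier rule.
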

\begin{proof}
The proof is also standard.  We only give the proof for the case of $-\mu_{a,1}<a_0<0$ and $-\mu_{b,1}<b_0<0$, since other cases are more simple and can be proved in a similar way due to Lemma~\ref{lem0110}.  Since $J_{\lambda,\beta}(u,v)$ is $C^2$ in $E_\lambda$, by the method of Lagrange multipliers, there exists $\nu_1,
\nu_2\in\bbr$ such that
 $$D[J_{\lambda,\beta}(u,v)]-\nu_1D[\Psi^{*,1}_{\lambda,\beta}(u,v)]-\nu_2D[\Psi^{*,2}_{\lambda,\beta}(u,v)]=0 \hbox{ in } E_\lambda^*,$$
 where $$\Psi^{*,1}_{\lambda,\beta}(u,v)=\langle D[J_{\lambda,\beta}(u,v)],(u,0)\rangle_{E_\lambda^*,E_\lambda},
 \Psi^{*,2}_{\lambda,\beta}(u,v)=\langle D[J_{\lambda,\beta}(u,v)],(0,v)\rangle_{E_\lambda^*,E_\lambda}.$$
   Multiplying this equation with $(u,0)$ and $(0,v)$ respectively and noting that $(u,v)\in\mathcal{N}_{\lambda,\beta}$, we have%
\begin{eqnarray*}
\left\{\aligned 2\nu_1\|u\|_{L^4(\bbr^4)}^4+2\nu_2\beta\|u^2v^2\|_{L^1(\bbr^4)}&=0,\\%
2\nu_2\|v\|_{L^4(\bbr^4)}^4+2\nu_1\beta\|u^2v^2\|_{L^1(\bbr^4)}&=0.\endaligned\right.%
\end{eqnarray*}
It follows that either $\nu_1=\nu_2=0$ or $\|u\|_{L^4(\bbr^4)}^4\|v\|_{L^4(\bbr^4)}^4-\beta^2\|u^2v^2\|_{L^1(\bbr^4)}^2=0$.
By the H\"older inequality and Lemma~\ref{lem0002}, we can see that
 $\mathcal{N}_{\lambda,\beta}\subset\mathcal{V}_{\lambda,\beta}$ with $\beta<1$.
 Therefore, we must have $\nu_1=\nu_2=0$, which implies $D[J_{\lambda,\beta}(u,v)]=0$ in $E_\lambda^*$.%
\end{proof}

\vskip0.31in

Next we consider  the set $\mathcal{G}_{\lambda,\beta}$.  Since $\mathcal{G}_{\lambda,\beta}$ is
modified from $\mathcal{M}_{\lambda,\beta}$, the firbering maps $\overline{T}_{\lambda,\beta,u,v}(t)$
also need to be modified.  For every $(u,v)\in E_\lambda\backslash\{(0, 0)\}$, we define
 $$G_{\lambda,\beta,u,v}(w,\sigma,t):\widehat{\mathcal{F}}_{a,\lambda}^{\perp}\times\widehat{\mathcal{F}}_{b,\lambda}^{\perp}\times\bbr^+\to\bbr$$
  by $G_{\lambda,\beta,u,v}(w,\sigma,t)=J_{\lambda,\beta}(w+t\widetilde{u},\sigma+t\widetilde{v})$, then $G(w,\sigma,t)$ is $C^2$ in $\widehat{\mathcal{F}}_{a,\lambda}^{\perp}\times\widehat{\mathcal{F}}_{b,\lambda}^{\perp}\times\bbr^+$, where $\widetilde{u}$ and $\widetilde{v}$ are the projections of $u$ and $v$ on $\widetilde{\mathcal{F}}_{a,\lambda}^{\perp}\oplus\mathcal{F}_{a,\lambda}$ and $\widetilde{\mathcal{F}}_{b,\lambda}^{\perp}\oplus\mathcal{F}_{b,\lambda}$.  In what follows, we will borrow some ideas from \cite{SW09} to observe the set $\mathcal{G}_{\lambda,\beta}$ by $G_{\lambda,\beta,u,v}(w,\sigma,t)$.%
\begin{lemma}\label{lem0004}
Assume $(D_1)$-$(D_3)$ hold and either $a_0\leq-\mu_{a,1}$ or $b_0\leq-\mu_{b,1}$.  If $0\leq\beta<1$ and $\lambda>\max\{\overline{\Lambda}_a, \overline{\Lambda}_b\}$, then for every $(u,v)\in\mathcal{G}_{\lambda,\beta}$, $G_{\lambda,\beta,u,v}(\widehat{u},\widehat{v},1)\geq G_{\lambda,\beta,u,v}(w,\sigma,t)$ for all $t\in(0, +\infty)$ and $(w,\sigma)\in\widehat{\mathcal{F}}_{a,\lambda}^{\perp}\times\widehat{\mathcal{F}}_{b,\lambda}^{\perp}$, where $(\widehat{u},\widehat{v})$ is the projection of $(u,v)$ in $\widehat{\mathcal{F}}_{a,\lambda}^{\perp}\times\widehat{\mathcal{F}}_{b,\lambda}^{\perp}$.  Furthermore, the equality holds if and only if $t=1$ and $(w,\sigma)=(\widehat{u},\widehat{v})$.%
\end{lemma}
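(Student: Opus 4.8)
The plan is to adapt the Szulkin--Weth fibering analysis of \cite{SW09} to the map $G_{\lambda,\beta,u,v}$, the decisive ingredient being an explicit algebraic inequality for the quartic density. Fix $(u,v)\in\mathcal{G}_{\lambda,\beta}$ and write its projections as $(\widetilde{u},\widetilde{v})$ onto the positive subspace $(\widetilde{\mathcal{F}}_{a,\lambda}^{\perp}\oplus\mathcal{F}_{a,\lambda})\times(\widetilde{\mathcal{F}}_{b,\lambda}^{\perp}\oplus\mathcal{F}_{b,\lambda})$ and $(\widehat{u},\widehat{v})$ onto $\widehat{\mathcal{F}}_{a,\lambda}^{\perp}\times\widehat{\mathcal{F}}_{b,\lambda}^{\perp}$, so that $G_{\lambda,\beta,u,v}(\widehat{u},\widehat{v},1)=J_{\lambda,\beta}(u,v)$. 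The first step is to record that the three summands of each factor are mutually $\mathcal{D}_{a,\lambda}$- (resp.\ $\mathcal{D}_{b,\lambda}$-) orthogonal: for a function supported off $\mathcal{A}_\lambda$ the weight $(\lambda a+a_0)^-$ vanishes, and the eigenfunctions $e_{a,j}(\lambda)$ are orthogonal both in $\langle\cdot,\cdot\rangle_{a,\lambda}$ and for the form $\int(\lambda a+a_0)^-(\cdot)(\cdot)$. Together with Lemma~\ref{lem0110} this yields, for $(w,\sigma)\in\widehat{\mathcal{F}}_{a,\lambda}^{\perp}\times\widehat{\mathcal{F}}_{b,\lambda}^{\perp}$,
\[
\mathcal{D}_{\lambda}(w+t\widetilde{u},\sigma+t\widetilde{v})=t^2P+Q_-(w,\sigma),\qquad P:=\mathcal{D}_{a,\lambda}(\widetilde{u},\widetilde{u})+\mathcal{D}_{b,\lambda}(\widetilde{v},\widetilde{v})>0,
\]
where $Q_-(w,\sigma):=\mathcal{D}_{a,\lambda}(w,w)+\mathcal{D}_{b,\lambda}(\sigma,\sigma)\le0$ by Lemma~\ref{lem0110}(iii). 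This isolates the indefinite part as a manifestly nonpositive term.

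Next I would use the two identities defining $\mathcal{G}_{\lambda,\beta}$. Setting $\ell(\xi,\eta)=\xi^4+\eta^4+2\beta\xi^2\eta^2$, so that $\mathcal{L}_\beta(u,v)=\int_{\bbr^4}\ell(u,v)\,dx$ and $\nabla\ell(u,v)=4(u^3+\beta uv^2,\,v^3+\beta u^2v)$, the relation $\langle D[J_{\lambda,\beta}(u,v)],(u,v)\rangle=0$ together with the orthogonal splitting gives $P=\mathcal{L}_\beta(u,v)-Q_-(\widehat{u},\widehat{v})$, while the $\widehat{\mathcal{F}}^{\perp}$-condition tested against $\psi:=(w,\sigma)-t(\widehat{u},\widehat{v})$ evaluates the mixed quadratic term as $\tfrac14\int_{\bbr^4}\nabla\ell(u,v)\cdot\psi\,dx$. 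Substituting these into $G_{\lambda,\beta,u,v}(\widehat{u},\widehat{v},1)-G_{\lambda,\beta,u,v}(w,\sigma,t)$, the $Q_-(\widehat{u},\widehat{v})$ contributions cancel identically, and Euler's relation $\nabla\ell(u,v)\cdot(u,v)=4\ell(u,v)$ collapses the difference to
\[
G_{\lambda,\beta,u,v}(\widehat{u},\widehat{v},1)-G_{\lambda,\beta,u,v}(w,\sigma,t)=\frac14\int_{\bbr^4}\mathcal{Q}\,dx-\frac12 Q_-(\psi),
\]
\[
\mathcal{Q}:=\ell(V)-t\,\nabla\ell(u,v)\cdot V+(2t^2+1)\ell(u,v),\qquad V:=(tu+\psi_1,\,tv+\psi_2).
\]

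The crux, and the step I expect to be the main obstacle, is the pointwise inequality $\mathcal{Q}\ge0$. I would prove it by observing that for $0\le\beta<1$ the density $\ell$ is convex on $\bbr^2$: its Hessian has nonnegative diagonal and determinant $48\beta(\xi^4+\eta^4)+(144-48\beta^2)\xi^2\eta^2\ge0$. Hence $V\mapsto \ell(V)-t\,\nabla\ell(u,v)\cdot V$ is convex, its minimum over $V\in\bbr^2$ being attained where $\nabla\ell(V)=t\,\nabla\ell(u,v)$; by the degree-three homogeneity of $\nabla\ell$ this occurs at $V=t^{1/3}(u,v)$, where a direct computation gives
\[
\mathcal{Q}\ \ge\ \ell(u,v)\bigl(2t^2-3t^{4/3}+1\bigr)=\ell(u,v)\,(\tau-1)^2(2\tau+1)\ \ge\ 0,\qquad \tau:=t^{2/3},
\]
since $\ell(u,v)\ge0$. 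Combined with $Q_-(\psi)\le0$ this proves the inequality. For the equality clause, $\int_{\bbr^4}\mathcal{Q}\,dx=0$ forces $\mathcal{Q}=0$ a.e.; on the positive-measure set $\{(u,v)\neq(0,0)\}$ (nonempty because $(u,v)\in\widetilde{E}_\lambda$ has $(\widetilde u,\widetilde v)\neq(0,0)$) the factor $\ell(u,v)$ is positive, so $(\tau-1)^2=0$ gives $t=1$ and strict convexity forces $V=(u,v)$, i.e.\ $\psi=0$ there; since $\psi$ is a finite combination of eigenfunctions of \eqref{eq0042}, unique continuation upgrades $\psi\equiv0$ on $\bbr^4$, whence $t=1$ and $(w,\sigma)=(\widehat{u},\widehat{v})$. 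The delicate points to monitor are the convexity and nonnegativity of $\ell$ (which is exactly where $0\le\beta<1$ is used) and this final strictness argument, which must be run without assuming $Q_-$ is negative definite.
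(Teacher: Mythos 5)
Your proof is correct in its main line, and it takes a genuinely different route from the paper's. The paper expands $J_{\lambda,\beta}(u,v)-J_{\lambda,\beta}(tu+w,tv+\sigma)$ directly, substitutes the two constraints defining $\mathcal{G}_{\lambda,\beta}$, discards $-\frac12\mathcal{D}_{a,\lambda}(w,w)\geq0$ and $-\frac12\mathcal{D}_{b,\lambda}(\sigma,\sigma)\geq0$ via Lemma~\ref{lem0110}, and then regroups pointwise into $\bigl((tu+w)^2-u^2\bigr)^2+\bigl((tv+\sigma)^2-v^2\bigr)^2+2\beta\bigl((tu+w)^2-u^2\bigr)\bigl((tv+\sigma)^2-v^2\bigr)$ plus the nonnegative terms $u^2w^2+v^2\sigma^2+\beta v^2w^2+\beta u^2\sigma^2$, finishing with $X^2+Y^2+2\beta XY\geq(|X|-|Y|)^2$ for $0\leq\beta<1$. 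You instead exploit the $\mathcal{D}_{a,\lambda}$- and $\mathcal{D}_{b,\lambda}$-orthogonality of the spectral summands (your justification is sound: the weight $(\lambda a(x)+a_0)^-$ vanishes on supports off $\mathcal{A}_\lambda$, and the $e_{a,j}(\lambda)$ are orthogonal in both relevant forms), reduce via the two Nehari--Pankov identities and Euler's relation to $\frac14\int_{\bbr^4}\mathcal{Q}\,dx-\frac12Q_-(\psi)$ --- I checked this is an exact identity, and your parametrization $\psi=(w,\sigma)-t(\widehat{u},\widehat{v})$ correctly reconciles $w+t\widetilde{u}$ with the paper's $tu+w$ --- and then prove $\mathcal{Q}\geq0$ by convexity of $\ell$ (the Hessian determinant $48\beta(\xi^4+\eta^4)+(144-48\beta^2)\xi^2\eta^2$ is right) together with the degree-three homogeneity of $\nabla\ell$, which yields the quantitative margin $\ell(u,v)(\tau-1)^2(2\tau+1)$ with $\tau=t^{2/3}$. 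This is closer to the abstract scheme of \cite{SW09}: it isolates exactly where $0\leq\beta<1$ enters (joint convexity and nonnegativity of the quartic density) and gives an explicit pointwise lower bound, at the price of a convex-analysis step; the paper's computation is more pedestrian but entirely self-contained.

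The equality case in your write-up has two soft spots, both repairable. First, strict convexity of $\ell$ does not follow from your PSD Hessian alone: at $\beta=0$ the determinant $144\xi^2\eta^2$ degenerates on the axes, and you need the separable one-variable observation that $\xi^4+\eta^4$ is strictly convex (for $0<\beta<1$ the determinant is positive off the origin, so there the Hessian argument does work). Second, and more seriously, the unique-continuation appeal is not justified as stated: your $\psi_1$ is a linear combination of eigenfunctions of \eqref{eq0042} with in general distinct eigenvalues $\alpha_{a,j}(\lambda)$, so it solves no single second-order elliptic equation, and vanishing on a set of positive measure is in any case not the standard unique-continuation hypothesis. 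Fortunately the step is superfluous inside your own framework: on the set $\{(u,v)=(0,0)\}$ one has $V=\psi$, $\nabla\ell(u,v)=0$ and $\ell(u,v)=0$, so $\mathcal{Q}=\ell(\psi)$ there, and $\mathcal{Q}=0$ a.e. forces $\psi=0$ a.e. on that set directly; combined with $t=1$ and $\psi=0$ on the positive-measure set $\{(u,v)\neq(0,0)\}$ from (repaired) strict convexity, this gives $\psi\equiv0$, hence $(w,\sigma)=(\widehat{u},\widehat{v})$, with no unique continuation at all. (Equality in the full chain also forces $Q_-(\psi)=0$, but you never need it once $\psi\equiv0$ is in hand --- consistent with your correct remark that $Q_-$ may fail to be negative definite when some $\alpha_{a,j}(\lambda)=1$.)
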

\begin{proof}
We only give the proof for the case of $a_0\leq-\mu_{a,1}$ and $b_0\leq-\mu_{b,1}$, since the proofs of other cases are similar and more simple due to Lemma~\ref{lem0110}.  Suppose $(u,v)\in\mathcal{G}_{\lambda,\beta}$, $t\in(0, +\infty)$ and $(w,\sigma)\in\widehat{\mathcal{F}}_{a,\lambda}^{\perp}\times\widehat{\mathcal{F}}_{b,\lambda}^{\perp}$.  Then we have%
\begin{eqnarray*}
&&J_{\lambda,\beta}(u,v)-J_{\lambda,\beta}(tu+w,tv+\sigma)\notag\\%
& &=\frac12(\mathcal{D}_{\lambda}(u,v)-\mathcal{D}_{\lambda}(tu+w,tv+\sigma))\notag-\frac14\int_{\bbr^4}u^4+v^4+2\beta u^2v^2dx\notag\\%
&&\quad +\frac14\int_{\bbr^4}(tu+w)^4+(tv+\sigma)^4+2\beta(tu+w)^2(tv+\sigma)^2dx\notag\\%
& &=\frac12((1-t^2)\mathcal{D}_{a,\lambda}(u,u)-2t\mathcal{D}_{a,\lambda}(u,w)-\mathcal{D}_{a,\lambda}(w,w))\\
& & \quad +\frac14\int_{\bbr^4}(tu+w)^4-u^4dx\notag\\%
&&\quad +\frac12((1-t^2)\mathcal{D}_{b,\lambda}(v,v)-2t\mathcal{D}_{b,\lambda}(v,\sigma)-\mathcal{D}_{b,\lambda}(\sigma,\sigma))\\
&&\quad +\frac14\int_{\bbr^4}(tv+\sigma)^4-v^4dx\notag\\%
&&\quad +\frac14\int_{\bbr^4}2\beta((tu+w)^2(tv+\sigma)^2-u^2v^2)dx.%
\end{eqnarray*}
It follows from the definition of $\mathcal{G}_{\lambda,\beta}$, $(w,\sigma)\in\widehat{\mathcal{F}}_{a,\lambda}^{\perp}\times\widehat{\mathcal{F}}_{b,\lambda}^{\perp}$ and Lemma~\ref{lem0110} that%
\begin{eqnarray*}
&&J_{\lambda,\beta}(u,v)-J_{\lambda,\beta}(tu+w,tv+\sigma)\\%
&&=-\frac12\mathcal{D}_{a,\lambda}(w,w)+\frac14\int_{\bbr^4}(tu+w)^4-u^4+2(1-t^2)u^4-4tu^3wdx\\%
&&\quad -\frac12\mathcal{D}_{b,\lambda}(\sigma,\sigma)+\frac14\int_{\bbr^4}(tv+\sigma)^4-v^4+2(1-t^2)v^4-4tv^3\sigma dx\\%
&&\quad +\frac\beta2\int_{\bbr^4}(tu+w)^2(tv+\sigma)^2-u^2v^2+(2-2t^2)u^2v^2dx\\%
&&\quad -\beta\int_{\bbr^4}tv^2uw+tu^2v\sigma dx\\%
& &\geq \frac14\int_{\bbr^4}(tu+w)^4+u^4-2u^2(tu+w)^2+2u^2w^2dx\\%
&& \quad +\frac14\int_{\bbr^4}(tv+\sigma)^4+v^4-2v^2(tv+\sigma)^2+2v^2\sigma^2dx\\%
&&\quad+\frac\beta2\int_{\bbr^4}((tu+w)^2-u^2)((tv+\sigma)^2-v^2)+v^2w^2+u^2\sigma^2dx\\%
& &= \frac14\int_{\bbr^4}\Big((tu+w)^2-u^2)^2+((tv+\sigma)^2-v^2)^2+\\
& &\quad 2\beta((tu+w)^2-u^2)((tv+\sigma)^2-v^2\Big)dx\\%
&&\quad +\frac{1}{2}\int_{\bbr^4}u^2w^2+v^2\sigma^2+\beta v^2w^2+\beta u^2\sigma^2dx.%
\end{eqnarray*}
Since $\beta\in[0, 1)$, we have%
\begin{equation*}
J_{\lambda,\beta}(u,v)-J_{\lambda,\beta}(tu+w,tv+\sigma)\geq\frac14\int_{\bbr^4}(|(tu+w)^2-u^2|-|(tv+\sigma)^2-v^2|)^2dx\geq0%
\end{equation*}
and the equalities hold if and only if $t=1$ and $(w,\sigma)=(0,0)$.%
\end{proof}

\vskip0.3in

By Lemma~\ref{lem0004}, we have the following important observation for $\mathcal{G}_{\lambda,\beta}$.%
\begin{lemma}\label{lem0011}
Assume $(D_1)$-$(D_3)$ hold and either $a_0\leq-\mu_{a,1}$ or $b_0\leq-\mu_{b,1}$.  If $0\leq\beta<1$ and $\lambda>\max\{\overline{\Lambda}_a, \overline{\Lambda}_b\}$, then for every $(u,v)\in \widetilde{E}_\lambda$, there exists a unique $(w^0_\lambda,\sigma^0_\lambda,t^0_\lambda)\in \widehat{\mathcal{F}}_{a,\lambda}^{\perp}\times\widehat{\mathcal{F}}_{b,\lambda}^{\perp}\times\bbr^+$ such that $(u_{\lambda,\beta}^0,v_{\lambda,\beta}^0)=(w^0_\lambda+t^0_\lambda\widetilde{u},\sigma^0_\lambda+t^0_\lambda\widetilde{v})\in\mathcal{G}_{\lambda,\beta}$, where $\widetilde{u}$ and $\widetilde{v}$ are the projections of $u$ and $v$ on $\widetilde{\mathcal{F}}_{a,\lambda}^{\perp}\oplus\mathcal{F}_{a,\lambda}$ and $\widetilde{\mathcal{F}}_{b,\lambda}^{\perp}\oplus\mathcal{F}_{b,\lambda}$.  Furthermore, we also have%
\begin{equation}\label{eq0057}
G_{\lambda,\beta,u,v}(w^0_\lambda,\sigma^0_\lambda,t^0_\lambda)=
\max_{\widehat{\mathcal{F}}_{a,\lambda}^{\perp}\times\widehat{\mathcal{F}}_{b,\lambda}^{\perp}\times\bbr^+}G_{\lambda,\beta,u,v}(w,\sigma,t).%
\end{equation}
\end{lemma}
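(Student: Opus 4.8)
The plan is to realize $\mathcal{G}_{\lambda,\beta}$, along the ``fiber'' generated by a given $(u,v)\in\widetilde{E}_\lambda$, as the set of critical points of the reduced functional $G_{\lambda,\beta,u,v}$ on the finite-dimensional-plus-ray domain $\widehat{\mathcal{F}}_{a,\lambda}^{\perp}\times\widehat{\mathcal{F}}_{b,\lambda}^{\perp}\times\bbr^+$, and to produce the desired point as the unique global maximizer of $G_{\lambda,\beta,u,v}$. First I would record that the three summands in the decomposition of $E_\lambda$ are mutually orthogonal both for $\langle\cdot,\cdot\rangle_\lambda$ and for $\mathcal{D}_\lambda$, since they are spanned by eigenfunctions of \eqref{eq0042}; hence, writing $u=\widehat{u}+\widetilde{u}$, $v=\widehat{v}+\widetilde{v}$ with $\widehat{u}\in\widehat{\mathcal{F}}_{a,\lambda}^{\perp}$, $\widehat{v}\in\widehat{\mathcal{F}}_{b,\lambda}^{\perp}$, the quadratic part of $G_{\lambda,\beta,u,v}(w,\sigma,t)$ splits as $\frac12(\mathcal{D}_{a,\lambda}(w,w)+t^2\mathcal{D}_{a,\lambda}(\widetilde u,\widetilde u)+\mathcal{D}_{b,\lambda}(\sigma,\sigma)+t^2\mathcal{D}_{b,\lambda}(\widetilde v,\widetilde v))$ with no cross terms. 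A direct computation then shows that $(w^0,\sigma^0,t^0)$ with $t^0>0$ is a critical point of $G_{\lambda,\beta,u,v}$ if and only if $(w^0+t^0\widetilde u,\sigma^0+t^0\widetilde v)$ lies in $\mathcal{G}_{\lambda,\beta}$: the vanishing of $\partial_{(w,\sigma)}G$ is precisely the orthogonality condition in \eqref{eq6001}, while combining $\partial_t G=0$ with the identity $\langle D[J_{\lambda,\beta}(w^0+t^0\widetilde u,\sigma^0+t^0\widetilde v)],(w^0+t^0\widetilde u,\sigma^0+t^0\widetilde v)\rangle_{E_\lambda^*,E_\lambda}=\langle D[J_{\lambda,\beta}],(w^0,\sigma^0)\rangle_{E_\lambda^*,E_\lambda}+t^0\langle D[J_{\lambda,\beta}],(\widetilde u,\widetilde v)\rangle_{E_\lambda^*,E_\lambda}$ yields the Nehari condition.

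Next I would establish the existence of the maximizer. By Lemma~\ref{lem0110}(iii) the terms $\mathcal{D}_{a,\lambda}(w,w)$ and $\mathcal{D}_{b,\lambda}(\sigma,\sigma)$ are nonpositive, while for $0\le\beta<1$ the quartic term $\mathcal{L}_\beta$ is positive definite on $\h\times\h$; therefore
$$G_{\lambda,\beta,u,v}(w,\sigma,t)\le\frac{t^2}{2}\big(\mathcal{D}_{a,\lambda}(\widetilde u,\widetilde u)+\mathcal{D}_{b,\lambda}(\widetilde v,\widetilde v)\big)-\frac14\big(\|w+t\widetilde u\|_{L^4(\bbr^4)}^4+\|\sigma+t\widetilde v\|_{L^4(\bbr^4)}^4\big),$$
and, the hat spaces being finite dimensional, the negative quartic term forces $G_{\lambda,\beta,u,v}\to-\infty$ as $|(w,\sigma,t)|\to+\infty$. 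Since $(u,v)\in\widetilde{E}_\lambda$ gives $(\widetilde u,\widetilde v)\ne(0,0)$, Lemma~\ref{lem0110}(i)--(ii) make the bracket $\mathcal{D}_{a,\lambda}(\widetilde u,\widetilde u)+\mathcal{D}_{b,\lambda}(\widetilde v,\widetilde v)$ strictly positive, so $G_{\lambda,\beta,u,v}(0,0,t)>0$ for small $t>0$, whereas $G_{\lambda,\beta,u,v}(w,\sigma,0)=J_{\lambda,\beta}(w,\sigma)\le0$ on the hat spaces. Hence the global maximum is positive and is attained at an interior point with $t^0>0$; by the previous paragraph this point yields an element of $\mathcal{G}_{\lambda,\beta}$ and proves \eqref{eq0057}.

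It remains to prove uniqueness, which is where Lemma~\ref{lem0004} enters decisively and which I expect to be the main obstacle. Suppose $(w',\sigma',t')$ with $t'>0$ also produces a point $(u',v')=(w'+t'\widetilde u,\sigma'+t'\widetilde v)\in\mathcal{G}_{\lambda,\beta}$. The key observation is that the positive-part projection of $u'$ is $t'\widetilde u$ and that of $v'$ is $t'\widetilde v$, so that $G_{\lambda,\beta,u',v'}(w,\sigma,t)=G_{\lambda,\beta,u,v}(w,\sigma,tt')$; that is, the reduced functionals attached to $(u,v)$ and to $(u',v')$ coincide up to the dilation $t\mapsto tt'$ of the ray variable. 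Applying Lemma~\ref{lem0004} to $(u',v')\in\mathcal{G}_{\lambda,\beta}$ shows that $(w',\sigma',1)$ is the \emph{unique} maximizer of $G_{\lambda,\beta,u',v'}$, and transporting this through the dilation shows that $(w',\sigma',t')$ is the unique maximizer of $G_{\lambda,\beta,u,v}$. Since $(w^0,\sigma^0,t^0)$ is also a maximizer of $G_{\lambda,\beta,u,v}$, uniqueness forces $(w',\sigma',t')=(w^0,\sigma^0,t^0)$, completing the proof. The delicate point is that the strict statement of Lemma~\ref{lem0004} (equality only at $t=1$, $(w,\sigma)=(\widehat u,\widehat v)$) is exactly what upgrades ``a maximizer exists'' to ``the maximizer is unique'' after reparametrization; without the finite dimensionality of $\widehat{\mathcal{F}}_{a,\lambda}^{\perp}\times\widehat{\mathcal{F}}_{b,\lambda}^{\perp}$ and the sign information of Lemma~\ref{lem0110}, neither the attainment nor this transport argument would be available.
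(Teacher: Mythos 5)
Your proposal is correct and follows essentially the same route as the paper's proof: positivity of $J_{\lambda,\beta}(t\widetilde{u},t\widetilde{v})$ for small $t>0$ via Lemma~\ref{lem0110} and \eqref{eq0003}, coercivity to $-\infty$ on the finite-dimensional space $(\widehat{\mathcal{F}}_{a,\lambda}^{\perp}\oplus\bbr^+\widetilde{u})\times(\widehat{\mathcal{F}}_{b,\lambda}^{\perp}\oplus\bbr^+\widetilde{v})$ to produce an interior global maximizer, criticality of the maximizer to place $(w^0_\lambda+t^0_\lambda\widetilde{u},\sigma^0_\lambda+t^0_\lambda\widetilde{v})$ in $\mathcal{G}_{\lambda,\beta}$, and Lemma~\ref{lem0004} for uniqueness. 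Your dilation identity $G_{\lambda,\beta,u',v'}(w,\sigma,t)=G_{\lambda,\beta,u,v}(w,\sigma,tt')$ is a correct and welcome unpacking of the paper's terse final sentence invoking Lemma~\ref{lem0004}, not a deviation from it.
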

\begin{proof}
We only give the proof for the case of $a_0\leq-\mu_{a,1}$ and $b_0\leq-\mu_{b,1}$, since the proofs of other cases are similar and more simple due to Lemma~\ref{lem0110}.
Clearly, $(\widehat{\mathcal{F}}_{a,\lambda}^{\perp}\oplus\bbr^+ u)\times(\widehat{\mathcal{F}}_{b,\lambda}^{\perp}\oplus\bbr^+ v)=(\widehat{\mathcal{F}}_{a,\lambda}^{\perp}\oplus\bbr^+ \widetilde{u})\times(\widehat{\mathcal{F}}_{b,\lambda}^{\perp}\oplus\bbr^+ \widetilde{v})$ for every $(u,v)\in E_\lambda$ with $\lambda>\max\{\overline{\Lambda}_a, \overline{\Lambda}_b\}$.  By the definitions of $\widehat{\mathcal{F}}_{a,\lambda}^{\perp}$ and $\widehat{\mathcal{F}}_{b,\lambda}^{\perp}$, we can also see that dim$((\widehat{\mathcal{F}}_{a,\lambda}^{\perp}\oplus\bbr^+ \widetilde{u})\times(\widehat{\mathcal{F}}_{b,\lambda}^{\perp}\oplus\bbr^+ \widetilde{v}))<+\infty$ for every $(u,v)\in E_\lambda$ with $\lambda>\max\{\overline{\Lambda}_a, \overline{\Lambda}_b\}$.  On the other hand, since $(u,v)\in \widetilde{E}_\lambda$ with $\lambda>\max\{\overline{\Lambda}_a, \overline{\Lambda}_b\}$, we must have $\widetilde{u}\not=0$ or $\widetilde{v}\not=0$.  Since $0\leq\beta<1$, by Lemma~\ref{lem0110} and \eqref{eq0003}, we have that%
\begin{eqnarray}
&&J_{\lambda,\beta}(t\widetilde{u},t\widetilde{v})\notag\\%
&=&\frac{t^2}2(\mathcal{D}_{a,\lambda}(\widetilde{u},\widetilde{u})+\mathcal{D}_{b,\lambda}(\widetilde{v},\widetilde{v}))\notag\\%
&&-\frac{t^4}4(\|\widetilde{u}\|^4_{L^4(\bbr^4)}+\|\widetilde{v}\|^4_{L^4(\bbr^4)}+2\beta\|\widetilde{u}^2\widetilde{v}^2\|^1_{L^1(\bbr^4)})\notag\\%
&\geq&\frac{t^2d_{\lambda}^*}2(\|\widetilde{u}\|_{a,\lambda}^2+\|\widetilde{v}\|_{b,\lambda}^2)
-\frac{t^4S^{-2}}4(\|\widetilde{u}\|_{a,\lambda}^2+\|\widetilde{v}\|_{b,\lambda}^2)^2\notag\\%
&=&\frac{t^2}{4}(\|\widetilde{u}\|_{a,\lambda}^2+\|\widetilde{v}\|_{b,\lambda}^2)(2d_\lambda^*-t^2S^{-2}
(\|\widetilde{u}\|_{a,\lambda}^2+\|\widetilde{v}\|_{b,\lambda}^2))\label{eq0052}\\%
&>&0\notag%
\end{eqnarray}
for $t>0$ sufficiently small, where%
\begin{equation}\label{eq5002}
d_\lambda^*=\min\bigg\{1-\frac{1}{\alpha_{a, j_{a,\lambda}}(\lambda)}, 1-\frac{1}{\alpha_{a, j_{b,\lambda}}(\lambda)}\bigg\}>0%
\end{equation}
and $j_{a,\lambda}$ and $j_{b,\lambda}$ are given by Lemma~\ref{lem0110}.
Note that $0\leq\beta<1$, then for every $(w, \sigma)\in(\widehat{\mathcal{F}}_{a,\lambda}^{\perp}\oplus\bbr^+ \widetilde{u})\times(\widehat{\mathcal{F}}_{b,\lambda}^{\perp}\oplus\bbr^+\widetilde{v})$ with $\|w\|_{a,\lambda}^2+\|\sigma\|_{b,\lambda}^2=1$, we have from the H\"older inequality that $\mathcal{L}_{\beta}(w,\sigma)>0$,
which then implies%
\begin{eqnarray*}
J_{\lambda,\beta}(Rw,R\sigma)&\leq&\frac{R^2}{2}
-\frac{R^4}{4}(\|w\|_{L^4(\bbr^4)}^4+\|\sigma\|_{L^4(\bbr^4)}^4+2\beta\|w^2\sigma^2\|_{L^1(\bbr^4)})\to-\infty%
\end{eqnarray*}
as $R\to+\infty$.  Since dim$((\widehat{\mathcal{F}}_{a,\lambda}^{\perp}\oplus\bbr^+ \widetilde{u})\times(\widehat{\mathcal{F}}_{b,\lambda}^{\perp}\oplus\bbr^+ \widetilde{v}))<+\infty$, there exists $R_\lambda>0$ such that%
\begin{equation}\label{eq9003}
J_{\lambda,\beta}(R_\lambda w,R_\lambda \sigma)\leq-1%
\end{equation}
for all $(w,\sigma)\in(\widehat{\mathcal{F}}_{a,\lambda}^{\perp}\oplus\bbr^+ \widetilde{u})\times(\widehat{\mathcal{F}}_{b,\lambda}^{\perp}\oplus\bbr^+ \widetilde{v})$ with $\|w\|_{a,\lambda}^2+\|\sigma\|_{b,\lambda}^2=1$.
Since $G_{\lambda,\beta,u,v}(w,\sigma,t)$ is of $C^2$ in
 $\widehat{\mathcal{F}}_{a,\lambda}^{\perp}\times\widehat{\mathcal{F}}_{b,\lambda}^{\perp}\times\bbr^+$,
noting  $$\dim ((\widehat{\mathcal{F}}_{a,\lambda}^{\perp}\oplus\bbr^+ \widetilde{u})
\times(\widehat{\mathcal{F}}_{b,\lambda}^{\perp}\oplus\bbr^+ \widetilde{v}))<+\infty,$$ there exists $(w^0_\lambda,\sigma^0_\lambda,t^0_\lambda)\in \widehat{\mathcal{F}}_{a,\lambda}^{\perp}\times\widehat{\mathcal{F}}_{b,\lambda}^{\perp}\times\bbr^+$ such that \eqref{eq0057} holds.
It follows that $(w^0_\lambda,\sigma^0_\lambda,t^0_\lambda)$ is a critical point of $G_{\lambda,\beta,u,v}(w,\sigma,t)$ in $\widehat{\mathcal{F}}_{a,\lambda}^{\perp}\times\widehat{\mathcal{F}}_{b,\lambda}^{\perp}\times\bbr^+$.  Therefore, $(u_{\lambda,\beta}^0,v_{\lambda,\beta}^0)=(w^0_\lambda+t^0_\lambda\widetilde{u},\sigma^0_\lambda+t^0_\lambda\widetilde{v})\in\mathcal{G}_{\lambda,\beta}$.  Note that $(u_{\lambda,\beta}^0,v_{\lambda,\beta}^0)\in\mathcal{G}_{\lambda,\beta}$ and $(w^0_\lambda,\sigma^0_\lambda,t^0_\lambda)$ satisfy \eqref{eq0057}, by Lemma~\ref{lem0004}, $(w^0_\lambda,\sigma^0_\lambda,t^0_\lambda)$ must be unique, which completes the proof.%
\end{proof}

\begin{remark}\label{rmk0002}
By Lemma~\ref{lem0001} and Remark~\ref{rmk0001}, we have $j_{a,\lambda}=j_{a,0}^*+1\leq k_a+1$ for $\lambda>\Lambda_a^*$ and $j_{b,\lambda}=j_{b,0}^*+1\leq k_b+1$ for $\lambda>\Lambda_b^*$.  It follows from Lemma~\ref{lem0003} that $d_\lambda^*$ and $R_\lambda$ given by \eqref{eq5002} and \eqref{eq9003} respectively are both independent of $\lambda$ sufficient large, say $\lambda\geq\Lambda_0^*\geq\max\{\Lambda_a^*, \Lambda_b^*\}$.%
\end{remark}

In what follows, we will give some estimates of $J_{\lambda,\beta}(u,v)$ on the sets
 $\mathcal{N}_{\lambda,\beta}$, $\mathcal{M}_{\lambda,\beta}$ and $\mathcal{G}_{\lambda,\beta}$.
  More precisely, we will give some estimates of $m_{\lambda,\beta}$, $m_{\lambda,\beta}^*$
  and $c_{\lambda,\beta}$.  We begin with the estimates of the upper boundary to $m_{\lambda,\beta}$
  and $m_{\lambda,\beta}^*$.  Let $I_{\Omega_a}(u)$ and $I_{\Omega_b}(v)$ be two functionals respectively
  defined on $H_0^1(\Omega_a)$ and $H_0^1(\Omega_b)$, which are given by%
\begin{equation*}
I_{\Omega_a}(u):=\frac12\int_{\Omega_a}|\nabla u|^2+a_0u^2dx-\frac{1}{4}\int_{\Omega_a}u^4dx,
\end{equation*}
\begin{equation*}
 I_{\Omega_b}(v):=\frac12\int_{\Omega_b}|\nabla v|^2+b_0v^2dx-\frac{1}{4}\int_{\Omega_b}v^4dx.%
\end{equation*}
Then it is well known that $I_{\Omega_a}(u)$ and $I_{\Omega_b}(v)$ are of $C^2$ in $H_0^1(\Omega_a)$ and $H_0^1(\Omega_b)$, respectively.  Define%
\begin{equation*}
\mathcal{N}_a:=\{u\in H_0^1(\Omega_a)\backslash\{0\}\mid I_{\Omega_a}'(u)u=0\},
\end{equation*}
\begin{equation*}
\mathcal{N}_b:=\{v\in H_0^1(\Omega_b)\backslash\{0\}\mid I_{\Omega_b}'(v)v=0\}.%
\end{equation*}
Then it is easy to show that $\mathcal{N}_a$ and $\mathcal{N}_b$ are all nonempty.
Let
 $$m_a:=\inf_{\mathcal{N}_a}I_a(u), \quad m_b=\inf_{\mathcal{N}_b}I_b(v).$$
Then it is   well known that $m_a=\frac14 S^2$ in the case of $a_0\geq0$ and
  $m_a<\frac14S^2$ in the case of $-\mu_{a,1}<a_0<0$ while $m_b=\frac14 S^2$ in the case
   of $b_0\geq0$ and $m_b<\frac14S^2$ in the case of $-\mu_{b,1}<b_0<0$ due to the
   condition $(D_3)$ (cf. \cite{S96}).

\begin{lemma}\label{lem5003}
Let   $(D_1)$-$(D_3)$ hold and $\mathcal{D}_{\lambda}(u,v)$ be positively  definite in $E_\lambda$.  Then $m_a+m_b\geq m_{\lambda,\beta}$ and $\min\{m_a, m_b\}\geq m_{\lambda,\beta}^*$ for all $\beta\in\bbr$.%
\end{lemma}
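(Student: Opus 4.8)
The plan is to exploit the disjointness of the two wells guaranteed by condition $(D_3)$, namely $\overline{\Omega}_a\cap\overline{\Omega}_b=\emptyset$, which forces every coupling integral to vanish on suitably chosen test functions. Concretely, I would take arbitrary $u\in\mathcal{N}_a$ and $v\in\mathcal{N}_b$ and regard them as elements of $\h$ by extending them by zero outside $\Omega_a$ and $\Omega_b$ respectively. Since $a(x)=0$ on $\overline{\Omega}_a=\text{supp}\,u$ we get $\int_{\bbr^4}a(x)u^2dx=0$, so $u\in E_{a,\lambda}$ and likewise $v\in E_{b,\lambda}$; moreover $\mathcal{D}_{a,\lambda}(u,u)=\int_{\Omega_a}|\nabla u|^2+a_0u^2dx$ and $\|u\|_{L^4(\bbr^4)}^4=\int_{\Omega_a}u^4dx$, with the analogous identities for $v$. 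Crucially, because $\text{supp}\,u\subset\overline{\Omega}_a$ and $\text{supp}\,v\subset\overline{\Omega}_b$ are disjoint, the coupling term satisfies $\int_{\bbr^4}u^2v^2dx=0$.

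For the first inequality I would check that this pair already lies in $\mathcal{N}_{\lambda,\beta}$. The vanishing of the coupling integral reduces the two Nehari constraints defining $\mathcal{N}_{\lambda,\beta}$ to $\mathcal{D}_{a,\lambda}(u,u)=\|u\|_{L^4(\bbr^4)}^4$ and $\mathcal{D}_{b,\lambda}(v,v)=\|v\|_{L^4(\bbr^4)}^4$, which are exactly $I_{\Omega_a}'(u)u=0$ and $I_{\Omega_b}'(v)v=0$, i.e. $u\in\mathcal{N}_a$ and $v\in\mathcal{N}_b$. Since $u\not=0$ and $v\not=0$, indeed $(u,v)\in\mathcal{N}_{\lambda,\beta}$. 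The same cancellation in $\mathcal{L}_\beta(u,v)$ gives $J_{\lambda,\beta}(u,v)=I_{\Omega_a}(u)+I_{\Omega_b}(v)$, so taking the infimum over $u\in\mathcal{N}_a$ and $v\in\mathcal{N}_b$ independently yields $m_{\lambda,\beta}\leq m_a+m_b$, which is the first assertion.

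For the second inequality I would instead use the semi-trivial competitors $(u,0)$ and $(0,v)$ with $u\in\mathcal{N}_a$, $v\in\mathcal{N}_b$. A direct computation gives $\langle D[J_{\lambda,\beta}(u,0)],(u,0)\rangle_{E_\lambda^*,E_\lambda}=\mathcal{D}_{a,\lambda}(u,u)-\|u\|_{L^4(\bbr^4)}^4=0$, so $(u,0)\in\mathcal{M}_{\lambda,\beta}$ and $J_{\lambda,\beta}(u,0)=I_{\Omega_a}(u)$; passing to the infimum gives $m_{\lambda,\beta}^*\leq m_a$. Symmetrically $(0,v)\in\mathcal{M}_{\lambda,\beta}$ with $J_{\lambda,\beta}(0,v)=I_{\Omega_b}(v)$ yields $m_{\lambda,\beta}^*\leq m_b$, and combining the two gives $m_{\lambda,\beta}^*\leq\min\{m_a,m_b\}$.

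Since these are pure upper-bound estimates obtained from explicit test functions, there is no genuine analytic obstacle: the positive definiteness of $\mathcal{D}_\lambda(u,v)$ is used only to guarantee that $m_{\lambda,\beta}$ and $m_{\lambda,\beta}^*$ are well defined and that the constructed pairs actually sit on the respective Nehari sets. The only point requiring a little care is the verification that the zero-extensions belong to $E_{a,\lambda}$ and $E_{b,\lambda}$ and that the coupling term truly vanishes, both of which follow immediately from $(D_3)$; note that the argument is valid for every $\beta\in\bbr$ precisely because $\beta$ multiplies a term that is identically zero on the chosen competitors.
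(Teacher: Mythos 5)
Your proof is correct and takes essentially the same route as the paper's: the paper's one-line argument rests on exactly the inclusions $\mathcal{N}_a\times\mathcal{N}_b\subset\mathcal{N}_{\lambda,\beta}$ and $\mathcal{N}_a\times\{0\}\subset\mathcal{M}_{\lambda,\beta}$, both consequences of the disjointness $\overline{\Omega}_a\cap\overline{\Omega}_b=\emptyset$ from $(D_3)$ killing the coupling term, with the remaining details deferred to \cite[Lemma~3.2]{WWZ15}. You have simply written out those details explicitly (using both semi-trivial competitors $(u,0)$ and $(0,v)$, where the paper instead assumes $m_a\leq m_b$ without loss of generality), so there is nothing to correct.
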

\begin{proof}
Without loss of generality, we assume $m_a\leq m_b$.  Since $\mathcal{N}_a\times\mathcal{N}_b\subset\mathcal{N}_{\lambda,\beta}$ and $\mathcal{N}_a\times\{0\}\subset\mathcal{M}_{\lambda,\beta}$ by the condition $(D_3)$, the conclusion follows immediately from a similar argument as used in \cite[Lemma~3.2]{WWZ15}.%
\end{proof}

We next give some estimates of the lower bound  of  $m_{\lambda,\beta}$ and $m_{\lambda,\beta}^*$.  Let%
\begin{eqnarray*}
I_{a,\lambda}(u)=\frac12\mathcal{D}_{a,\lambda}(u,u)-\frac{1}{4}\|u\|_{L^4(\bbr^4)}^4\quad\text{and}\quad
I_{b,\lambda}(v)=\frac12\mathcal{D}_{b,\lambda}(v,v)-\frac{1}{4}\|v\|_{L^4(\bbr^4)}^4.%
\end{eqnarray*}
Then by \eqref{eq0001}--\eqref{eq0003}, $I_{a,\lambda}(u)$ is well defined on $E_{a,\lambda}$ and $I_{b,\lambda}(v)$ is
 well defined on $E_{b,\lambda}$ respectively for $\lambda>\max\{\overline{\Lambda}_a, \overline{\Lambda}_b\}$.
 Moreover, by a standard argument, we can see that $I_{a,\lambda}(u)$ and $I_{b,\lambda}(v)$ are of $C^2$ in
  $E_{a,\lambda}$ and $E_{b,\lambda}$, respectively.  Denote%
\begin{eqnarray}   \label{eq1149}
\mathcal{N}_{a,\lambda}=\{u\in E_{a,\lambda}\backslash\{0\}\mid I_{a,\lambda}'(u)u=0\}, \\
\mathcal{N}_{b,\lambda}=\{u\in E_{b,\lambda}\backslash\{0\}\mid I_{b,\lambda}'(u)u=0\}.%
\end{eqnarray}
Then $\mathcal{N}_{a,\lambda}$ and $\mathcal{N}_{b,\lambda}$ are nonempty
if $\mathcal{D}_{a,\lambda}(u,u)$ and $\mathcal{D}_{b,\lambda}(v,v)$ are  positively  definite in $E_{a,\lambda}$ and $E_{b,\lambda}$ respectively.%

\begin{lemma}\label{lem5002}
Assume  that $(D_1)$-$(D_3)$ hold and $\mathcal{D}_{a,\lambda}(u,u)$ and $\mathcal{D}_{b,\lambda}(v,v)$ are
  positively definite on $E_{a,\lambda}$ and $E_{b,\lambda}$ respectively, then for $\beta\leq0$,
 we have $m_{\lambda,\beta}\geq m_{a,\lambda}+m_{b,\lambda}$ and
 $m_{\lambda,\beta}^*\geq\min\{m_{a,\lambda}, m_{b,\lambda}\}$, where $m_{a,\lambda}=\inf_{\mathcal{N}_{a,\lambda}}I_{a,\lambda}(u)$
 and $m_{b,\lambda}=\inf_{\mathcal{N}_{b,\lambda}}I_{b,\lambda}(v)$.%
\end{lemma}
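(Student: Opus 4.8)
The plan is to reduce both inequalities to the scalar ground state levels through the quotient representation of the Nehari minima. First I would record the scalar analogue of Lemma~\ref{lem0008} applied to $I_{a,\lambda}$ and $I_{b,\lambda}$: since $\mathcal{D}_{a,\lambda}(u,u)$ and $\mathcal{D}_{b,\lambda}(v,v)$ are positively definite, the fibering map $t\mapsto I_{a,\lambda}(tu)$ has a unique maximiser $t_u=(\mathcal{D}_{a,\lambda}(u,u)/\|u\|_{L^4(\bbr^4)}^4)^{1/2}$ lying on $\mathcal{N}_{a,\lambda}$, so that $m_{a,\lambda}=\inf_{u\neq 0}\frac{\mathcal{D}_{a,\lambda}(u,u)^2}{4\|u\|_{L^4(\bbr^4)}^4}$ and likewise for $m_{b,\lambda}$. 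Combining positive definiteness with the Sobolev inequality \eqref{eq0003} gives $m_{a,\lambda},m_{b,\lambda}>0$, a fact I will need in order to divide by them in the second half.

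For $m_{\lambda,\beta}\geq m_{a,\lambda}+m_{b,\lambda}$, I would take any $(u,v)\in\mathcal{N}_{\lambda,\beta}$ and read off the two scalar identities $\mathcal{D}_{a,\lambda}(u,u)=\|u\|_{L^4(\bbr^4)}^4+\beta\|u^2v^2\|_{L^1(\bbr^4)}$ and $\mathcal{D}_{b,\lambda}(v,v)=\|v\|_{L^4(\bbr^4)}^4+\beta\|u^2v^2\|_{L^1(\bbr^4)}$ that define $\mathcal{N}_{\lambda,\beta}$. Substituting these back into $J_{\lambda,\beta}$ gives $J_{\lambda,\beta}(u,v)=\frac14(\mathcal{D}_{a,\lambda}(u,u)+\mathcal{D}_{b,\lambda}(v,v))$. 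Since $\beta\leq0$ and $\|u^2v^2\|_{L^1(\bbr^4)}\geq0$, each identity yields $\mathcal{D}_{a,\lambda}(u,u)\leq\|u\|_{L^4(\bbr^4)}^4$, so the quotient formula gives $m_{a,\lambda}\leq\frac{\mathcal{D}_{a,\lambda}(u,u)^2}{4\|u\|_{L^4(\bbr^4)}^4}\leq\frac14\mathcal{D}_{a,\lambda}(u,u)$, and symmetrically $m_{b,\lambda}\leq\frac14\mathcal{D}_{b,\lambda}(v,v)$; adding these and taking the infimum over $\mathcal{N}_{\lambda,\beta}$ closes this half.

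The harder half is $m_{\lambda,\beta}^*\geq\min\{m_{a,\lambda},m_{b,\lambda}\}$, and I expect the main obstacle here: a point of $\mathcal{M}_{\lambda,\beta}$ satisfies only the single constraint $\mathcal{D}_\lambda(u,v)=\mathcal{L}_\beta(u,v)$, so $u$ and $v$ need not individually lie on $\mathcal{N}_{a,\lambda}$ or $\mathcal{N}_{b,\lambda}$ and the components can be ``mismatched.'' I would first dispose of the semi-trivial points: if $v=0$ (resp.\ $u=0$) the constraint collapses to $u\in\mathcal{N}_{a,\lambda}$ (resp.\ $v\in\mathcal{N}_{b,\lambda}$) and $J_{\lambda,\beta}=I_{a,\lambda}(u)\geq m_{a,\lambda}$ (resp.\ $\geq m_{b,\lambda}$). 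For a point with both $u\neq0$ and $v\neq0$ I would use $J_{\lambda,\beta}(u,v)=\frac14\mathcal{D}_\lambda(u,v)$ on $\mathcal{M}_{\lambda,\beta}$ together with $\mathcal{D}_\lambda(u,v)=\mathcal{L}_\beta(u,v)\leq\|u\|_{L^4(\bbr^4)}^4+\|v\|_{L^4(\bbr^4)}^4$ (again $\beta\leq0$) and the quotient bounds $\|u\|_{L^4(\bbr^4)}^4\leq\frac{\mathcal{D}_{a,\lambda}(u,u)^2}{4m_{a,\lambda}}$, $\|v\|_{L^4(\bbr^4)}^4\leq\frac{\mathcal{D}_{b,\lambda}(v,v)^2}{4m_{b,\lambda}}$, arguing by contradiction. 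Setting $m=\min\{m_{a,\lambda},m_{b,\lambda}\}$, if $\frac14(\mathcal{D}_{a,\lambda}(u,u)+\mathcal{D}_{b,\lambda}(v,v))<m$ then both $\mathcal{D}_{a,\lambda}(u,u)<4m$ and $\mathcal{D}_{b,\lambda}(v,v)<4m$ (each being positive by definiteness), and feeding this into the quotient bounds forces $\|u\|_{L^4(\bbr^4)}^4<\mathcal{D}_{a,\lambda}(u,u)$ and $\|v\|_{L^4(\bbr^4)}^4<\mathcal{D}_{b,\lambda}(v,v)$, contradicting $\mathcal{D}_\lambda(u,v)\leq\|u\|_{L^4(\bbr^4)}^4+\|v\|_{L^4(\bbr^4)}^4$. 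Taking the infimum over $\mathcal{M}_{\lambda,\beta}$ yields the claim, and the positivity $m_{a,\lambda},m_{b,\lambda}>0$ proved in the first step is precisely what legitimises the divisions throughout this argument.
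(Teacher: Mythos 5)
Your proof is correct, and it reaches both inequalities by a more algebraic, self-contained route than the paper's. The paper proves $m_{\lambda,\beta}\geq m_{a,\lambda}+m_{b,\lambda}$ by deferring to the argument of \cite[Lemma~3.2]{WWZ15}: for $(u,v)\in\mathcal{N}_{\lambda,\beta}$ with $\beta\leq0$ the two defining identities give $\mathcal{D}_{a,\lambda}(u,u)\leq\|u\|_{L^4(\bbr^4)}^4$ and $\mathcal{D}_{b,\lambda}(v,v)\leq\|v\|_{L^4(\bbr^4)}^4$, so each component can be scaled back onto $\mathcal{N}_{a,\lambda}$, $\mathcal{N}_{b,\lambda}$ with factors in $(0,1]$, and the maximum property \eqref{eq0024} of the two-variable fibering map (Lemma~\ref{lem0002}) then yields $J_{\lambda,\beta}(u,v)\geq I_{a,\lambda}(tu)+I_{b,\lambda}(sv)\geq m_{a,\lambda}+m_{b,\lambda}$; for $m_{\lambda,\beta}^*$ the paper observes that on $\mathcal{M}_{\lambda,\beta}$ at least one of $\mathcal{D}_{a,\lambda}(u,u)-\|u\|_{L^4(\bbr^4)}^4$ and $\mathcal{D}_{b,\lambda}(v,v)-\|v\|_{L^4(\bbr^4)}^4$ is $\leq0$, scales that component onto the corresponding scalar Nehari set with some $t\in(0,1]$, and invokes the diagonal maximum property of Lemma~\ref{lem0008}. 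You instead run everything through the quotient representation $m_{a,\lambda}=\inf_{u\neq0}\mathcal{D}_{a,\lambda}(u,u)^2/(4\|u\|_{L^4(\bbr^4)}^4)$ (the scalar analogue of \eqref{eq1003}), which turns the first half into the chain $m_{a,\lambda}\leq\mathcal{D}_{a,\lambda}(u,u)^2/(4\|u\|_{L^4(\bbr^4)}^4)\leq\frac14\mathcal{D}_{a,\lambda}(u,u)$ and the second half into a pure contradiction from $\mathcal{D}_{a,\lambda}(u,u)+\mathcal{D}_{b,\lambda}(v,v)<4m$. This buys two things: your first half uses only the one-component identities on $\mathcal{N}_{\lambda,\beta}$, never the joint $(t,s)$-maximum property; and your second half sidesteps a detail the paper's sketch glosses over, namely that after scaling one component onto, say, $\mathcal{N}_{a,\lambda}$ one must still check $I_{b,\lambda}(tv)\geq0$ at the chosen $t$ (which in general forces a small case analysis on which admissible scaling factor is smaller), whereas your contradiction never evaluates the off-manifold component at all. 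The quantitative inputs you rely on, namely $\mathcal{D}_{a,\lambda}(u,u)\geq c\|u\|_{a,\lambda}^2$ with $c>0$ together with \eqref{eq0003} to guarantee $m_{a,\lambda},m_{b,\lambda}>0$, are exactly what the paper's notion of positive definiteness supplies via Lemma~\ref{lem0110} and \eqref{eq5002}, so all divisions in your argument are legitimate and the proof stays within the paper's framework.
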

\begin{proof}
Since $\mathcal{D}_{a,\lambda}(u,u)$ and $\mathcal{D}_{b,\lambda}(v,v)$ are  positively  definite on
 $E_{a,\lambda}$ and $E_{b,\lambda}$ respectively and Lemma~\ref{lem0002} holds, the proof of $m_{\lambda,\beta}\geq m_{a,\lambda}+m_{b,\lambda}$ is similar to \cite[Lemma~3.2]{WWZ15}.  For the proof of $m_{\lambda,\beta}^*\geq\min\{m_{a,\lambda}, m_{b,\lambda}\}$, note that by $\beta\leq0$, we must have $\min\{\mathcal{D}_{a,\lambda}(u,u)-\|u\|_{L^4(\bbr^4)}^4, \mathcal{D}_{b,\lambda}(v,v)-\|v\|_{L^4(\bbr^4)}^4\}\leq0$ for all $(u,v)\in\mathcal{M}_{\lambda,\beta}$.  It follows that there exists $t\in(0, 1]$ such that either $tu\in\mathcal{N}_{a,\lambda}$ or $tv\in\mathcal{N}_{b,\lambda}$, which together with Lemma~\ref{lem0008}, implies $m_{\lambda,\beta}^*\geq\min\{m_{a,\lambda}, m_{b,\lambda}\}$.%
\end{proof}

When $\mathcal{D}_{\lambda}(u,v)$ is positively indefinite in $E_\lambda$, the situation is somewhat different.
In this case, due to Lemma~\ref{lem0110}, we have that either $a_0\leq-\mu_{a,1}$ or $b_0\leq-\mu_{b,1}$
 if $\lambda>\max\{\overline{\Lambda}_a, \overline{\Lambda}_b\}$.%

\begin{lemma}\label{lem0007}
Assume that  $(D_1)$-$(D_3)$ hold and   that either $a_0\leq-\mu_{a,1}$ or $b_0\leq-\mu_{b,1}$.  If $0\leq\beta<1$ and $\lambda\geq\Lambda_0^*$, then we have%
\begin{equation*}
c_{\lambda,\beta}\geq\alpha_0>0,%
\end{equation*}
where $\alpha_0>0$ is a constant independent of $\beta\in(-1, 1)$ and $\lambda\geq\Lambda_0^*$.%
\end{lemma}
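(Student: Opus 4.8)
The plan is to bound the energy of any $(u,v)\in\mathcal{G}_{\lambda,\beta}$ from below along the one–parameter slice obtained by suppressing the finite–dimensional negative cone, and then to exploit the mismatch between the quadratic homogeneity of $\mathcal{D}_\lambda$ and the quartic homogeneity of $\mathcal{L}_\beta$ so that all dependence on the test function cancels, leaving a pure constant. I would not attempt to control $J_{\lambda,\beta}$ on all of $\mathcal{G}_{\lambda,\beta}$ directly; instead the point is to reduce, via Lemma~\ref{lem0004}, to a single explicit one–variable maximization.

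First, fix $(u,v)\in\mathcal{G}_{\lambda,\beta}$ and let $(\widetilde{u},\widetilde{v})$ be its projection onto $(\widetilde{\mathcal{F}}_{a,\lambda}^{\perp}\oplus\mathcal{F}_{a,\lambda})\times(\widetilde{\mathcal{F}}_{b,\lambda}^{\perp}\oplus\mathcal{F}_{b,\lambda})$, and let $(\widehat{u},\widehat{v})$ be its projection onto $\widehat{\mathcal{F}}_{a,\lambda}^{\perp}\times\widehat{\mathcal{F}}_{b,\lambda}^{\perp}$. Since $(u,v)\in\widetilde{E}_\lambda$ we have $(\widetilde{u},\widetilde{v})\neq(0,0)$. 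Applying Lemma~\ref{lem0004} with the admissible triple $(w,\sigma,t)=(0,0,t)$ (note $0\in\widehat{\mathcal{F}}_{a,\lambda}^{\perp}$) gives, for every $t>0$,
\begin{equation*}
J_{\lambda,\beta}(u,v)=G_{\lambda,\beta,u,v}(\widehat{u},\widehat{v},1)\geq G_{\lambda,\beta,u,v}(0,0,t)=\frac{t^2}{2}\mathcal{D}_\lambda(\widetilde{u},\widetilde{v})-\frac{t^4}{4}\mathcal{L}_\beta(\widetilde{u},\widetilde{v}).
\end{equation*}
Because $0\leq\beta<1$, the H\"older inequality forces $\mathcal{L}_\beta(\widetilde{u},\widetilde{v})>0$, while $\mathcal{D}_\lambda(\widetilde{u},\widetilde{v})>0$ by Lemma~\ref{lem0110}; hence the right-hand side is maximized at $t^2=\mathcal{D}_\lambda(\widetilde{u},\widetilde{v})/\mathcal{L}_\beta(\widetilde{u},\widetilde{v})$, and taking the supremum over $t>0$ yields
\begin{equation*}
J_{\lambda,\beta}(u,v)\geq\frac{\mathcal{D}_\lambda(\widetilde{u},\widetilde{v})^2}{4\mathcal{L}_\beta(\widetilde{u},\widetilde{v})}.
\end{equation*}

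Next I would estimate the two quantities exactly as in the derivation of \eqref{eq0052}: Lemma~\ref{lem0110}$(i)$--$(ii)$ gives the coercivity bound $\mathcal{D}_\lambda(\widetilde{u},\widetilde{v})\geq d_\lambda^*\|(\widetilde{u},\widetilde{v})\|_\lambda^2$ with $d_\lambda^*>0$ from \eqref{eq5002}, while the H\"older inequality together with \eqref{eq0003} and $0\leq\beta<1$ gives $\mathcal{L}_\beta(\widetilde{u},\widetilde{v})\leq S^{-2}\|(\widetilde{u},\widetilde{v})\|_\lambda^4$. Inserting these, the factor $\|(\widetilde{u},\widetilde{v})\|_\lambda^4$ cancels and
\begin{equation*}
J_{\lambda,\beta}(u,v)\geq\frac{(d_\lambda^*)^2S^2}{4},
\end{equation*}
a bound free of $(u,v)$ and of $\beta$. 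Taking the infimum over $\mathcal{G}_{\lambda,\beta}$ gives $c_{\lambda,\beta}\geq(d_\lambda^*)^2S^2/4$. Finally, by Remark~\ref{rmk0002} the constant $d_\lambda^*$ is independent of $\lambda$ for $\lambda\geq\Lambda_0^*$; writing $d^*$ for this common value, I set $\alpha_0:=(d^*)^2S^2/4>0$, which is independent of $\lambda\geq\Lambda_0^*$ and of $\beta\in(-1,1)$. There is no genuine analytic obstacle here: the only real decision is to test Lemma~\ref{lem0004} against the single direction $(0,0,t)$ rather than the whole finite-dimensional cone, which converts the minimax value into an explicit scale-invariant quantity; one must only be careful to verify that this slice lands in the region where $\mathcal{D}_\lambda$ is coercive (guaranteed by $(\widetilde{u},\widetilde{v})\in(\widetilde{\mathcal{F}}_{a,\lambda}^{\perp}\oplus\mathcal{F}_{a,\lambda})\times(\widetilde{\mathcal{F}}_{b,\lambda}^{\perp}\oplus\mathcal{F}_{b,\lambda})$) and where $\mathcal{L}_\beta$ is positive.
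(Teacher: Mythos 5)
Your proof is correct and is essentially the paper's own argument: the paper likewise reduces to $J_{\lambda,\beta}(u,v)\geq J_{\lambda,\beta}(t\widetilde{u},t\widetilde{v})$ (implicitly Lemma~\ref{lem0004} with $(w,\sigma)=(0,0)$), applies the estimate of \eqref{eq0052}, arrives at the same constant $\frac{(d_\lambda^*)^2S^2}{4}$, and removes the $\lambda$-dependence of $d_\lambda^*$ via Lemma~\ref{lem0003} and Remark~\ref{rmk0002}. The only cosmetic difference is that you maximize the quotient $\mathcal{D}_\lambda(\widetilde{u},\widetilde{v})^2/(4\mathcal{L}_\beta(\widetilde{u},\widetilde{v}))$ exactly, whereas the paper evaluates its lower bound at the particular $t_\lambda$ with $t_\lambda^2(\|\widetilde{u}\|_{a,\lambda}^2+\|\widetilde{v}\|_{b,\lambda}^2)=d_\lambda^*S^2$; both give the identical bound.
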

\begin{proof}
As in the proof of Lemma~\ref{lem0004}, we only give the proof for the case of $a_0\leq-\mu_{a,1}$ and $b_0\leq-\mu_{b,1}$, since the proofs of other cases are similar and more simple due to Lemma~\ref{lem0110}.  Let $(u,v)\in \mathcal{G}_{\lambda,\beta}$.  Then $u=\widehat{u}+\widetilde{u}$ and $v=\widehat{v}+\widetilde{v}$ with $\widetilde{u}\not=0$ or $\widetilde{v}\not=0$, where $\widehat{u}$, $\widetilde{u}$, $\widehat{v}$ and $\widetilde{v}$ are the projections of $u$ and $v$ on $\widehat{\mathcal{F}}_{a,\lambda}^{\perp}$, $\widetilde{\mathcal{F}}_{a,\lambda}^{\perp}\oplus\mathcal{F}_{a,\lambda}$, $\widehat{\mathcal{F}}_{b,\lambda}^{\perp}$ and $\widetilde{\mathcal{F}}_{b,\lambda}^{\perp}\oplus\mathcal{F}_{b,\lambda}$, respecitvely.  By a similar argument as used in \eqref{eq0052}, we can see that%
\begin{eqnarray*}
J_{\lambda,\beta}(u,v)&\geq&\frac{t^2}{4}(\|\widetilde{u}\|_{a,\lambda}^2+\|\widetilde{v}\|_{b,\lambda}^2)(2d_\lambda^*-t^2S^{-2}
(\|\widetilde{u}\|_{a,\lambda}^2+\|\widetilde{v}\|_{b,\lambda}^2))%
\end{eqnarray*}
for all $t\geq0$, where $d_\lambda^*$ is given by \eqref{eq5002}.  Since $\widetilde{u}\not=0$ or $\widetilde{v}\not=0$, there exists $t_\lambda\in(0, +\infty)$ such that $t_\lambda^2(\|\widetilde{u}\|_{a,\lambda}^2+\|\widetilde{v}\|_{b,\lambda}^2)=d_\lambda^* S^{2}$.  It follows that $J_{\lambda,\beta}(u,v)\geq\frac{(d_\lambda^*)^2S^2}{4}$.  Note that $(u,v)\in\mathcal{G}_{\lambda,\beta}$ is arbitrary, we must have $c_{\lambda,\beta}\geq\frac{(d_\lambda^*)^2S^2}{4}:=\alpha_\lambda>0$ for $0\leq\beta<1$ and $\lambda>\Lambda_0^*$.  It remains to show that $\alpha_\lambda\geq\alpha_0>0$ for some $\alpha_0$ independent of $0\leq\beta<1$ and $\lambda\geq\Lambda_0^*$.  Indeed, by Lemma~\ref{lem0001} and Remark~\ref{rmk0001}, $j_{a,\lambda}=j_{a,0}^*+1\leq k_a+1$ for $\lambda>\Lambda_a^*$ and $j_{b,\lambda}=j_{b,0}^*+1\leq k_b+1$ for $\lambda>\Lambda_b^*$.  Then by Lemma~\ref{lem0003}, we have%
\begin{equation*}
d_{\lambda}^*\geq\min\bigg\{1-\frac{1}{\alpha_{a, j_{a,0}^*+1}(\Lambda_0^*)}, 1-\frac{1}{\alpha_{a, j_{b,0}^*+1}(\Lambda_0^*)}\bigg\}>0.%
\end{equation*}
We close the proof by taking $\alpha_0=\min\bigg\{1-\frac{1}{\alpha_{a, j_{a,0}^*+1}(\Lambda_0^*)}, 1-\frac{1}{\alpha_{a, j_{b,0}^*+1}(\Lambda_0^*)}\bigg\}$.%
\end{proof}

If we also have $-a_0\not\in\sigma(-\Delta, H_0^1(\Omega_a))$, then by the condition $(D_3)$ and the results of \cite{CSZ12,SWW09}, $I_a(u)$ has a least energy critical point $u_a$ in $H_0^1(\Omega_a)$ with the energy value $0<I_a(u_a)<\frac14 S^2$.  Similarly, if $-b_0\not\in\sigma(-\Delta, H_0^1(\Omega_b))$, then $I_b(v)$ has a least energy critical point $v_b$ in $H_0^1(\Omega_b)$ with the energy value $0<I_b(v_b)<\frac14 S^2$ due to the condition $(D_3)$ and the results of \cite{CSZ12,SWW09}.%
\begin{lemma}\label{lem0009}
Let   $(D_1)$-$(D_3)$ hold. Further, assume  that either $a_0\leq-\mu_{a,1}$ with $-a_0\not\in\sigma(-\Delta, H_0^1(\Omega_a))$ or
$b_0\leq-\mu_{b,1}$ with $-b_0\not\in\sigma(-\Delta, H_0^1(\Omega_b))$, then we have that%
\begin{equation*}
\frac14S^2> \limsup_{\lambda\to+\infty}c_{\lambda,\beta}\quad\text{for all }0\leq\beta<1.%
\end{equation*}
\end{lemma}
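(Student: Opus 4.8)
The plan is to bound $c_{\lambda,\beta}$ from above by the energy of a suitable \emph{semi-trivial} element of $\mathcal{G}_{\lambda,\beta}$ built from the least energy solution on the bounded domain, and then to pass to the limit $\lambda\to+\infty$. The two hypotheses being symmetric, assume $a_0\leq-\mu_{a,1}$ with $-a_0\notin\sigma(-\Delta,H_0^1(\Omega_a))$, and let $u_a\in H_0^1(\Omega_a)$ be the least energy critical point of $I_{\Omega_a}$ with $0<I_{\Omega_a}(u_a)<\frac14 S^2$ supplied just before the statement. I extend $u_a$ by zero to an element of $E_{a,\lambda}$. Since $a(x)\equiv0$ on $\Omega_a$ and $\mathrm{supp}\,u_a\subset\Omega_a$, we get $\mathcal{D}_{a,\lambda}(u_a,u_a)=\int_{\Omega_a}(|\nabla u_a|^2+a_0u_a^2)\,dx$ and $\|u_a\|_{L^4(\bbr^4)}^4=\int_{\Omega_a}u_a^4\,dx$ for \emph{every} $\lambda$, so that $I_{a,\lambda}(u_a)=I_{\Omega_a}(u_a)$ independently of $\lambda$. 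Moreover the Nehari identity $I_{\Omega_a}'(u_a)u_a=0$ gives $\mathcal{D}_{a,\lambda}(u_a,u_a)=\|u_a\|_{L^4(\bbr^4)}^4>0$, whence $u_a\notin\widehat{\mathcal{F}}_{a,\lambda}^{\perp}$ (on which $\mathcal{D}_{a,\lambda}\leq0$ by Lemma~\ref{lem0110}); thus $(u_a,0)\in\widetilde{E}_\lambda$ and Lemma~\ref{lem0011} applies.

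Next I would project $(u_a,0)$ onto $\mathcal{G}_{\lambda,\beta}$. By Lemma~\ref{lem0011} there is a unique maximizer $(w^0,\sigma^0,t^0)$ of $G_{\lambda,\beta,u_a,0}(w,\sigma,t)=J_{\lambda,\beta}(w+t\widetilde{u}_a,\sigma)$ over $\widehat{\mathcal{F}}_{a,\lambda}^{\perp}\times\widehat{\mathcal{F}}_{b,\lambda}^{\perp}\times\bbr^+$, where $\widetilde{u}_a$ is the projection of $u_a$ onto $\widetilde{\mathcal{F}}_{a,\lambda}^{\perp}\oplus\mathcal{F}_{a,\lambda}$ and the second slot is $\sigma+t\widetilde{0}=\sigma$. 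I claim $\sigma^0=0$: for fixed $(w,t)$ the three $\sigma$-dependent terms of $J_{\lambda,\beta}(w+t\widetilde{u}_a,\sigma)$ are $\tfrac12\mathcal{D}_{b,\lambda}(\sigma,\sigma)\leq0$ (Lemma~\ref{lem0110}), $-\tfrac14\|\sigma\|_{L^4(\bbr^4)}^4\leq0$, and, since $\beta\geq0$, $-\tfrac\beta2\int_{\bbr^4}(w+t\widetilde{u}_a)^2\sigma^2\,dx\leq0$, each vanishing only at $\sigma=0$, so the maximum in $\sigma$ is attained solely at $\sigma=0$. Hence the projected element is semi-trivial, of the form $(u_\lambda^0,0)$ with $u_\lambda^0=w^0+t^0\widetilde{u}_a$, and since $J_{\lambda,\beta}(u,0)=I_{a,\lambda}(u)$ we obtain
\[
c_{\lambda,\beta}\leq J_{\lambda,\beta}(u_\lambda^0,0)=\max_{w\in\widehat{\mathcal{F}}_{a,\lambda}^{\perp},\,t>0}I_{a,\lambda}(w+t\widetilde{u}_a).
\]

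Finally I would let $\lambda\to+\infty$ in this finite-dimensional maximization. Writing $z=w+t\widetilde{u}_a$ and using $\mathcal{D}_{a,\lambda}$-orthogonality of $\widehat{\mathcal{F}}_{a,\lambda}^{\perp}$ and $\widetilde{\mathcal{F}}_{a,\lambda}^{\perp}\oplus\mathcal{F}_{a,\lambda}$, one has $I_{a,\lambda}(z)=\tfrac12\mathcal{D}_{a,\lambda}(w,w)+\tfrac{t^2}2\mathcal{D}_{a,\lambda}(\widetilde{u}_a,\widetilde{u}_a)-\tfrac14\|z\|_{L^4(\bbr^4)}^4$, and on the finite-dimensional space $\widehat{\mathcal{F}}_{a,\lambda}^{\perp}\oplus\bbr\widetilde{u}_a$ the quartic term makes $I_{a,\lambda}$ coercive to $-\infty$, so the maximum is attained at some $(w_\lambda,t_\lambda)$. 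The crucial input is that, by Lemma~\ref{lem0003} and the proof of Lemma~\ref{lem0001}, the basis functions $e_{a,j}(\lambda)$ spanning $\widehat{\mathcal{F}}_{a,\lambda}^{\perp}$ converge strongly in $D^{1,2}(\bbr^4)$ to eigenfunctions of $(-\Delta,H_0^1(\Omega_a))$ with eigenvalue $<|a_0|$ (spanning a space $\widehat{H}_a$), while $\widetilde{u}_a$ converges strongly to the positive-definite part $u_a^+$ of $u_a$; hence the Dirichlet energy and $L^4$-norm of $z_\lambda=w_\lambda+t_\lambda\widetilde{u}_a$ concentrate on $\Omega_a$ and $I_{a,\lambda}(z_\lambda)\to I_{\Omega_a}(z_\infty)$ for the $H_0^1(\Omega_a)$-limit $z_\infty\in\widehat{H}_a\oplus\bbr u_a^+$. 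The Szulkin--Weth min-max characterization of the least energy solution (cf.~\cite{SW09}) then gives $I_{\Omega_a}(z_\infty)\leq\max_{w\in\widehat{H}_a,\,t>0}I_{\Omega_a}(w+tu_a^+)=I_{\Omega_a}(u_a)$, so that $\limsup_{\lambda\to+\infty}c_{\lambda,\beta}\leq I_{\Omega_a}(u_a)<\frac14 S^2$ uniformly for $\beta\in[0,1)$.

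The main obstacle is this last limit: establishing uniform bounds on the maximizers $(w_\lambda,t_\lambda)$ as the subspaces $\widehat{\mathcal{F}}_{a,\lambda}^{\perp}$ move with $\lambda$, and upgrading the weak limits to the strong $D^{1,2}$- and $L^4$-convergence on $\Omega_a$ needed to identify the limiting energy with $I_{\Omega_a}$ — this is precisely where the steep-well estimates of Section~2 do the work, since away from $\Omega_a$ the penalization $\lambda a(x)$ forces the mass to vanish. The hypothesis $-a_0\notin\sigma(-\Delta,H_0^1(\Omega_a))$ enters here to keep $\dim\widehat{\mathcal{F}}_{a,\lambda}^{\perp}$ constant and the negative eigenspace transversal in the limit, so that the finite-dimensional maxima converge to the genuine Nehari--Pankov level $I_{\Omega_a}(u_a)$ rather than to a larger value.
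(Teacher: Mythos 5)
Your proposal is correct and follows essentially the same route as the paper's proof of Lemma~\ref{lem0009}: test the level $c_{\lambda,\beta}$ with the projection of $(u_a,0)$ onto $\mathcal{G}_{\lambda,\beta}$ furnished by Lemma~\ref{lem0011}, discard the second component using $\beta\geq0$ together with Lemma~\ref{lem0110}, pass to the limit $\lambda\to+\infty$ via the uniform bounds of Remark~\ref{rmk0002} and the strong $D^{1,2}(\bbr^4)$-convergence of the eigenfunctions $e_{a,j}(\lambda)$ established in Lemmas~\ref{lem0003}--\ref{lem0001}, and conclude with the Szulkin--Weth fiber-maximization identity $\max_{t\geq0,\,w\in\widehat{H}_a}I_{\Omega_a}(tu_a+w)=I_{\Omega_a}(u_a)<\frac14S^2$, exactly as the paper does by invoking the argument of Lemma~\ref{lem0004} and \cite[Proposition~2.3]{SW09}. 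Your only deviations are two local simplifications, both sound: the direct verification $u_a\notin\widehat{\mathcal{F}}_{a,\lambda}^{\perp}$ from $\mathcal{D}_{a,\lambda}(u_a,u_a)=\|u_a\|_{L^4(\bbr^4)}^4>0$ against Lemma~\ref{lem0110}$(iii)$, which works for every admissible $\lambda$ (the paper instead argues by contradiction along a sequence $\lambda_n\to+\infty$ using the limiting eigenfunction expansion), and the explicit identification $\sigma^0=0$ of the maximizer in the second slot, which the paper realizes equivalently by dropping the nonpositive $\sigma$-terms in the bound $J_{\lambda,\beta}(\cdot,\sigma^0_\lambda)\leq I_{a,\lambda}(\cdot)$.
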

\begin{proof}
We only give the proof for the case of $a_0\leq-\mu_{a,1}$ with $-a_0\not\in\sigma(-\Delta, H_0^1(\Omega_a))$, since another case can be proved in a similar way.  Since $u_a$ is a nonzero critical point of $I_a(u)$ with the energy value $0<I_a(u_a)<\frac14 S^2$, we can see that $\int_{\Omega_a}(|\nabla u_a|^2+a_0u_a^2)dx>0$.  We claim that there exists $\Lambda_0\geq\Lambda_0^*$ such that $u_a\in E_{a,\lambda}\backslash\widehat{\mathcal{F}}_{a,\lambda}^{\perp}$ for $\lambda>\Lambda_0$.  Indeed, by the condition $(D_3)$ once more, we can see that $u_a\in E_{a,\lambda}$ for $\lambda>\Lambda_{a,0}$.  If there exists $\{\lambda_n\}$ with $\lambda_n\to+\infty$ as $n\to\infty$ such that $u_a\in\widehat{\mathcal{F}}_{a,\lambda_n}^{\perp}$, then by Lemmas~\ref{lem0003} and \ref{lem0001} and the definition of $\widehat{\mathcal{F}}_{a,\lambda_n}^{\perp}$, we must have $u_a=\sum_{i=1}^{j_{a,0}^*} d_ie_{a,i}$, where $j_{a,0}^*$ is given by \eqref{eq1015} and $e_{a,i}$ satisfy \eqref{eq0130} with $\alpha_{a,i}\leq1$ for all $i=1,2,\cdots j_{a,0}^*$.  This implies $\int_{\Omega_a}(|\nabla u_a|^2+a_0u_a^2)dx\leq0$ and it is impossible.  Now, by Lemma~\ref{lem0011}, there exists a unique $(w^0_\lambda,\sigma^0_\lambda,t^0_\lambda)\in \widehat{\mathcal{F}}_{a,\lambda}^{\perp}\times\widehat{\mathcal{F}}_{b,\lambda}^{\perp}\times\bbr^+$ such that $(u_{\lambda,\beta}^0,v_{\lambda,\beta}^0)=(w^0_\lambda+t^0_\lambda(u_a-\widehat{u}_{a,\lambda}),\sigma^0_\lambda+t^0_\lambda(v_b-\widehat{v}_{b,\lambda}))
\in\mathcal{G}_{\lambda,\beta}$ for $\lambda\geq\Lambda_0$, where $\widehat{u}_{a,\lambda}$ and $\widehat{v}_{b,\lambda}$ are the projections of $u_a$ and $v_b$ in $\widehat{\mathcal{F}}_{a,\lambda}^{\perp}$ and $\widehat{\mathcal{F}}_{b,\lambda}^{\perp}$ respectively.  It follows from Remark~\ref{rmk0002} that $(w^0_\lambda,\sigma^0_\lambda,t^0_\lambda)\to(w_0,\sigma_0,t_0)$ strongly in $\h\times\h\times\bbr^+$ as $\lambda\to+\infty$.  Moreover, we also have that $(\widehat{u}_{a,\lambda}, \widehat{v}_{b,\lambda})\to (\widehat{u}_{a}, \widehat{v}_{b})$, where $\widehat{u}_{a}$ and $\widehat{v}_{b}$ are the projections of $u_a$ and $v_b$ in $\text{span}\{e^*_{a,j}\mid\alpha_{a,j}\leq1\})$ and $\text{span}\{e^*_{b,j}\mid\alpha_{b,j}\leq1\})$ respectively.  Due to the condition $(D_3)$, we must have that $(w_0,\sigma_0)\in H_0^1(\Omega_a)\times H_0^1(\Omega_b)$, which together with the condition $(D_3)$ and $u_a\in H_0^1(\Omega_a)$, implies%
\begin{eqnarray}
&&\limsup_{\lambda\to+\infty}J_{\lambda,\beta}(w^0_\lambda+t^0_\lambda(u_a-\widehat{u}_{a,\lambda}), \sigma^0_\lambda)\nonumber\\
&&\leq\limsup_{\lambda\to+\infty} I_{a,\lambda}(w^0_\lambda+t^0_\lambda(u_a-\widehat{u}_{a,\lambda}))\leq I_a(t_0(u_a-\widehat{u}_{a})).
\end{eqnarray}
By a similar argument as used in the proof of Lemma~\ref{lem0004} (see also \cite[Proposition~2.3]{SW09}),
 we have that $I_a(t_0(u_a-\widehat{u}_{a}))< I_a(u_a)$.
  Thus, $\frac14S^2> I_a(u_a)\geq\limsup_{\lambda\to+\infty}c_{\lambda,\beta}$
  for $0\leq\beta<1$, which completes the proof.%
\end{proof}


Next we prepare    some estimates which are  useful in the following sections.%
\begin{lemma}\label{lem0020}
Assume  $(D_1)$-$(D_3)$  and $\beta\leq0$.  If $\mathcal{D}_{\lambda}(u,v)$ is  positively definite in $E_\lambda$, then there exists $d_{\lambda,\beta}>0$ such that $\|u\|_{L^4(\bbr^4)}^4\|v\|_{L^4(\bbr^4)}^4-\beta^2\|u^2v^2\|^2_{L^1(\bbr^4)}>d_{\lambda,\beta}$ for all $(u,v)\in\mathcal{N}_{\lambda,\beta}$.%
\end{lemma}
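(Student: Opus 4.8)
The plan is to exploit the two scalar constraints defining $\mathcal{N}_{\lambda,\beta}$ and to rewrite the target expression so that the sign hypothesis $\beta\leq0$ can be brought to bear. First I would record that, differentiating $J_{\lambda,\beta}$, the two conditions $\langle D[J_{\lambda,\beta}(u,v)],(u,0)\rangle_{E_\lambda^*,E_\lambda}=\langle D[J_{\lambda,\beta}(u,v)],(0,v)\rangle_{E_\lambda^*,E_\lambda}=0$ that cut out $\mathcal{N}_{\lambda,\beta}$ read
\begin{equation*}
\mathcal{D}_{a,\lambda}(u,u)=\|u\|_{L^4(\bbr^4)}^4+\beta\|u^2v^2\|_{L^1(\bbr^4)},\qquad \mathcal{D}_{b,\lambda}(v,v)=\|v\|_{L^4(\bbr^4)}^4+\beta\|u^2v^2\|_{L^1(\bbr^4)}.
\end{equation*}
Writing $A=\|u\|_{L^4(\bbr^4)}^4$, $B=\|v\|_{L^4(\bbr^4)}^4$, $C=\|u^2v^2\|_{L^1(\bbr^4)}$, $D_a=\mathcal{D}_{a,\lambda}(u,u)$ and $D_b=\mathcal{D}_{b,\lambda}(v,v)$, these identities say exactly $A=D_a-\beta C$ and $B=D_b-\beta C$.

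The key algebraic observation is then that the quantity to be estimated factors cleanly:
\begin{equation*}
AB-\beta^2C^2=(D_a-\beta C)(D_b-\beta C)-\beta^2C^2=D_aD_b-\beta C(D_a+D_b).
\end{equation*}
Since $\beta\leq0$ and $C\geq0$, while $D_a,D_b>0$ because $\mathcal{D}_\lambda$ is positively definite and $u\neq0$, $v\neq0$ on $\mathcal{N}_{\lambda,\beta}$, the cross term $-\beta C(D_a+D_b)$ is nonnegative. Hence $AB-\beta^2C^2\geq D_aD_b$, and it remains only to bound $D_a$ and $D_b$ away from $0$ uniformly on $\mathcal{N}_{\lambda,\beta}$.

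To get such a bound I would combine coercivity with the Sobolev inequality. By Lemma~\ref{lem0110} and the decomposition of $E_\lambda$ in the positive-definite cases $(1)$–$(3)$, there is a constant $\delta_\lambda>0$ with $\mathcal{D}_{a,\lambda}(u,u)\geq\delta_\lambda\|u\|_{a,\lambda}^2$ and $\mathcal{D}_{b,\lambda}(v,v)\geq\delta_\lambda\|v\|_{b,\lambda}^2$. For $(u,v)\in\mathcal{N}_{\lambda,\beta}$ one has $u\neq0$, so using $\beta C\leq0$ and then \eqref{eq0003},
\begin{equation*}
\delta_\lambda\|u\|_{a,\lambda}^2\leq\mathcal{D}_{a,\lambda}(u,u)=A+\beta C\leq A\leq S^{-2}\|u\|_{a,\lambda}^4.
\end{equation*}
Dividing by $\|u\|_{a,\lambda}^2>0$ gives $\|u\|_{a,\lambda}^2\geq\delta_\lambda S^2$, whence $D_a=\mathcal{D}_{a,\lambda}(u,u)\geq\delta_\lambda^2S^2$; the same argument yields $D_b\geq\delta_\lambda^2S^2$. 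Feeding this into the previous paragraph, $AB-\beta^2C^2\geq D_aD_b\geq\delta_\lambda^4S^4$, so any choice $d_{\lambda,\beta}\in(0,\delta_\lambda^4S^4)$, for instance $d_{\lambda,\beta}=\tfrac12\delta_\lambda^4S^4$, delivers the strict inequality claimed.

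I expect the only genuinely delicate point to be the uniform coercivity constant $\delta_\lambda$: one must verify that in each positive-definite case the form $\mathcal{D}_{a,\lambda}$ (resp. $\mathcal{D}_{b,\lambda}$) dominates a positive multiple of $\|\cdot\|_{a,\lambda}^2$. This follows from the block structure supplied by Lemma~\ref{lem0110}, since $\mathcal{D}_{a,\lambda}$ is diagonal with respect to the decomposition $\mathcal{F}_{a,\lambda}\oplus\widetilde{\mathcal{F}}_{a,\lambda}^{\perp}$ and is bounded below by $\|\cdot\|_{a,\lambda}^2$ on the first block and by $(1-1/\alpha_{a,j_{a,\lambda}}(\lambda))\|\cdot\|_{a,\lambda}^2$ on the second. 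The remaining ingredients—the exact factorization and the sign bookkeeping—are elementary once $\beta\leq0$ is invoked.
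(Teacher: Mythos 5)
Your proof is correct: the Nehari identities give $A=D_a-\beta C$ and $B=D_b-\beta C$, the factorization $AB-\beta^2C^2=D_aD_b-\beta C(D_a+D_b)\geq D_aD_b$ is valid since $\beta\leq0$ and $C\geq0$, and the uniform lower bound $D_a,D_b\geq\delta_\lambda^2S^2$ follows from Lemma~\ref{lem0110} together with \eqref{eq0003} (the cross terms of $\mathcal{D}_{a,\lambda}$ across $\mathcal{F}_{a,\lambda}\oplus\widetilde{\mathcal{F}}_{a,\lambda}^{\perp}$ indeed vanish, because $(\lambda a(x)+a_0)^-$ is zero on the supports of functions in $\mathcal{F}_{a,\lambda}$). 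The paper itself omits the details, deferring to \cite[Lemma~3.3]{WWZ15} via Lemmas~\ref{lem0110} and \ref{lem0002}, and your argument is essentially that same route---constraint identities plus coercivity plus the Sobolev bound---written out explicitly and self-contained.
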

\begin{proof}
Due to Lemmas~\ref{lem0110} and \ref{lem0002}, the conclusion can be obtained by a similar argument as used in the proof of \cite[Lemma~3.3]{WWZ15} and only some trivial modifications needed, so we omit the details here.%
\end{proof}

\begin{lemma}\label{lem5010}
Assume that  $(D_1)$-$(D_3)$ hold and $\mathcal{D}_{\lambda}(u,v)$ is positively definite in $E_\lambda$ in the case of $a_0<0\leq b_0$.  If%
\begin{equation}\label{eq5010}
0<\beta<1-\frac{1}{\alpha_{a,1}(\lambda)},%
\end{equation}
then we have%
\begin{equation}\label{eq5011}
\|u\|_{L^4(\bbr^4)}^2\geq\frac{(1-\frac{1}{\alpha_{a,1}(\lambda)})-\beta }{1-\beta(1-\frac{1}{\alpha_{a,1}(\lambda)})}S>0%
\end{equation}
and
\begin{equation}\label{eq5012}
\|v\|_{L^4(\bbr^4)}^2\geq\frac{(1-\frac{1}{\alpha_{a,1}(\lambda)})-\beta }{(1-\frac{1}{\alpha_{a,1}(\lambda)})-\beta}S>0%
\end{equation}
for all $(u,v)\in\mathcal{N}_{\lambda,\beta}$ with $J_{\lambda,\beta}(u,v)\leq\frac14 S^2$.%
\end{lemma}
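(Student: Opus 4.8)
The plan is to reduce the statement to three scalar inequalities in the two quantities $x:=\|u\|_{L^4(\bbr^4)}^2$ and $y:=\|v\|_{L^4(\bbr^4)}^2$, and then eliminate one of them. Throughout set $R:=\|u^2v^2\|_{L^1(\bbr^4)}$ and $A:=1-\frac{1}{\alpha_{a,1}(\lambda)}$, so that the hypothesis \eqref{eq5010} reads $0<\beta<A$, and $A\in(0,1)$ because $\mathcal{D}_\lambda$ is positively definite (hence $\alpha_{a,1}(\lambda)>1$). Since $(u,v)\in\mathcal{N}_{\lambda,\beta}$ we have $u,v\neq0$, so $x,y>0$, and the two defining relations $\langle D[J_{\lambda,\beta}(u,v)],(u,0)\rangle=\langle D[J_{\lambda,\beta}(u,v)],(0,v)\rangle=0$ give the Nehari identities $\mathcal{D}_{a,\lambda}(u,u)=x^2+\beta R$ and $\mathcal{D}_{b,\lambda}(v,v)=y^2+\beta R$.

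First I would record the lower bounds. In the present case $a_0<0\leq b_0$ with $\mathcal{D}_\lambda$ positively definite one has $\widehat{\mathcal{F}}_{a,\lambda}^{\perp}=\emptyset$ and $j_{a,\lambda}=1$, so Lemma~\ref{lem0110} together with \eqref{eq0003} yields $\mathcal{D}_{a,\lambda}(u,u)\geq A\|u\|_{a,\lambda}^2\geq ASx$, while $\mathcal{D}_{b,\lambda}(v,v)=\|v\|_{b,\lambda}^2\geq Sy$. Using the Cauchy--Schwarz bound $R=\int_{\bbr^4}u^2v^2dx\leq\|u\|_{L^4(\bbr^4)}^2\|v\|_{L^4(\bbr^4)}^2=xy$ in each Nehari identity (so that $\beta R\leq\beta xy$) and dividing the resulting inequalities by $x>0$ and $y>0$ respectively, I obtain the two linear inequalities $x+\beta y\geq AS$ and $\beta x+y\geq S$.

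Next I would bring in the energy constraint. On $\mathcal{N}_{\lambda,\beta}$ the functional collapses to $J_{\lambda,\beta}(u,v)=\frac14(\mathcal{D}_{a,\lambda}(u,u)+\mathcal{D}_{b,\lambda}(v,v))$, so the hypothesis $J_{\lambda,\beta}(u,v)\leq\frac14S^2$ is equivalent to $\mathcal{D}_{a,\lambda}(u,u)+\mathcal{D}_{b,\lambda}(v,v)\leq S^2$; combined with the two lower bounds above this gives the third inequality $Ax+y\leq S$. Now both claims follow by elimination. For \eqref{eq5011}, substitute $y\leq S-Ax$ into $x+\beta y\geq AS$: since $\beta>0$,
\[
AS\leq x+\beta y\leq x+\beta(S-Ax)=(1-\beta A)x+\beta S,
\]
and, as $\beta<A<1$ forces $1-\beta A>0$, solving for $x$ gives $x\geq\frac{(A-\beta)S}{1-\beta A}$, which is \eqref{eq5011}. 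For \eqref{eq5012}, from $Ax+y\leq S$ I get $x\leq(S-y)/A$ (if $y\geq S$ there is nothing to prove), and inserting this into $\beta x+y\geq S$ gives $S\leq\frac{\beta}{A}(S-y)+y$, i.e. $(1-\frac{\beta}{A})(S-y)\leq0$; since $\beta<A$ makes $1-\frac{\beta}{A}>0$, this yields $y\geq S$, the right-hand side of \eqref{eq5012}.

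The computation is short, so the main point is really the bookkeeping in the first two steps: one must check that in the regime $a_0<0\leq b_0$ the positive-definite estimate of Lemma~\ref{lem0110} applies with the sharp constant $A=1-\frac{1}{\alpha_{a,1}(\lambda)}$ (equivalently $\widehat{\mathcal{F}}_{a,\lambda}^{\perp}=\emptyset$, $j_{a,\lambda}=1$), and that the three inequalities are assembled in the correct order. The hypothesis $\beta<1-\frac{1}{\alpha_{a,1}(\lambda)}$ enters precisely here: it is what keeps $A-\beta$, $1-\beta A$ and $1-\frac{\beta}{A}$ all strictly positive, so that the two eliminations produce genuine positive lower bounds rather than reversing.
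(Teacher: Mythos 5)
Your proof is correct and takes essentially the same route as the paper: your three inequalities $x+\beta y\geq AS$, $\beta x+y\geq S$ and $Ax+y\leq S$ are exactly the paper's \eqref{eq5003}, \eqref{eq5004} and \eqref{eq5005}, obtained there too from the Nehari identities together with Lemma~\ref{lem0110}, \eqref{eq0003} and the H\"older inequality, after which the paper performs (without writing out) the same two eliminations you make explicit. The only divergence is cosmetic: the paper's proof body twice writes the energy bound as $J_{\lambda,\beta}(u,v)\leq\frac12 S^2$, a slip since deriving \eqref{eq5005} requires the $\frac14 S^2$ of the statement, which your argument silently uses in the correct form.
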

\begin{proof}
Since $\mathcal{D}_{\lambda}(u,v)$ is positively definite in $E_\lambda$, without loss of generality, we may assume that $E_\lambda=(\widetilde{\mathcal{F}}_{a,\lambda}^{\perp}\oplus\mathcal{F}_{a,\lambda})
\times(\widetilde{\mathcal{F}}_{b,\lambda}^{\perp}\oplus\mathcal{F}_{b,\lambda})$.  Suppose $(u,v)\in\mathcal{N}_{\lambda,\beta}$ with $J_{\lambda,\beta}(u,v)\leq\frac12 S^2$, then by Lemma~\ref{lem0110}, \eqref{eq0003} and the H\"older inequality, we can see that%
\begin{eqnarray}
&(1-\frac{1}{\alpha_{a,1}(\lambda)})S\leq\|u\|_{L^4(\bbr^4)}^2+\beta\|v\|_{L^4(\bbr^4)}^2,\label{eq5003}\\%
&S\leq\|v\|_{L^4(\bbr^4)}^2+\beta\|u\|_{L^4(\bbr^4)}^2.\label{eq5004}%
\end{eqnarray}
Since $J_{\lambda,\beta}(u,v)\leq\frac12 S^2$, we also have from Lemma~\ref{lem0110} and \eqref{eq0003} that%
\begin{equation}\label{eq5005}
(1-\frac{1}{\alpha_{a,1}(\lambda)})\|u\|_{L^4(\bbr^4)}^2+\|v\|_{L^4(\bbr^4)}^2\leq S.%
\end{equation}
We can obtain \eqref{eq5011} by \eqref{eq5003} and \eqref{eq5005} while \eqref{eq5012} can be obtained by \eqref{eq5004} and \eqref{eq5005} due to \eqref{eq5010}, which completes the proof.%
\end{proof}

\begin{lemma}\label{lem6010}
Assume  that $(D_1)$-$(D_3)$ hold and  that $\mathcal{D}_{\lambda}(u,v)$ is positively definite in $E_\lambda$ in the case of $a_0\leq b_0<0$.  If%
\begin{equation}\label{eq6010}
0<\beta<\min\bigg\{\frac12(1-\frac{1}{\alpha_{a,1}(\lambda)})(1-\frac{1}{\alpha_{b,1}(\lambda)}),
\frac{1-\frac{1}{\alpha_{b,1}(\lambda)}}{1-\frac{1}{\alpha_{a,1}(\lambda)}},
\frac{1-\frac{1}{\alpha_{a,1}(\lambda)}}{1-\frac{1}{\alpha_{b,1}(\lambda)}}\bigg\},%
\end{equation}
then we have%
\begin{equation}\label{eq6011}
\|u\|_{L^4(\bbr^4)}^2\geq\frac{(1-\frac{1}{\alpha_{a,1}(\lambda)})(1-\frac{1}{\alpha_{b,1}(\lambda)})-2\beta }{(1-\frac{1}{\alpha_{b,1}(\lambda)})-\beta(1-\frac{1}{\alpha_{a,1}(\lambda)})}S>0%
\end{equation}
and
\begin{equation}\label{eq6012}
\|v\|_{L^4(\bbr^4)}^2\geq\frac{(1-\frac{1}{\alpha_{b,1}(\lambda)})(1-\frac{1}{\alpha_{a,1}(\lambda)})-2\beta }{(1-\frac{1}{\alpha_{a,1}(\lambda)})-\beta(1-\frac{1}{\alpha_{b,1}(\lambda)})}S>0%
\end{equation}
for all $(u,v)\in\mathcal{N}_{\lambda,\beta}$ with $J_{\lambda,\beta}(u,v)\leq\frac12 S^2$.%
\end{lemma}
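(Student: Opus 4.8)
The plan is to follow exactly the scheme of the proof of Lemma~\ref{lem5010}; the only structural difference is that now both $a_0$ and $b_0$ are negative, so the quadratic form $\mathcal{D}_{b,\lambda}(v,v)$ also picks up a coercivity factor $1-\frac{1}{\alpha_{b,1}(\lambda)}$ rather than $1$. Throughout I abbreviate $\gamma_a=1-\frac{1}{\alpha_{a,1}(\lambda)}$ and $\gamma_b=1-\frac{1}{\alpha_{b,1}(\lambda)}$. Note first that positive definiteness of $\mathcal{D}_\lambda$ with $a_0\leq b_0<0$ forces $-\mu_{a,1}<a_0<0$ and $-\mu_{b,1}<b_0<0$ (case $(3)$ of the decomposition), so $\widehat{\mathcal{F}}_{a,\lambda}^{\perp}=\widehat{\mathcal{F}}_{b,\lambda}^{\perp}=\emptyset$ and the indices in Lemma~\ref{lem0110}(ii) collapse to $j_{a,\lambda}=j_{b,\lambda}=1$; in particular $\gamma_a,\gamma_b\in(0,1)$.

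First I would record the two one-component coercivity estimates. Combining Lemma~\ref{lem0110}(i)--(ii) with the Sobolev inequality \eqref{eq0003} gives
$$\mathcal{D}_{a,\lambda}(u,u)\geq\gamma_a S\|u\|_{L^4(\bbr^4)}^2,\qquad \mathcal{D}_{b,\lambda}(v,v)\geq\gamma_b S\|v\|_{L^4(\bbr^4)}^2.$$
On the other hand, since $(u,v)\in\mathcal{N}_{\lambda,\beta}$ the two defining identities are $\mathcal{D}_{a,\lambda}(u,u)=\|u\|_{L^4(\bbr^4)}^4+\beta\|u^2v^2\|_{L^1(\bbr^4)}$ and $\mathcal{D}_{b,\lambda}(v,v)=\|v\|_{L^4(\bbr^4)}^4+\beta\|u^2v^2\|_{L^1(\bbr^4)}$. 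Bounding $\|u^2v^2\|_{L^1(\bbr^4)}\leq\|u\|_{L^4(\bbr^4)}^2\|v\|_{L^4(\bbr^4)}^2$ by H\"older (legitimate since $\beta>0$) and cancelling $\|u\|_{L^4(\bbr^4)}^2$ and $\|v\|_{L^4(\bbr^4)}^2$ respectively, I obtain the two linear inequalities
$$\gamma_a S\leq\|u\|_{L^4(\bbr^4)}^2+\beta\|v\|_{L^4(\bbr^4)}^2,\qquad \gamma_b S\leq\|v\|_{L^4(\bbr^4)}^2+\beta\|u\|_{L^4(\bbr^4)}^2,$$
the exact analogues of \eqref{eq5003}--\eqref{eq5004}.

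Next I would feed in the energy bound. Adding the two Nehari identities shows $\mathcal{D}_\lambda(u,v)=\mathcal{L}_\beta(u,v)$, so that $J_{\lambda,\beta}(u,v)=\frac14\mathcal{D}_\lambda(u,v)$ on $\mathcal{N}_{\lambda,\beta}$; combining $J_{\lambda,\beta}(u,v)\leq\frac12 S^2$ with the two coercivity estimates above yields the third inequality
$$\gamma_a\|u\|_{L^4(\bbr^4)}^2+\gamma_b\|v\|_{L^4(\bbr^4)}^2\leq 2S.$$
Then \eqref{eq6011} follows by forming $\gamma_b\times(\text{first inequality})-\beta\times(\text{third inequality})$, which gives $(\gamma_b-\beta\gamma_a)\|u\|_{L^4(\bbr^4)}^2\geq(\gamma_a\gamma_b-2\beta)S$, and \eqref{eq6012} follows symmetrically from $\gamma_a\times(\text{second inequality})-\beta\times(\text{third inequality})$, giving $(\gamma_a-\beta\gamma_b)\|v\|_{L^4(\bbr^4)}^2\geq(\gamma_a\gamma_b-2\beta)S$.

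The only delicate point is the bookkeeping at the end, and here the three clauses of the smallness assumption \eqref{eq6010} are precisely calibrated: $\beta<\frac12\gamma_a\gamma_b$ makes the common numerator $\gamma_a\gamma_b-2\beta$ positive, while $\beta<\gamma_b/\gamma_a$ and $\beta<\gamma_a/\gamma_b$ make the two denominators $\gamma_b-\beta\gamma_a$ and $\gamma_a-\beta\gamma_b$ positive, so that dividing preserves the inequality directions and both right-hand sides are strictly positive. I do not expect any genuine obstacle beyond tracking signs in these linear combinations and confirming that the eigenvalue index in Lemma~\ref{lem0110}(ii) really reduces to $\alpha_{a,1}(\lambda)$ and $\alpha_{b,1}(\lambda)$ in this regime (which is exactly where positive definiteness, i.e. $\widehat{\mathcal{F}}_{a,\lambda}^{\perp}=\widehat{\mathcal{F}}_{b,\lambda}^{\perp}=\emptyset$, is used).
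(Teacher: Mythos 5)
Your proof is correct and is essentially the paper's own argument: the paper likewise derives the two Nehari/H\"older inequalities \eqref{eq6003}--\eqref{eq6004} and the energy bound \eqref{eq6005} from Lemma~\ref{lem0110} and \eqref{eq0003}, then combines them under \eqref{eq6010}. The only difference is presentational --- you make explicit the linear combinations (e.g.\ $\gamma_b\times\eqref{eq6003}-\beta\times\eqref{eq6005}$) and the sign bookkeeping that the paper leaves implicit, and your preliminary reduction to $j_{a,\lambda}=j_{b,\lambda}=1$ via emptiness of $\widehat{\mathcal{F}}_{a,\lambda}^{\perp}$ and $\widehat{\mathcal{F}}_{b,\lambda}^{\perp}$ matches the paper's standing assumption in the positive definite case.
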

\begin{proof}
Since $\mathcal{D}_{\lambda}(u,v)$ is positively definite in $E_\lambda$, without loss of generality,
we may assume that $E_\lambda=(\widetilde{\mathcal{F}}_{a,\lambda}^{\perp}\oplus\mathcal{F}_{a,\lambda})
\times(\widetilde{\mathcal{F}}_{b,\lambda}^{\perp}\oplus\mathcal{F}_{b,\lambda})$.
Suppose $(u,v)\in\mathcal{N}_{\lambda,\beta}$ with $J_{\lambda,\beta}(u,v)\leq\frac12 S^2$, then by Lemma~\ref{lem0110}, \eqref{eq0003} and the H\"older inequality, we can see that%
\begin{eqnarray}
&(1-\frac{1}{\alpha_{a,1}(\lambda)})S\leq\|u\|_{L^4(\bbr^4)}^2+\beta\|v\|_{L^4(\bbr^4)}^2,\label{eq6003}\\%
&(1-\frac{1}{\alpha_{b,1}(\lambda)})S\leq\|v\|_{L^4(\bbr^4)}^2+\beta\|u\|_{L^4(\bbr^4)}^2.\label{eq6004}%
\end{eqnarray}
Since $J_{\lambda,\beta}(u,v)\leq\frac12 S^2$, we also have from Lemma~\ref{lem0110} and \eqref{eq0003} that%
\begin{equation}\label{eq6005}
(1-\frac{1}{\alpha_{a,1}(\lambda)})\|u\|_{L^4(\bbr^4)}^2+(1-\frac{1}{\alpha_{b,1}(\lambda)})\|v\|_{L^4(\bbr^4)}^2\leq2S.%
\end{equation}
We can obtain \eqref{eq6011} by \eqref{eq6003} and \eqref{eq6005} while \eqref{eq6012} can be obtained by \eqref{eq6004} and \eqref{eq6005} due to \eqref{eq6010}, which completes the proof.%
\end{proof}

\section{The existence results}
Note that we have assumed  $b_0\geq a_0$, without loss of generality, one of the following four cases must happen:%
\begin{enumerate}
\item[$(i)$] $b_0\geq a_0\geq0$;%
\item[$(ii)$] $-\mu_{a,1}<a_0<0\leq b_0$;%
\item[$(iii)$] $-\mu_{a,1}<a_0<0$, $-\mu_{b,1}<b_0<0$ and $b_0\geq a_0$;%
\item[$(iv)$] $a_0\leq-\mu_{a,1}$ or $b_0\leq-\mu_{b,1}$ and $a_0\leq b_0$.%
\end{enumerate}
Let us first consider the case of $b_0\geq a_0\geq0$.  In this case, $\mathcal{D}_\lambda(u,v)=\|(u,v)\|_\lambda$ for all $\lambda>0$ due to Lemma~\ref{lem0110}.  Let%
\begin{equation}\label{eq0040}
\mathcal{E}_\beta(u,v):=\frac{1}{2}\|\nabla u\|_{L^2(\bbr^4)}^2+\frac12\|\nabla v\|_{L^2(\bbr^4)}^2-\frac{1}4\mathcal{L}_\beta(u,v).%
\end{equation}
Then $\mathcal{E}_\beta(u,v)$ is a $C^2$ functional on $D^{1,2}(\bbr^4)\times D^{1,2}(\bbr^4)$.  Denote $D^{1,2}(\bbr^4)\times D^{1,2}(\bbr^4)$ by $\mathcal{D}$ and define%
\begin{equation*}
\mathcal{M}^*_\beta:=\{(u,v)\in \mathcal{D}\backslash\{(0,0)\}\mid \langle D[\mathcal{E}_\beta(u,v)],(u,v)\rangle_{\mathcal{D}^*,\mathcal{D}}=0\}%
\end{equation*}
and
\begin{eqnarray}
&\mathcal{N}^*_\beta:=\Big\{(u,v)\in \mathcal{D}\mid u\not=0,v\not=0, \langle D[\mathcal{E}_\beta(u,v)],(u,0)\rangle_{\mathcal{D}^*,\mathcal{D}}\nonumber\\
&\quad\quad\quad\quad\quad\quad\quad \quad =\langle D[\mathcal{E}(u,v)],(0,v)\rangle_{\mathcal{D}^*,\mathcal{D}}=0\Big\},%
\end{eqnarray}
where $D[\mathcal{E}_\beta(u,v)]$ is the Frech\'et derivative of the functional $\mathcal{E}_\beta$ in $\mathcal{D}$ at $(u,v)$ and $\mathcal{D}^*$ is the dual space of $\mathcal{D}$.%
\begin{lemma}\label{lem0010}
Let $\beta>1$.  Then $m_{\beta}^0=m_{\beta}^{**}=\frac{1}{2(1+\beta)}S^2$, where
$$m_{\beta}^0=\inf_{\mathcal{M}^*_\beta}\mathcal{E}_\beta(u,v), \quad m_{\beta}^{**}=\inf_{\mathcal{N}^*_\beta}\mathcal{E}_\beta(u,v).$$
\end{lemma}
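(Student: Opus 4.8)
The plan is to reduce both infima to a single ratio involving only the Dirichlet energy, and then to optimize via the Sobolev and H\"older inequalities together with a one-variable computation. First I would record the value of $\mathcal{E}_\beta$ on the two constraint sets. If $(u,v)\in\mathcal{M}^*_\beta$, the Nehari identity $\langle D[\mathcal{E}_\beta(u,v)],(u,v)\rangle_{\mathcal{D}^*,\mathcal{D}}=0$ reads $\|\nabla u\|_{L^2(\bbr^4)}^2+\|\nabla v\|_{L^2(\bbr^4)}^2=\mathcal{L}_\beta(u,v)$, whence $\mathcal{E}_\beta(u,v)=\frac14(\|\nabla u\|_{L^2(\bbr^4)}^2+\|\nabla v\|_{L^2(\bbr^4)}^2)$. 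Since $\beta>1>0$, $\mathcal{L}_\beta$ is positive definite on $\mathcal{D}\backslash\{(0,0)\}$, so the fibering argument of Lemma~\ref{lem0008} (with the positive definite quadratic form $\|\nabla u\|_{L^2(\bbr^4)}^2+\|\nabla v\|_{L^2(\bbr^4)}^2$ playing the role of $\mathcal{D}_\lambda$) applies and gives, exactly as in \eqref{eq1003},
\begin{equation*}
m_\beta^0=\inf_{(u,v)\in\mathcal{D}\backslash\{(0,0)\}}\frac{(\|\nabla u\|_{L^2(\bbr^4)}^2+\|\nabla v\|_{L^2(\bbr^4)}^2)^2}{4\mathcal{L}_\beta(u,v)}.
\end{equation*}
Adding the two identities that define $\mathcal{N}^*_\beta$ shows at once that $\mathcal{N}^*_\beta\subset\mathcal{M}^*_\beta$ and that the same formula $\mathcal{E}_\beta=\frac14(\|\nabla u\|_{L^2(\bbr^4)}^2+\|\nabla v\|_{L^2(\bbr^4)}^2)$ holds on $\mathcal{N}^*_\beta$; hence $m_\beta^{**}\geq m_\beta^0$, and it suffices to compute $m_\beta^0$ and to exhibit one competitor in $\mathcal{N}^*_\beta$ at the critical level.

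For the lower bound I would estimate the ratio from below. Writing $A=\|u\|_{L^4(\bbr^4)}^2$ and $B=\|v\|_{L^4(\bbr^4)}^2$, the Sobolev inequality \eqref{eq0003} yields $\|\nabla u\|_{L^2(\bbr^4)}^2+\|\nabla v\|_{L^2(\bbr^4)}^2\geq S(A+B)$, while $\beta>0$ together with the H\"older inequality gives $\mathcal{L}_\beta(u,v)\leq A^2+B^2+2\beta AB$. Therefore the ratio is bounded below by $S^2(A+B)^2/(A^2+B^2+2\beta AB)$. Setting $t=A/B$ and minimizing $f(t)=(t+1)^2/(t^2+2\beta t+1)$ on $[0,+\infty)$, the numerator of $f'(t)$ factors as $2(t+1)(\beta-1)(t-1)$, so for $\beta>1$ there is a unique interior minimum at $t=1$ with $f(1)=2/(1+\beta)$ (the endpoint values being $f(0)=\lim_{t\to+\infty}f(t)=1>2/(1+\beta)$). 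This gives $m_\beta^0\geq \frac{S^2}{4}\cdot\frac{2}{1+\beta}=\frac{S^2}{2(1+\beta)}$.

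For the matching upper bound I would use that $S$ is attained in $D^{1,2}(\bbr^4)$ by an Aubin--Talenti instanton $U$, for which $\|\nabla U\|_{L^2(\bbr^4)}^2=S\|U\|_{L^4(\bbr^4)}^2$. The diagonal choice $u=v=U$ achieves equality in both the Sobolev inequality and the H\"older inequality simultaneously, since $\int_{\bbr^4}U^2U^2\,dx=\|U\|_{L^4(\bbr^4)}^4$; consequently the ratio equals exactly $2S^2/(1+\beta)$, so $m_\beta^0\leq \frac{S^2}{2(1+\beta)}$, proving the first equality. Moreover, the symmetric scaling $(tU,tU)$ with $t^2=S/((1+\beta)\|U\|_{L^4(\bbr^4)}^2)$ solves both equations defining $\mathcal{N}^*_\beta$ (the two collapse to one when $u=v$), and a direct evaluation gives $\mathcal{E}_\beta(tU,tU)=\frac{S^2}{2(1+\beta)}$. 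Hence $m_\beta^{**}\leq\frac{S^2}{2(1+\beta)}$, which combined with $m_\beta^{**}\geq m_\beta^0=\frac{S^2}{2(1+\beta)}$ finishes the proof.

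The delicate point is the treatment of $\mathcal{N}^*_\beta$ in the regime $\beta>1$: here the two-variable fibering map of $\mathcal{N}^*_\beta$ is no longer well behaved, because the determinant $\|u\|_{L^4(\bbr^4)}^4\|v\|_{L^4(\bbr^4)}^4-\beta^2\|u^2v^2\|_{L^1(\bbr^4)}^2$ of the associated linear system may fail to be positive, so one cannot project an arbitrary pair $(u,v)$ onto $\mathcal{N}^*_\beta$ as was done for $\beta<1$ in Lemmas~\ref{lem0002} and \ref{lem0006}. I would therefore deliberately avoid recomputing $m_\beta^{**}$ through a quotient formula and instead route it through the inclusion $\mathcal{N}^*_\beta\subset\mathcal{M}^*_\beta$ for the lower bound and through the single explicit diagonal competitor $(tU,tU)$ for the upper bound. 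The other point requiring care is that the lower bound forces equality in two distinct inequalities at once; this is precisely what the symmetric (diagonal) bubble delivers, which explains why the optimal configuration is $u=v$ and why the extremal level carries the factor $1/(1+\beta)$.
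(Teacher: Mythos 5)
Your proof is correct, and it takes a genuinely different route from the paper's. The paper does not compute $m_\beta^0$ directly: it imports the value $m_\beta^{**}=\frac{S^2}{2(1+\beta)}$ for $\beta>1$ from \cite[Theorem~3.1]{CZ14} (see also \cite[Theorem~1.5]{CZ121}), records only the crude lower bound $m_\beta^0\geq\frac{S^2}{4(1+\beta)}$ from H\"older and Sobolev, and then spends the bulk of the proof establishing the reverse inequality $m_\beta^0\geq m_\beta^{**}$ by a compactness analysis: a minimizing sequence on $\mathcal{M}^*_\beta$, Brez\'is--Lieb splitting, the two cases $(u_0,v_0)\not=(0,0)$ and $(u_0,v_0)=(0,0)$, exclusion of semi-trivial limits via $m_\beta^0\geq\frac14S^2$ when one component vanishes, and a Lagrange-multiplier step to place the limit in $\mathcal{N}^*_\beta$. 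You instead compute $m_\beta^0$ outright: your sharp lower bound via the one-variable minimization of $f(t)=(t+1)^2/(t^2+2\beta t+1)$, whose derivative factors as $2(t+1)(\beta-1)(t-1)$ so that the interior minimum $f(1)=2/(1+\beta)$ occurs exactly when $\beta>1$, upgrades the paper's $\frac{S^2}{4(1+\beta)}$ to the exact constant, and the diagonal instanton $(tU,tU)$ gives both the matching upper bound and an explicit element of $\mathcal{N}^*_\beta$ at the critical level, so $m_\beta^{**}$ follows from the inclusion $\mathcal{N}^*_\beta\subset\mathcal{M}^*_\beta$ alone. What each approach buys: yours is self-contained and elementary (no external ground-state classification, no concentration-compactness), and it correctly localizes where $\beta>1$ enters --- for $\beta<1$ your $f$ has its minimum at the endpoints with value $1$, consistent with the paper's $m^*_{\lambda,\beta}=\frac14S^2$ in that regime; the paper's longer route, on the other hand, yields as a by-product that the infimum over $\mathcal{M}^*_\beta$ is attained and that any minimizer is a fully non-trivial critical point, a compactness mechanism that is reused almost verbatim in Proposition~\ref{prop0003} and Proposition~\ref{prop0004}, which is presumably why the authors argue that way (your construction also shows attainment, by the explicit pair $(tU,tU)$, though not that every minimizer is of this form).
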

\begin{proof}
The idea of this proof comes from \cite{CZ14}.
Clearly, $\mathcal{N}^*_\beta\subset\mathcal{M}^*_\beta$.  Thus, it is easy to see that $m_{\beta}^0\leq m_{\beta}^{**}$.  Furthermore, by \cite[Theorem~3.1]{CZ14} (see also \cite[Theorem~1.5]{CZ121}), we know that%
\begin{equation}    \label{eq0012}
m_{\beta}^{**}=\frac{1}{2(1+\beta)}S^2\quad\text{for }\beta>1.%
\end{equation}
Thus, $m_{\beta}^0\leq m_{\beta}^{**}=\frac{1}{2(1+\beta)}S^2$ for $\beta>1$.
On the other hand, by a standard argument, we also have%
\begin{eqnarray}
m_{\beta}^0&=&\inf_{(u,v)\in \mathcal{D}\backslash\{(0,0)\}}\max_{t\geq0}\mathcal{E}_{\beta}(tu,tv)\notag\\%
&=&\inf_{(u,v)\in  \mathcal{D}\backslash\{(0,0)\}}\frac{(\|\nabla u\|_{L^2(\bbr^4)}^2+\|\nabla v\|_{L^2(\bbr^4)}^2)^2}
{4\mathcal{L}_\beta(u,v)}.\label{eq0039}%
\end{eqnarray}
It follows from the H\"older and Sobolev inequalities that $m_{\beta}^0\geq\frac{S^2}{4(1+\beta)}$.  Next, we will show that $m_{\beta}^0\geq m_{\beta}^{**}$.  Let $\{(u_n,v_n)\}\subset\mathcal{M}^*_\beta$ be a minimizing sequence of $\mathcal{E}_\beta(u,v)$.  Then it is easy to show that $\{(u_n,v_n)\}$ is bounded in $\mathcal{D}$.  Without loss of generality, we assume $(u_n,v_n)\rightharpoonup(u_0,v_0)$ weakly in $\mathcal{D}$ and $(u_n,v_n)\to(u_0,v_0)$ a.e. in $\bbr^4\times \bbr^4$ as $n\to\infty$.  Denote $w_n=u_n-u_0$ and $\sigma_n=v_n-v_0$.  Then by the Sobolev inequality, the Brez\'is-Lieb lemma and \cite[Lemma~2.3]{CZ14}, we have%
\begin{equation}\label{eq0037}
m_{\beta}^0=\mathcal{E}_\beta(u_0,v_0)+\mathcal{E}_\beta(w_n,\sigma_n)+o_n(1)%
\end{equation}
and%
\begin{eqnarray}
0=\langle D[\mathcal{E}_\beta(u_0,v_0)],(u_0,v_0)\rangle_{\mathcal{D}^*,\mathcal{D}}
+\langle D[\mathcal{E}_\beta(w_n,\sigma_n)],(w_n,\sigma_n)\rangle_{\mathcal{D}^*,\mathcal{D}}+o_n(1).\label{eq0038}%
\end{eqnarray}

\noindent {\bf Case~1:}\quad $(u_0,v_0)\not=(0,0)$. In this case, we can see from \eqref{eq0039} that%
\begin{equation*}
\frac{(\|\nabla u_0\|_{L^2(\bbr^4)}^2+\|\nabla v_0\|_{L^2(\bbr^4)}^2)^2}{\mathcal{L}_\beta(u_0,v_0)}\geq4m_{\beta}^0=\|\nabla u_n\|_{L^2(\bbr^4)}^2+\|\nabla v_n\|_{L^2(\bbr^4)}^2+o_n(1).%
\end{equation*}
It follows that%
\begin{equation*}
\|\nabla u_0\|_{L^2(\bbr^4)}^2+\|\nabla v_0\|_{L^2(\bbr^4)}^2\geq\|u_0\|_{L^4(\bbr^4)}^4+2\beta\|u_0^2v_0^2\|_{L^1(\bbr^4)}+\|v_0\|_{L^4(\bbr^4)}^4,%
\end{equation*}
which together with \eqref{eq0040} and \eqref{eq0038}, implies that%
\begin{equation}    \label{eq0043}
\mathcal{E}_\beta(u_0,v_0)>0\text{ and }\langle D[\mathcal{E}_\beta(w_n,\sigma_n)],(w_n,\sigma_n)\rangle_{\mathcal{D}^*,\mathcal{D}}\leq o_n(1).%
\end{equation}
If $\|\nabla w_n\|_{L^2(\bbr^4)}+\|\nabla \sigma_n\|_{L^2(\bbr^4)}\geq C+o_n(1)$, then by \eqref{eq0043}, there exists $0<t_n\leq1+o_n(1)$ such that $(t_nw_n,t_nv_n)\in\mathcal{M}^*_\beta$ for $n$ large enough.  Since $\beta>1$, by Lemma~\ref{lem0008} and similar arguments as used in \eqref{eq0037}, we can see that%
\begin{eqnarray*}
m_{\beta}^0&\geq&\mathcal{E}_\beta(t_nu_n,t_nv_n)\\%
&=&\mathcal{E}_\beta(t_nu_0,t_nv_0)+\mathcal{E}_\beta(t_nw_n,t_n\sigma_n)+o_n(1)\\%
&\geq&\mathcal{E}_\beta(t_nu_0,t_nv_0)+m_{\beta}^0+o_n(1).%
\end{eqnarray*}
It follows that $t_n\to0$ as $n\to\infty$, which is impossible due to $\|\nabla w_n\|_{L^2(\bbr^4)}+\|\nabla \sigma_n\|_{L^2(\bbr^4)}\geq C+o_n(1)$ and $(t_nw_n,t_nv_n)\in\mathcal{M}^*_\beta$ for $n$ large enough.  Therefore, we must have $\|\nabla w_n\|_{L^2(\bbr^4)}+\|\nabla \sigma_n\|_{L^2(\bbr^4)}\to0$ as $n\to\infty$ up to a subsequence.  It follows from the Sobolev inequality and \eqref{eq0037}--\eqref{eq0043} that $\mathcal{E}_\beta(u_0,v_0)=m_\beta^0$ and $(u_0,v_0)\in\mathcal{M}_\beta^*$.  If $u_0=0$ or $v_0=0$, then by the Sobolev inequality and \eqref{eq0039}, we can see that $m_\beta^0\geq\frac14S^2$.  It contradicts to $m_{\beta}^0\leq m_{\beta}^{**}$ and \eqref{eq0012}, since $\beta>1$.  Hence, both $u_0\not=0$ and $v_0\not=0$.  Since $\mathcal{E}_\beta(u,v)$ is $C^2$, by a similar argument as used in the proof of Lemma~\ref{lem5001}, we have $D[\mathcal{E}_\beta(u_0,v_0)]=0$ in $\mathcal{D}^*$.  Hence, $(u_0,v_0)\in\mathcal{N}_\beta^*$ and $m_\beta^0\geq m_\beta^{**}$.%

\vskip0.23in

\noindent {\bf Case~2:}\quad $(u_0,v_0)=(0,0)$.%

In this case, $(w_n,\sigma_n)=(u_n,v_n)$.  By \eqref{eq0037} and $m_{\beta}^0\geq\frac{S^2}{4(1+\beta)}$,
 we must have $\|\nabla w_n\|_{L^2(\bbr^4)}+\|\nabla \sigma_n\|_{L^2(\bbr^4)}\geq C+o_n(1)$.
 If $w_n\to0$ or $\sigma_n\to0$ strongly in $D^{1,2}(\bbr^4)$ as $n\to\infty$,
 then we can see from the Sobolev inequality and \eqref{eq0039} that $m_\beta^0\geq\frac14S^2$,
 which is impossible since $m_{\beta}^0\leq m_{\beta}^{**}$, \eqref{eq0012} holds and $\beta>1$.
  Therefore, we must  have both $w_n\not\to0$ and $\sigma_n\not\to0$ strongly in $D^{1,2}(\bbr^4)$
  as $n\to\infty$.  Now, by a similar argument as used in \cite[Lemma~2.5]{CZ14}, we can get a contradiction.%
\end{proof}

Due to Lemma~\ref{lem0010}, we can give a precise description on $m_{\lambda,\beta}$ and $m_{\lambda,\beta}^*$ in the case of $b_0\geq a_0\geq0$.%
\begin{lemma}\label{lem0100}
Assume that $(D_1)$-$(D_3)$ hold.  If $b_0\geq a_0\geq0$, then
$$m_{\lambda,\beta}=\frac{1}{2(1+\max\{\beta, 0\})}S^2, \quad m_{\lambda,\beta}^*=\frac{1}{2(1+\max\{1,\beta\})}S^2$$
 for all $\beta\in\bbr$ and $\lambda>0$.%
\end{lemma}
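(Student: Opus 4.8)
The plan is to determine both constants by trapping each infimum between a lower bound coming from the potential-free (autonomous) problem and a matching upper bound produced by concentrating Aubin--Talenti bubbles. The two inequalities used throughout are: since $a_0,b_0\geq0$ and $a,b\geq0$, Lemma~\ref{lem0110} gives $\mathcal{D}_\lambda(u,v)=\|u\|_{a,\lambda}^2+\|v\|_{b,\lambda}^2\geq\|\nabla u\|_{L^2(\bbr^4)}^2+\|\nabla v\|_{L^2(\bbr^4)}^2$ for all $\lambda>0$; and the Sobolev and H\"older bounds $\|\nabla u\|_{L^2(\bbr^4)}^2\geq S\|u\|_{L^4(\bbr^4)}^2$ and $\|u^2v^2\|_{L^1(\bbr^4)}\leq\|u\|_{L^4(\bbr^4)}^2\|v\|_{L^4(\bbr^4)}^2$. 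Write $A=\|u\|_{L^4(\bbr^4)}^2$, $B=\|v\|_{L^4(\bbr^4)}^2$ and $C=\|u^2v^2\|_{L^1(\bbr^4)}\in[0,AB]$.

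First I establish the lower bounds. For $m_{\lambda,\beta}^*$ I start from \eqref{eq1003}, which (by Lemma~\ref{lem0008}) writes $m_{\lambda,\beta}^*$ as the infimum of $\mathcal{D}_\lambda(u,v)^2/(4\mathcal{L}_\beta(u,v))$ over $\{\mathcal{L}_\beta>0\}$. The numerator is $\geq S^2(A+B)^2$, while $\mathcal{L}_\beta=A^2+B^2+2\beta C\leq A^2+B^2+2\beta AB$ for $\beta\geq0$ and $\leq A^2+B^2$ for $\beta<0$. Minimizing $(A+B)^2/(A^2+B^2+2\beta AB)$ over $A,B>0$ gives the value $1$ for $0\leq\beta\leq1$ (the infimum being approached as one component degenerates) and $2/(1+\beta)$ for $\beta\geq1$ (attained at $A=B$), and for $\beta<0$ one has $(A+B)^2/(A^2+B^2)\geq1$; hence $m_{\lambda,\beta}^*\geq S^2/(2(1+\max\{1,\beta\}))$. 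For $m_{\lambda,\beta}$, on $\mathcal{N}_{\lambda,\beta}$ both partial Nehari identities hold, so $\mathcal{D}_\lambda=\mathcal{L}_\beta$ and $J_{\lambda,\beta}=\tfrac14\mathcal{D}_\lambda=\tfrac14(\|u\|_{a,\lambda}^2+\|v\|_{b,\lambda}^2)$ there. When $\beta\geq0$, combining $\|u\|_{a,\lambda}^2=A^2+\beta C\leq A^2+\beta AB$ with $\|u\|_{a,\lambda}^2\geq SA$ yields $S\leq A+\beta B$, and symmetrically $S\leq\beta A+B$; adding gives $A+B\geq2S/(1+\beta)$, whence $J_{\lambda,\beta}\geq\tfrac14 S(A+B)\geq S^2/(2(1+\beta))$. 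When $\beta<0$, $\|u\|_{a,\lambda}^2=A^2+\beta C\leq A^2$ forces $A\geq S$ and likewise $B\geq S$, so $J_{\lambda,\beta}\geq\tfrac14 S(A+B)\geq S^2/2$. These are exactly the claimed values.

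The matching upper bounds use a truncated Aubin--Talenti bubble $u_\epsilon$ concentrating at a point, for which $\|\nabla u_\epsilon\|_{L^2(\bbr^4)}^2=S^2+o(1)$, $\|u_\epsilon\|_{L^4(\bbr^4)}^4=S^2+o(1)$ and, crucially in $\bbr^4$, $\int_{\bbr^4}u_\epsilon^2\,dx=O(\epsilon^2|\log\epsilon|)\to0$, so that for each fixed $\lambda$ the bounded potential contribution $\int_{\bbr^4}(\lambda a(x)+a_0)u_\epsilon^2\,dx\to0$. For $m_{\lambda,\beta}^*$ I feed into \eqref{eq1003} the semi-trivial pair $(u_\epsilon,0)$ when $\beta\leq1$ (ratio $\to S^2/4$) and the diagonal pair $(u_\epsilon,u_\epsilon)$ when $\beta\geq1$ (ratio $\to S^2/(2(1+\beta))$), giving $m_{\lambda,\beta}^*\leq S^2/(2(1+\max\{1,\beta\}))$. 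For $m_{\lambda,\beta}$ I project $(u_\epsilon,u_\epsilon)$ onto $\mathcal{N}_{\lambda,\beta}$: for $\beta\neq1$ the two Nehari equations become the linear system $\mathcal{D}_{a,\lambda}(u_\epsilon,u_\epsilon)=L(t^2+\beta s^2)$, $\mathcal{D}_{b,\lambda}(u_\epsilon,u_\epsilon)=L(s^2+\beta t^2)$ with $L=\|u_\epsilon\|_{L^4(\bbr^4)}^4$, whose determinant $1-\beta^2\neq0$ yields positive $t_\epsilon^2,s_\epsilon^2\to1/(1+\beta)$ for small $\epsilon$, and then $J_{\lambda,\beta}(t_\epsilon u_\epsilon,s_\epsilon u_\epsilon)\to S^2/(2(1+\beta))$, valid for $\beta\geq0$. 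For $\beta<0$ I instead use two bubbles with disjoint supports, so $C=0$; then Lemma~\ref{lem0002} decouples the projection, $t_\epsilon^2,s_\epsilon^2\to1$, and the energy tends to $\tfrac14S^2+\tfrac14S^2=S^2/2$. Matching upper and lower bounds closes the proof.

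The main obstacle is the bubble bookkeeping rather than the (routine) algebraic optimizations in the lower bounds: one must verify admissibility of the truncated test pairs in $E_\lambda$ and in the correct Nehari-type set, control the $\bbr^4$-specific logarithmic term in $\int u_\epsilon^2$, and, for the projected pair in $\mathcal{N}_{\lambda,\beta}$, solve the $2\times2$ Nehari system near the diagonal; the degenerate threshold $\beta=1$ is then recovered by a limiting argument. Finally, for $\beta>1$ one may shortcut the autonomous value $S^2/(2(1+\beta))$ by quoting Lemma~\ref{lem0010} in place of the explicit computation, which is consistent with the $\beta<1$ properties recorded before Lemma~\ref{lem0006}.
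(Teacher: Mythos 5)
Your proposal reaches the correct values, and your upper bounds are essentially the paper's: truncated Aubin--Talenti bubbles with the potential term killed by $\|\psi_\ve\|_{L^2}^2\to0$, the $2\times2$ projection onto $\mathcal{N}_{\lambda,\beta}$ via the nondegenerate linear system (the paper's \eqref{eq0005}--\eqref{eq0006}, Steps 3 and 5), and for $\beta<0$ two disjoint bubbles in place of the paper's comparison $m_{\lambda,\beta}\leq m_a+m_b$ of Lemma~\ref{lem5003}, which is the same construction one well inside $\Omega_a$ and one inside $\Omega_b$. Where you genuinely depart from the paper is in the lower bounds, and your route is simpler and sharper. Writing $A=\|u\|_{L^4}^2$, $B=\|v\|_{L^4}^2$, $C\leq AB$, your key inequality $A^2+B^2+2\beta AB\leq\tfrac{1+\beta}{2}(A+B)^2$ for $\beta\geq1$ (equivalent to $(\beta-1)(A-B)^2\geq0$), combined with $\mathcal{D}_\lambda\geq S(A+B)$, yields $m_{\lambda,\beta}^*\geq\frac{S^2}{2(1+\beta)}$ for $\beta>1$ \emph{directly from \eqref{eq1003}}. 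The paper instead routes this case through Lemma~\ref{lem0010}, whose proof rests on the classification result of [CZ14, Theorem~3.1] and a concentration-compactness dichotomy; interestingly, the crude H\"older bound inside the paper's proof of Lemma~\ref{lem0010} only gives $\frac{S^2}{4(1+\beta)}$, so your optimization extracts the sharp constant elementarily. Likewise, for $\beta\leq0$ you bypass the scalar comparison Lemmas~\ref{lem5002}--\ref{lem5003}: on $\mathcal{N}_{\lambda,\beta}$ the identities $SA\leq A^2+\beta C$, $SB\leq B^2+\beta C$ force $A,B\geq S$, hence $m_{\lambda,\beta}\geq\frac{S^2}{2}$, and your addition trick $2S\leq(1+\beta)(A+B)$ covers all $\beta\geq0$ at once, including $\beta=1$, which the paper obtains only case by case.

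The one genuine gap is the upper bound $m_{\lambda,1}\leq\frac14S^2$ at the threshold $\beta=1$, which you dismiss as ``recovered by a limiting argument.'' No routine limit is available: monotonicity of $\beta\mapsto m_{\lambda,\beta}$ is not established (the constraint set moves with $\beta$), and the natural attempt --- project the near-optimal pairs $(t_\ve\psi_\ve,s_\ve\psi_\ve)\in\mathcal{N}_{\lambda,\beta}$, $\beta<1$, onto $\mathcal{N}_{\lambda,1}$ --- fails precisely because these pairs have proportional components, so $C=AB$ and the $2\times2$ Nehari system at $\beta=1$ is singular. Indeed, with identical profiles the conditions for $(\sqrt{t}\,\psi_\ve,\sqrt{s}\,\psi_\ve)\in\mathcal{N}_{\lambda,1}$ read $\mathcal{D}_{a,\lambda}(\psi_\ve,\psi_\ve)=L(t+s)=\mathcal{D}_{b,\lambda}(\psi_\ve,\psi_\ve)$, which is solvable only when the two potential integrals balance; this is why the paper devotes a separate Step~4 to $\beta=1$ via the degenerate system \eqref{eq0031}. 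A concrete repair along your own lines: take two \emph{slightly different} bubbles, say $\psi_\ve$ and $\psi_{\ve'}$ with $\ve'=\ve(1+\eta)$ or a small translate, so that $\kappa:=A^2B^2-C^2>0$; since the right-hand-side mismatch is $\mathcal{D}_{a,\lambda}-\mathcal{D}_{b,\lambda}=O(\ve^2|\log\ve|)$, choosing $\eta=\eta(\ve)$ with $\kappa\gg\ve^2|\log\ve|$ makes the system uniquely solvable with $t^2+s^2\to1$, $t^2-s^2\to0$, both positive, and then $J_{\lambda,1}(t\psi_\ve,s\psi_{\ve'})=\frac14\bigl(t^2\mathcal{D}_{a,\lambda}(\psi_\ve,\psi_\ve)+s^2\mathcal{D}_{b,\lambda}(\psi_{\ve'},\psi_{\ve'})\bigr)\to\frac14S^2$. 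With this step filled in, your proof is complete and, on the lower-bound side, strictly lighter than the paper's.
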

\begin{proof}
For the sake of clarity, the proof will be performed through the following five steps.%

\vskip0.11in
\noindent {\bf Step 1.} We prove that $m_{\lambda,\beta}^*=\frac{1}{4}S^2$ and $m_{\lambda,\beta}=\frac12S^2$ for $\lambda>0$ and $\beta\leq0$.%

Indeed, thanks to the Sobolev inequality and the condition $(D_1)$, we have $m_{a,\lambda}\geq\frac{1}{4}S^2$ and $m_{b,\lambda}\geq\frac1{4}S^2$ in the case of $b_0\geq a_0\geq0$.  It follows from Lemmas~\ref{lem5003} and \ref{lem5002} that $m_{\lambda,\beta}^*=\frac{1}{4}S^2$ and $m_{\lambda,\beta}=\frac12S^2$ for $\lambda>0$ and $\beta\leq0$.%

\vskip0.11in
\noindent{\bf Step 2.} We prove that $m_{\lambda,\beta}^*=\frac{1}{4}S^2$ for $\lambda>0$ and $0<\beta\leq1$.  Indeed, by Lemma~\ref{lem5003}, we can see that $m_{\lambda,\beta}^*\leq\frac{1}{4}S^2$ for $\lambda>0$ and $0<\beta\leq1$.  It remains to show that $m_{\lambda,\beta}^*\geq\frac{1}{4}S^2$ for all $\lambda>0$ and $0<\beta\leq1$.  Suppose the contrary, we have $m_{\lambda',\beta'}^*<\frac{1}{4}S^2$ for some $\lambda'>0$ and $0<\beta'\leq1$.  By the definition of $m_{\lambda',\beta'}^*$, there exists $(u_\delta,v_\delta)\in\mathcal{M}_{\lambda',\beta'}$ satisfying $J_{\lambda',\beta'}(u_\delta,v_\delta)\leq m_{\lambda',\beta'}+\delta$ for some $\delta\in(0, \frac{1}{4}S^2-m_{\lambda',\beta'})$.  Since $(u_\delta,v_\delta)\in\mathcal{M}_{\lambda',\beta'}$ with $\lambda'>0$, by the Sobolev inequality, the condition $(D_1)$ and $b_0\geq a_0\geq0$, we have%
\begin{equation}\label{eq0135}
\mathcal{L}_{\beta'}(u_\delta,v_\delta)\leq4m_{\lambda',\beta'}+4\delta
<S^2,%
\end{equation}
and
\begin{equation}\label{eq0133}
S\|u_\delta\|_{L^4(\bbr^4)}^2+S\|v_\delta\|_{L^4(\bbr^4)}^2\leq\mathcal{D}_{\lambda'}(u_\delta,v_\delta)
=\mathcal{L}_{\beta'}(u_\delta,v_\delta).%
\end{equation}
Combining \eqref{eq0135}-\eqref{eq0133}, we can obtain that $\|u_\delta\|_{L^4(\bbr^4)}^2+\|v_\delta\|_{L^4(\bbr^4)}^2<S$.  On the other hand, thanks to the H\"older inequality, $0<\beta'\leq1$ and \eqref{eq0133}, we can see that $\|u_\delta\|_{L^4(\bbr^4)}^2+\|v_\delta\|_{L^4(\bbr^4)}^2\geq S$, which is a contradiction.%

\vskip0.11in
\noindent{\bf Step 3.} We prove that $m_{\lambda,\beta}=\frac{1}{2(1+\beta)}S^2$ for $\lambda>0$ and $\beta\in(0, 1)$.%

Indeed, consider the following family of functions:%
\begin{equation*}
\psi_\ve^*(x)=\frac{2\sqrt{2}\ve}{\ve^2+|x|^2},\quad\ve>0.%
\end{equation*}
Then $\psi_\ve(x)=\psi_\ve^*(x)\eta(x)\in H_0^1(B_R)$, where $\eta\in C_0^\infty(B_R)$.  Furthermore, it is well known that $\|\psi_\ve\|_{L^4(\bbr^4)}^4=S^2+O(\ve^4)$, $\|\nabla \psi_\ve\|_{L^2(\bbr^4)}^2=S^2+O(\ve^2)$ and $\|\psi_\ve\|_{L^2(\bbr^4)}^2=o(\ve)$ (cf. \cite{S96}).  It follows from the condition $(D_1)$ that%
\begin{eqnarray}\label{eq0009}
\|\psi_\ve\|_{L^4(\bbr^4)}^4\mathcal{D}_{a,\lambda}(\psi_\ve,\psi_\ve)
-\beta\|(\psi_\ve)^4\|_{L^1(\bbr^4)}\mathcal{D}_{b,\lambda}(\psi_\ve,\psi_\ve)=S^4(1-\beta+o(\ve))%
\end{eqnarray}
and%
\begin{eqnarray}\label{eq0010}
\|\psi_\ve\|_{L^4(\bbr^4)}^4\mathcal{D}_{b,\lambda}(\psi_\ve,\psi_\ve)
-\beta\|(\psi_\ve)^4\|_{L^1(\bbr^4)}\mathcal{D}_{a,\lambda}(\psi_\ve,\psi_\ve)=S^4(1-\beta+o(\ve)).%
\end{eqnarray}
Since%
\begin{equation}   \label{eq0030}
\|\psi_\ve\|_{L^4(\bbr^4)}^8-\beta^2\|(\psi_\ve)^4\|_{L^1(\bbr^4)}^2
=(1-\beta^2)\|\psi_\ve\|^8_{L^4(\bbr^4)},%
\end{equation}
by \eqref{eq0009}--\eqref{eq0010}, we can see that the proof of Lemma~\ref{lem0002} still works for $\ve$ sufficiently small in the case of $\beta\not=1$.
Thus, there exist $t_{\lambda,\beta}(\psi_\ve,\psi_\ve)$ and $s_{\lambda,\beta}(\psi_\ve,\psi_\ve)$ respectively given by \eqref{eq0005} and \eqref{eq0006} such that%
\begin{equation*}
(t_{\lambda,\beta}(\psi_\ve, \psi_\ve)\psi_\ve, s_{\lambda,\beta}(\psi_\ve, \psi_\ve)\psi_\ve)\in\mathcal{N}_{\lambda,\beta},%
\end{equation*}
which then implies%
\begin{equation*}
m_{\lambda,\beta}\leq J_{\lambda,\beta}(t_{\lambda,\beta}(\psi_\ve, \psi_\ve)\psi_\ve, s_{\lambda,\beta}(\psi_\ve, \psi_\ve)\psi_\ve)
=\frac{1}{2(1+\beta)}S^2+o(\ve).%
\end{equation*}
It follows that $m_{\lambda,\beta}\leq\frac{1}{2(1+\beta)}S^2$.  It remains to show that $m_{\lambda,\beta}\geq\frac{1}{2(1+\beta)}S^2$.  Indeed, let $\{(u_n,v_n)\}\subset\mathcal{N}_{\lambda,\beta}$ be a minimizing sequence of $J_{\lambda,\beta}(u, v)$.  Since $m_{\lambda,\beta}\leq\frac{1}{2(1+\beta)}S^2$, we have%
\begin{equation}\label{eq0017}
\mathcal{L}_{\beta}(u_n,v_n)\leq4m_{\lambda,\beta}+o_n(1)
\leq\frac{2}{1+\beta}S^2+o_n(1).%
\end{equation}
Note that the condition $(D_1)$ holds, $\lambda>0$ and $b_0\geq a_0\geq0$, by the Sobolev inequality and the fact that $(u_n,v_n)\in\mathcal{N}_{\lambda,\beta}$, we have%
\begin{equation}\label{eq0018}
S\|u_n\|_{L^4(\bbr^4)}^2\leq\mathcal{D}_{a,\lambda}(u_n,u_n)
=\|u_n\|_{L^4(\bbr^4)}^4+\beta\|u_n^2v_n^2\|_{L^1(\bbr^4)}%
\end{equation}
and%
\begin{equation}\label{eq0019}
S\|v_n\|_{L^4(\bbr^4)}^2\leq\mathcal{D}_{b,\lambda}(v_n,v_n)
=\|v_n\|_{L^4(\bbr^4)}^4+\beta\|u_n^2v_n^2\|_{L^1(\bbr^4)}.%
\end{equation}
Thanks to \eqref{eq0017}--\eqref{eq0019}, we can see that%
\begin{equation*}
\|u_n\|_{L^4(\bbr^4)}^2+\|v_n\|_{L^4(\bbr^4)}^2\leq\frac{2}{1+\beta}S+o_n(1),%
\end{equation*}
which together with \eqref{eq0018} and \eqref{eq0019} and the H\"older inequality, implies%
\begin{equation}\label{eq0021}
\|u_n\|_{L^4(\bbr^4)}^2\geq\frac{1}{1+\beta}S+o_n(1)\quad\text{and}\quad
\|v_n\|_{L^4(\bbr^4)}^2\geq\frac{1}{1+\beta}S+o_n(1).%
\end{equation}
Since $(u_n,v_n)\in\mathcal{N}_{\lambda,\beta}$ and $\beta>0$, we must have from \eqref{eq0018}--\eqref{eq0021} that
$m_{\lambda,\beta}\geq\frac{1}{2(1+\beta)}S^2+o_n(1)$.  The conclusion follows from letting $n\to\infty$.%

\vskip0.11in
\noindent{\bf Step 4.} We prove that $m_{\lambda,1}=\frac14S^2$ for $\lambda>0$.%

Indeed, for every $\lambda>0$, we consider the following two-component systems of algebraic equations%
\begin{equation}\label{eq0031}
\left\{\aligned \mathcal{D}_{a,\lambda}(\psi_\ve,\psi_\ve)-\|\psi_\ve\|_{L^4(\bbr^4)}^4t-\|\psi_\ve^4\|_{L^1(\bbr^4)}s&=0,\\%
\mathcal{D}_{b,\lambda}(\psi_\ve,\psi_\ve)-\|\psi_\ve\|_{L^4(\bbr^4)}^4s-\|\psi_\ve^4\|_{L^1(\bbr^4)}t&=0,\endaligned\right.%
\end{equation}
where $\psi_\ve$ is given in Step 3.
Since $\|\psi_\ve\|_{L^4(\bbr^4)}^4=S^2+O(\ve^4)$, $\|\nabla \psi_\ve\|_{L^2(\bbr^4)}^2=S^2+O(\ve^2)$ and $\|\psi_\ve\|_{L^2(\bbr^4)}^2=o(\ve)$, by the condition $(D_1)$, we can see that \eqref{eq0031} can be solved in $\bbr^+\times\bbr^+$ for $\ve$ sufficiently small and the solutions $(t_\ve,s_\ve)$ satisfies $t_\ve+s_\ve=1+o(\ve)$.  Thus, we can choose $t_\ve>0$ and $s_\ve>0$ for $\ve$ sufficiently small such that $(\sqrt{t_\ve}\psi_\ve,\sqrt{s_\ve}\psi_\ve)\in\mathcal{N}_{\lambda, 1}$.  It follows that%
\begin{equation}   \label{eq0032}
m_{\lambda, 1}\leq J_{\lambda,1}(\sqrt{t_\ve}\psi_\ve,\sqrt{s_\ve}\psi_\ve)=\frac14 S^2+o(\ve).%
\end{equation}
Letting $\ve\to0^+$ in \eqref{eq0032}, we have $m_{\lambda,1}\leq\frac14 S^2$ for all $\lambda>0$.
 Since $m_{\lambda,1}\geq m_{\lambda,1}^*$ for all $\lambda>0$, by the conclusion of Step 2,
 we can see that $m_{\lambda,1}\geq\frac14 S^2$ for all $\lambda>0$.%

\vskip0.11in
\noindent{\bf Step 5. }We prove that $m_{\lambda,\beta}^*=m_{\lambda,\beta}=\frac{1}{2(1+\beta)}S^2$ for $\lambda>0$ and $\beta>1$.%

Indeed, since $\mathcal{N}_{\lambda,\beta}\subset\mathcal{M}_{\lambda,\beta}$, we can see that $m_{\lambda,\beta}^*\leq m_{\lambda,\beta}$.
 Note that \eqref{eq0009}--\eqref{eq0030} still hold for $\lambda>0$ and $\beta>1$, thus, we also
 have $m_{\lambda,\beta}\leq\frac{1}{2(1+\beta)}S^2$ for $\lambda>0$ and $\beta>1$ by similar arguments as used in Step 3.
 In what follows, we will show that $m_{\lambda,\beta}^*\geq\frac{1}{2(1+\beta)}S^2$ for $\lambda>0$ and $\beta>1$.
 Indeed, for every $\delta>0$, we can take $(u_\delta,v_\delta)\in\mathcal{M}_{\lambda,\beta}$ such
 that $J_{\lambda,\beta}(u_\delta,v_\delta)\leq m_{\lambda,\beta}^*+\delta$.  By a standard argument,
 there exists $t_\delta>0$ such that $(t_\delta u_\delta,t_\delta v_\delta)\in\mathcal{M}_\beta^*$.
 It follows from the condition $(D_1)$, $\lambda>0$, $b_0\geq a_0\geq0$ and Lemmas~\ref{lem0008} and \ref{lem0010} that%
\begin{equation*}
\delta+m_{\lambda,\beta}^*\geq J_{\lambda,\beta}(u_\delta,v_\delta)\geq J_{\lambda,\beta}(t_\delta u_\delta,t_\delta v_\delta)\geq\mathcal{E}_\beta(t_\delta u_\delta,t_\delta v_\delta)\geq m_\beta^0=\frac{S^2}{2(1+\beta)}.%
\end{equation*}
The conclusion follows by letting $\delta\to0^+$.%
\end{proof}

With Lemma~\ref{lem0100} in hands, we can obtain the following
\begin{proposition}\label{prop0004}
Let  $(D_1)$-$(D_3)$ hold.  If $b_0\geq a_0\geq0$, then $m_{\lambda,\beta}$ and $m_{\lambda,\beta}^*$ can not be attained for all $\beta\in\bbr$ and $\lambda>0$.%
\end{proposition}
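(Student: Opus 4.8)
The plan is to argue by contradiction, feeding the explicit values of $m_{\lambda,\beta}$ and $m_{\lambda,\beta}^*$ just computed in Lemma~\ref{lem0100} back into the very inequalities that produced them. The decisive structural fact is that, since $a_0,b_0\geq0$ and $a,b\geq0$, the weights $\lambda a(x)+a_0$ and $\lambda b(x)+b_0$ are nonnegative, so that for every $(u,v)\in E_\lambda$
\[
\mathcal{D}_{a,\lambda}(u,u)=\|\nabla u\|_{L^2(\bbr^4)}^2+\int_{\bbr^4}(\lambda a(x)+a_0)u^2dx\geq\|\nabla u\|_{L^2(\bbr^4)}^2\geq S\|u\|_{L^4(\bbr^4)}^2,
\]
and analogously for $v$; here the first inequality is an equality precisely when $\int_{\bbr^4}(\lambda a(x)+a_0)u^2dx=0$, and the second precisely when $u$ saturates the best Sobolev constant $S$. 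Suppose $m_{\lambda,\beta}$ were attained at some $(u,v)\in\mathcal{N}_{\lambda,\beta}$, necessarily with $u\neq0$, $v\neq0$. By the definition of $\mathcal{N}_{\lambda,\beta}$ one has $\mathcal{D}_{a,\lambda}(u,u)=\|u\|_{L^4(\bbr^4)}^4+\beta\|u^2v^2\|_{L^1(\bbr^4)}$ and $\mathcal{D}_{b,\lambda}(v,v)=\|v\|_{L^4(\bbr^4)}^4+\beta\|u^2v^2\|_{L^1(\bbr^4)}$. Writing $P=\|u\|_{L^4(\bbr^4)}^2$, $Q=\|v\|_{L^4(\bbr^4)}^2$, $R=\|u^2v^2\|_{L^1(\bbr^4)}$ and combining these identities with the displayed Sobolev bounds and the H\"older inequality $R\leq PQ$, the requirement $J_{\lambda,\beta}(u,v)=\tfrac14(\mathcal{D}_{a,\lambda}(u,u)+\mathcal{D}_{b,\lambda}(v,v))=m_{\lambda,\beta}$ becomes a rigid finite--dimensional constraint on $(P,Q,R)$, and a short optimization shows it forces $P=Q=S/(1+\max\{\beta,0\})$ together with equality in both Sobolev bounds for $u$ and $v$.

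For $m_{\lambda,\beta}^*$ I would proceed in the same spirit, first invoking Lemma~\ref{lem5001} to ensure that a minimizer over $\mathcal{M}_{\lambda,\beta}$ is an actual solution, hence satisfies the two component identities above. If the minimizer is semi-trivial, say $v=0$, then $\mathcal{D}_{a,\lambda}(u,u)=\|u\|_{L^4(\bbr^4)}^4$ and $J_{\lambda,\beta}=\tfrac14\mathcal{D}_{a,\lambda}(u,u)=\tfrac14S^2$ again force equality in the Sobolev bound for $u$. If both components are nonzero, then for $\beta\leq0$ the identities give $\mathcal{D}_{a,\lambda}(u,u)\leq\|u\|_{L^4(\bbr^4)}^4$, whence $P\geq S$ and $\mathcal{D}_{a,\lambda}(u,u)\geq S^2$, and likewise $\mathcal{D}_{b,\lambda}(v,v)\geq S^2$, so $\mathcal{D}_\lambda(u,v)\geq2S^2$, contradicting $\mathcal{D}_\lambda(u,v)=4m_{\lambda,\beta}^*=S^2$; for $0<\beta\leq1$ the H\"older bound $R\leq PQ$ together with $\beta\leq1$ collapses the chain to a string of equalities and again forces Sobolev saturation of a nonzero component. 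Thus in every regime attainment forces some nonzero component $w\in\{u,v\}$ to realize equality in $\|\nabla w\|_{L^2(\bbr^4)}^2\geq S\|w\|_{L^4(\bbr^4)}^2$ while simultaneously $\int_{\bbr^4}(\lambda a(x)+a_0)w^2dx=0$ (resp.\ with $b$).

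It remains to derive the contradiction, and this is where the geometry of $\bbr^4$ enters. Equality in the Sobolev inequality forces $w$ to be, up to translation, dilation and a nonzero multiple, an Aubin--Talenti instanton $\psi^*_\mu(x)=\tfrac{2\sqrt2\,\mu}{\mu^2+|x-x_0|^2}$ (the very profile used in Step~3 of Lemma~\ref{lem0100}), which is strictly positive on all of $\bbr^4$. If $a_0>0$ this already fails, since $\psi^*_\mu\notin L^2(\bbr^4)$ and hence $\int_{\bbr^4}(\lambda a(x)+a_0)w^2dx=+\infty\neq0$; and if $a_0=0$, then by $(D_3)$ the set $\bbr^4\backslash\overline{\Omega}_a$ is nonempty and open with $a>0$ on it, so $\int_{\bbr^4}\lambda a(x)w^2dx>0$ strictly, again contradicting the vanishing obtained above. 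This rules out attainment for all $\beta\in\bbr$ and $\lambda>0$. I expect the main obstacle to lie in the strongly coupled regime $\beta>1$ (and the borderline $\beta=1$), where the optimal profile is synchronized and the problem does not decouple into two independent Sobolev problems: there the finite--dimensional optimization in $(P,Q,R)$ must be carried out keeping the H\"older equality $R=PQ$ and the two Sobolev equalities active at once, and the bulk of the work will be to verify that no triple $(P,Q,R)$ carrying strict Sobolev slack can meet the exact value $m_{\lambda,\beta}=m_{\lambda,\beta}^*=\frac{1}{2(1+\beta)}S^2$.
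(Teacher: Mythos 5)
Your strategy is correct, and it is genuinely different from the paper's proof in two respects. First, the endgame is a mirror image of the paper's: you run ``Sobolev saturation on $\bbr^4$ $\Rightarrow$ Aubin--Talenti instanton $\Rightarrow$ everywhere-positive, so $\int_{\bbr^4}(\lambda a(x)+a_0)w^2dx>0$ (or, for $a_0>0$, $w\notin L^2(\bbr^4)$, incompatible with \eqref{eq0001}),'' whereas the paper runs ``vanishing potential integrals $\Rightarrow$ $w\in H_0^1(\Omega_a)$ by $(D_3)$ $\Rightarrow$ $S$ attained on a bounded domain, impossible by \cite{T76}.'' Both need the same two facts (saturation plus vanishing) and both rest on Talenti, so they are of equal strength; note that when $a_0>0$ you do not even need the instanton, since $\int_{\bbr^4}(\lambda a+a_0)w^2dx\geq a_0\|w\|_{L^2(\bbr^4)}^2$ forces $w\equiv0$ outright. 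Second, and more substantially, for $\beta>1$ the paper routes everything through Lemma~\ref{lem0010} (the limit functional $\mathcal{E}_\beta$ on $\mathcal{M}_\beta^*$), the maximum principle, and the uniqueness theorem for synchronized positive solutions in \cite{CZ14}, while your $(P,Q,R)$ reduction, once completed, bypasses all of that machinery; you likewise avoid the paper's comparison trick in its Step~2 for $0<\beta<1$, where a second minimizer at a smaller parameter $\lambda'<\lambda$ is manufactured to force the potential integrals to vanish. What the paper's heavier route buys is Lemma~\ref{lem0010} itself, which is reused later (e.g.\ in Proposition~\ref{prop0003}); what your route buys is a short, self-contained, purely algebraic proof of non-attainment.

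The step you defer as the ``main obstacle'' at $\beta\geq1$ in fact closes in two lines, so your proposal is complete once you write it out. If $(u,v)\in\mathcal{N}_{\lambda,\beta}$ with $J_{\lambda,\beta}(u,v)=m_{\lambda,\beta}=\frac{S^2}{2(1+\beta)}$, divide the constraint identities by $P$ resp.\ $Q$ after H\"older ($R\leq PQ$) to get $S\leq P+\beta Q$ and $S\leq Q+\beta P$, hence $P+Q\geq\frac{2S}{1+\beta}$; summing the two Sobolev bounds gives $S(P+Q)\leq\mathcal{L}_\beta(u,v)=4m_{\lambda,\beta}=\frac{2S^2}{1+\beta}$, i.e.\ $P+Q\leq\frac{2S}{1+\beta}$. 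Equality throughout then forces $\mathcal{D}_{a,\lambda}(u,u)=SP$ and $\mathcal{D}_{b,\lambda}(v,v)=SQ$, i.e.\ Sobolev saturation and potential vanishing for both components, so no triple with strict Sobolev slack survives; the same computation handles $\beta=1$ (where $2S\leq(1+\beta)(P+Q)\leq(1+\beta)S$), and for $m_{\lambda,\beta}^*$ with $\beta>1$ the semi-trivial alternative is excluded since it would give $P^2=\frac{2S^2}{1+\beta}<S^2$ while $SP\leq P^2$ forces $P\geq S$. One caveat: for $m_{\lambda,\beta}$ with $\beta\geq1$ you must not invoke Lemma~\ref{lem5005} (criticality of minimizers on $\mathcal{N}_{\lambda,\beta}$), which the paper proves only for $\beta<1$; fortunately you never need it there, because the componentwise identities $\mathcal{D}_{a,\lambda}(u,u)=P^2+\beta R$ and $\mathcal{D}_{b,\lambda}(v,v)=Q^2+\beta R$ hold for \emph{every} element of $\mathcal{N}_{\lambda,\beta}$ by definition of the constraint, whereas on $\mathcal{M}_{\lambda,\beta}$, where only the summed identity is built in, Lemma~\ref{lem5001} is valid for all $\beta$, exactly as you use it.
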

\begin{proof}
For the sake of clarity, the proof will be performed through the following four steps.%

\vskip0.1in

\noindent{\bf Step 1.}  We prove that $m_{\lambda,\beta}$ and $m_{\lambda,\beta}^*$ can not be attained for $\lambda>0$ and $\beta\leq0$.%

Firstly, we assume that there exists $(u_{\lambda,\beta}, v_{\lambda,\beta})\in\mathcal{N}_{\lambda,\beta}$
such that $$J_{\lambda,\beta}(u_{\lambda,\beta}, v_{\lambda,\beta})=m_{\lambda,\beta} \hbox{ for } \lambda>0 \hbox{  and } \beta\leq0.$$
  Then by Lemma~\ref{lem0100}, we must have%
\begin{equation}   \label{eq0008}
\mathcal{D}_{\lambda}(u_{\lambda,\beta},v_{\lambda,\beta})
=2S^2.%
\end{equation}
On the other hand, since $(u_{\lambda,\beta}, v_{\lambda,\beta})\in\mathcal{N}_{\lambda,\beta}$ with $\lambda>0$ and $\beta\leq0$, by the Sobolev inequality, the condition $(D_1)$ and $b_0\geq a_0\geq0$, we can see that%
\begin{equation}   \label{eq0044}
\|\nabla u_{\lambda,\beta}\|_{L^2(\bbr^4)}^2\geq S^2\text{ and }\|\nabla v_{\lambda,\beta}\|_{L^2(\bbr^4)}^2\geq S^2%
\end{equation}
and
\begin{equation}   \label{eq0045}
\|u_{\lambda,\beta}\|_{L^4(\bbr^4)}^2\geq S\text{ and }\|v_{\lambda,\beta}\|_{L^4(\bbr^4)}^2\geq S.%
\end{equation}
 By \eqref{eq0044},  \eqref{eq0008},   $(D_1)$ and  recall that $b_0\geq a_0\geq0$,  we have that
\begin{equation*}
\|\nabla u_{\lambda,\beta}\|_{L^2(\bbr^4)}^2=\|\nabla v_{\lambda,\beta}\|_{L^2(\bbr^4)}^2=S^2%
\end{equation*}
and
\begin{equation*}
\int_{\bbr^4}a(x)u_{\lambda,\beta}^2dx=\int_{\bbr^4}b(x)v_{\lambda,\beta}^2dx=0.%
\end{equation*}
Thanks to the condition $(D_3)$, $u_{\lambda,\beta}\in H_0^1(\Omega_a)$ and $v_{\lambda,\beta}\in H_0^1(\Omega_b)$ and it follows from \eqref{eq0045} that%
\begin{equation*}
\frac{\|\nabla u_{\lambda,\beta}\|_{L^2(\bbr^4)}^2}{\|u_{\lambda,\beta}\|_{L^4(\bbr^4)}^2}
=\inf_{u\in H_0^1(\Omega_a)\backslash\{0\}}\frac{\|\nabla u\|_{L^2(\bbr^4)}^2}{\|u\|_{L^4(\bbr^4)}^2},
\end{equation*}
\begin{equation*}
\frac{\|\nabla v_{\lambda,\beta}\|_{L^2(\bbr^4)}^2}{\|v_{\lambda,\beta}\|_{L^4(\bbr^4)}^2}
=\inf_{v\in H_0^1(\Omega_b)\backslash\{0\}}\frac{\|\nabla v\|_{L^2(\bbr^4)}^2}{\|v\|_{L^4(\bbr^4)}^2},%
\end{equation*}
which contradicts to the  Talenti's  results in \cite{T76}, since $\Omega_a$ and $\Omega_b$ are both bounded domains.
 Thus, $m_{\lambda,\beta}$ can not be attained for $\lambda>0$ and $\beta\leq0$.

  We next prove that $m_{\lambda,\beta}^*$ can not be attained for $\lambda>0$ and $\beta\leq0$.
   In fact, assume that there exists $(u_{\lambda,\beta}, v_{\lambda,\beta})\in\mathcal{M}_{\lambda,\beta}$ such
   that $J_{\lambda,\beta}(u_{\lambda,\beta}, v_{\lambda,\beta})=m_{\lambda,\beta}^*$ for $\lambda>0$ and $\beta\leq0$.
    Then by Lemma~\ref{lem0100} again, we must have%
\begin{equation}   \label{eq0108}
\mathcal{D}_{\lambda}(u_{\lambda,\beta},v_{\lambda,\beta})
=S^2.%
\end{equation}
On the other hand, since $(u_{\lambda,\beta}, v_{\lambda,\beta})\in\mathcal{M}_{\lambda,\beta}$ with $\lambda>0$ and $\beta\leq0$, by the Sobolev inequality, the condition $(D_1)$ and $b_0\geq a_0\geq0$, we can see that%
\begin{equation}   \label{eq0046}
\|\nabla u_{\lambda,\beta}\|_{L^2(\bbr^4)}^2+\|\nabla v_{\lambda,\beta}\|_{L^2(\bbr^4)}^2\geq S^2%
\end{equation}
and
\begin{equation}   \label{eq0047}
\|u_{\lambda,\beta}\|_{L^4(\bbr^4)}^2+\|v_{\lambda,\beta}\|_{L^4(\bbr^4)}^2\geq S.%
\end{equation}
\eqref{eq0046} together with \eqref{eq0108}, the condition $(D_1)$ and $b_0\geq a_0\geq0$ once more, implies that%
\begin{equation*}
\|\nabla u_{\lambda,\beta}\|_{L^2(\bbr^4)}^2+\|\nabla v_{\lambda,\beta}\|_{L^2(\bbr^4)}^2=S^2%
\end{equation*}
and
\begin{equation*}
\int_{\bbr^4}a(x)u_{\lambda,\beta}^2dx=\int_{\bbr^4}b(x)v_{\lambda,\beta}^2dx=0.%
\end{equation*}
Thanks to the condition $(D_3)$, $u_{\lambda,\beta}\in H_0^1(\Omega_a)$ and $v_{\lambda,\beta}\in H_0^1(\Omega_b)$ and it follows from \eqref{eq0047} and the condition $(D_3)$ again that%
\begin{equation*}
\frac{\|\nabla w_{\lambda,\beta}\|_{L^2(\bbr^4)}^2}{\|w_{\lambda,\beta}\|_{L^4(\bbr^4)}^2}=\inf_{w\in H_0^1(\Omega_a\cup\Omega_b)\backslash\{0\}}\frac{\|\nabla w\|_{L^2(\bbr^4)}^2}{\|w\|_{L^4(\bbr^4)}^2},%
\end{equation*}
where $w_{\lambda,\beta}=u_{\lambda,\beta}+v_{\lambda,\beta}$.  It contradicts to the results in \cite{T76}, since $\Omega_a$ and $\Omega_b$ are both bounded domains.  Thus, $m_{\lambda,\beta}^*$ can not be attained for $\lambda>0$ and $\beta\leq0$.%

\vskip0.1in

\noindent{\bf Step 2.}\quad We prove that $m_{\lambda,\beta}$ and $m_{\lambda,\beta}^*$ can not be attained for $\lambda>0$ and $\beta\in(0, 1)$.%

We first prove that $m_{\lambda,\beta}$ can not be attained for $\lambda>0$ and $\beta\in(0, 1)$.
 Indeed, suppose that there exists $(u_{\lambda,\beta},v_{\lambda,\beta})\in\mathcal{N}_{\lambda,\beta}$
 such that $J_{\lambda,\beta}(u_{\lambda,\beta},v_{\lambda,\beta})=m_{\lambda,\beta}$ for
 $\lambda>0$ and $\beta\in(0, 1)$.  Then by similar arguments as used in Step 3 of Lemma~\ref{lem0100}, we can show that%
\begin{equation}   \label{eq0026}
\|u_{\lambda,\beta}\|_{L^4(\bbr^4)}^2=\frac{1}{1+\beta}S,\quad\|v_{\lambda,\beta}\|_{L^4(\bbr^4)}^2=\frac{1}{1+\beta}S%
\end{equation}
and%
\begin{equation*}   \label{eq0025}
\|u_{\lambda,\beta}^2v_{\lambda,\beta}^2\|_{L^1(\bbr^4)}=\frac{1-\beta}{1+\beta}S.%
\end{equation*}
It follows from $\beta\in(0, 1)$ that%
\begin{equation*}
\|u_{\lambda,\beta}\|_{L^4(\bbr^4)}^4\|v_{\lambda,\beta}\|_{L^4(\bbr^4)}^4-\beta^2\|u_{\lambda,\beta}^2v_{\lambda,\beta}^2\|_{L^1(\bbr^4)}^2
=\frac{(1-\beta)^3}{1+\beta}S^2>0.%
\end{equation*}
On the other hand, by Lemma~\ref{lem0100}, $\mathcal{D}_{\lambda}(u_{\lambda,\beta},v_{\lambda,\beta})
=\frac{2}{1+\beta}S^2$.  It follows from the condition $(D_1)$ and $b_0\geq a_0\geq0$ that%
\begin{equation*}
\lambda\int_{\bbr^4}a(x)u_{\lambda,\beta}^2dx\leq\frac{2}{1+\beta}S^2\quad\text{and}\quad\lambda\int_{\bbr^4}b(x)v_{\lambda,\beta}^2dx\leq\frac{2}{1+\beta}S^2.%
\end{equation*}
Hence, there exists $0<\lambda'<\lambda$ such that


$$\frac{(1-\beta)^3}{4(1+\beta)}S^2 \geq (\lambda-\lambda')(\|v_{\lambda,\beta}\|_{L^4(\bbr^4)}^4\int_{\bbr^4}a(x)u_{\lambda,\beta}^2dx$$
$$\quad\quad\quad\quad\quad\quad\quad\quad\quad +\beta\|u_{\lambda,\beta}^2v_{\lambda,\beta}^2\|_{L^1(\bbr^4)}\int_{\bbr^4}b(x)v_{\lambda,\beta}^2dx)>0$$
and
$$\frac{(1-\beta)^3}{4(1+\beta)}S^2 \geq (\lambda-\lambda')(\|u_{\lambda,\beta}\|_{L^4(\bbr^4)}^4\int_{\bbr^4}b(x)v_{\lambda,\beta}^2dx$$
$$\quad\quad\quad\quad\quad\quad\quad\quad\quad +\beta\|u_{\lambda,\beta}^2v_{\lambda,\beta}^2\|_{L^1(\bbr^4)}\int_{\bbr^4}a(x)u_{\lambda,\beta}^2dx)>0,
$$
which implies $$(t_{\lambda',\beta}(u_{\lambda,\beta}, v_{\lambda,\beta}), s_{\lambda',\beta}(u_{\lambda,\beta}, v_{\lambda,\beta}))\in[\frac{\sqrt{3}}2, 1]\times[\frac{\sqrt{3}}2, 1],$$
 where $t_{\lambda',\beta}(u_{\lambda,\beta}, v_{\lambda,\beta})$ and $s_{\lambda',\beta}(u_{\lambda,\beta}, v_{\lambda,\beta})$
 are given by \eqref{eq0005} and \eqref{eq0006}, respectively.  Therefore, by a similar argument as used in Lemma~\ref{lem0002},
 we must have that%
\begin{equation*}
(t_{\lambda',\beta}(u_{\lambda,\beta}, v_{\lambda,\beta})u_{\lambda,\beta}, s_{\lambda',\beta}(u_{\lambda,\beta}, v_{\lambda,\beta})v_{\lambda,\beta})\in\mathcal{N}_{\lambda',\beta}.%
\end{equation*}
Since $\beta\in(0, 1)$, by Lemmas~\ref{lem0006} and \ref{lem0100}, we have from $0<\lambda'<\lambda$ and the condition $(D_1)$ that%
\begin{equation*}
J_{\lambda',\beta}(t_{\lambda',\beta}(u_{\lambda,\beta}, v_{\lambda,\beta})u_{\lambda,\beta}, s_{\lambda',\beta}(u_{\lambda,\beta}, v_{\lambda,\beta})v_{\lambda,\beta})=m_{\lambda',\beta}.%
\end{equation*}
That is, $(t_{\lambda',\beta}(u_{\lambda,\beta}, v_{\lambda,\beta})u_{\lambda,\beta}, s_{\lambda',\beta}(u_{\lambda,\beta}, v_{\lambda,\beta})v_{\lambda,\beta})$ is the minimizer of $J_{\lambda',\beta}(u,v)$ on $\mathcal{N}_{\lambda',\beta}$.  By similar arguments as used in \eqref{eq0026}, we can also obtain that%
\begin{equation*}
\|t_{\lambda',\beta}(u_{\lambda,\beta}, v_{\lambda,\beta})u_{\lambda,\beta}\|_{L^4(\bbr^4)}^2=\frac{1}{1+\beta}S,\quad
\|s_{\lambda',\beta}(u_{\lambda,\beta}, v_{\lambda,\beta})v_{\lambda,\beta}\|_{L^4(\bbr^4)}^2=\frac{1}{1+\beta}S,%
\end{equation*}
which together with \eqref{eq0026}, implies $t_{\lambda',\beta}(u_{\lambda,\beta}, v_{\lambda,\beta})=1$ and
 $s_{\lambda',\beta}(u_{\lambda,\beta}, v_{\lambda,\beta})=1$.  Note that $(u_{\lambda,\beta}, v_{\lambda,\beta})$
 are the minimizers for  both   $J_{\lambda,\beta}(u,v)$ on $\mathcal{N}_{\lambda,\beta}$ and $J_{\lambda',\beta}(u,v)$
 on $\mathcal{N}_{\lambda',\beta}$. By $\lambda>\lambda'>0$, $\beta\in(0, 1)$, condition $(D_1)$
 and Lemma~\ref{lem0100}, we can see that%
\begin{equation*}
\int_{\bbr^4}a(x)u_{\lambda,\beta}^2dx=\int_{\bbr^4}b(x)v_{\lambda,\beta}^2dx=0.%
\end{equation*}
Thanks to the condition $(D_3)$ and $(u_{\lambda,\beta},v_{\lambda,\beta})\in\mathcal{N}_{\lambda,\beta}$,
we have $u_{\lambda,\beta}\in\mathcal{N}_a$ and $v_{\lambda,\beta}\in\mathcal{N}_b$.  Since $b_0\geq a_0\geq0$, we must have
$\|u_{\lambda,\beta}\|_{L^4(\bbr^4)}^2\geq S$ and $\|v_{\lambda,\beta}\|_{L^4(\bbr^4)}^2\geq S$, which
 contradicts to \eqref{eq0026}.  Since $m_{\lambda,\beta}^*=\frac14S^2$ for all $\lambda>0$ and $\beta\leq1$,
 we can prove that $m_{\lambda,\beta}^*$ can not be attained for $\lambda>0$ and $\beta\in(0, 1)$ by a similar
 argument as used in Step.~1.%

\vskip0.16in

\noindent{\bf Step 3.} We prove that $m_{\lambda,1}$ and $m_{\lambda,1}^*$ can not be attained for $\lambda>0$.%

We first prove that $m_{\lambda,1}$ can not be attained for $\lambda>0$.  Indeed,
suppose that there exists $(u_{\lambda,1},v_{\lambda,1})\in\mathcal{N}_{\lambda,1}$
such that $J_{\lambda,1}(u_{\lambda,1},v_{\lambda,1})=m_{\lambda,1}$.
 By Lemma~\ref{lem0100} and the H\"older inequality, we can see that
 $\|u_{\lambda,1}\|_{L^4(\bbr^4)}^2+\|v_{\lambda,1}\|_{L^4(\bbr^4)}^2\geq S$.
  On the other hand, thanks to a similar argument as used
   in \eqref{eq0135}-\eqref{eq0133}, we have $\|u_{\lambda,1}\|_{L^4(\bbr^4)}^2+\|v_{\lambda,1}\|_{L^4(\bbr^4)}^2\leq S$ due to
   Lemma~\ref{lem0100}.  Thus, we must
   have $\|u_{\lambda,1}\|_{L^4(\bbr^4)}^2+\|v_{\lambda,1}\|_{L^4(\bbr^4)}^2=S$.  Since $(u_{\lambda,1},v_{\lambda,1})\in\mathcal{N}_{\lambda,1}$,
   by similar arguments as used in \eqref{eq0108}--\eqref{eq0047} and the H\"older inequality,
   we can see that $$\int_{\bbr^4}a(x)u_{\lambda,1}^2dx=\int_{\bbr^4}b(x)v_{\lambda,1}^2dx=0, \quad
    S\|u_{\lambda,1}\|_{L^4(\bbr^4)}^2=\|\nabla u_{\lambda,1}\|_{L^2(\bbr^4)}^2$$ and
   $$ S\|v_{\lambda,1}\|_{L^4(\bbr^4)}^2=\|\nabla v_{\lambda,1}\|_{L^2(\bbr^4)}^2.$$
    By the condition $(D_3)$, $u_{\lambda,\beta}\in H_0^1(\Omega_a)$ and
     $v_{\lambda,\beta}\in H_0^1(\Omega_b)$, which contradicts to the  Talenti's  results
     of \cite{T76}, since $\Omega_a$ and $\Omega_b$ are both bounded domains.
      Thus, $m_{\lambda,1}$ can not be attained for $\lambda>0$.
      Since $m_{\lambda,1}^*=\frac14S^2$ for all $\lambda>0$,
      we can prove that $m_{\lambda,1}^*$ can not be attained
      for $\lambda>0$ by a similar argument as used in Step 1.

      \vskip0.12in

\noindent{\bf Step 4.}\quad We prove that $m_{\lambda,\beta}$ and $m_{\lambda,\beta}^*$ can not be attained for $\lambda>0$ and $\beta>1$.%

We first prove that $m_{\lambda,\beta}$ can not be attained for $\lambda>0$ and $\beta>1$.  Indeed, suppose that there exists $(u_{\lambda,\beta},v_{\lambda,\beta})\in\mathcal{N}_{\lambda,\beta}$ such that $J_{\lambda,\beta}(u_{\lambda,\beta},v_{\lambda,\beta})=m_{\lambda,\beta}$.  Without loss of generality, we may assume $u_{\lambda,\beta}\geq0$ and $v_{\lambda,\beta}\geq0$.  Clearly, $(u_{\lambda,\beta},v_{\lambda,\beta})\in\mathcal{M}_{\lambda,\beta}$.  By a standard argument, we can see that there exists $t_{\lambda,\beta}>0$ such that $(t_{\lambda,\beta}u_{\lambda,\beta},t_{\lambda,\beta}v_{\lambda,\beta})\in\mathcal{M}_{\beta}^*$.  Since the condition $(D_1)$ holds and $b_0\geq a_0\geq0$,
by Lemmas~\ref{lem0008} and \ref{lem0100}, we have that%
\begin{equation*}
\frac{1}{2(1+\beta)}S^2=J_{\lambda,\beta}(u_{\lambda,\beta},v_{\lambda,\beta})\geq J_{\lambda,\beta}(t_{\lambda,\beta}u_{\lambda,\beta},t_{\lambda,\beta}v_{\lambda,\beta})
\geq\mathcal{E}_\beta(t_{\lambda,\beta}u_{\lambda,\beta},t_{\lambda,\beta}v_{\lambda,\beta}),%
\end{equation*}
which together with Lemma~\ref{lem0010}, implies $\mathcal{E}_\beta(t_{\lambda,\beta}u_{\lambda,\beta},t_{\lambda,\beta}v_{\lambda,\beta})=m_\beta^0$.
Use  a similar argument as that  in the proof
  of Lemma~\ref{lem0010}, we have
  $$D[\mathcal{E}_\beta(t_{\lambda,\beta}u_{\lambda,\beta},t_{\lambda,\beta}v_{\lambda,\beta})]=0  \hbox{ in } \mathcal{D}^*.$$
Therefore, by the maximum principle, we can see that $t_{\lambda,\beta}u_{\lambda,\beta}>0$ and $t_{\lambda,\beta}v_{\lambda,\beta}>0$ on $\bbr^4$.
 Due to \cite[Theorem~3.1]{CZ14} and $\beta>1$,
 we must have $t_{\lambda,\beta}u_{\lambda,\beta}=t_{\lambda,\beta}v_{\lambda,\beta}=U_{\lambda,\beta}$,
 where $U_{\lambda,\beta}$ is given in \cite[Theorem~3.1]{CZ14} and satisfies
 $$\|U_{\lambda,\beta}\|_{L^4(\bbr^4)}^4=\|\nabla U_{\lambda,\beta}\|_{L^2(\bbr^4)}^2=S^2.$$
  It follows from Lemma~\ref{lem0100} and $(u_{\lambda,\beta},v_{\lambda,\beta})\in\mathcal{N}_{\lambda,\beta}$ that%
\begin{eqnarray*}
\frac{S^2}{2(1+\beta)}=J_{\lambda,\beta}(u_{\lambda,\beta},v_{\lambda,\beta})=\frac{(1+\beta)S^2}{2t_{\lambda,\beta}^4},%
\end{eqnarray*}
which then implies $t_{\lambda,\beta}^2=(1+\beta)$.  By Lemma~\ref{lem0100} and $(u_{\lambda,\beta},v_{\lambda,\beta})\in\mathcal{N}_{\lambda,\beta}$ once more, we must have%
\begin{eqnarray}
\frac{S^2}{2(1+\beta)}&=&J_{\lambda,\beta}(u_{\lambda,\beta},v_{\lambda,\beta})\notag\\%
&=&\frac14\mathcal{D}_{\lambda}(u_{\lambda,\beta},v_{\lambda,\beta})\notag\\%
&=&\frac{S^2}{2(1+\beta)}+\int_{\bbr^4}(\lambda a(x)+a_0)u_{\lambda,\beta}^2+(\lambda b(x)+b_0)v_{\lambda,\beta}^2dx.\notag%
\end{eqnarray}
It is impossible since $u_{\lambda,\beta}>0$, $v_{\lambda,\beta}>0$ on $\bbr^4$, $b_0\geq a_0\geq0$ and the conditions $(D_1)$--$(D_3)$ hold.  Note that $m_{\lambda,\beta}^*=m_{\lambda,\beta}$ for all $\lambda>0$ in the case of $\beta>1$, we can also show that $m_{\lambda,\beta}^*$ can not be attained for $\lambda>0$ and $\beta>1$ by a similar argument as above, which completes the proof.%
\end{proof}

\vskip0.12in

We next consider the case of $-\mu_{a,1}<a_0<0\leq b_0$.  Due to Lemma~\ref{lem0110},
we always assume $\lambda>\overline{\Lambda}_a$ in this case.  Let us consider the Nehari type set $\mathcal{M}_{\lambda,\beta}$ in what follows.  Since $\mathcal{D}_{a,\lambda}(u,u)$ and $\mathcal{D}_{b,\lambda}(v,v)$ are both definite on $E_{a,\lambda}$ and $E_{b,\lambda}$ respectively for $\lambda>\overline{\Lambda}_{a}$ in the case of $-\mu_{a,1}<a_0<0\leq b_0$, we can see that Lemma~\ref{lem0008} holds for all $E_{\lambda}\backslash\{(0, 0)\}$ for $\lambda>\overline{\Lambda}_{a}$ and $\beta\geq0$.  Furthermore, we also have the following.%
\begin{lemma}\label{lem1001}
Assume  $(D_1)$-$(D_3)$  and $-\mu_{a,1}<a_0<0\leq b_0$.  If $\beta\geq0$ and $\lambda>\overline{\Lambda}_{a}$, then $m_{\lambda,\beta}^*\geq\frac{S^2(\alpha_{a,1}(\lambda)-1)^2}{4\max\{1,\beta\}[\alpha_{a,1}(\lambda)]^2}>0$.%
\end{lemma}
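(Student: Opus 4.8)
The plan is to rely on the variational characterization already recorded in \eqref{eq1003}. Since $-\mu_{a,1}<a_0<0\le b_0$ and $\lambda>\overline{\Lambda}_a$, the decomposition $E_\lambda=(\widetilde{\mathcal{F}}_{a,\lambda}^{\perp}\oplus\mathcal{F}_{a,\lambda})\times\mathcal{F}_{b,\lambda}$ holds and $\mathcal{D}_\lambda(u,v)$ is positively definite on $E_\lambda$ by Lemma~\ref{lem0110}, so Lemma~\ref{lem0008} applies and gives
\[
m_{\lambda,\beta}^*=\inf_{E_\lambda\backslash\{(0,0)\}}\frac{\mathcal{D}_\lambda(u,v)^2}{4\mathcal{L}_\beta(u,v)}.
\]
Because $\beta\ge0$, the functional $\mathcal{L}_\beta$ is positively definite, hence $\mathcal{L}_\beta(u,v)>0$ for every $(u,v)\ne(0,0)$ and each quotient above is well defined. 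Thus it suffices to produce a pointwise lower bound for $\mathcal{D}_\lambda(u,v)^2/\mathcal{L}_\beta(u,v)$, that is, to bound $\mathcal{L}_\beta(u,v)$ from above by a multiple of $\mathcal{D}_\lambda(u,v)^2$.

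To this end I would set $P:=\mathcal{D}_{a,\lambda}(u,u)$ and $Q:=\mathcal{D}_{b,\lambda}(v,v)$, so that $\mathcal{D}_\lambda(u,v)=P+Q$. Since $-\mu_{a,1}<a_0<0$ forces $j_{a,\lambda}=1$, Lemma~\ref{lem0110}(i)--(ii) yields $\|u\|_{a,\lambda}^2\le c\,P$ with $c:=\alpha_{a,1}(\lambda)/(\alpha_{a,1}(\lambda)-1)$, while $\|v\|_{b,\lambda}^2=Q$ because $b_0\ge0$; note $c$ is finite and $c>1$, since $\lambda>\overline{\Lambda}_a$ guarantees $\alpha_{a,1}(\lambda)>1$. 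Feeding these into the Sobolev estimate \eqref{eq0003} gives $\|u\|_{L^4(\bbr^4)}^2\le S^{-1}cP$ and $\|v\|_{L^4(\bbr^4)}^2\le S^{-1}Q$, and the H\"older inequality gives $\|u^2v^2\|_{L^1(\bbr^4)}\le\|u\|_{L^4(\bbr^4)}^2\|v\|_{L^4(\bbr^4)}^2\le S^{-2}cPQ$. Summing the three terms I obtain
\[
\mathcal{L}_\beta(u,v)\le S^{-2}\bigl(c^2P^2+Q^2+2\beta cPQ\bigr).
\]

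The crux is then the purely algebraic inequality
\[
c^2P^2+Q^2+2\beta cPQ\le\max\{1,\beta\}\,c^2\,(P+Q)^2\qquad(P,Q\ge0,\ \beta\ge0,\ c>1),
\]
which I would prove by distinguishing two cases. If $0\le\beta\le1$, subtracting the left side from the right reduces the claim to $Q^2(1-c^2)+2c(\beta-c)PQ\le0$, and both terms are nonpositive because $c>1$ and $\beta\le1<c$. If $\beta>1$, I bound each monomial separately: $\beta c^2P^2\ge c^2P^2$, $\beta c^2Q^2\ge Q^2$ and $2\beta c^2PQ\ge2\beta cPQ$, again using $c>1$ and $\beta>1$. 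Granting this inequality, $\mathcal{L}_\beta(u,v)\le S^{-2}\max\{1,\beta\}c^2\mathcal{D}_\lambda(u,v)^2$, so every quotient in the characterization is at least $S^2/(4\max\{1,\beta\}c^2)$. Since $c^{-2}=(\alpha_{a,1}(\lambda)-1)^2/[\alpha_{a,1}(\lambda)]^2$, passing to the infimum yields precisely $m_{\lambda,\beta}^*\ge S^2(\alpha_{a,1}(\lambda)-1)^2/\bigl(4\max\{1,\beta\}[\alpha_{a,1}(\lambda)]^2\bigr)$, and strict positivity is exactly the fact $\alpha_{a,1}(\lambda)>1$. The only genuinely delicate point is securing the sharp constant $\max\{1,\beta\}$ in the algebraic step; everything else is the routine combination of Lemma~\ref{lem0110} with the Sobolev and H\"older inequalities.
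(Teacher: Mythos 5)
Your proposal is correct, and it reaches the paper's constant by a genuinely reorganized route. The paper works on the Nehari set itself: it takes an $\ve$-minimizer $(u_\ve,v_\ve)\in\mathcal{M}_{\lambda,\beta}$, uses Lemma~\ref{lem0110} plus the Sobolev inequality to get $\ve+m_{\lambda,\beta}^*\geq\frac{S(\alpha_{a,1}(\lambda)-1)}{4\alpha_{a,1}(\lambda)}\,(X+Y)$ with $X=\|u_\ve\|_{L^4(\bbr^4)}^2$, $Y=\|v_\ve\|_{L^4(\bbr^4)}^2$, then exploits the constraint identity $\mathcal{D}_\lambda=\mathcal{L}_\beta$ on $\mathcal{M}_{\lambda,\beta}$ together with the H\"older bound $\mathcal{L}_\beta\leq\max\{1,\beta\}(X+Y)^2$ in the \emph{symmetric} variables $X,Y$ to obtain $X+Y\geq\frac{S(\alpha_{a,1}(\lambda)-1)}{\alpha_{a,1}(\lambda)\max\{1,\beta\}}$, and multiplies, letting $\ve\to0^+$. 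You instead bound the unconstrained quotient in \eqref{eq1003} pointwise on $E_\lambda\backslash\{(0,0)\}$, which shifts the $\max\{1,\beta\}$ algebra to the \emph{asymmetric} variables $P=\mathcal{D}_{a,\lambda}(u,u)$, $Q=\mathcal{D}_{b,\lambda}(v,v)$ weighted by $c=\alpha_{a,1}(\lambda)/(\alpha_{a,1}(\lambda)-1)$; your case-split inequality $c^2P^2+Q^2+2\beta cPQ\leq\max\{1,\beta\}\,c^2(P+Q)^2$ checks out in both regimes (for $0\leq\beta\leq1$ the deficit is $Q^2(1-c^2)+2c(\beta-c)PQ\leq0$ since $\beta\leq1<c$; for $\beta>1$ the termwise comparison works), and it is precisely the paper's two estimates fused into a single pointwise bound, so the same constant drops out. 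What your route buys: no minimizing sequences, no use of the Nehari constraint beyond the characterization \eqref{eq1003} (and your observation that $\beta\geq0$ makes $\mathcal{L}_\beta>0$ off the origin is exactly the justification needed for that formula to apply globally). One small point, common to both arguments and worth a sentence in a final write-up: the coercivity $\|u\|_{a,\lambda}^2\leq cP$ for \emph{all} $u\in E_{a,\lambda}$, not merely on the summands where Lemma~\ref{lem0110} states it, uses that the cross terms of $\mathcal{D}_{a,\lambda}$ between $\mathcal{F}_{a,\lambda}$ and $\mathcal{F}_{a,\lambda}^{\perp}$ vanish — true because $(\lambda a(x)+a_0)^-$ is supported in $\mathcal{A}_\lambda$ while functions in $\mathcal{F}_{a,\lambda}$ vanish there, and $\widehat{\mathcal{F}}_{a,\lambda}^{\perp}=\emptyset$ (i.e. $\alpha_{a,1}(\lambda)>1$, whence $c>1$ is finite) for $\lambda>\overline{\Lambda}_a$ in the regime $-\mu_{a,1}<a_0<0$; the paper leaves this implicit as well.
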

\begin{proof}
Since $-\mu_{a,1}<a_0<0\leq b_0$  for $\lambda>\overline{\Lambda}_{a}$, we have $E_\lambda=(\widetilde{\mathcal{F}}_{a,\lambda}^{\perp}\oplus\mathcal{F}_{a,\lambda})
\times\mathcal{F}_{b,\lambda}$.  Thanks to Lemmas~\ref{lem0003} and \ref{lem0110} and $-\mu_{a,1}<a_0<0$, for every $\ve>0$, we have from the Sobolev inequality that%
\begin{equation}\label{eq1013}
\ve+m_{\lambda,\beta}^*\geq\frac{S(\alpha_{a,1}(\lambda)-1)}{4\alpha_{a,1}(\lambda)}(\|u_\ve\|^2_{L^4(\bbr^4)}+\|v_\ve\|^2_{L^4(\bbr^4)})%
\end{equation}
for some $(u_\ve,v_\ve)\in\mathcal{M}_{\lambda,\beta}$ with $\lambda>\overline{\Lambda}_{a}$ and $\beta\geq0$.  On the other hand, since $(u_\ve,v_\ve)\in\mathcal{M}_{\lambda,\beta}$ with $\lambda>\overline{\Lambda}_{a}$ and $\beta\geq0$, we can see from the H\"older and Sobolev inequalities that%
\begin{equation}\label{eq1014}
\|u_\ve\|^2_{L^4(\bbr^4)}+\|v_\ve\|^2_{L^4(\bbr^4)}\geq\frac{S(\alpha_{a,1}(\lambda)-1)}{\alpha_{a,1}(\lambda)\max\{1,\beta\}}.%
\end{equation}
Combining \eqref{eq1013} and \eqref{eq1014}, we can obtain that
$m_{\lambda,\beta}^*\geq\frac{S^2(\alpha_{a,1}(\lambda)-1)^2}{4\max\{1,\beta\}[\alpha_{a,1}(\lambda)]^2}>0$ for $\lambda>\overline{\Lambda}_{a}$ and $\beta\geq0$ by letting $\ve\to0^+$.%
\end{proof}

\begin{proposition}\label{prop0003}
Let the conditions $(D_1)$--$(D_3)$ hold and $-\mu_{a,1}<a_0<0\leq b_0$.  If $\beta\geq0$ and $\lambda>\overline{\Lambda}_{a}$, then $J_{\lambda,\beta}(u_{\lambda,\beta}, v_{\lambda,\beta})=m_{\lambda,\beta}^*$ and $D[J_{\lambda,\beta}(u_{\lambda,\beta}, v_{\lambda,\beta})]=0$ in $E_\lambda^*$ for some $(u_{\lambda,\beta}, v_{\lambda,\beta})\in\mathcal{M}_{\lambda,\beta}$.%
\end{proposition}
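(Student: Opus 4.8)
The plan is to realize $m_{\lambda,\beta}^*$ as the energy of a genuine minimizer over $\mathcal{M}_{\lambda,\beta}$, the loss of compactness at the critical exponent being controlled by a strict energy threshold. First I would record the two quantitative inputs already available. Since $-\mu_{a,1}<a_0<0\leq b_0$, Lemma~\ref{lem5003} gives $m_{\lambda,\beta}^*\leq\min\{m_a,m_b\}=m_a$, and because $a_0<0$ the scalar Br\'ezis--Nirenberg level satisfies $m_a<\frac14 S^2$; on the other hand Lemma~\ref{lem1001} yields $m_{\lambda,\beta}^*>0$. Thus
\begin{equation*}
0<m_{\lambda,\beta}^*\leq m_a<\tfrac14 S^2 .
\end{equation*}
This strict gap below the first bubbling level is the engine of the whole argument. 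Note that in the present case $E_\lambda=(\widetilde{\mathcal{F}}_{a,\lambda}^{\perp}\oplus\mathcal{F}_{a,\lambda})\times\mathcal{F}_{b,\lambda}$ and $\mathcal{D}_\lambda$ is positively definite, so Lemma~\ref{lem0008} applies on all of $E_\lambda\backslash\{(0,0)\}$.

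Next I would take a minimizing sequence $(u_n,v_n)\subset\mathcal{M}_{\lambda,\beta}$. On $\mathcal{M}_{\lambda,\beta}$ one has $J_{\lambda,\beta}=\frac14\mathcal{D}_\lambda$, so positive definiteness of $\mathcal{D}_\lambda$ forces $\{(u_n,v_n)\}$ to be bounded in $E_\lambda$. Because $\mathcal{M}_{\lambda,\beta}$ is a natural constraint—the derivative of the constraint functional along $(u_n,v_n)$ equals $2\mathcal{D}_\lambda(u_n,v_n)$, which is bounded away from $0$ along the sequence by the lower bounds behind Lemma~\ref{lem0020}—an Ekeland/Lagrange-multiplier argument exactly as in Lemma~\ref{lem5001} upgrades it to a Palais--Smale sequence for the free functional, $DJ_{\lambda,\beta}(u_n,v_n)\to0$ in $E_\lambda^*$. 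Passing to a subsequence, $(u_n,v_n)\rightharpoonup(u_0,v_0)$ weakly in $E_\lambda$; the weak limit is a weak solution, $DJ_{\lambda,\beta}(u_0,v_0)=0$, and in particular $J_{\lambda,\beta}(u_0,v_0)=\frac14\mathcal{D}_\lambda(u_0,v_0)\geq0$.

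The heart of the proof is a splitting/vanishing dichotomy for $(w_n,\sigma_n):=(u_n-u_0,v_n-v_0)$. Using the compactness of $E_{a,\lambda},E_{b,\lambda}$ in $L^2$ on the bounded set $\mathcal{A}_\lambda$ (so that $\int(\lambda a+a_0)^-w_n^2\to0$, and likewise for $b$) together with the Br\'ezis--Lieb lemma for the quartic and coupling terms, I would obtain
\begin{equation*}
m_{\lambda,\beta}^*=J_{\lambda,\beta}(u_0,v_0)+\tfrac14\lim_{n}\mathcal{D}_\lambda(w_n,\sigma_n),\qquad \mathcal{D}_\lambda(w_n,\sigma_n)-\mathcal{L}_\beta(w_n,\sigma_n)\to0 .
\end{equation*}
Suppose $(w_n,\sigma_n)\not\to0$. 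Since the negative potential part is asymptotically negligible and $b_0\geq0$, Lemma~\ref{lem0110} gives $\mathcal{D}_{a,\lambda}(w_n,w_n)\geq\|\nabla w_n\|_{L^2}^2-o_n(1)$ and $\mathcal{D}_{b,\lambda}(\sigma_n,\sigma_n)\geq\|\nabla\sigma_n\|_{L^2}^2$, so the remainder behaves like a solution of the constant-coefficient critical system. Feeding the Sobolev inequality into the Nehari splitting and optimizing over the mass in each component then forces
\begin{equation*}
\tfrac14\lim_{n}\mathcal{D}_\lambda(w_n,\sigma_n)\geq\frac{S^2}{2(1+\max\{1,\beta\})},
\end{equation*}
which is exactly the ground-state level of the limiting system computed in Lemma~\ref{lem0100}. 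Together with $J_{\lambda,\beta}(u_0,v_0)\geq0$ this would contradict $m_{\lambda,\beta}^*<\frac{S^2}{2(1+\max\{1,\beta\})}$. Hence $(w_n,\sigma_n)\to0$, i.e. $(u_n,v_n)\to(u_0,v_0)$ strongly; in particular $(u_0,v_0)\neq(0,0)$, so $(u_0,v_0)\in\mathcal{M}_{\lambda,\beta}$ attains $m_{\lambda,\beta}^*$ and by Lemma~\ref{lem5001} solves $DJ_{\lambda,\beta}(u_0,v_0)=0$ in $E_\lambda^*$.

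The main obstacle is the last threshold comparison. For $0\leq\beta\leq1$ the synchronized bubbling level $\frac{S^2}{2(1+\beta)}$ is $\geq\frac14 S^2$, so the remainder cannot escape below $\frac14 S^2$ and the gap $m_{\lambda,\beta}^*\leq m_a<\frac14 S^2$ closes the argument cleanly. For $\beta>1$, however, a simultaneously concentrating (synchronized) bubble carries the strictly smaller energy $\frac{S^2}{2(1+\beta)}$, and the clean comparison only works as long as $m_{\lambda,\beta}^*<\frac{S^2}{2(1+\beta)}$. The delicate point I would have to settle is therefore a strict upper bound of this form for large $\beta$, which is precisely where the disjointness of the wells $\overline{\Omega}_a\cap\overline{\Omega}_b=\emptyset$ and the sign condition $b_0\geq0$ must be exploited (a synchronized test bubble must sit where one of $\lambda a,\lambda b$ is large, penalizing its energy); this is the only step that is not routine, and it is where I expect the real work to lie.
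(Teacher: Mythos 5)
For $0\le\beta\le1$ your plan is correct and runs essentially parallel to the paper's proof: a bounded minimizing sequence on $\mathcal{M}_{\lambda,\beta}$, Br\'ezis--Lieb splitting with the negative potential part compact (because $|\mathcal{A}_\lambda|<+\infty$, so $\int(\lambda a+a_0)^-w_n^2\to0$), and the threshold comparison $0<m_{\lambda,\beta}^*\le m_a<\frac14S^2$ against the remainder level $\frac14S^2$. One structural difference: the paper never upgrades to a Palais--Smale sequence. It works with a plain minimizing sequence, uses the quotient characterization \eqref{eq1003} together with Fatou's lemma to get $\langle D[J_{\lambda,\beta}(u_0,v_0)],(u_0,v_0)\rangle_{E_\lambda^*,E_\lambda}\ge0$ (so the weak limit need not be known to be a critical point beforehand), proves strong convergence, and only then invokes Lemma~\ref{lem5001}. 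Your Ekeland route is a legitimate variant, since on $\mathcal{M}_{\lambda,\beta}$ the constraint derivative paired with $(u,v)$ equals $-2\mathcal{D}_\lambda(u,v)$, which is bounded away from zero along the sequence by Lemma~\ref{lem1001}; but it is not what the paper does, and it is not needed.

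The genuine gap is exactly where you place it, at $\beta>1$, and the difficulty is worse than you suggest: the strict inequality $m_{\lambda,\beta}^*<\frac{S^2}{2(1+\beta)}$ that your dichotomy requires is not proved in the paper and is not obtainable by the mechanism you propose. Testing \eqref{eq1003} with a synchronized pair $(t\psi_\ve,s\psi_\ve)$ concentrating at a point gives $m_{\lambda,\beta}^*\le\frac{S^2}{2(1+\beta)}+o(1)$, with the potential contribution entering only at the order $\|\psi_\ve\|_{L^2(\bbr^4)}^2=o(\ve)$; moreover, since $\overline{\Omega}_a\cap\overline{\Omega}_b=\emptyset$ and $b_0\ge0$, at any concentration point at least one of $\lambda a+a_0$, $\lambda b+b_0$ is positive once $\lambda$ is large, so the disjointness of the wells pushes the test energy \emph{up}, blocking strictness rather than supplying it; your heuristic cuts the wrong way, and the approach cannot cover all $\lambda>\overline{\Lambda}_a$. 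The paper instead admits the equality case: in the vanishing scenario it shows $t_n\to1$ and $m_{\lambda,\beta}^*=\frac{S^2}{2(1+\beta)}$ exactly (the upper bound $m_{\lambda,\beta}^*\le\frac{S^2}{2(1+\beta)}$ is borrowed from Step~5 of Lemma~\ref{lem0100}, which you also need here since $m_a$ no longer beats the synchronized level), so that $\{(t_nu_n,t_nv_n)\}\subset\mathcal{M}_\beta^*$ becomes a minimizing sequence for the limiting functional $\mathcal{E}_\beta$ at the level $m_\beta^0$ of Lemma~\ref{lem0010}; the contradiction then comes from the concentration-compactness/rigidity analysis of the pure doubly critical system (the argument of Case~2 in the proof of Lemma~\ref{lem0010}, resting on \cite[Lemma~2.5]{CZ14}), not from a level gap. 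To complete your proof for $\beta>1$ you must replace the strict-threshold step by this limit-problem argument, both in your vanishing case and in the splitting step of your nonvanishing case.
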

\begin{proof}
Let $\{(u_n,v_n)\}\subset\mathcal{M}_{\lambda,\beta}$ be a minimizing sequence of $J_{\lambda,\beta}(u,v)$.  Since $\lambda>\overline{\Lambda}_{a}$, by Lemma~\ref{lem0110}, we can see that $\{(u_n,v_n)\}$ is bounded both in $E_\lambda$ and $\mathcal{D}$.  Without loss of generality, we may assume that $(u_n,v_n)\rightharpoonup(u_0,v_0)$ weakly both in $E_\lambda$ and $\mathcal{D}$ and $(u_n,v_n)\to(u_0,v_0)$ a.e. in $\bbr^4\times\bbr^4$ as $n\to\infty$.  Clearly, one of the following two cases must happen:%
\begin{enumerate}
\item[$(i)$] $(u_0,v_0)=(0,0)$;%
\item[$(ii)$] $(u_0,v_0)\not=(0,0)$.%
\end{enumerate}
If the case $(i)$ happen, then by the Sobolev embedding theorem and the condition $(D_2)$, we have that
\begin{equation}\label{eq9998}
\int_{\bbr^4}(\lambda a(x)+a_0)^-u_n^2dx=\int_{\bbr^4}(\lambda b(x)+b_0)^-v_n^2dx=o_n(1).
\end{equation}
It follows from $\{(u_n,v_n)\}\subset\mathcal{M}_{\lambda,\beta}$ and \eqref{eq0003} that
\begin{eqnarray}
S(\|u_n\|_{L^4(\bbr^4)}^2+\|v_n\|_{L^4(\bbr^4)}^2)&\leq&\|u_n\|_{a,\lambda}^2+\|v_n\|_{b,\lambda}^2\notag\\
&=&\mathcal{D}_\lambda(u_n,v_n)+o_n(1)\notag\\
&=&\mathcal{L}_\beta(u_n,v_n)+o_n(1).\label{eq9999}
\end{eqnarray}
If $\beta\leq1$, then we can see from \eqref{eq9999} that $\|u_n\|_{L^4(\bbr^4)}^2+\|v_n\|_{L^4(\bbr^4)}^2\geq S+o_n(1)$.  On the other hand, by \eqref{eq0003} and Lemma~\ref{lem5003}, we have that $$\|u_n\|_{L^4(\bbr^4)}^2+\|v_n\|_{L^4(\bbr^4)}^2\leq 4\min\{m_a, m_b\}S^{-1}.$$  Note that $\min\{m_a, m_b\}<\frac14 S^2$ in the case of  $-\mu_{a,1}<a_0<0\leq b_0$, we get a contradiction.  Thus, we must have $\beta>1$.  By a similar argument as used in Step 5 to Lemma~\ref{lem0100}, we can see that $m_{\lambda,\beta}^*\leq\frac{1}{2(1+\beta)}S^2$ in the case of $-\mu_{a,1}<a_0<0\leq b_0$ for $\beta>1$.  By \eqref{eq9998} and Lemma~\ref{lem1001}, it is easy to see that there exist $0<t_n\leq1+o_n(1)$ such that $(t_nu_n,t_nv_n)\in\mathcal{M}_{\beta}^*$.  Hence, by Lemma~\ref{lem0010} and the fact that $\{(u_n,v_n)\}\subset\mathcal{M}_{\lambda,\beta}$ is a minimizing sequence of $J_{\lambda,\beta}(u,v)$, we can see that
\begin{eqnarray*}
\frac{1}{2(1+\beta)}S^2\leq\frac14\mathcal{L}_\beta(t_nu_n,t_nv_n)
\leq\frac14\mathcal{L}_\beta(u_n,v_n)
\leq\frac{1}{2(1+\beta)}S^2+o_n(1).
\end{eqnarray*}
It follows that $t_n\to1$ as $n\to\infty$, $m_{\lambda,\beta}^*=\frac{1}{2(1+\beta)}S^2$ and $\{(t_nu_n,t_nv_n)\}\subset\mathcal{M}_{\beta}^*$ is a minimizing sequence of $\mathcal{E}_\beta(u,v)$.  Due to a similar argument as used in Case~2 of Lemma~\ref{lem0010}, we can get a contradiction.  Thus, we must have the case~$(ii)$.  In this case, by \eqref{eq1003} and the Fatou lemma, we can see that%
\begin{equation*}
\frac{\mathcal{D}_\lambda(u_0,v_0)^2}{4\mathcal{L}_\beta(u_0,v_0)}\geq m_{\lambda,\beta}^*=\frac14\mathcal{L}_\beta(u_n,v_n)+o_n(1)\geq\frac14\mathcal{L}_\beta(u_0,v_0)+o_n(1).%
\end{equation*}
It follows that%
\begin{equation}\label{eq1004}
\langle D[J_{\lambda,\beta}(u_0,v_0)],(u_0,v_0)\rangle_{E_\lambda^*,E_\lambda}\geq0.%
\end{equation}
Let $(w_n,\sigma_n)=(u_n-u_0,v_n-v_0)$.  Then by the Brez\'is-Lieb lemma and \cite[Lemma~2.3]{CZ14},
  the Sobolev embedding theorem and  n $(D_2)$, we get
\begin{equation*}
\langle D[J_{\lambda,\beta}(w_n,\sigma_n)],(w_n,\sigma_n)\rangle_{E_\lambda^*,E_\lambda}+\langle D[J_{\lambda,\beta}(u_0,v_0)],(u_0,v_0)\rangle_{E_\lambda^*,E_\lambda}=o_n(1),%
\end{equation*}
which together with \eqref{eq1004}, implies%
\begin{equation}\label{eq1005}
\langle D[J_{\lambda,\beta}(w_n,\sigma_n)],(w_n,\sigma_n)\rangle_{E_\lambda^*,E_\lambda}\leq o_n(1).%
\end{equation}
Due to \eqref{eq1003} and \eqref{eq1004}--\eqref{eq1005}, we can use a similar argument as used in the case~$(i)$ to
show that $(w_n,\sigma_n)\to(0, 0)$ strongly in $L^4(\bbr^4)\times L^4(\bbr^4)$ as $n\to\infty$ up to a subsequence.
By \eqref{eq1005}, the Sobolev embedding theorem and the condition $(D_2)$, $(w_n,\sigma_n)\to(0,0)$ strongly
in $E_\lambda$ as $n\to\infty$ up to a sequence.  Hence, $J_{\lambda,\beta}(u_0,v_0)=m_{\lambda,\beta}^*$.
Thanks to  Lemma~\ref{lem5001}, we have that $D[J_{\lambda,\beta}(u_0,v_0)]=0$ in $E_\lambda^*$,
which completes the proof.%
\end{proof}

By Proposition~\ref{prop0003}, we can see that $(\mathcal{P}_{\lambda,\beta})$ has a general ground
state solution $(u_{\lambda,\beta},v_{\lambda,\beta})\in E_\lambda$ for all $\beta\geq0$ and
 $\lambda>\overline{\Lambda}_{a}$.  Furthermore, we have the following
\begin{lemma}\label{lem1002}
Let $(u_{\lambda,0},v_{\lambda,0})$ be the general ground state solution of $(\mathcal{P}_{\lambda,0})$ obtained by Proposition~\ref{prop0003}.
 Then $(u_{\lambda,0},v_{\lambda,0})$ is a semi-trivial solution of $(\mathcal{P}_{\lambda,0})$ and of the type $(u_{\lambda,0}, 0)$.
 Furthermore, $u_{\lambda,0}$ is a least energy critical point of $I_{a,\lambda}(u)$.%
\end{lemma}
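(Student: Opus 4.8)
The plan is to exploit the fact that at $\beta=0$ the energy splits as $J_{\lambda,0}(u,v)=I_{a,\lambda}(u)+I_{b,\lambda}(v)$, and that the two decoupled scalar problems sit at strictly different energy levels because $a_0<0$ while $b_0\geq0$. First I would record two baseline computations. Since $-\mu_{a,1}<a_0<0$ and $H_0^1(\Omega_a)\subset E_{a,\lambda}$ with $I_{a,\lambda}=I_{\Omega_a}$ on $H_0^1(\Omega_a)$ (because $a\equiv0$ there), one has $\mathcal{N}_a\subset\mathcal{N}_{a,\lambda}$ and hence $m_{a,\lambda}\leq m_a<\frac14S^2$, the strict inequality being the fact recalled just before Lemma~\ref{lem5003}. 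On the other hand, for $v\in\mathcal{N}_{b,\lambda}$ one has $\mathcal{D}_{b,\lambda}(v,v)=\|v\|_{L^4(\bbr^4)}^4$, and since $b_0\geq0$ forces $\mathcal{D}_{b,\lambda}(v,v)\geq\|\nabla v\|_{L^2(\bbr^4)}^2\geq S\|v\|_{L^4(\bbr^4)}^2$, we obtain $\|v\|_{L^4(\bbr^4)}^2\geq S$ and therefore $I_{b,\lambda}(v)=\frac14\|v\|_{L^4(\bbr^4)}^4\geq\frac14S^2$; testing with $H_0^1(\Omega_b)$-functions gives the reverse bound, so $m_{b,\lambda}=\frac14S^2$. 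The decisive structural fact is thus the strict gap $m_{a,\lambda}<\frac14S^2=m_{b,\lambda}$.

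Next I would pin down the minimal level $m_{\lambda,0}^*$. For every $u\in\mathcal{N}_{a,\lambda}$ the pair $(u,0)$ lies in $\mathcal{M}_{\lambda,0}$ with $J_{\lambda,0}(u,0)=I_{a,\lambda}(u)$, so taking the infimum over $\mathcal{N}_{a,\lambda}$ yields $m_{\lambda,0}^*\leq m_{a,\lambda}$. The matching lower bound $m_{\lambda,0}^*\geq\min\{m_{a,\lambda},m_{b,\lambda}\}=m_{a,\lambda}$ is exactly Lemma~\ref{lem5002} applied with $\beta=0$, together with the gap just established. Hence $m_{\lambda,0}^*=m_{a,\lambda}$.

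Then I would invoke Proposition~\ref{prop0003}: the general ground state $(u_{\lambda,0},v_{\lambda,0})$ is a genuine solution of $(\mathcal{P}_{\lambda,0})$ with $J_{\lambda,0}(u_{\lambda,0},v_{\lambda,0})=m_{\lambda,0}^*=m_{a,\lambda}$. Because $\beta=0$ decouples the system, $u_{\lambda,0}$ is a critical point of $I_{a,\lambda}$ and $v_{\lambda,0}$ a critical point of $I_{b,\lambda}$, so each nonzero component lies on the corresponding Nehari set. Suppose for contradiction that $v_{\lambda,0}\neq0$; then $v_{\lambda,0}\in\mathcal{N}_{b,\lambda}$ gives $I_{b,\lambda}(v_{\lambda,0})\geq m_{b,\lambda}=\frac14S^2$, while $I_{a,\lambda}(u_{\lambda,0})\geq0$ (since $\mathcal{D}_{a,\lambda}$ is positively definite for $\lambda>\overline{\Lambda}_a$ by Lemma~\ref{lem0110}, so $I_{a,\lambda}=\frac14\mathcal{D}_{a,\lambda}\geq0$ on $\mathcal{N}_{a,\lambda}\cup\{0\}$); hence $m_{a,\lambda}=J_{\lambda,0}(u_{\lambda,0},v_{\lambda,0})\geq\frac14S^2$, contradicting $m_{a,\lambda}<\frac14S^2$. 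Therefore $v_{\lambda,0}=0$, and since the solution is non-zero we must have $u_{\lambda,0}\neq0$, so $(u_{\lambda,0},v_{\lambda,0})$ is semi-trivial of type $(u_{\lambda,0},0)$. Finally, $u_{\lambda,0}\in\mathcal{N}_{a,\lambda}$ is a critical point of $I_{a,\lambda}$ with $I_{a,\lambda}(u_{\lambda,0})=J_{\lambda,0}(u_{\lambda,0},0)=m_{a,\lambda}=\inf_{\mathcal{N}_{a,\lambda}}I_{a,\lambda}$, which is precisely the statement that it is a least energy critical point of $I_{a,\lambda}$.

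The main obstacle is establishing, and then correctly exploiting, the strict gap $m_{a,\lambda}<m_{b,\lambda}$: the whole argument rests on the asymmetry forced by $a_0<0$ versus $b_0\geq0$, which drives the $a$-component strictly below the critical threshold $\frac14S^2$ while no nontrivial critical point of the $b$-equation can fall below it. The only delicate point beyond this is ensuring $I_{a,\lambda}(u_{\lambda,0})\geq0$, so that a nonzero $v_{\lambda,0}$ genuinely pushes the total energy above $\frac14S^2$; this is immediate from the positive definiteness of $\mathcal{D}_{a,\lambda}$ once $\lambda>\overline{\Lambda}_a$.
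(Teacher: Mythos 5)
Your proof is correct and follows essentially the same route as the paper: both arguments hinge on the Sobolev-forced lower bound $\frac14S^2$ for any nonzero $b$-component (using $b_0\geq0$) against the strict upper bound $m_{\lambda,0}^*\leq m_a<\frac14S^2$ coming from $-\mu_{a,1}<a_0<0$, which forces $v_{\lambda,0}=0$, after which the inclusion $\mathcal{N}_{a,\lambda}\times\{0\}\subset\mathcal{M}_{\lambda,0}$ yields the least-energy property of $u_{\lambda,0}$. The only cosmetic difference is that the paper derives $\|v_{\lambda,0}\|_{L^4(\bbr^4)}^4\geq S^2$ directly from the solution and contradicts Lemma~\ref{lem5003}, whereas you express the same gap at the level of the Nehari infima, computing $m_{b,\lambda}=\frac14S^2$ and pinning down $m_{\lambda,0}^*=m_{a,\lambda}$ via Lemma~\ref{lem5002}.
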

\begin{proof}
Suppose $v_{\lambda,0}\not=0$.  Since $(u_{\lambda,0},v_{\lambda,0})$ is a non-zero solution of $(\mathcal{P}_{\lambda,\beta})$,
by the condition $(D_1)$, $\lambda>0$ and $b_0\geq0$, we can see from the Sobolev inequality
 that $\|v_{\lambda,0}\|_{L^4(\bbr^4)}^4\geq S^2$.  Note that the condition $(D_3)$ holds,
 it is well known that $\|v_{\lambda,0}\|_{L^4(\bbr^4)}^4> S^2$.
 Hence, $$J_{\lambda,\beta}(u_{\lambda,0},v_{\lambda,0})\geq\frac14(\|u_{\lambda,0}\|_{L^4(\bbr^4)}^4+\|v_{\lambda,0}\|_{L^4(\bbr^4)}^4)>\frac14 S^2,$$
  which contradicts to Lemma~\ref{lem5003}.  Hence, $(u_{\lambda,0},v_{\lambda,0})$ is a semi-trivial solution of $(\mathcal{P}_{\lambda,0})$
  and of the type $(u_{\lambda,0}, 0)$.  It follows from $\mathcal{N}_{a,\lambda}\times\{0\}\subset\mathcal{M}_{\lambda,0}$ for
  all $\lambda>\overline{\Lambda}_{a}$ that $u_{\lambda,0}$ is also a least
   energy critical point of $I_{a,\lambda}(u)$, where $\mathcal{N}_{a,\lambda}$ is given by \eqref{eq1149}.%
\end{proof}

\vskip0.12in

By Lemma~\ref{lem0003}, we have $\displaystyle \lim_{\lambda\to+\infty}\alpha_{a,1}(\lambda)=\frac{\mu_{a,1}}{|a_0|}<1$ in
the case of $-\mu_{a,1}<a_0<0\leq b_0$.  It follows that for $0<\beta<1-\frac{\mu_{a,1}}{|a_0|}$,
there exists $\Lambda_1^*>\max\{\overline{\Lambda}_a, \overline{\Lambda}_b\}$ such that
$0<\beta<1-\frac{1}{\alpha_{a,1}(\lambda)}$ for $\lambda\geq\Lambda_1^*$.%

\vskip0.12in

\begin{lemma}\label{lem1003}
Let $(u_{\lambda,\beta},v_{\lambda,\beta})$ be the general ground state solution of $(\mathcal{P}_{\lambda,\beta})$ obtained by Proposition~\ref{prop0003}.  Then we have%
\begin{enumerate}
\item[$(1)$] $(u_{\lambda,0}, 0)$ is a general ground state solution of $(\mathcal{P}_{\lambda,\beta})$ for all $\beta<0$.%
\item[$(2)$] For every $\beta\in(0, 1-\frac{\mu_{a,1}}{|a_0|})$, there exists $\Lambda_\beta>\Lambda_1^*$ such that $(u_{\lambda,\beta},v_{\lambda,\beta})$ is a semi-trivial solution of $(\mathcal{P}_{\lambda,\beta})$ and of the type $(u_{\lambda,\beta}, 0)$ with $\lambda>\Lambda_\beta$.%
\item[$(3)$] There exists $\beta_\lambda>0$ such that $(u_{\lambda,\beta},v_{\lambda,\beta})$ is a non-trivial solution of $(\mathcal{P}_{\lambda,\beta})$ for $\beta>\beta_\lambda$.%
\end{enumerate}
\end{lemma}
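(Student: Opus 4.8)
The three assertions will be settled by comparing the general ground state level $m_{\lambda,\beta}^*$ with the energies of the two semi-trivial competitors, which are the least-energy levels $m_{a,\lambda}$ and $m_{b,\lambda}$ of the scalar functionals $I_{a,\lambda}$ and $I_{b,\lambda}$. Throughout I use that in the present case $-\mu_{a,1}<a_0<0\leq b_0$ one has $m_{a,\lambda}\leq m_a<\frac14 S^2$ (by testing $I_{a,\lambda}$ on $H_0^1(\Omega_a)$, where $a\equiv0$, and invoking $(D_3)$), while $m_{b,\lambda}=\frac14 S^2$ (since $b_0\geq0$, from $\mathcal{D}_{b,\lambda}(v,v)\geq\|\nabla v\|_{L^2(\bbr^4)}^2\geq S\|v\|_{L^4(\bbr^4)}^2$ and the Aubin--Talenti bubbles). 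In particular $m_{a,\lambda}<m_{b,\lambda}$, so the least semi-trivial energy equals $m_{a,\lambda}$. Recall also that on $\mathcal{N}_{\lambda,\beta}$ the Nehari identity gives $J_{\lambda,\beta}(u,v)=\frac14\mathcal{D}_\lambda(u,v)=\frac14\mathcal{L}_\beta(u,v)$.

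For (1), the vanishing of the coupling term when $v=0$ shows that $(u_{\lambda,0},0)$ from Lemma~\ref{lem1002} solves $(\mathcal{P}_{\lambda,\beta})$ for every $\beta$, with $J_{\lambda,\beta}(u_{\lambda,0},0)=I_{a,\lambda}(u_{\lambda,0})=m_{a,\lambda}$ independent of $\beta$; moreover $(u_{\lambda,0},0)\in\mathcal{M}_{\lambda,\beta}$, so $m_{\lambda,\beta}^*\leq m_{a,\lambda}$. On the other hand Lemma~\ref{lem5002} yields $m_{\lambda,\beta}^*\geq\min\{m_{a,\lambda},m_{b,\lambda}\}=m_{a,\lambda}$ for $\beta\leq0$. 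Hence $m_{\lambda,\beta}^*=m_{a,\lambda}$ is attained by $(u_{\lambda,0},0)$, which is therefore a general ground state for all $\beta<0$.

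For (2), the plan is to show that no non-trivial solution can have energy as low as the semi-trivial level. Fix $\beta$ in the indicated interval and choose $\Lambda_\beta>\Lambda_1^*$ so large that $0<\beta<1-\frac{1}{\alpha_{a,1}(\lambda)}$ for $\lambda>\Lambda_\beta$, which is possible because $\alpha_{a,1}(\lambda)\to\frac{\mu_{a,1}}{|a_0|}>1$ by Lemma~\ref{lem0003}. Suppose $(u,v)\in\mathcal{N}_{\lambda,\beta}$ with $u\neq0$, $v\neq0$ and $J_{\lambda,\beta}(u,v)\leq\frac14 S^2$. Then Lemma~\ref{lem5010} applies and gives $\|v\|_{L^4(\bbr^4)}^2\geq S$ by \eqref{eq5012} together with $\|u\|_{L^4(\bbr^4)}^2>0$ by \eqref{eq5011}. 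Combining $\mathcal{D}_{b,\lambda}(v,v)\geq S\|v\|_{L^4(\bbr^4)}^2\geq S^2$ with $\mathcal{D}_{a,\lambda}(u,u)>0$ (Lemma~\ref{lem0110}), the identity $J_{\lambda,\beta}(u,v)=\frac14(\mathcal{D}_{a,\lambda}(u,u)+\mathcal{D}_{b,\lambda}(v,v))$ forces $J_{\lambda,\beta}(u,v)>\frac14 S^2$, a contradiction. Thus every non-trivial solution has energy strictly above $\frac14 S^2>m_{a,\lambda}\geq m_{\lambda,\beta}^*$, so the minimizer $(u_{\lambda,\beta},v_{\lambda,\beta})$ of Proposition~\ref{prop0003} cannot be non-trivial; since type $(0,v)$ already costs $m_{b,\lambda}=\frac14 S^2>m_{\lambda,\beta}^*$, it must be of type $(u_{\lambda,\beta},0)$ with $u_{\lambda,\beta}$ a least-energy critical point of $I_{a,\lambda}$.

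For (3), with $\lambda$ fixed, I would exploit that strong attraction lowers the energy. Choose $(\varphi,\psi)\in E_\lambda$ with $\varphi\neq0$, $\psi\neq0$ and overlapping supports, so that $\|\varphi^2\psi^2\|_{L^1(\bbr^4)}>0$; for instance $C_0^\infty$ functions supported on a common ball. Since $\mathcal{D}_\lambda$ is positively definite here, \eqref{eq1003} gives $m_{\lambda,\beta}^*\leq\frac{\mathcal{D}_\lambda(\varphi,\psi)^2}{4\mathcal{L}_\beta(\varphi,\psi)}$, and as $\mathcal{L}_\beta(\varphi,\psi)\geq2\beta\|\varphi^2\psi^2\|_{L^1(\bbr^4)}\to+\infty$ when $\beta\to+\infty$, the right-hand side tends to $0$. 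Hence there is $\beta_\lambda>0$ such that $m_{\lambda,\beta}^*<m_{a,\lambda}=\min\{m_{a,\lambda},m_{b,\lambda}\}$ for $\beta>\beta_\lambda$. Every semi-trivial solution has energy $\geq\min\{m_{a,\lambda},m_{b,\lambda}\}>m_{\lambda,\beta}^*$, so the general ground state of Proposition~\ref{prop0003}, being a non-zero solution attaining $m_{\lambda,\beta}^*$, must be non-trivial.

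The main obstacle is part (2): the whole argument turns on reading the estimates of Lemma~\ref{lem5010} correctly so that the $v$-component alone already carries energy at least $\frac14 S^2$, and on the strict inequality $m_{a,\lambda}<\frac14 S^2$ coming from $(D_3)$; one must also separately rule out the alternative semi-trivial type $(0,v)$ by an energy count. Parts (1) and (3) are comparatively soft, relying only on the variational characterizations and the fibering-map maximum of Lemma~\ref{lem0008}.
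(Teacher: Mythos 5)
Your proposal is correct, and for parts (1) and (3) it runs essentially parallel to the paper's own proof: in (1) the paper compares $m_{\lambda,\beta}^*$ with $m_{\lambda,0}^*$ by rescaling elements of $\mathcal{M}_{\lambda,\beta}$ into $\mathcal{M}_{\lambda,0}$ via Lemma~\ref{lem0008}, while you pin the common value down as $m_{a,\lambda}$ through Lemmas~\ref{lem5002} and \ref{lem1002} --- a cosmetic difference; in (3) the paper tests the quotient \eqref{eq1003} with the pair $(u_{\lambda,0},u_{\lambda,0})$ whereas you use an arbitrary pair with overlapping supports, to the same effect. The genuine divergence is in part (2). The paper argues by contradiction along a sequence $\lambda_n\to+\infty$: boundedness from Lemma~\ref{lem5003}, passage to a limit profile in $H_0^1(\Omega_a)\times H_0^1(\Omega_b)$, exclusion of the second component in the limit by the energy budget (a nonzero critical point of $I_{\Omega_b}$ costs at least $m_b=\frac14S^2$), strong convergence, and only at the very end a contradiction with the lower bound of Lemma~\ref{lem5010} on $\|v_{\lambda_n,\beta}\|_{L^4(\bbr^4)}^2$. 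You instead observe that the conclusions of Lemma~\ref{lem5010} are already self-contradictory at each fixed $\lambda$ with $\beta<1-\frac{1}{\alpha_{a,1}(\lambda)}$: \eqref{eq5012} yields $\|v\|_{L^4(\bbr^4)}^2\geq S$, hence $\mathcal{D}_{b,\lambda}(v,v)\geq S^2$, while \eqref{eq5011} forces $\mathcal{D}_{a,\lambda}(u,u)>0$, so that $J_{\lambda,\beta}(u,v)=\frac14\mathcal{D}_\lambda(u,v)>\frac14S^2$, incompatible with $J_{\lambda,\beta}\leq m_a<\frac14S^2$ from Lemma~\ref{lem5003}. This is a real simplification: it dispenses with compactness and the limit problem entirely and gives an explicit, monotone choice of $\Lambda_\beta$ (any $\lambda$ with $\alpha_{a,1}(\lambda)>(1-\beta)^{-1}$, using the monotonicity in Lemma~\ref{lem0003}); I checked that the chain \eqref{eq5003}--\eqref{eq5005} under the stated hypothesis $J_{\lambda,\beta}\leq\frac14S^2$ does produce exactly $\|v\|_{L^4(\bbr^4)}^2\geq S$, so your reading of \eqref{eq5012} is sound even though the paper's proof of Lemma~\ref{lem5010} contains inconsistent $\frac12S^2$'s, and it is worth rederiving that chain explicitly rather than citing the lemma. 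Two further points in your favor: you correctly supply the exclusion of the type $(0,v)$ by the energy count $J_{\lambda,\beta}(0,v)=\frac14\|v\|_{L^4(\bbr^4)}^4\geq\frac14S^2>m_{\lambda,\beta}^*$, a step the paper handles only implicitly via the argument of Lemma~\ref{lem1002}; and you silently repair the paper's typo in the interval for $\beta$, which should read $\beta\in(0,\,1-\frac{|a_0|}{\mu_{a,1}})$ (note $\alpha_{a,1}(\lambda)\to\frac{\mu_{a,1}}{|a_0|}>1$), consistent with the statement of Theorem~\ref{thm0001}.
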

\begin{proof} $(1)$\quad Clearly, $(u_{\lambda,0}, 0)$ is a solution of $(\mathcal{P}_{\lambda,\beta})$ for all $\beta<0$.  It follows that $m_{\lambda,0}^*\geq m_{\lambda,\beta}^*$ for all $\beta<0$.  On the other hand, since $\beta<0$, for all $(u,v)\in\mathcal{M}_{\lambda,\beta}$, there exists $0<t\leq1$ such that $(tu,tv)\in\mathcal{M}_{\lambda,0}$.  By Lemma~\ref{lem0008}, we have%
\begin{equation*}
J_{\lambda,\beta}(u,v)\geq J_{\lambda,\beta}(tu,tv)\geq J_{\lambda,0}(tu,tv)\geq m_{\lambda,0}^*,%
\end{equation*}
which implies $m_{\lambda,\beta}^*\geq m_{\lambda,0}^*$ for all $\beta<0$.  Therefore, $(u_{\lambda,0}, 0)$ is a general ground state solution of $(\mathcal{P}_{\lambda,\beta})$ for all $\beta<0$.%

\vskip0.12in

$(2)$\quad  Suppose the contrary, there exists $\{\lambda_n\}$ with $\lambda_n\to+\infty$ as $n\to\infty$ and $\beta\in(0, 1)$ such that $(u_{\lambda_n,\beta}, v_{\lambda_n,\beta})\in\mathcal{N}_{\lambda_n,\beta}$ and%
\begin{equation*}
J_{\lambda_n,\beta}(u_{\lambda_n,\beta}, v_{\lambda_n,\beta})=m_{\lambda_n,\beta}^*.%
\end{equation*}
Thanks to Lemma~\ref{lem5003},
we have that $\{(u_{\lambda_n,\beta}, v_{\lambda_n,\beta})\}$ is bounded in $$\mathcal{D}=D^{1,2}(\bbr^4)\times D^{1,2}(\bbr^4).$$
  Without loss of generality, we assume $(u_{\lambda_n,\beta}, v_{\lambda_n,\beta})\rightharpoonup(u_{0,\beta}, v_{0,\beta})$ weakly in $\mathcal{D}$ and $(u_{\lambda_n,\beta}, v_{\lambda_n,\beta})\to(u_{0,\beta}, v_{0,\beta})$ a.e. in $\bbr^4\times\bbr^4$ as $n\to\infty$.  Note that $\lambda_n\to+\infty$ as $n\to\infty$, we can see from the condition $(D_1)$ and Lemma~\ref{lem5003} once more that%
\begin{equation}\label{eq1002}
\int_{\bbr^4}a(x)u_{\lambda_n,\beta}^2+b(x)v_{\lambda_n,\beta}^2dx\to0\quad\text{as }n\to\infty.%
\end{equation}
By  $(D_3)$ and the Fatou's lemma, we see that  $(u_{0,\beta}, v_{0,\beta})\in H_0^1(\Omega_a)\times H_0^1(\Omega_b)$ with $u_{0,\beta}=0$
outside $\Omega_a$ and $v_{0,\beta}=0$ outside $\Omega_b$.  It follows from the Sobolev embedding theorem, the condition $(D_2)$ and \eqref{eq1002} once more that $(u_{\lambda_n,\beta}, v_{\lambda_n,\beta})\to(u_{0,\beta}, v_{0,\beta})$ strongly in $L^2(\bbr^4)\times L^2(\bbr^4)$ as $n\to\infty$.  Since $H_0^1(\Omega_a)\times H_0^1(\Omega_b)\subset E_\lambda$, by the condition $(D_3)$, it is easy to see that $I_a'(u_{0,\beta})=0$ and $I_b'(v_{0,\beta})=0$ in $H^{-1}(\Omega_a)$ and $H^{-1}(\Omega_b)$, respectively.  Thus, we have from Lemma~\ref{lem5003} that%
\begin{eqnarray}
& &\min\{m_a, m_b\}\\
& &\geq \limsup_{n\to\infty}m_{\lambda_n,\beta}^*\notag\\%
& &\geq\liminf_{n\to\infty}
\frac14\mathcal{D}_{\lambda_n}(u_{\lambda_n,\beta}, v_{\lambda_n,\beta})\notag\\%
& &\geq \frac14(\|\nabla u_{0,\beta}\|_{L^2(\bbr^4)}^2+a_0\|u_{0,\beta}\|_{L^2(\bbr^4)}^2\\
& & \quad +\|\nabla v_{0,\beta}\|_{L^2(\bbr^4)}^2+b_0\| v_{0,\beta}\|_{L^2(\bbr^4)}^2).\label{eq1000}%
\end{eqnarray}
Since $\Omega_b$ is bounded and $-\mu_{a,1}<a_0<0\leq b_0$, it is well known that $I_b(v_{0,\beta})>m_b=\frac14S^4$ if $v_{0,\beta}\not=0$, which together with \eqref{eq1000} and the condition $(D_3)$ once more, implies that $v_{0,\beta}=0$ and $u_{0,\beta}$ is a least energy critical point of $I_a(u)$.  Thanks to \eqref{eq1000} and $(u_{\lambda_n,\beta}, v_{\lambda_n,\beta})\to(u_{0,\beta}, v_{0,\beta})$ strongly in $L^2(\bbr^4)\times L^2(\bbr^4)$ as $n\to\infty$ once more, we can see that $(u_{\lambda_n,\beta}, v_{\lambda_n,\beta})\to(u_{0,\beta}, 0)$ strongly in $H^1(\bbr^4)\times H^1(\bbr^4)$ as $n\to\infty$.  Since
$0<\beta<1-\frac{1}{\alpha_{a,1}(\lambda_n)}$ for $n$ sufficiently large, we can see from Lemma~\ref{lem5010} that
\begin{equation*}
\|v_{\lambda_n,\beta}\|_{L^4(\bbr^4)}^2\geq\frac{(1-\frac{1}{\alpha_{a,1}(\lambda_n)})-\beta }{1-\beta(1-\frac{1}{\alpha_{a,1}(\lambda_n)})}S=\frac{(1-\frac{1}{\alpha_{a,1}})-\beta }{1-\beta(1-\frac{1}{\alpha_{a,1}})}S+o_n(1),
\end{equation*}
which is a contradiction.

$(3)$\quad By \eqref{eq1003} and the condition $(D_2)$, we can see that%
\begin{eqnarray*}
m_{\lambda,\beta}^*&\leq&\frac{(\mathcal{D}_{a,\lambda}(u_{\lambda,0},u_{\lambda,0})+\mathcal{D}_{b,\lambda}(u_{\lambda,0},u_{\lambda,0}))^2}
{8(1+\beta)\|u_{\lambda,0}\|_{L^4(\bbr^4)}^4}\\%
&\leq&\frac{2m_{a,\lambda}}{1+\beta}+\frac{(\lambda(a_\infty+b_\infty)+b_0-a_0)\|u_{\lambda,0}\|_{L^2(\bbr^4)}^2}{2(1+\beta)}\\%
&&+\frac{(\lambda(a_\infty+b_\infty)+b_0-a_0)^2\|u_{\lambda,0}\|_{L^2(\bbr^4)}^4}{8(1+\beta)\|u_{\lambda,0}\|_{L^4(\bbr^4)}^4},%
\end{eqnarray*}
where $m_{a,\lambda}=\inf_{\mathcal{N}_{a,\lambda}}I_{a,\lambda}(u)$.
Thus, $m_{\lambda,\beta}^*\to0$ as $\beta\to+\infty$.  By $-\mu_{a,1}<a_0<0$ and Lemma~\ref{lem0110},
 we have $m_{a,\lambda}>0$ for all $\lambda>\overline{\Lambda}_a$.
  Thus, there exists $\beta_\lambda\in(0, +\infty)$ such that $m_{\lambda,\beta}^*<m_{a,\lambda}$
   for $\beta>\beta_{\lambda}$.  Since $-\mu_{a,1}<a_0<0$, it is easy to show that
    $m_{a,\lambda}\leq\frac14 S^2$.  If $(u_{\lambda,\beta}, v_{\lambda,\beta})$ is
    of the type $(0, v_{\lambda,\beta})$ for some $\beta>\beta_\lambda$, then by a
    similar argument as used in the proof of Lemma~\ref{lem1002}, we can see that
    $m_{\lambda,\beta}>\frac14 S^2$, which is impossible.  If $(u_{\lambda,\beta}, v_{\lambda,\beta})$
     is of the type $(u_{\lambda,\beta}, 0)$ for some $\beta>\beta_\lambda$, then also by a similar
      argument as used in the proof of Lemma~\ref{lem1002}, we can obtain that $m_{\lambda,\beta}^*\geq m_{a,\lambda}$,
       which is also a contradiction.  Therefore,   $(u_{\lambda,\beta}, v_{\lambda,\beta})$ must be  a
       non-trivial solution of $(\mathcal{P}_{\lambda,\beta})$ for $\beta>\beta_\lambda$.%
\end{proof}

\vskip0.13in

Next, we consider the case of $-\mu_{a,1}<a_0<0$, $-\mu_{b,1}<b_0<0$ and $b_0\geq a_0$.  Due to Lemma~\ref{lem0110}, we always assume  $\lambda>\max\{\overline{\Lambda}_a, \overline{\Lambda}_b\}$ in this case.
\begin{lemma}\label{lem7001}
Assume  $(D_1)$-$(D_3)$   and $-\mu_{a,1}<a_0<0$ and $-\mu_{b,1}<b_0<0$.  Then $\lim_{\lambda\to+\infty}m_{a,\lambda}=m_a$ and $\lim_{\lambda\to+\infty}m_{b,\lambda}=m_b$, where $m_{a}$ and $m_b$ are given by Lemma~\ref{lem5003} and $m_{a,\lambda}$ and $m_{b,\lambda}$ are given by Lemma~\ref{lem5002}.
\end{lemma}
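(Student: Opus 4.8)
```latex
\textbf{Proof proposal.}
The plan is to prove the two limits by the standard sandwiching argument, establishing one inequality via test functions and the reverse inequality via a weak-limit/concentration-compactness analysis of minimizing sequences as $\lambda\to+\infty$. I will only treat $\lim_{\lambda\to+\infty}m_{a,\lambda}=m_a$, since the claim for $m_{b,\lambda}$ follows verbatim with $a$ replaced by $b$ (using $-\mu_{b,1}<b_0<0$ and Remark~\ref{rmk0001} in place of the $a$-versions of Lemmas~\ref{lem0003} and \ref{lem0001}).

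First I would show $\limsup_{\lambda\to+\infty}m_{a,\lambda}\leq m_a$. Take any $u\in\mathcal{N}_a\subset H_0^1(\Omega_a)$ with $I_{\Omega_a}(u)$ close to $m_a$. By condition $(D_3)$ we have $a(x)=0$ on $\Omega_a$, so $\int_{\bbr^4}a(x)u^2\,dx=0$ and hence $\mathcal{D}_{a,\lambda}(u,u)=\int_{\Omega_a}(|\nabla u|^2+a_0u^2)\,dx$ is independent of $\lambda$; thus $u\in E_{a,\lambda}$ for every $\lambda>\overline{\Lambda}_a$ and $I_{a,\lambda}$ restricted to such $u$ agrees with $I_{\Omega_a}$. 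Since $\mathcal{D}_{a,\lambda}(u,u)$ is positively definite in this regime, Lemma~\ref{lem0008} (applied to the one-component functional $I_{a,\lambda}$, or equivalently the scalar fibering map) gives a unique $t_{u}>0$ with $t_u u\in\mathcal{N}_{a,\lambda}$ and $I_{a,\lambda}(t_u u)=\max_{t\geq0}I_{a,\lambda}(tu)$. Because the functional coincides with $I_{\Omega_a}$ on $u$, we get $t_u=1$ and $I_{a,\lambda}(u)=I_{\Omega_a}(u)$, whence $m_{a,\lambda}\leq I_{\Omega_a}(u)$. Taking the infimum over $\mathcal{N}_a$ yields $m_{a,\lambda}\leq m_a$ for all large $\lambda$, which gives the desired $\limsup$ bound (in fact the even stronger pointwise bound $m_{a,\lambda}\leq m_a$).

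For the reverse inequality $\liminf_{\lambda\to+\infty}m_{a,\lambda}\geq m_a$ I would argue by contradiction, taking $\lambda_n\to+\infty$ and near-minimizers $u_n\in\mathcal{N}_{a,\lambda_n}$ with $I_{a,\lambda_n}(u_n)\to\liminf m_{a,\lambda}$. Since $u_n\in\mathcal{N}_{a,\lambda_n}$, one has $I_{a,\lambda_n}(u_n)=\tfrac14\mathcal{D}_{a,\lambda_n}(u_n,u_n)$, and the positive-definiteness estimate of Lemma~\ref{lem0110}(ii) together with the uniform lower bound on the relevant eigenvalue gap (Lemma~\ref{lem0003}, giving $\alpha_{a,1}(\lambda_n)\to\mu_{a,1}/|a_0|>1$ since $-\mu_{a,1}<a_0<0$) shows $\{u_n\}$ is bounded in $E_{a,\lambda_n}$ and hence in $D^{1,2}(\bbr^4)$. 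Passing to a subsequence, $u_n\rightharpoonup u_0$ weakly in $D^{1,2}(\bbr^4)$ and a.e.; by the Fatou lemma applied to $\lambda_n\int a(x)u_n^2\,dx$ (which is bounded) one forces $\int_{\bbr^4}a(x)u_0^2\,dx=0$, so $(D_3)$ gives $u_0\in H_0^1(\Omega_a)$. The Sobolev embedding and $(D_2)$ give strong $L^2$ convergence off the wells. The crucial point is then to rule out vanishing of the $L^4$ mass: I would show $u_0\neq0$ by combining the Nehari constraint with a Brezis--Lieb decomposition, exactly as in the case $(i)$ argument of Proposition~\ref{prop0003}, using $m_a<\tfrac14S^2$ (valid since $-\mu_{a,1}<a_0<0$) to exclude the escape of mass to the critical Sobolev profile. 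Once $u_0\neq0$, projecting onto $\mathcal{N}_a$ and invoking lower semicontinuity gives $\liminf I_{a,\lambda_n}(u_n)\geq I_{\Omega_a}(u_0)\geq m_a$, contradicting $\liminf m_{a,\lambda}<m_a$.

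The main obstacle is the last step: because the nonlinearity is of critical Sobolev growth in $\bbr^4$, the minimizing sequence may lose compactness by concentrating into a standard bubble, and the weak limit $u_0$ could a priori be zero. The key quantitative input that saves the argument is the strict inequality $m_a<\tfrac14S^2$ in the case $-\mu_{a,1}<a_0<0$ (recorded just before Lemma~\ref{lem5003}), which shows that a pure bubble carries energy $\tfrac14S^2$ strictly above the minimax level and therefore cannot be the limiting profile; this is the analogue of the Brezis--Nirenberg threshold and is where the negativity of $a_0$ is genuinely used. I would organize this exclusion exactly as in Case~2 of Lemma~\ref{lem0010} and case~$(i)$ of Proposition~\ref{prop0003}, so no new compactness machinery is required beyond what the excerpt already develops.
```
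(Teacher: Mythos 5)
Your proposal is correct in outline and rests on the same two pillars as the paper's proof: the upper bound $m_{a,\lambda}\leq m_a$ obtained by testing with Nehari elements of the limit problem (by $(D_3)$ one has $\mathcal{N}_a\subset\mathcal{N}_{a,\lambda}$ and $I_{a,\lambda}=I_{\Omega_a}$ there), and the exclusion of vanishing via the strict threshold $m_a<\frac14S^2$, which is exactly how the paper rules out a zero weak limit. Where you genuinely diverge is in the lower bound: the paper does not work with minimizing sequences. It first notes that $m_{a,\lambda}$ is nondecreasing in $\lambda$ (so the limit exists), and then invokes Lemma~\ref{lem1002} --- the attainment of $m_{a,\lambda}$ by a least energy critical point $u_{\lambda,0}$ of $I_{a,\lambda}$, itself a consequence of the system-level existence result of Proposition~\ref{prop0003} at $\beta=0$ --- so that the weak limit $u_{0,0}$ of the \emph{exact} minimizers automatically satisfies $I_a'(u_{0,0})=0$ in $H^{-1}(\Omega_a)$ (by the argument of part $(2)$ of Lemma~\ref{lem1003}); once $u_{0,0}\neq0$ it lies on $\mathcal{N}_a$, and Fatou gives $\lim_{\lambda\to+\infty}m_{a,\lambda}\geq I_a(u_{0,0})\geq m_a$ with no projection step. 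Your route trades this input for a concentration-compactness analysis of near-minimizers: it buys independence from the attainment result (and from monotonicity), but it must replace the critical-point property of the limit by a Nehari projection, and that is the one place your sketch is imprecise.

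Specifically, the line ``projecting onto $\mathcal{N}_a$ and invoking lower semicontinuity gives $\liminf I_{a,\lambda_n}(u_n)\geq I_{\Omega_a}(u_0)\geq m_a$'' is not right as written: $I_{\Omega_a}(u_0)\geq m_a$ presupposes $u_0\in\mathcal{N}_a$, which a weak limit of near-minimizers need not satisfy. The repair is standard but must be spelled out. Writing $w_n=u_n-u_0$ and $Q(u_0):=\int_{\Omega_a}\left(|\nabla u_0|^2+a_0u_0^2\right)dx$, the Brez\'is--Lieb lemma and the splitting of $\mathcal{D}_{a,\lambda_n}$ (the cross terms vanish since $u_0\in H_0^1(\Omega_a)$, $u_n\rightharpoonup u_0$ in $D^{1,2}(\bbr^4)$ and $u_n\to u_0$ in $L^2(\bbr^4)$) reduce the Nehari constraint to $Q(u_0)+\mathcal{D}_{a,\lambda_n}(w_n,w_n)=\|u_0\|_{L^4(\bbr^4)}^4+\|w_n\|_{L^4(\bbr^4)}^4+o_n(1)$. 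If $Q(u_0)>\|u_0\|_{L^4(\bbr^4)}^4$, then $S\|w_n\|_{L^4(\bbr^4)}^2\leq\|w_n\|_{L^4(\bbr^4)}^4+o_n(1)$ with $\|w_n\|_{L^4(\bbr^4)}$ bounded away from zero, so the remainder carries energy at least $\frac14S^2$, contradicting $\liminf_n I_{a,\lambda_n}(u_n)\leq m_a<\frac14S^2$; hence $I_{\Omega_a}'(u_0)u_0\leq0$. Since $-\mu_{a,1}<a_0$ makes $Q$ positive definite on $H_0^1(\Omega_a)$, there is $t\in(0,1]$ with $tu_0\in\mathcal{N}_a$, and then $m_a\leq I_{\Omega_a}(tu_0)=\frac{t^4}{4}\|u_0\|_{L^4(\bbr^4)}^4\leq\frac14\liminf_n\|u_n\|_{L^4(\bbr^4)}^4=\liminf_n I_{a,\lambda_n}(u_n)$. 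With this step made explicit (it is indeed the remainder analysis of Proposition~\ref{prop0003} that you cite), your argument closes and is a legitimate alternative to the paper's.
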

\begin{proof}
We only give the proof of $\lim_{\lambda\to+\infty}m_{a,\lambda}=m_a$.  Due to the condition $(D_1)$, it is easy to show that $m_{a,\lambda}$ is nondecreasing by $\lambda$. Thus, combine with  $(D_3)$, it implies $\lim_{\lambda\to+\infty}m_{a,\lambda}\leq m_a$.  By Lemma~\ref{lem1002}, $m_{a,\lambda}$ can be attained by some $u_{\lambda, 0}\in E_\lambda$ for $\lambda>\overline{\Lambda}_{a}$.  Now, thanks to a similar argument as used in the proof of $(2)$ to Lemma~\ref{lem1003}, for every $\lambda_n\to+\infty$ as $n\to\infty$, we can see that $u_{\lambda_n,0}\rightharpoonup u_{0,0}$ weakly in $D^{1,2}(\bbr^4)$, and $u_{\lambda_n,0}\to u_{0,0}$ a.e. in $\bbr^4$ and $u_{\lambda_n,0}\to u_{0,0}$ strongly in $L^2(\bbr^4)$ as $n\to\infty$ up to a subsequence.  It $u_{0,0}=0$, then by the condition $(D_2)$ and \eqref{eq1002}, we have
\begin{eqnarray*}
S\|u_{\lambda_n,0}\|_{L^4(\bbr^4)}^2\leq\|u_{\lambda_n,0}\|_{a,\lambda}^2=\mathcal{D}_{a,\lambda}(u_{\lambda_n,0},u_{\lambda_n,0})+o_n(1)
=\|u_{\lambda_n,0}\|_{L^4(\bbr^4)}^4+o_n(1).
\end{eqnarray*}
It follows that $\|u_{\lambda_n,0}\|_{L^4(\bbr^4)}^4\geq S^2+o_n(1)$.  Thus, we must have that $m_{a,\lambda_n}\geq \frac14 S^2+o_n(1)$.  It is impossible since $m_a<\frac14S^2$ due to $a_0<0$.  Thus, we must have $u_{0,0}\not=0$.  By a similar argument as used in the proof of $(2)$ to Lemma~\ref{lem1003}, we have that $I_a'(u_{0,0})=0$ in $H^{-1}(\Omega_a)$.  Therefore, $\lim_{\lambda\to+\infty}m_{a,\lambda}=m_a$.
\end{proof}

Thanks to Lemma~\ref{lem0110}, we can see that every minimizing sequence of $J_{\lambda,\beta}(u,v)$ on $\mathcal{N}_{\lambda,\beta}$ is bounded in $E_\lambda$ in this case.  Using Lemma~\ref{lem0020}, the implicit function theorem and the Ekeland principle in a standard way (cf. \cite{CZ121}), we can obtain a $(PS)_{m_{\lambda,\beta}}$ sequence of $J_{\lambda,\beta}(u,v)$ in $\mathcal{N}_{\lambda,\beta}$ for $\beta\leq0$, denoted by $\{(u_n,v_n)\}$.%
\begin{proposition}
Let the conditions $(D_1)$--$(D_3)$ hold and $\beta\leq0$.  Then $m_{\lambda,\beta}$ can be attained by a ground state solution of $(\mathcal{P}_{\lambda,\beta})$ for $\lambda>\max\{\overline{\Lambda}_a, \overline{\Lambda}_b\}$.%
\end{proposition}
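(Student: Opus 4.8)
The plan is to pass to the weak limit of the bounded Palais--Smale sequence $\{(u_n,v_n)\}\subset\mathcal{N}_{\lambda,\beta}$ already produced and to recover strong convergence through an energy--threshold argument. First I would extract, up to a subsequence, $(u_n,v_n)\rightharpoonup(u_0,v_0)$ weakly in $E_\lambda$ and a.e.\ on $\bbr^4$; since $D[J_{\lambda,\beta}(u_n,v_n)]\to0$ in $E_\lambda^*$, the weak limit is a critical point, i.e.\ a (possibly trivial) solution of $(\mathcal{P}_{\lambda,\beta})$. Writing $(w_n,\sigma_n)=(u_n-u_0,v_n-v_0)$, I would apply the Brez\'is--Lieb lemma together with \cite[Lemma~2.3]{CZ14} to split $\mathcal{L}_\beta$, and use that the negative part $\int_{\bbr^4}(\lambda a(x)+a_0)^-\cdot\,dx$ lives on the bounded set $\mathcal{A}_\lambda$ (hence is weakly continuous) to split $\mathcal{D}_\lambda$ via Lemma~\ref{lem0110}. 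This yields $J_{\lambda,\beta}(u_n,v_n)=J_{\lambda,\beta}(u_0,v_0)+\frac14\mathcal{D}_\lambda(w_n,\sigma_n)+o_n(1)$ together with the residual identities $\mathcal{D}_{a,\lambda}(w_n,w_n)=\|w_n\|_{L^4(\bbr^4)}^4+o_n(1)$ and $\mathcal{D}_{b,\lambda}(\sigma_n,\sigma_n)=\|\sigma_n\|_{L^4(\bbr^4)}^4+o_n(1)$, the coupling term disappearing in the limit.

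The heart of the matter is to prove $(w_n,\sigma_n)\to(0,0)$ strongly. By Lemma~\ref{lem0020} one has $\|u_n\|_{L^4(\bbr^4)},\|v_n\|_{L^4(\bbr^4)}\geq c>0$, so no component may vanish; in particular, if a residual $L^4$--mass is zero, then the corresponding weak limit is nonzero. Whenever a residual mass is positive, \eqref{eq0003}, the positive definiteness of Lemma~\ref{lem0110} and $\beta\leq0$ force $\|w_n\|_{L^4(\bbr^4)}^2\geq S$ (respectively $\|\sigma_n\|_{L^4(\bbr^4)}^2\geq S$), so that block contributes energy $\geq\frac14S^2$. I would then run a short case analysis. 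If both residuals survive, the residual energy is $\geq\frac12S^2>m_a+m_b\geq m_{\lambda,\beta}$ by $m_a,m_b<\frac14S^2$ and Lemma~\ref{lem5003}, a contradiction. If exactly one residual survives while both $u_0\neq0$ and $v_0\neq0$, then $(u_0,v_0)\in\mathcal{N}_{\lambda,\beta}$, so $m_{\lambda,\beta}\geq J_{\lambda,\beta}(u_0,v_0)+\frac14S^2\geq m_{\lambda,\beta}+\frac14S^2$, again impossible.

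The delicate remaining configuration, and the main obstacle, is a single--component bubble, say $\sigma_n\to0$ in $L^4(\bbr^4)$ with $\|w_n\|_{L^4(\bbr^4)}^2\geq S$ and $u_0=0$. Here the surviving limit $v_0$ lies in $\mathcal{N}_{b,\lambda}$, while $w_n$ behaves like a Br\'ezis--Nirenberg instanton; because a bubble in $\bbr^4$ has divergent $L^2$--mass, $\mathcal{D}_{a,\lambda}(w_n,w_n)$ stays bounded only if the concentration point lies in $\overline{\Omega}_a$, where, using $\overline{\Omega}_a\cap\overline{\Omega}_b=\emptyset$, it avoids the support of $v_0$ and the coupling genuinely vanishes. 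This gives $m_{\lambda,\beta}\geq I_{b,\lambda}(v_0)+\frac14S^2\geq m_{b,\lambda}+\frac14S^2$. To contradict this I would combine the sandwich $m_{a,\lambda}+m_{b,\lambda}\leq m_{\lambda,\beta}\leq m_a+m_b$ from Lemmas~\ref{lem5002} and \ref{lem5003} with $m_a,m_b<\frac14S^2$ and the convergences $m_{a,\lambda}\to m_a$, $m_{b,\lambda}\to m_b$ of Lemma~\ref{lem7001}, which force $m_a+m_b-m_{b,\lambda}<\frac14S^2$ (and symmetrically $m_a+m_b-m_{a,\lambda}<\frac14S^2$) once $\lambda$ is large, that is $m_{\lambda,\beta}<m_{b,\lambda}+\frac14S^2$, ruling out the single bubble. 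I expect this strict energy comparison to be the technically most demanding point, since it is exactly the place where the steepness of the wells (through Lemma~\ref{lem7001}) must be invoked.

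Finally, once all loss--of--compactness scenarios are excluded, I conclude $(u_n,v_n)\to(u_0,v_0)$ strongly in $E_\lambda$, whence $(u_0,v_0)\in\mathcal{N}_{\lambda,\beta}$ with $J_{\lambda,\beta}(u_0,v_0)=m_{\lambda,\beta}$ and $u_0,v_0\neq0$. Applying Lemma~\ref{lem5005} (valid since $\beta\leq0<1$) gives $D[J_{\lambda,\beta}(u_0,v_0)]=0$ in $E_\lambda^*$, so $(u_0,v_0)$ is the desired ground state solution of $(\mathcal{P}_{\lambda,\beta})$.
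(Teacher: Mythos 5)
Your proposal is correct and follows essentially the same route as the paper's own proof: a bounded Palais--Smale sequence on $\mathcal{N}_{\lambda,\beta}$, a Brez\'is--Lieb splitting via \cite[Lemma~2.3]{CZ14} with the negative-part term $\int_{\bbr^4}(\lambda a(x)+a_0)^-u^2dx$ handled by the finiteness of $|\mathcal{A}_\lambda|$, exclusion of the double bubble through $m_{\lambda,\beta}\leq m_a+m_b<\frac12S^2$ (Lemma~\ref{lem5003}) and of the single bubble through $m_{\lambda,\beta}\geq\frac14S^2+m_{b,\lambda}$ contradicting Lemmas~\ref{lem5003} and \ref{lem7001} for large $\lambda$, exactly as in the paper's Cases~1--3. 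The only cosmetic differences are that the paper concludes via the $(PS)$ property of the sequence together with Fatou's lemma rather than via strong convergence plus Lemma~\ref{lem5005}, and that your heuristic about the bubble's concentration point in $\overline{\Omega}_a$ is unnecessary, since $\sigma_n\to0$ in $L^4(\bbr^4)$ already annihilates the coupling term by the H\"older inequality.
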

\begin{proof}
Since $\{(u_n,v_n)\}$ is bounded in $E_\lambda$, without loss of generality, we may assume $(u_n,v_n)\rightharpoonup(u_0,v_0)$ weakly in $E_\lambda$ as $n\to\infty$.  It follows that $D[J_{\lambda,\beta}(u_0,v_0)]=0$ in $E_\lambda^*$.%

\vskip0.1in
\noindent{\it Case~1: } $u_0=0$ and $v_0=0$. In this case, by a similar argument as used in the proof of Proposition~\ref{prop0003}, we can obtain that $\|u_n\|_{L^4(\bbr^4)}^2\geq S+o_n(1)$ and $\|v_n\|_{L^4(\bbr^4)}^2\geq S+o_n(1)$.  It follows that $m_{\lambda,\beta}\geq\frac12 S^2$, which contradicts to Lemma~\ref{lem5003}.

\vskip0.1in
\noindent{\it Case~2: } $u_0=0$ and $v_0\not=0$. Let $\sigma_n=v_n-v_0$.  Then $(u_n,\sigma_n)\rightharpoonup(0,0)$ weakly in $E_\lambda$ as $n\to\infty$.  It follows from the Sobolev inequality, the Brez\'is-Lieb lemma and \cite[Lemma~2.3]{CZ14} that
\be\label{eq7002}
J_{\lambda,\beta}(u_n,v_n)=J_{\lambda,\beta}(u_n,\sigma_n)+I_{b,\lambda}(v_0)+o_n(1),\ee
\be\label{eq7003} \langle D[J_{\lambda,\beta}(u_n,\sigma_n)], (u_n,0)\rangle_{E_{\lambda}^*, E_\lambda}=\langle D[J_{\lambda,\beta}(u_n,v_n)], (u_n,0)\rangle_{E_{\lambda}^*, E_\lambda}+o_n(1),\ee
 $$ \langle D[J_{\lambda,\beta}(u_n,\sigma_n)], (0,\sigma_n)\rangle_{E_{\lambda}^*, E_\lambda}$$
\be\label{eq7004} = \langle D[J_{\lambda,\beta}(u_n,v_n)], (0,v_n)\rangle_{E_{\lambda}^*, E_\lambda}+I_{b,\lambda}'(v_0)v_0+o_n(1).\ee
Since $u_n^2v_n\rightharpoonup0$ in $L^{\frac43}(\bbr^4)$, by $D[J_{\lambda,\beta}(u_n,v_n)]=o_n(1)$ strongly in $E_\lambda^*$ as $n\to\infty$, we must have $I_{b,\lambda}'(v_0)v_0=0$.  It follows that $I_{b,\lambda}(v_0)\geq m_{b,\lambda}$.  If $\|\sigma_n\|_{L^4(\bbr^4)}\geq C+o_n(1)$, then by a similar argument as used in the proof of Proposition~\ref{prop0003}, we can see from \eqref{eq7003} and \eqref{eq7004} that $\|u_n\|_{L^4(\bbr^4)}^2\geq S+o_n(1)$ and $\|\sigma_n\|_{L^4(\bbr^4)}^2\geq S+o_n(1)$, which together with \eqref{eq7002}, implies $m_{\lambda,\beta}\geq\frac12 S^2$.  It contradicts to Lemma~\ref{lem5003}.  Thus, we must have $\sigma_n\to0$ strongly in $L^4(\bbr^4)$ as $n\to\infty$.  Since $u_0=0$, we still have $\|u_n\|_{L^4(\bbr^4)}^2\geq S+o_n(1)$.  Now, by \eqref{eq7002} once more, we can see that $m_{\lambda,\beta}\geq \frac14 S^2+m_{b, \lambda}$.  Thanks to Lemmas~\ref{lem5003} and \ref{lem7001}, it is impossible for $\lambda$ sufficient large.  Without loss of generality, we assume $m_{\lambda,\beta}\geq \frac14 S^2+m_{b, \lambda}$ can not hold for $\lambda>\max\{\overline{\Lambda}_a, \overline{\Lambda}_b\}$.

\vskip0.1in
\noindent {\bf Case~3:} $u_0\not=0$ and $v_0=0$.  We can exclude this case by a similar argument as used in the Case~2.%

\vskip0.12in

Combine with the above 3 cases, now we must have $u_0\not=0$ and $v_0\not=0$.  It follows that $(u_0,v_0)\in\mathcal{N}_{\lambda,\beta}$,
 which together with the Fatou's lemma, implies that $J_{\lambda,\beta}(u_0,v_0)=m_{\lambda,\beta}$.
   Thus, $(u_0,v_0)$ is a ground state solution of $(\mathcal{P}_{\lambda,\beta})$ for $\lambda\geq\Lambda_1^*$ and $\beta\leq0$.%
\end{proof}

\vskip0.3in

We next consider the general ground state solution of $(\mathcal{P}_{\lambda,\beta})$ in the case of $-\mu_{a,1}<a_0<0$, $-\mu_{b,1}<b_0<0$ and $b_0\geq a_0$.
By Lemma~\ref{lem0003}, we have $\lim_{\lambda\to+\infty}\alpha_{a,1}(\lambda)=\frac{\mu_{a,1}}{|a_0|}<1$ and $\lim_{\lambda\to+\infty}\alpha_{b,1}(\lambda)=\frac{\mu_{b,1}}{|b_0|}<1$ in this case.  Let%
\begin{equation*}
\beta_0:=\min\bigg\{\frac12(1-\frac{|a_0|}{\mu_{a,1}})(1-\frac{|b_0|}{\mu_{b,1}}),
\frac{1-\frac{|b_0|}{\mu_{b,1}}}{1-\frac{|a_0|}{\mu_{a,1}}},
\frac{1-\frac{|a_0|}{\mu_{a,1}}}{1-\frac{|b_0|}{\mu_{b,1}}}\bigg\}.%
\end{equation*}
Then it is easy to see that $\beta_0\leq1$.  It follows that for $0<\beta<\beta_0$, there exists $\Lambda_2^*>\max\{\overline{\Lambda}_a, \overline{\Lambda}_b\}$ such that%
\begin{equation*}
0<\beta<\min\bigg\{\frac12(1-\frac{1}{\alpha_{a,1}(\lambda)})(1-\frac{1}{\alpha_{b,1}(\lambda)}),
\frac{1-\frac{1}{\alpha_{b,1}(\lambda)}}{1-\frac{1}{\alpha_{a,1}(\lambda)}},
\frac{1-\frac{1}{\alpha_{a,1}(\lambda)}}{1-\frac{1}{\alpha_{b,1}(\lambda)}}\bigg\}%
\end{equation*}
as long as  $\lambda\geq\Lambda_2^*$.  By checking the proofs of Proposition~\ref{prop0003} and Lemma~\ref{lem1003}, we can see that they still work for $\lambda\geq\Lambda_2^*$ and $\beta<\beta_0$ since Lemma~\ref{lem6010} holds.  Thus, we can obtain the following.%
\begin{proposition}\label{prop0006}
Let the conditions $(D_1)$--$(D_3)$ hold and $\beta\geq0$. If $-\mu_{a,1}<a_0<0$, $-\mu_{b,1}<b_0<0$ and $\lambda>\max\{\overline{\Lambda}_a, \overline{\Lambda}_b\}$, then $m_{\lambda,\beta}^*$ can be attained by a general ground state solution of $(\mathcal{P}_{\lambda,\beta})$.  Moreover, we have the following.%
\begin{enumerate}
\item[$(1)$] The general ground state solution of $(\mathcal{P}_{\lambda,0})$ is also a general ground state solution of $(\mathcal{P}_{\lambda,\beta})$ for $\beta<0$.%
\item[$(2)$] For every $\beta\in(0, \beta_0)$, there exists $\Lambda_\beta>\Lambda_1^*$ such that $(u_{\lambda,\beta},v_{\lambda,\beta})$ is a semi-trivial solution of $(\mathcal{P}_{\lambda,\beta})$ and of the type $(u_{\lambda,\beta}, 0)$ with $\lambda>\Lambda_\beta$.
\item[$(3)$] There exists $\beta_\lambda>0$ such that $m_{\lambda,\beta}^*$ can be attained by a ground state solution of $(\mathcal{P}_{\lambda,\beta})$ with $\beta\geq\beta_\lambda$.%
\end{enumerate}
\end{proposition}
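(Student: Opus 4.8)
The plan is to run the proofs of Proposition~\ref{prop0003} and Lemma~\ref{lem1003} essentially verbatim, replacing the single-well lower bounds of Lemmas~\ref{lem1001} and~\ref{lem5010} by their two-well counterparts. The structural input that makes this possible is Lemma~\ref{lem0110}: for $\lambda>\max\{\overline{\Lambda}_a,\overline{\Lambda}_b\}$ the form $\mathcal{D}_\lambda$ is positively definite on $E_\lambda=(\widetilde{\mathcal{F}}_{a,\lambda}^\perp\oplus\mathcal{F}_{a,\lambda})\times(\widetilde{\mathcal{F}}_{b,\lambda}^\perp\oplus\mathcal{F}_{b,\lambda})$, so every minimizing sequence for $m_{\lambda,\beta}^*$ is bounded in both $E_\lambda$ and $\mathcal{D}$, Lemma~\ref{lem0008} applies to all of $E_\lambda\backslash\{(0,0)\}$, and Lemma~\ref{lem6010} supplies on the range $0<\beta<\beta_0$ the quantitative $L^4$-lower bounds that were previously furnished by Lemma~\ref{lem5010}.

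For the attainment of $m_{\lambda,\beta}^*$ with $\beta\geq0$ I would first record the analogue of Lemma~\ref{lem1001}, a bound $m_{\lambda,\beta}^*\geq c(\alpha_{a,1}(\lambda),\alpha_{b,1}(\lambda),\beta)>0$ obtained from the Sobolev and H\"older inequalities exactly as there but keeping both eigenvalue factors $1-\tfrac1{\alpha_{a,1}(\lambda)}$ and $1-\tfrac1{\alpha_{b,1}(\lambda)}$. Then, given a minimizing sequence $\{(u_n,v_n)\}\subset\mathcal{M}_{\lambda,\beta}$ with weak limit $(u_0,v_0)$, I would rule out $(u_0,v_0)=(0,0)$: in that case $\int_{\bbr^4}(\lambda a+a_0)^-u_n^2+(\lambda b+b_0)^-v_n^2\,dx=o_n(1)$, so either $\|u_n\|_{L^4(\bbr^4)}^2+\|v_n\|_{L^4(\bbr^4)}^2\geq S+o_n(1)$ (when $\beta\leq1$), forcing $m_{\lambda,\beta}^*\geq\tfrac14 S^2$ and contradicting $\min\{m_a,m_b\}<\tfrac14 S^2$, or else $\beta>1$ and the limiting Sobolev problem of Lemma~\ref{lem0010} is reached and excluded exactly as in its Case~2. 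With $(u_0,v_0)\neq(0,0)$, the Br\'ezis--Lieb splitting together with \cite[Lemma~2.3]{CZ14} gives strong convergence in $L^4(\bbr^4)\times L^4(\bbr^4)$ and hence in $E_\lambda$, and Lemma~\ref{lem5001} yields $D[J_{\lambda,\beta}(u_0,v_0)]=0$, so $m_{\lambda,\beta}^*$ is attained.

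Part~(1) follows as in Lemma~\ref{lem1003}(1): for $\beta<0$, $(u_{\lambda,0},0)$ remains a solution, and projecting any $(u,v)\in\mathcal{M}_{\lambda,\beta}$ onto $\mathcal{M}_{\lambda,0}$ via Lemma~\ref{lem0008} shows $m_{\lambda,\beta}^*=m_{\lambda,0}^*$. For part~(2), fix $\beta\in(0,\beta_0)$ and argue by contradiction: if the general ground state were non-trivial along some $\lambda_n\to+\infty$, it would lie in $\mathcal{N}_{\lambda_n,\beta}$ with energy $\leq\min\{m_a,m_b\}$; concentration at the wells (the condition $(D_3)$ and the Fatou lemma) would produce a strong limit $(u_{0,\beta},v_{0,\beta})\in H_0^1(\Omega_a)\times H_0^1(\Omega_b)$ solving the decoupled system~\eqref{eq9001} (the cross term dies since $\overline{\Omega}_a\cap\overline{\Omega}_b=\emptyset$), while Lemma~\ref{lem6010} forces $\|u_{\lambda_n,\beta}\|_{L^4(\bbr^4)}^2$ and $\|v_{\lambda_n,\beta}\|_{L^4(\bbr^4)}^2$ to stay bounded below. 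Hence both limit components survive, the limiting energy is at least $m_a+m_b>\min\{m_a,m_b\}$, contradicting $\limsup_n m_{\lambda_n,\beta}^*\leq\min\{m_a,m_b\}$. This forces the general ground state to be semi-trivial for $\lambda$ large, of the type carrying the smaller of the two semi-trivial levels $m_{a,\lambda},m_{b,\lambda}$; under the normalization $a_0\leq b_0$ this is the type $(u_{\lambda,\beta},0)$. Finally part~(3) copies Lemma~\ref{lem1003}(3): testing $m_{\lambda,\beta}^*$ against the scaled pair built from $u_{\lambda,0}$ and using $(D_2)$ gives $m_{\lambda,\beta}^*\to0$ as $\beta\to+\infty$, so for $\beta>\beta_\lambda$ one has $m_{\lambda,\beta}^*<\min\{m_{a,\lambda},m_{b,\lambda}\}$; since each semi-trivial type would force $m_{\lambda,\beta}^*\geq\min\{m_{a,\lambda},m_{b,\lambda}\}$, the minimizer must be non-trivial, i.e.\ a ground state.

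The main obstacle will be part~(2). Unlike the case $-\mu_{a,1}<a_0<0\leq b_0$ treated in Lemma~\ref{lem1003}, where the component $v$ was automatically charged with energy $\geq\tfrac14 S^2=m_b$ and could be discarded at once, here both wells carry subcritical energy $m_a,m_b<\tfrac14 S^2$, so neither component is cheap to kill. The delicate point is to combine the quantitative lower bounds of Lemma~\ref{lem6010} (valid precisely on $0<\beta<\beta_0$) with the convergences $m_{a,\lambda}\to m_a$ and $m_{b,\lambda}\to m_b$ of Lemma~\ref{lem7001}, and to verify $m_{a,\lambda}\leq m_{b,\lambda}$, so that the surviving-mass argument closes and correctly identifies the semi-trivial type $(u_{\lambda,\beta},0)$.
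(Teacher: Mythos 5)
Your blueprint coincides with the paper's own proof, which consists entirely of the remark that the arguments of Proposition~\ref{prop0003} and Lemma~\ref{lem1003} persist for $\lambda\geq\Lambda_2^*$ and $\beta<\beta_0$ once Lemma~\ref{lem5010} is replaced by Lemma~\ref{lem6010}; your attainment argument and parts (1) and (3) implement that remark correctly. But part (2) as you wrote it has a genuine gap: from the lower bounds $\|u_{\lambda_n,\beta}\|_{L^4(\bbr^4)}^2,\|v_{\lambda_n,\beta}\|_{L^4(\bbr^4)}^2\geq c>0$ of Lemma~\ref{lem6010} you conclude that ``both limit components survive.'' At the critical exponent this inference is not available. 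The compactness you actually have (boundedness plus the steep-well estimates) yields only weak convergence in $D^{1,2}(\bbr^4)\times D^{1,2}(\bbr^4)$ and strong convergence in $L^2(\bbr^4)\times L^2(\bbr^4)$; an $L^4$ lower bound along the sequence says nothing about the weak limit, since a component may shed its entire $L^4$ mass into a concentrating bubble, and the ``strong limit'' you invoke from $(D_3)$ and Fatou is precisely what must be proved. The repair is the sequencing of Lemma~\ref{lem1003}(2): (a) if both limit components are nonzero then, with no strong convergence needed, weak lower semicontinuity, strong $L^2$ convergence and the identity $J_{\lambda,\beta}=\tfrac14\mathcal{D}_\lambda$ on $\mathcal{N}_{\lambda,\beta}$ give $\min\{m_a,m_b\}\geq\limsup_n m_{\lambda_n,\beta}^*\geq I_{\Omega_a}(u_{0,\beta})+I_{\Omega_b}(v_{0,\beta})\geq m_a+m_b$, absurd; (b) the zero limit is excluded by the Nehari--Sobolev estimate, since $\beta<\beta_0\leq1$ forces $m_{\lambda_n,\beta}^*\geq\tfrac14S^2+o_n(1)>\min\{m_a,m_b\}$; (c) in the remaining semi-trivial-limit case, say $v_{0,\beta}=0\neq u_{0,\beta}$, the same energy chain reads $\min\{m_a,m_b\}\geq m_a+\tfrac14\limsup_n\|\nabla v_{\lambda_n,\beta}\|_{L^2(\bbr^4)}^2$, which forces $v_{\lambda_n,\beta}\to0$ in $D^{1,2}(\bbr^4)$, hence in $L^4(\bbr^4)$, and only now does Lemma~\ref{lem6010} deliver the contradiction. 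So your ingredients are right, but Lemma~\ref{lem6010} must be applied \emph{after} strong convergence is extracted from the energy equality, not as a substitute for it.

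The second soft spot, which you flag but leave open, cannot in fact be closed as you propose: $a_0\leq b_0$ does not imply $m_{a,\lambda}\leq m_{b,\lambda}$ (nor $m_a\leq m_b$), because the Br\'ezis--Nirenberg levels depend on the geometry of the wells and not only on the constants. By the scaling $u\mapsto r^{-1}u(\cdot/r)$, the least-energy level of $\Omega_a=B_r$ with parameter $a_0$ equals that of $B_1$ with parameter $a_0r^2$, which tends to $\tfrac14S^2$ as $r\to0$; hence a small well $\Omega_a$ can have $m_a$ arbitrarily close to $\tfrac14S^2$ while a larger well $\Omega_b$ with $b_0$ closer to $0$ has $m_b<m_a$, all compatible with $a_0\leq b_0$. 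What the argument genuinely proves is semi-triviality, with the surviving component realizing $\min\{m_{a,\lambda},m_{b,\lambda}\}$; this is exactly what Theorem~\ref{thm0001}(2) asserts in the case $a_0\leq b_0<0$, and the phrase ``of the type $(u_{\lambda,\beta},0)$'' in the Proposition is best read as labelling the nonzero component rather than locating it over $\Omega_a$. The paper itself offers no argument on this point, so your hesitation is warranted, but the inequality $m_{a,\lambda}\leq m_{b,\lambda}$ you would need is simply not a consequence of the hypotheses.
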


\vskip0.3in
Now, we are ready to  give the proof of Theorem~\ref{thm0001}.%

\noindent\textbf{Proof of Theorem~\ref{thm0001}:} In fact, it is a straightforward consequence of
 Lemmas~\ref{lem0100} and \ref{lem1002}-\ref{lem1003} and Propositions~\ref{prop0004}-\ref{prop0006}.
\qquad\raisebox{-0.5mm}
 {
\rule{1.5mm}{4mm}}\vspace{6pt}

In the following part of this section, we will consider the case of $a_0\leq-\mu_{a,1}$ or $b_0\leq-\mu_{b,1}$.
Let $\mathcal{G}_{\lambda,\beta}$ be the Nehari-Pankov type set related to $\mathcal{M}_{\lambda,\beta}$,
which is given by \eqref{eq6001}.  Then it is easy to see that $\mathcal{G}_{\lambda,\beta}$ contains all
 non-zero solutions of $(\mathcal{P}_{\lambda,\beta})$.  In what follows, we will borrow some ideas
 from \cite{SW09} to show that $c_{\lambda,\beta}=\inf_{\mathcal{G}_{\lambda,\beta}}J_{\lambda,\beta}(u,v)$
 can be attained by some non-zero solutions of $(\mathcal{P}_{\lambda,\beta})$ for $\lambda$ sufficiently
 large and $0\leq\beta<1$.  Denote the map $(u,v)\to(u_{\lambda,\beta}^0,v_{\lambda,\beta}^0)$
 by $(\check{m}_{\lambda,\beta}^0(u,v), \hat{m}_{\lambda,\beta}^0(u,v))$,
  where $(u_{\lambda,\beta}^0,v_{\lambda,\beta}^0)$ is given by Lemma~\ref{lem0011}.  Then we can obtain the following

\begin{lemma}\label{lem0013}
Assume  $(D_1)$-$(D_3)$   and $a_0\leq-\mu_{a,1}$ or $b_0\leq-\mu_{b,1}$.  If $0\leq\beta<1$ and $\lambda\geq\Lambda_0^*$, then the map $(u,v)\to(\check{m}_{\lambda,\beta}^0(u,v), \hat{m}_{\lambda,\beta}^0(u,v))$ is continuous on $\widetilde{E}_{\lambda}$, where $\Lambda_0^*$ is given by Lemma~\ref{lem0007}.%
\end{lemma}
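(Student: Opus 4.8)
The plan is to adapt the continuity argument for the Nehari--Pankov reduction of \cite{SW09}. Since $\widetilde{E}_\lambda$ is a metric space, it suffices to establish sequential continuity. So I fix $(u,v)\in\widetilde{E}_\lambda$, take $(u_n,v_n)\to(u,v)$ in $\widetilde{E}_\lambda$, and denote by $(w_n,\sigma_n,t_n)\in\widehat{\mathcal{F}}_{a,\lambda}^{\perp}\times\widehat{\mathcal{F}}_{b,\lambda}^{\perp}\times\bbr^+$ and by $(w^0,\sigma^0,t^0)$ the triples produced by Lemma~\ref{lem0011} for $(u_n,v_n)$ and for $(u,v)$, so that $(\check{m}_{\lambda,\beta}^0(u_n,v_n),\hat{m}_{\lambda,\beta}^0(u_n,v_n))=(w_n+t_n\widetilde{u}_n,\sigma_n+t_n\widetilde{v}_n)$. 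Because the orthogonal projections onto $\widetilde{\mathcal{F}}_{a,\lambda}^{\perp}\oplus\mathcal{F}_{a,\lambda}$ and $\widetilde{\mathcal{F}}_{b,\lambda}^{\perp}\oplus\mathcal{F}_{b,\lambda}$ are bounded linear operators, $\widetilde{u}_n\to\widetilde{u}$ and $\widetilde{v}_n\to\widetilde{v}$ in $E_\lambda$; moreover $(u,v)\in\widetilde{E}_\lambda$ forces $\widetilde{u}\neq0$ or $\widetilde{v}\neq0$, so that both $\|\widetilde{u}_n\|_{a,\lambda}^2+\|\widetilde{v}_n\|_{b,\lambda}^2$ and, through \eqref{eq0003} and the H\"older inequality, $\mathcal{L}_\beta(\widetilde{u}_n,\widetilde{v}_n)$ stay bounded away from zero for $n$ large.

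The key step is the boundedness of $\{(w_n,\sigma_n,t_n)\}$. Here $w_n,\sigma_n$ live in the fixed finite-dimensional spaces $\widehat{\mathcal{F}}_{a,\lambda}^{\perp},\widehat{\mathcal{F}}_{b,\lambda}^{\perp}$ (whose dimensions are fixed by Lemma~\ref{lem0001} and Remark~\ref{rmk0001}), and $t_n\in\bbr^+$, so the argument reduces to a finite-dimensional one. I would exploit Lemma~\ref{lem0110}: on $\widehat{\mathcal{F}}_{a,\lambda}^{\perp}\times\widehat{\mathcal{F}}_{b,\lambda}^{\perp}$ the quadratic form $\mathcal{D}_\lambda$ is nonpositive while $\mathcal{L}_\beta$ is positive for $0\le\beta<1$, and in the $(\widetilde{u}_n,\widetilde{v}_n)$-direction $\mathcal{D}_\lambda$ is controlled by the convergent quantities above. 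Consequently $G_{\lambda,\beta,u_n,v_n}(w,\sigma,t)\to-\infty$ as $\|(w,\sigma,t)\|\to+\infty$, uniformly in $n$ large, and the attained maximal value $J_{\lambda,\beta}(w_n+t_n\widetilde{u}_n,\sigma_n+t_n\widetilde{v}_n)$ is bounded above, exactly as in the estimates \eqref{eq0052} and \eqref{eq9003} used in Lemma~\ref{lem0011}, with constants uniform in $n$ thanks to Remark~\ref{rmk0002}. This confines $(w_n,\sigma_n,t_n)$ to a bounded subset of $\widehat{\mathcal{F}}_{a,\lambda}^{\perp}\times\widehat{\mathcal{F}}_{b,\lambda}^{\perp}\times\bbr^+$.

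With boundedness in hand I would extract a subsequence $(w_n,\sigma_n,t_n)\to(w_*,\sigma_*,t_*)$ in the finite-dimensional product space. Since $G_{\lambda,\beta,u,v}$ is of $C^2$ and depends on $(u,v)$ only through $(\widetilde{u},\widetilde{v})$, passing to the limit in $G_{\lambda,\beta,u_n,v_n}(w_n,\sigma_n,t_n)\ge G_{\lambda,\beta,u_n,v_n}(w,\sigma,t)$ shows that $(w_*,\sigma_*,t_*)$ maximizes $G_{\lambda,\beta,u,v}$ over $\widehat{\mathcal{F}}_{a,\lambda}^{\perp}\times\widehat{\mathcal{F}}_{b,\lambda}^{\perp}\times[0,+\infty)$. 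One must rule out $t_*=0$: were $t_*=0$, the limiting maximal value would be $J_{\lambda,\beta}(w_*,\sigma_*)\le0$ (again by Lemma~\ref{lem0110} and $0\le\beta<1$), contradicting $J_{\lambda,\beta}(w_n+t_n\widetilde{u}_n,\sigma_n+t_n\widetilde{v}_n)\ge c_{\lambda,\beta}\ge\alpha_0>0$ from Lemma~\ref{lem0007}. Hence $t_*>0$, and by the uniqueness part of Lemma~\ref{lem0011} (which rests on Lemma~\ref{lem0004}) we get $(w_*,\sigma_*,t_*)=(w^0,\sigma^0,t^0)$. As the limit is the same for every subsequence, the full sequence converges, whence $(\check{m}_{\lambda,\beta}^0(u_n,v_n),\hat{m}_{\lambda,\beta}^0(u_n,v_n))\to(w^0+t^0\widetilde{u},\sigma^0+t^0\widetilde{v})=(\check{m}_{\lambda,\beta}^0(u,v),\hat{m}_{\lambda,\beta}^0(u,v))$, which is the asserted continuity.

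The main obstacle is the uniform boundedness of the maximizers $(w_n,\sigma_n,t_n)$: since the admissible set $(\widehat{\mathcal{F}}_{a,\lambda}^{\perp}\oplus\bbr^+\widetilde{u}_n)\times(\widehat{\mathcal{F}}_{b,\lambda}^{\perp}\oplus\bbr^+\widetilde{v}_n)$ itself moves with $n$, one has to make the coercivity of $-J_{\lambda,\beta}$ in these directions uniform. This is precisely where the lower bounds on $\|\widetilde{u}_n\|_{a,\lambda}^2+\|\widetilde{v}_n\|_{b,\lambda}^2$ and on $\mathcal{L}_\beta(\widetilde{u}_n,\widetilde{v}_n)$, together with the $\lambda$-uniform constants of Remark~\ref{rmk0002}, become essential; once these are secured, the remaining finite-dimensional compactness and the uniqueness from Lemma~\ref{lem0011} render the rest routine.
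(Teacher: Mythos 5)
Your proposal is correct and follows essentially the same route as the paper's proof: sequential continuity via uniform boundedness of the maximizing triples $(w_n,\sigma_n,t_n)$ (using the finite dimension of $\widehat{\mathcal{F}}_{a,\lambda}^{\perp}\times\widehat{\mathcal{F}}_{b,\lambda}^{\perp}$ and the uniform coercivity estimates behind \eqref{eq0052} and \eqref{eq9003}), compactness to extract a limit triple, passage to the limit in the maximization property \eqref{eq0057}, and identification of the limit through the uniqueness statement of Lemma~\ref{lem0011}. Your explicit exclusion of $t_*=0$ via the lower bound $c_{\lambda,\beta}\geq\alpha_0>0$ from Lemma~\ref{lem0007} is a small point the paper leaves implicit, but it does not change the argument's structure.
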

\begin{proof}
We only give the proof for the case $a_0\leq-\mu_{a,1}$ and $b_0\leq-\mu_{b,1}$, since other cases are more simple and can be proved in a similar way due to Lemma~\ref{lem0110}.  Let $(u_n,v_n)\to(u,v)$ strongly in $\widetilde{E}_\lambda$ as $n\to\infty$.  By Lemma~\ref{lem0011}, we can see that%
\begin{equation*}
(\check{m}_{\lambda,\beta}^0(u_n,v_n), \hat{m}_{\lambda,\beta}^0(u_n,v_n))=(w_n^0+t_n^0\widetilde{u}_n, \sigma_n^0+t_n^0\widetilde{v}_n)%
\end{equation*}
and
\begin{equation*}
(\check{m}_{\lambda,\beta}^0(u,v), \hat{m}_{\lambda,\beta}^0(u,v))=(w^0+t^0\widetilde{u}, \sigma^0+t^0\widetilde{v}).%
\end{equation*}
Since $0\leq\beta<1$ and dim$(\widehat{\mathcal{F}}_{a,\lambda}^{\perp}\oplus\bbr^+ \widetilde{u})\times(\widehat{\mathcal{F}}_{b,\lambda}^{\perp}\oplus\bbr^+ \widetilde{v})<+\infty$ due to Lemma~\ref{lem0001}, there exists $R_\lambda>0$ such that $J_{\lambda,\beta}(R_\lambda w,R_\lambda\sigma)\leq-1$ for all $(w,\sigma)\in(\widehat{\mathcal{F}}_{a,\lambda}^{\perp}\oplus\bbr^+ \widetilde{u})\times(\widehat{\mathcal{F}}_{b,\lambda}^{\perp}\oplus\bbr^+ \widetilde{v})$ with $\|w\|_{a,\lambda}^2+\|\sigma\|_{b,\lambda}^2=1$.  Since $(u_n,v_n)\to(u,v)$ strongly in $\widetilde{E}_\lambda$ as $n\to\infty$, we have $J_{\lambda,\beta}(R_\lambda w,R_\lambda\sigma)\leq0$ for all $(w,\sigma)\in(\widehat{\mathcal{F}}_{a,\lambda}^{\perp}\oplus\bbr^+ \widetilde{u}_n)\times(\widehat{\mathcal{F}}_{b,\lambda}^{\perp}\oplus\bbr^+ \widetilde{v}_n)$ with $\|w\|_{a,\lambda}^2+\|\sigma\|_{b,\lambda}^2=1$ and $n$ sufficiently large.  It follows from \eqref{eq0057} that $\{(w_n^0+t_n^0\widetilde{u}_n, \sigma_n^0+t_n^0\widetilde{v}_n)\}$ is bounded in $\widetilde{E}_\lambda$.  Without loss of generality, we may assume that $(w_n^0+t_n^0\widetilde{u}_n, \sigma_n^0+t_n^0\widetilde{v}_n)\rightharpoonup(w_0^0+t_0^0\widetilde{u}, \sigma_0^0+t_0^0\widetilde{v})$ weakly in $\widetilde{E}_\lambda$ and $(w_n^0+t_n^0\widetilde{u}_n, \sigma_n^0+t_n^0\widetilde{v}_n)\to(w_0^0+t_0^0\widetilde{u}, \sigma_0^0+t_0^0\widetilde{v})$ a.e. in $\bbr^4\times\bbr^4$ as $n\to\infty$.  Since dim$\widehat{\mathcal{F}}_{a,\lambda}^{\perp}\times\widehat{\mathcal{F}}_{b,\lambda}^{\perp}<+\infty$ and $(u_n,v_n)\to(u,v)$ strongly in $\widetilde{E}_\lambda$ as $n\to\infty$, we have $(w_n^0+t_n^0\widetilde{u}_n, \sigma_n^0+t_n^0\widetilde{v}_n)\to(w_0^0+t_0^0\widetilde{u}, \sigma_0^0+t_0^0\widetilde{v})$ strongly in $\widetilde{E}_\lambda$ as $n\to\infty$.  Now, by \eqref{eq0057}, we can see that%
$$
J_{\lambda,\beta}(w_n^0+t_n^0\widetilde{u}_n, \sigma_n^0+t_n^0\widetilde{v}_n)\geq
J_{\lambda,\beta}(w^0+t^0\widetilde{u}_n, \sigma^0+t^0\widetilde{v}_n)$$
$$\quad\quad\quad\quad\quad\quad\quad\quad\quad\quad\quad\quad\quad\quad\quad=J_{\lambda,\beta}(w^0+t^0\widetilde{u}, \sigma^0+t^0\widetilde{v})+o_n(1).$$
and
$$
J_{\lambda,\beta}(w^0+t^0\widetilde{u}, \sigma^0+t^0\widetilde{v})\geq J_{\lambda,\beta}(w_0^0+t_0^0\widetilde{u}, \sigma_0^0+t_0^0\widetilde{v})$$
$$\quad\quad\quad\quad\quad\quad\quad\quad\quad\quad\quad\quad\quad\quad\quad\quad
= J_{\lambda,\beta}(w_n^0+t_n^0\widetilde{u}_n, \sigma_n^0+t_n^0\widetilde{v}_n)+o_n(1).$$
Note that $(w^0,\sigma^0,t^0)$ is the unique one satisfying \eqref{eq0057} for $(u,v)$,
we must have $w_0^0=w^0$, $\sigma^0_0=\sigma^0$ and $t^0_0=t^0$.  Since $(u_n,v_n)\to(u,v)$
strongly in $\widetilde{E}_\lambda$ as $n\to\infty$, we can see that the
map $(\check{m}_{\lambda,\beta}^0(u,v), \hat{m}_{\lambda,\beta}^0(u,v))$ is continuous
on $\widetilde{E}_{\lambda}$ for $0\leq\beta<1$ and $\lambda\geq\Lambda_0^*$.%
\end{proof}

Let%
\begin{equation*}
\mathbb{B}_1^+:=\{(u,v)\in \widetilde{E}_\lambda
\mid\|u\|_{a,\lambda}^2+\|v\|_{b,\lambda}^2=1\}%
\end{equation*}
and consider the following functional%
\begin{equation*}
\Phi_{\lambda,\beta}(u,v):=J_{\lambda,\beta}(\check{m}_{\lambda,\beta}^0(u,v), \hat{m}_{\lambda,\beta}^0(u,v)).%
\end{equation*}
Then by Lemmas~\ref{lem0011} and \ref{lem0013}, $\Phi_{\lambda,\beta}(u,v)$ is well defined and
 continuous on $\mathbb{B}_1^+$ for $0\leq\beta<1$ and $\lambda\geq\Lambda_0^*$.  Furthermore, we also have the following
\begin{lemma}\label{lem0012}
Assume  that  $(D_1)$-$(D_3)$ hold and that either     $a_0\leq-\mu_{a,1}$ or $b_0\leq-\mu_{b,1}$.  If $0\leq\beta<1$ and $\lambda\geq\Lambda_0^*$, then $\Phi_{\lambda,\beta}(u,v)$ is $C^1$ on $\mathbb{B}_1^+$.  Moreover,%
\begin{equation}\label{eq0060}
\langle D[\Phi_{\lambda,\beta}(u,v)], (w,\sigma)\rangle_{E^*_\lambda, E_\lambda}=\langle D[J_{\lambda,\beta}(\check{m}_{\lambda,\beta}^0(u,v), \hat{m}_{\lambda,\beta}^0(u,v))], (t_\lambda^0\widetilde{w}, t_\lambda^0\widetilde{\sigma})\rangle_{E^*_\lambda, E_\lambda}%
\end{equation}
for all $(u,v)$ and $(w,\sigma)\in\mathbb{B}_1^+$, where $t_\lambda^0\in\bbr^+$ is given by \eqref{eq0057} and only depends
 on $(u,v)$;  $\widetilde{w}$  and $\widetilde{\sigma}$ are the projections of $w$ and $\sigma$
  on $\widetilde{\mathcal{F}}_{a,\lambda}^{\perp}\oplus\mathcal{F}_{a,\lambda}$ and
  $\widetilde{\mathcal{F}}_{b,\lambda}^{\perp}\oplus\mathcal{F}_{b,\lambda}$.%
\end{lemma}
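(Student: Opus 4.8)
The plan is to follow the variational scheme of Szulkin and Weth \cite{SW09}, exploiting the fact that, by Lemma~\ref{lem0011}, the triple $(w^0_\lambda,\sigma^0_\lambda,t^0_\lambda)$ is not merely a critical point but the \emph{global maximizer} of $G_{\lambda,\beta,u,v}$ over $\widehat{\mathcal{F}}_{a,\lambda}^{\perp}\times\widehat{\mathcal{F}}_{b,\lambda}^{\perp}\times\bbr^+$. First I would fix $(u,v)\in\mathbb{B}_1^+$ together with a direction $(w,\sigma)$, and choose a $C^1$ curve $s\mapsto(u_s,v_s)$ in $\mathbb{B}_1^+$ with $(u_0,v_0)=(u,v)$ and velocity $(w,\sigma)$ at $s=0$. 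Writing $(w^0_s,\sigma^0_s,t^0_s)$ for the maximizer attached to $(u_s,v_s)$ by Lemma~\ref{lem0011}, I would record the identity $\Phi_{\lambda,\beta}(u_s,v_s)=G_{\lambda,\beta,u_s,v_s}(w^0_s,\sigma^0_s,t^0_s)=\max G_{\lambda,\beta,u_s,v_s}$ and estimate the one-sided difference quotients from both sides. The point is that the $\widehat{\mathcal{F}}$-components of the velocity are absorbed by the free maximization and drop out at the critical point, which is precisely why the final formula \eqref{eq0060} only involves the projections $\widetilde{w},\widetilde{\sigma}$ and the scalar factor $t^0_\lambda$.

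The heart of the argument is a two-sided squeeze. For the lower bound (taking $s>0$) I would compare $\max G_{\lambda,\beta,u_s,v_s}$ against the \emph{frozen} point $(w^0_0,\sigma^0_0,t^0_0)$, giving $\Phi_{\lambda,\beta}(u_s,v_s)-\Phi_{\lambda,\beta}(u_0,v_0)\geq J_{\lambda,\beta}(w^0_0+t^0_0\widetilde{u_s},\sigma^0_0+t^0_0\widetilde{v_s})-J_{\lambda,\beta}(w^0_0+t^0_0\widetilde{u_0},\sigma^0_0+t^0_0\widetilde{v_0})$. Since the projection $\widetilde{\cdot}$ is bounded and linear, $s\mapsto\widetilde{u_s}$ is differentiable with derivative $\widetilde{w}$, and the $C^2$-regularity of $J_{\lambda,\beta}$ lets me divide by $s$ and pass to the limit, obtaining $\liminf_{s\to0^+}$ of the quotient at least $\langle D[J_{\lambda,\beta}(\check{m}_{\lambda,\beta}^0,\hat{m}_{\lambda,\beta}^0)],(t^0_\lambda\widetilde{w},t^0_\lambda\widetilde{\sigma})\rangle$. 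For the upper bound I would instead compare $\max G_{\lambda,\beta,u_0,v_0}$ against the \emph{moving} point $(w^0_s,\sigma^0_s,t^0_s)$, producing a matching $\limsup$. Running the same two comparisons with $s\to0^-$ (which reverses the inequalities upon division by $s$) yields the two-sided directional derivative and establishes \eqref{eq0060}.

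The delicate step, and the main obstacle, is the upper bound: there the mean-value evaluation point drifts together with the moving maximizer $(w^0_s,\sigma^0_s,t^0_s)$, so controlling its limit requires knowing that $(w^0_s,\sigma^0_s,t^0_s)\to(w^0_0,\sigma^0_0,t^0_0)$ as $s\to0$. This is exactly where I rely on the continuity of the map $(u,v)\mapsto(\check{m}_{\lambda,\beta}^0(u,v),\hat{m}_{\lambda,\beta}^0(u,v))$ established in Lemma~\ref{lem0013}; combined with the continuity of $DJ_{\lambda,\beta}$ it closes the gap between the two estimates. Finally, since the right-hand side of \eqref{eq0060} is a composition of the continuous map of Lemma~\ref{lem0013}, the continuous derivative $DJ_{\lambda,\beta}$, and the continuously varying scalar $t^0_\lambda$, the Gateaux derivative depends continuously on $(u,v)$, which upgrades $\Phi_{\lambda,\beta}$ to a genuine $C^1$ functional on $\mathbb{B}_1^+$. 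Throughout, the hypotheses $0\leq\beta<1$ and $\lambda\geq\Lambda_0^*$ are what guarantee, via Lemmas~\ref{lem0004} and \ref{lem0011}, that the maximizer exists and is unique, so the entire construction is confined to that range.
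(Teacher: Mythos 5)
Your proposal is correct and follows essentially the same route as the paper: the paper also compares the frozen maximizer against the moving one along a curve in $\mathbb{B}_1^+$ (constructed explicitly as $(t(l)u+lw,\,t(l)v+l\sigma)$ via the implicit function theorem), applies the mean value theorem on both sides, and closes the squeeze using the continuity of $(u,v)\mapsto(\check{m}_{\lambda,\beta}^0(u,v),\hat{m}_{\lambda,\beta}^0(u,v))$ from Lemma~\ref{lem0013}, exactly as in the Szulkin--Weth scheme you invoke. The only cosmetic difference is that the paper's curve carries a normalization correction $t'(0)(\widetilde{u},\widetilde{v})$ whose contribution vanishes because $(\check{m}_{\lambda,\beta}^0(u,v),\hat{m}_{\lambda,\beta}^0(u,v))\in\mathcal{G}_{\lambda,\beta}$, which is the same cancellation you describe when noting that the constrained directions drop out at the critical point.
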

\begin{proof}
We just  give the proof for the case $a_0\leq-\mu_{a,1}$ and $b_0\leq-\mu_{b,1}$.
  Let $(w,\sigma)$ and $(u,v)\in \mathbb{B}_1^+$.
  Then $w=\widehat{w}+\widetilde{w}$, $u=\widehat{u}+\widetilde{u}$ and $\sigma=\widehat{\sigma}+\widetilde{\sigma}$,
  $v=\widehat{v}+\widetilde{v}$ with $\widetilde{u}\not=0$ or $\widetilde{v}\not=0$ and $\widetilde{w}\not=0$
   or $\widetilde{\sigma}\not=0$, where $\widehat{w}$, $\widehat{u}$ and $\widehat{\sigma}$, $\widehat{v}$
    are the projections of $w$, $u$ and $\sigma$, $v$ on $\widehat{\mathcal{F}}_{a,\lambda}^{\perp}$ and
    $\widehat{\mathcal{F}}_{b,\lambda}^{\perp}$ while $\widetilde{w}$, $\widetilde{u}$ and $\widetilde{\sigma}$,
     $\widetilde{v}$ are the projections of $w$, $u$ and $\sigma$, $v$ on
      $\widetilde{\mathcal{F}}_{a,\lambda}^{\perp}\oplus\mathcal{F}_{a,\lambda}$ and
       $\widetilde{\mathcal{F}}_{b,\lambda}^{\perp}\oplus\mathcal{F}_{b,\lambda}$.
       By the implicit function theorem, it is easy to see that there exist $\delta>0$ and a $C^1$
       function $t(l)$ on $(-\delta, \delta)$ satisfying $t(l)\in[\frac12, \frac32]$, $t(0)=1$ and
        $t'(0)=-(\langle u,w\rangle_{a,\lambda}+\langle v,\sigma\rangle_{b,\lambda})$ such that
        $(w_l,\sigma_l)\in\mathbb{B}_1^+$ for $l\in(-\delta, \delta)$, where $(w_l,\sigma_l)=(t(l)u+lw,t(l)v+l\sigma)$.
         Now, by the mean value theorem and Lemma~\ref{lem0011}, we can see that%
\begin{eqnarray}
&&\Phi_{\lambda,\beta}(u,v)-\Phi_{\lambda,\beta}(w_l,\sigma_l)\notag\\
&&\leq J_{\lambda,\beta}(w_\lambda^0+t_\lambda^0\widetilde{u}, \sigma_\lambda^0+t_\lambda^0\widetilde{v})\notag\\
&&\quad -J_{\lambda,\beta}(w_\lambda^0+t_\lambda^0(t(l)\widetilde{u}
+l\widetilde{w}), \sigma_\lambda^0+t_\lambda^0(t(l)\widetilde{v}+l\widetilde{\sigma}))\label{eq0059}\\
&&=-\langle D[J_{\lambda,\beta}(\rho_{1}(l),\rho_{2}(l))], (t_\lambda^0((t(l)-1)\widetilde{u}+l\widetilde{w}), t_\lambda^0((t(l)-1)\widetilde{v}+l\widetilde{\sigma}))\rangle_{E^*_\lambda, E_\lambda}\notag
\end{eqnarray}
and
\begin{eqnarray}
&&\Phi_{\lambda,\beta}(u,v)-\Phi_{\lambda,\beta}(w_l,\sigma_l)\notag\\
& & \geq J_{\lambda,\beta}(w_{\lambda,l}^0+t_{\lambda,l}^0\widetilde{u}, \sigma_{\lambda,l}^0+t_{\lambda,l}^0\widetilde{v})\notag\\
&&\quad -J_{\lambda,\beta}(w_{\lambda,l}^0+t_{\lambda,l}^0(t(l)\widetilde{u}+l\widetilde{w}),
 \sigma_{\lambda,l}^0+t_{\lambda,l}^0(t(l)\widetilde{v}+l\widetilde{\sigma}))\label{eq0058}\\%
&&=-l\langle D[J_{\lambda,\beta}(\rho^*_{1}(l),\rho^*_{2}(l))], (t_{\lambda,l}^0((t(l)-1)\widetilde{u}+l\widetilde{w}),
t_{\lambda,l}^0((t(l)-1)\widetilde{v}+l\widetilde{\sigma}))\rangle_{E^*_\lambda, E_\lambda},\notag%
\end{eqnarray}
where $$\rho_{1}(l)=w_\lambda^0+t_\lambda^0(t(l)\rho_l^1+(1-\rho_l^1))\widetilde{u}+l(1-\rho_l^1)\widetilde{w},$$
 $$\rho_{2}(l)=\sigma_\lambda^0+t_\lambda^0(t(l)\rho_l^2+(1-\rho_l^2))\widetilde{v}+l(1-\rho_l^2)\widetilde{\sigma},$$
  $$\rho^*_{1}(l)=w_{\lambda,l}^0+t_{\lambda,l}^0(t(l)\rho_l^{1,*}+(1-\rho_l^{1,*}))\widetilde{u}+l(1-\rho_l^{1,*})\widetilde{w}$$
   and $$\rho^*_{2}(l)=\sigma_{\lambda,l}^0+t_{\lambda,l}^0(t(l)\rho_l^{2,*}+(1-\rho_l^{2,*}))\widetilde{v}+l(1-\rho_l^{2,*})\widetilde{\sigma}$$ with
$\rho_l^1,\rho_l^2,\rho_l^{1,*},\rho_l^{2,*}\in(0, 1)$.  Since $(w_l,\sigma_l)\to(u,v)$ as $l\to0$, by Lemmas~\ref{lem0011} and \ref{lem0013} and \eqref{eq0059}--\eqref{eq0058}, we have from $(\check{m}_{\lambda,\beta}^0(u,v), \hat{m}_{\lambda,\beta}^0(u,v)))\in\mathcal{G}_{\lambda,\beta}$ that%
\begin{eqnarray*}
&&\lim_{l\to0^+}\frac{\Phi_{\lambda,\beta}(w_l,\sigma_l)-\Phi_{\lambda,\beta}(u,v)}{l}\\
&=&\langle D[J_{\lambda,\beta}(\check{m}_{\lambda,\beta}^0(u,v), \hat{m}_{\lambda,\beta}^0(u,v))], (t_\lambda^0(t'(0)\widetilde{u}+\widetilde{w}), t_\lambda^0(t'(0)\widetilde{v}+\widetilde{\sigma}))\rangle_{E^*_\lambda, E_\lambda}\\%
&=&\langle D[J_{\lambda,\beta}(\check{m}_{\lambda,\beta}^0(u,v), \hat{m}_{\lambda,\beta}^0(u,v))], (t_\lambda^0\widetilde{w}, t_\lambda^0\widetilde{\sigma})\rangle_{E^*_\lambda, E_\lambda},%
\end{eqnarray*}
where $t_\lambda^0\in\bbr^+$ is given by \eqref{eq0057} and only relies   on $(u,v)$.
It follows that $\Phi_{\lambda,\beta}(u,v)$ is of  $C^1$ on $\mathbb{B}_1^+$ and \eqref{eq0060} holds.%
\end{proof}

Thanks to Lemma~\ref{lem0012}, we can obtain the following
\begin{proposition}\label{prop0001}
Let  $(D_1)$-$(D_3)$ hold. Assume that  either  $a_0\leq-\mu_{a,1}$ with $-a_0\not\in\sigma(-\Delta, H_0^1(\Omega_a))$
or $b_0\leq-\mu_{b,1}$ with $-b_0\not\in\sigma(-\Delta, H_0^1(\Omega_b))$.  If $0\leq\beta<1$ and $\lambda\geq\Lambda_0^*$, then $c_{\lambda,\beta}$ can be attained by a non-zero solution of $(\mathcal{P}_{\lambda,\beta})$.%
\end{proposition}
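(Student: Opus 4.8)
The plan is to realize $c_{\lambda,\beta}$ as the infimum of the reduced functional $\Phi_{\lambda,\beta}$ over the sphere $\mathbb{B}_1^+$ and then to produce a Palais--Smale sequence for $J_{\lambda,\beta}$ at the level $c_{\lambda,\beta}$ whose weak limit is a non-zero critical point. First I would record that, by Lemmas~\ref{lem0011} and~\ref{lem0013}, the map $(u,v)\mapsto(\check{m}_{\lambda,\beta}^0(u,v),\hat{m}_{\lambda,\beta}^0(u,v))$ is a continuous bijection from $\mathbb{B}_1^+$ onto $\mathcal{G}_{\lambda,\beta}$, so that $c_{\lambda,\beta}=\inf_{\mathbb{B}_1^+}\Phi_{\lambda,\beta}$. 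Applying the Ekeland variational principle on the complete $C^1$-manifold $\mathbb{B}_1^+$ yields $\{(u_n,v_n)\}\subset\mathbb{B}_1^+$ with $\Phi_{\lambda,\beta}(u_n,v_n)\to c_{\lambda,\beta}$ and $D[\Phi_{\lambda,\beta}(u_n,v_n)]\to0$. Setting $(\overline{u}_n,\overline{v}_n)=(\check{m}_{\lambda,\beta}^0(u_n,v_n),\hat{m}_{\lambda,\beta}^0(u_n,v_n))\in\mathcal{G}_{\lambda,\beta}$ and using the derivative identity~\eqref{eq0060} of Lemma~\ref{lem0012}, I would verify that $\{(\overline{u}_n,\overline{v}_n)\}$ is a genuine Palais--Smale sequence for $J_{\lambda,\beta}$ in $E_\lambda$: the component of $D[J_{\lambda,\beta}]$ along $\widehat{\mathcal{F}}_{a,\lambda}^{\perp}\times\widehat{\mathcal{F}}_{b,\lambda}^{\perp}$ and along $(\overline{u}_n,\overline{v}_n)$ itself vanishes since $(\overline{u}_n,\overline{v}_n)\in\mathcal{G}_{\lambda,\beta}$, while the remaining component is controlled by~\eqref{eq0060}.

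Next I would establish boundedness. On $\mathcal{G}_{\lambda,\beta}$ one has $\langle D[J_{\lambda,\beta}(\overline{u}_n,\overline{v}_n)],(\overline{u}_n,\overline{v}_n)\rangle=0$, whence $J_{\lambda,\beta}(\overline{u}_n,\overline{v}_n)=\tfrac14\mathcal{D}_\lambda(\overline{u}_n,\overline{v}_n)=\tfrac14\mathcal{L}_\beta(\overline{u}_n,\overline{v}_n)=c_{\lambda,\beta}+o_n(1)$. Since $0\leq\beta<1$ gives $\mathcal{L}_\beta(\overline{u}_n,\overline{v}_n)\geq\|\overline{u}_n\|_{L^4(\bbr^4)}^4+\|\overline{v}_n\|_{L^4(\bbr^4)}^4$, the $L^4$-norms are bounded; because $(\lambda a(x)+a_0)^-$ and $(\lambda b(x)+b_0)^-$ are bounded and supported on the finite-measure sets $\mathcal{A}_\lambda,\mathcal{B}_\lambda$, the weighted $L^2$-terms are then bounded by the H\"older inequality, and the negative finite-dimensional parts $\widehat{u}_n,\widehat{v}_n$ are controlled via Lemma~\ref{lem0110}$(iii)$ and the equivalence of norms on $\widehat{\mathcal{F}}_{a,\lambda}^{\perp}\times\widehat{\mathcal{F}}_{b,\lambda}^{\perp}$; together these bound $\|(\overline{u}_n,\overline{v}_n)\|_\lambda$. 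Passing to a subsequence, $(\overline{u}_n,\overline{v}_n)\rightharpoonup(u_0,v_0)$ weakly in $E_\lambda$ and a.e. in $\bbr^4$, and the compactness of the weighted embedding together with $D[J_{\lambda,\beta}(\overline{u}_n,\overline{v}_n)]\to0$ shows $D[J_{\lambda,\beta}(u_0,v_0)]=0$.

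The heart of the argument is to rule out $(u_0,v_0)=(0,0)$, where the critical growth of the cubic and coupling terms obstructs compactness. Assuming $(u_0,v_0)=(0,0)$, the weighted $L^2$-terms vanish, so testing $D[J_{\lambda,\beta}(\overline{u}_n,\overline{v}_n)]\to0$ against $(\overline{u}_n,0)$ and $(0,\overline{v}_n)$ and invoking the Brez\'is--Lieb splitting of \cite[Lemma~2.3]{CZ14} together with the Sobolev inequality~\eqref{eq0003} forces a dichotomy: either $\|\overline{u}_n\|_{L^4(\bbr^4)}+\|\overline{v}_n\|_{L^4(\bbr^4)}\to0$, whence $c_{\lambda,\beta}=\tfrac14\lim\mathcal{L}_\beta(\overline{u}_n,\overline{v}_n)=0$, contradicting $c_{\lambda,\beta}\geq\alpha_0>0$ from Lemma~\ref{lem0007}; or a nontrivial concentration occurs, forcing $c_{\lambda,\beta}\geq\tfrac14S^2$ (a single-component bubble is the cheapest, since $\tfrac{S^2}{2(1+\beta)}>\tfrac14S^2$ for $\beta<1$), which contradicts $c_{\lambda,\beta}<\tfrac14S^2$ supplied by Lemma~\ref{lem0009} once $\lambda$ is large enough. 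Hence $(u_0,v_0)\neq(0,0)$ and $(u_0,v_0)\in\mathcal{G}_{\lambda,\beta}$.

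Finally I would check that the level is attained. Writing $J_{\lambda,\beta}(u_0,v_0)=\tfrac14\mathcal{D}_\lambda(u_0,v_0)$ and using weak lower semicontinuity of the positive-definite quadratic form on $\widetilde{\mathcal{F}}_{a,\lambda}^{\perp}\oplus\mathcal{F}_{a,\lambda}$ and $\widetilde{\mathcal{F}}_{b,\lambda}^{\perp}\oplus\mathcal{F}_{b,\lambda}$, while the finite-dimensional parts $\widehat{u}_n,\widehat{v}_n$ converge strongly, one gets $J_{\lambda,\beta}(u_0,v_0)\leq\liminf_n J_{\lambda,\beta}(\overline{u}_n,\overline{v}_n)=c_{\lambda,\beta}$; the reverse inequality holds because $(u_0,v_0)\in\mathcal{G}_{\lambda,\beta}$, so $c_{\lambda,\beta}$ is attained by the non-zero solution $(u_0,v_0)$. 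The decisive obstacle is exactly the compactness step of the third paragraph: the entire scheme hinges on the strict two-sided energy gap $0<\alpha_0\leq c_{\lambda,\beta}<\tfrac14S^2$ provided by Lemmas~\ref{lem0007} and~\ref{lem0009}, which is what prevents the minimizing sequence from escaping to a Sobolev bubble and collapsing to the trivial solution.
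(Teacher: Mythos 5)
Your proposal follows essentially the same route as the paper's own proof: minimizing the reduced functional $\Phi_{\lambda,\beta}$ over $\mathbb{B}_1^+$ via Ekeland, transferring through \eqref{eq0060} to a bounded Palais--Smale sequence for $J_{\lambda,\beta}$ lying in $\mathcal{G}_{\lambda,\beta}$, and excluding the trivial weak limit by exactly the two-sided gap $0<\alpha_0\leq c_{\lambda,\beta}<\frac14 S^2$ supplied by Lemmas~\ref{lem0007} and~\ref{lem0009}, with attainment at the end. The only cosmetic deviations are that the paper also uses Lemma~\ref{lem0007} to rule out $t_{\lambda,n}^0\to0$ when upgrading $D[\Phi_{\lambda,\beta}(u_n,v_n)]\to0$ to $D[J_{\lambda,\beta}(\phi_n,\psi_n)]\to0$ (a detail you should make explicit in your ``verification'' step), and that it concludes attainment by Fatou's lemma applied to $\mathcal{L}_\beta$ rather than your weak lower semicontinuity of $\mathcal{D}_\lambda$ --- the two being interchangeable on $\mathcal{G}_{\lambda,\beta}$ since $\mathcal{D}_\lambda=\mathcal{L}_\beta$ there.
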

\begin{proof}
We only consider  the case $a_0\leq-\mu_{a,1}$ with $-a_0\not\in\sigma(-\Delta, H_0^1(\Omega_a))$
and $b_0\leq-\mu_{b,1}$ with $-b_0\not\in\sigma(-\Delta, H_0^1(\Omega_b))$,  the  other cases are more simple  in view of  Lemma~\ref{lem0110}.  Let
\begin{equation*}
\widetilde{c}_{\lambda,\beta}=\inf_{\mathbb{B}_1^+}\Phi_{\lambda,\beta}(u,v).%
\end{equation*}
Then by Lemmas~\ref{lem0004} and \ref{lem0011}, it is easy to see that $\widetilde{c}_{\lambda,\beta}=c_{\lambda,\beta}$.  Since $\Phi_{\lambda,\beta}$ is $C^1$ due to Lemma~\ref{lem0012}, we can apply the implicit function theorem and the Ekeland principle in a standard way (cf. \cite{CZ121}) to show that there exists
$\{(u_n,v_n)\}\subset\mathbb{B}_{1}^+$ such that $\Phi_{\lambda,\beta}(u_n,v_n)=\widetilde{c}_{\lambda,\beta}+o_n(1)$
 and $\langle D[\Phi_{\lambda,\beta}(u_n,v_n)], (w,\sigma)\rangle_{E^*_\lambda, E_\lambda}=o_n(1)$
  for all $(w,\sigma)\in\mathbb{B}_1^+$.  For the sake of clarity, the remaining  proof will be   performed by several steps.%

\vskip0.12in

\noindent{\bf Step 1. }We prove that $\{\check{m}_{\lambda,\beta}^0(u_n,v_n), \hat{m}_{\lambda,\beta}^0(u_n,v_n)\}$ is bounded in $E_\lambda$.

For the sake of convenience, we denote $(\check{m}_{\lambda,\beta}^0(u_n,v_n), \hat{m}_{\lambda,\beta}^0(u_n,v_n))$ by $(\phi_n,\psi_n)$.  By the definition of $\Phi_{\lambda,\beta}(u,v)$, it is easy to see that $(\phi_n,\psi_n)\in\mathcal{G}_{\lambda,\beta}$ and $J_{\lambda,\beta}(\phi_n,\psi_n)=c_{\lambda,\beta}+o_n(1)$.  It follows that%
\begin{equation}   \label{eq0054}
c_{\lambda,\beta}+o_n(1)=J_{\lambda,\beta}(\phi_n,\psi_n)=\frac14\mathcal{L}_{\beta}(\phi_n,\psi_n).%
\end{equation}
If $\|\phi_n\|_{L^4(\bbr^4)}+\|\psi_n\|_{L^4(\bbr^4)}\to+\infty$ as $n\to\infty$ up to a subsequence, then by \eqref{eq0054}, $0\leq\beta<1$ and the H\"older inequality, we can see that%
\begin{eqnarray*}
o_n(1)=\frac{c_{\lambda,\beta}+o_n(1)}{\|\phi_n\|_{L^4(\bbr^4)}^4+\|\psi_n\|_{L^4(\bbr^4)}^4}\geq\frac14(1+\beta)>0,%
\end{eqnarray*}
which is a contradiction.  Thus, $\{(\phi_n,\psi_n)\}$ is bounded in $L^4(\bbr^4)\times L^4(\bbr^4)$.  Since $\lambda\geq\Lambda_0^*>\max\{\Lambda_{a,0}, \Lambda_{b,0}\}$, by the definitions of $\Lambda_{a,0}$ and $\Lambda_{b,0}$ given by \eqref{eq0056} and Remark~\ref{rmk0001}, we can obtain that%
\begin{equation}  \label{eq0062}
\int_{\bbr^4}(\lambda a(x)+a_0)^-\phi_n^2dx\leq|a_0||\mathcal{A}_\lambda|^{\frac12}\|\phi_n\|_{L^4(\bbr^4)}^2,\\
\end{equation}
\begin{equation}  \label{eq0062+0}
\int_{\bbr^4}(\lambda b(x)+b_0)^-v\psi_n^2dx\leq|b_0||\mathcal{B}_\lambda|^{\frac12}\|\psi_n\|_{L^4(\bbr^4)}^2,%
\end{equation}
where $\mathcal{A}_{\lambda}$ and $\mathcal{B}_{\lambda}$ are   given by \eqref{eq0055} and Remark~\ref{rmk0001}.  This implies%
\begin{eqnarray*}
& &c_{\lambda,\beta}+o_n(1)\\
&=&J_{\lambda,\beta}(\phi_n,\psi_n)\\%
&=&\frac14(\mathcal{D}_{a,\lambda}(\phi_n,\phi_n)+\mathcal{D}_{b,\lambda}(\psi_n,\psi_n))\\%
&\geq&\frac14(\|\phi_n\|^2_{a,\lambda}+\|\psi_n\|_{b,\lambda}^2)
-\frac14(|a_0||\mathcal{A}_\lambda|^{\frac12}\|\phi_n\|_{L^4(\bbr^4)}^2+|b_0||\mathcal{B}_\lambda|^{\frac12}\|\psi_n\|_{L^4(\bbr^4)}^2).%
\end{eqnarray*}
Hence $\{(\phi_n,\psi_n)\}$ is bounded in $E_\lambda$.%

\vskip0.11in
\noindent{\bf Step 2.}We prove that $\{\phi_n,\psi_n\}$ is a $(PS)_{c_{\lambda,\beta}}$ sequence of $J_{\lambda,\beta}(u,v)$.
Indeed, by the definition of $\Psi_{\lambda,\beta}(u,v)$, it is easy to see that%
\begin{equation*}
J_{\lambda,\beta}(\phi_n,\psi_n)=c_{\lambda,\beta}+o_n(1).%
\end{equation*}
It remains to show that $D[J_{\lambda,\beta}(\phi_n,\psi_n)]=o_n(1)$ strongly in $E_\lambda^*$.  Let $(\varphi, \eta)\in E_\lambda$.  Without loss of generality, we may assume $(\varphi, \eta)\not=(0, 0)$ and $\|\varphi\|_{a,\lambda}^2+\|\eta\|_{b,\lambda}^2=1$.  If $(\varphi, \eta)\in\widehat{\mathcal{F}}_{a,\lambda}^{\perp}\times\widehat{\mathcal{F}}_{b,\lambda}^{\perp}$, then due to $(\phi_n,\psi_n)\in\mathcal{G}_{\lambda,\beta}$, we have%
\begin{equation*}
\langle D[J_{\lambda,\beta}(\phi_n,\psi_n)], (\varphi, \eta)\rangle_{E^*_\lambda, E_\lambda}=0\quad\text{for all }n\in\bbn.%
\end{equation*}
Otherwise, by Lemma~\ref{lem0012}, we can see that%
\begin{equation*}
\langle D[J_{\lambda,\beta}(\phi_n,\psi_n)], (t_{\lambda,n}^0\widetilde{\varphi}, t_{\lambda,n}^0\widetilde{\eta})\rangle_{E^*_\lambda, E_\lambda}=o_n(1),%
\end{equation*}
where   $\widetilde{\varphi}$  and $\widetilde{\eta}$ are the projections of
$\varphi$ and $\eta$ on $\widetilde{\mathcal{F}}_{a,\lambda}^{\perp}\oplus\mathcal{F}_{a,\lambda}$
and $\widetilde{\mathcal{F}}_{b,\lambda}^{\perp}\oplus\mathcal{F}_{b,\lambda}$ and
 $t_{\lambda,n}^0\in\bbr^+$ is given by \eqref{eq0057} and only depends on
  $(\phi_n,\psi_n)$.  If $t_{\lambda,n}^0\to0$ as $n\to\infty$, then by
   Lemma~\ref{lem0110}, \eqref{eq0057} and Step 1, we must have
   $J_{\lambda,\beta}(\phi_n,\psi_n)\leq0$, which implies
   $c_{\lambda,\beta}\leq0$.  It is impossible since Lemma~\ref{lem0007} holds
    for $\lambda\geq\Lambda_0^*$.  Therefore, by $(\phi_n,\psi_n)\in\mathcal{G}_{\lambda,\beta}$, we have%
\begin{equation*}
\langle D[J_{\lambda,\beta}(\phi_n,\psi_n)], (\varphi, \eta)\rangle_{E^*_\lambda, E_\lambda}=o_n(1)\quad\text{for all }(\varphi, \eta)\in E_\lambda.%
\end{equation*}

\noindent{\bf Step 3.}We prove that there exists $(u_{\lambda,\beta}, v_{\lambda,\beta})\in\mathcal{G}_{\lambda,\beta}$ such that
$$D[J_{\lambda,\beta}(u_{\lambda,\beta}, v_{\lambda,\beta})]=0$$ in $E_\lambda^*$ and $J_{\lambda,\beta}(u_{\lambda,\beta}, v_{\lambda,\beta})=c_{\lambda,\beta}$.%
Without loss of generality, we may assume that $(\phi_n,\psi_n)\rightharpoonup(u_{\lambda,\beta},v_{\lambda,\beta})$ weakly in $E_\lambda$ and $(\phi_n,\psi_n)\to(u_{\lambda,\beta},v_{\lambda,\beta})$ a.e. in $\bbr^4\times\bbr^4$ as $n\to\infty$.
Clearly, $D[J_{\lambda,\beta}(u_{\lambda,\beta},v_{\lambda,\beta})]=0$ in $E_\lambda^*$.  In this situation, two cases may occur:%
\begin{enumerate}
\item[$(1)$] $(u_{\lambda,\beta},v_{\lambda,\beta})=(0, 0)$;%
\item[$(2)$] $(u_{\lambda,\beta},v_{\lambda,\beta})\not=(0, 0)$.%
\end{enumerate}
If case~$(1)$ happens, then $(\phi_n,\psi_n)\rightharpoonup(0,0)$ weakly in $E_\lambda$ as $n\to\infty$.  Since $|\mathcal{A}_\lambda|<+\infty$ and $|\mathcal{B}_\lambda|<+\infty$, by the condition $(D_1)$, we can see that%
\begin{eqnarray*}
\int_{\bbr^4}(\lambda a(x)+a_0)\phi_n^2dx\geq\int_{\bbr^4}(\lambda a(x)+a_0)^+\phi_n^2dx+o_n(1)\geq o_n(1)%
\end{eqnarray*}
and
\begin{eqnarray*}
\int_{\bbr^4}(\lambda b(x)+b_0)\psi_n^2dx\geq\int_{\bbr^4}(\lambda b(x)+b_0)^+\psi_n^2dx+o_n(1)\geq o_n(1),%
\end{eqnarray*}
where $\mathcal{A}_\lambda$ and $\mathcal{B}_\lambda$ are given by \eqref{eq0055} and Remark~\ref{rmk0001}.
By using  the Sobolev inequality and $(\phi_n,\psi_n)\in\mathcal{G}_{\lambda,\beta}$, we have  that%
\begin{eqnarray*}
S(\|\phi_n\|_{L^4(\bbr^4)}^2+\|\psi_n\|_{L^4(\bbr^4)}^2)\leq\|\phi_n\|_{L^4(\bbr^4)}^4+\|\psi_n\|_{L^4(\bbr^4)}^4+2\beta\|\phi_n^2\psi_n^2\|_{L^1(\bbr^4)}+o_n(1).%
\end{eqnarray*}
Note that $0\leq\beta<1$ and $J_{\lambda,\beta}(\phi_n,\psi_n)=c_{\lambda,\beta}+o_n(1)$, by Lemma~\ref{lem0007}, we have $S+o_n(1)\leq\|\phi_n\|_{L^4(\bbr^4)}^2+\|\psi_n\|_{L^4(\bbr^4)}^2$, which then implies%
\begin{equation*}
\|\phi_n\|_{L^4(\bbr^4)}^4+\|\psi_n\|_{L^4(\bbr^4)}^4+2\beta\|\phi_n^2\psi_n^2\|_{L^1(\bbr^4)}\geq S^2+o_n(1).%
\end{equation*}
Now, by Lemma~\ref{lem0009}, we can see that%
\begin{eqnarray*}
\frac14 S^2>c_{\lambda,\beta}+o_n(1)&=&J_{\lambda,\beta}(\phi_n,\psi_n)\\%
&=&\frac14(\|\phi_n\|_{L^4(\bbr^4)}^4+\|\psi_n\|_{L^4(\bbr^4)}^4+2\beta\|\phi_n^2\psi_n^2\|_{L^1(\bbr^4)})\\%
&\geq&\frac14S^2+o_n(1),%
\end{eqnarray*}
it is impossible.  Therefore, we must have the case~$(2)$.  In this case we can easily see that $J_{\lambda,\beta}(u_{\lambda,\beta},v_{\lambda,\beta})\geq c_{\lambda,\beta}$.  On the other hand, since $0\leq\beta<1$, we must have%
\begin{equation*}
|\phi_n|^4+|\psi_n|^4+2\beta|\phi_n|^2|\psi_n|^2\geq0\quad\text{all }n\in\bbn\text{ and }x\in\bbr^4.%
\end{equation*}
It follows from the Fatou's lemma that%
 $$\liminf_{n\to\infty}\int_{\bbr^4}(|\phi_n|^4+|\psi_n|^4+2\beta|\phi_n|^2|\psi_n|^2)dx$$
$$\geq\int_{\bbr^4}(|u_{\lambda,\beta}|^4+|v_{\lambda,\beta}|^4+2\beta|u_{\lambda,\beta}|^2|v_{\lambda,\beta}|^2)dx,$$
hence
\begin{eqnarray*}
c_{\lambda,\beta}&=&\lim_{n\to\infty}J_{\lambda,\beta}(\phi_n,\psi_n)-\frac12\langle D[J_{\lambda,\beta}(\phi_n,\psi_n)], (\phi_n,\psi_n)\rangle_{E^*_\lambda, E_\lambda}\\%
&=&\frac14\lim_{n\to\infty}\int_{\bbr^4}(|\phi_n|^4+|\psi_n|^4+2\beta|\phi_n|^2|\psi_n|^2)dx\\%
&\geq&\frac14\int_{\bbr^4}(|u_{\lambda,\beta}|^4+|v_{\lambda,\beta}|^4+2\beta|u_{\lambda,\beta}|^2|v_{\lambda,\beta}|^2)dx\\%
&=&J_{\lambda,\beta}(u_{\lambda,\beta},v_{\lambda,\beta})-\frac12\langle D[J_{\lambda,\beta}(u_{\lambda,\beta},v_{\lambda,\beta})], (u_{\lambda,\beta},v_{\lambda,\beta})\rangle_{E^*_\lambda, E_\lambda}\\%
&=&J_{\lambda,\beta}(u_{\lambda,\beta},v_{\lambda,\beta}).%
\end{eqnarray*}
Therefore, $(u_{\lambda,\beta},v_{\lambda,\beta})$ is a general ground state solution of $(\mathcal{P}_{\lambda,\beta})$ for $\lambda\geq\Lambda_0^*$ and $0\leq\beta<1$.%
\end{proof}

For the general ground state solution of $(\mathcal{P}_{\lambda,\beta})$ obtained by Proposition~\ref{prop0001},
 we also have the following properties.%
\begin{proposition}\label{prop0002}
Let $(u_{\lambda,\beta}, v_{\lambda,\beta})$ be the general ground state solution of $(\mathcal{P}_{\lambda,\beta})$
obtained by Proposition~\ref{prop0001}.  Then
\begin{enumerate}
\item[$(1)$] $(u_{\lambda,0}, v_{\lambda,0})$ is a semi-trivial solution of $(\mathcal{P}_{\lambda,0})$ and must be  of the type $(u_{\lambda,0}, 0)$ in the case of $a_0\leq-\mu_{a,1}$ with $-a_0\not\in\sigma(-\Delta, H_0^1(\Omega_a))$ and $b_0\geq0$.  Furthermore, $(u_{\lambda,0}, 0)$ is a least energy critical point of $I_{a,\lambda}(u)$.%
\item[$(2)$] For every $0\leq\beta<1$, there exists $\Lambda_\beta^*\geq\Lambda_0^*$
such that $(u_{\lambda,\beta}, v_{\lambda,\beta})$ is a
 semi-trivial solution of $(\mathcal{P}_{\lambda,\beta})$ and must be
 of the type $(u_{\lambda,\beta}, 0)$ for $\lambda>\Lambda_\beta^*$ in the case
 of $a_0\leq-\mu_{a,1}$ with $-a_0\not\in\sigma(-\Delta, H_0^1(\Omega_a))$ and $b_0\geq0$.%
\end{enumerate}
\end{proposition}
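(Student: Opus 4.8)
The plan is to leverage three facts already in place: the energy stays positive, $c_{\lambda,\beta}\geq\alpha_0>0$ (Lemma~\ref{lem0007}); it sits strictly below the first critical level, $\limsup_{\lambda\to+\infty}c_{\lambda,\beta}<\frac14S^2$ (Lemma~\ref{lem0009}); and on the $v$-side the hypothesis $b_0\geq0$ makes $\mathcal{D}_{b,\lambda}$ positively definite with $\mathcal{D}_{b,\lambda}(v,v)\geq\|\nabla v\|_{L^2(\bbr^4)}^2\geq S\|v\|_{L^4(\bbr^4)}^2$, and forces $\widehat{\mathcal{F}}_{b,\lambda}^{\perp}=\emptyset$. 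Since $(u_{\lambda,\beta},v_{\lambda,\beta})$ solves $(\mathcal{P}_{\lambda,\beta})$, one always has $c_{\lambda,\beta}=J_{\lambda,\beta}(u_{\lambda,\beta},v_{\lambda,\beta})=\frac14\mathcal{L}_\beta(u_{\lambda,\beta},v_{\lambda,\beta})$.

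For part $(1)$ the system decouples, $J_{\lambda,0}=I_{a,\lambda}+I_{b,\lambda}$ and $c_{\lambda,0}=\frac14(\|u_{\lambda,0}\|_{L^4(\bbr^4)}^4+\|v_{\lambda,0}\|_{L^4(\bbr^4)}^4)$. First I would take $\lambda$ large enough that $c_{\lambda,0}<\frac14S^2$ (Lemma~\ref{lem0009}), so that $\|u_{\lambda,0}\|_{L^4(\bbr^4)}^4+\|v_{\lambda,0}\|_{L^4(\bbr^4)}^4<S^2$. If $v_{\lambda,0}\neq0$, then $v_{\lambda,0}$ is a nonzero critical point of $I_{b,\lambda}$, and testing its equation against itself together with $b_0\geq0$ gives $\|v_{\lambda,0}\|_{L^4(\bbr^4)}^2\geq S$, hence $\|v_{\lambda,0}\|_{L^4(\bbr^4)}^4\geq S^2$, contradicting the previous bound. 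Thus $v_{\lambda,0}=0$, and since a ground state is nonzero, $(u_{\lambda,0},0)$ is semi-trivial with $u_{\lambda,0}\neq0$ solving \eqref{eq5001}. For the least-energy claim I would note that any nonzero critical point $u$ of $I_{a,\lambda}$ satisfies $I_{a,\lambda}(u)=\frac14\|u\|_{L^4(\bbr^4)}^4>0$, so $u\notin\widehat{\mathcal{F}}_{a,\lambda}^{\perp}$ (where $\mathcal{D}_{a,\lambda}\leq0$ by Lemma~\ref{lem0110}); hence $(u,0)\in\mathcal{G}_{\lambda,0}$, and minimality of $c_{\lambda,0}$ yields $I_{a,\lambda}(u_{\lambda,0})=c_{\lambda,0}\leq I_{a,\lambda}(u)$, which is precisely the least-energy property, as in Lemma~\ref{lem1002}.

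For part $(2)$ I would argue by contradiction, mirroring Lemma~\ref{lem1003}$(2)$ but on $\mathcal{G}_{\lambda,\beta}$. Suppose there are $\lambda_n\to+\infty$ for which the ground state $(u_n,v_n):=(u_{\lambda_n,\beta},v_{\lambda_n,\beta})$ is not of type $(u,0)$; then necessarily $v_n\neq0$, since a solution of type $(0,v)$ would force $\|v_n\|_{L^4(\bbr^4)}^2\geq S$ and hence $c_{\lambda_n,\beta}\geq\frac14S^2$. From $c_{\lambda_n,\beta}<\frac14S^2$ and $\beta\geq0$ one gets $\|u_n\|_{L^4(\bbr^4)}^4+\|v_n\|_{L^4(\bbr^4)}^4<S^2$, in particular $\|u_n\|_{L^4(\bbr^4)}^2<S$. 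Testing the $v$-equation against $v_n$, using $b_0\geq0$ and the H\"older inequality, gives $S\leq\|v_n\|_{L^4(\bbr^4)}^2+\beta\|u_n\|_{L^4(\bbr^4)}^2$, whence the uniform lower bound $\|v_n\|_{L^4(\bbr^4)}^2\geq(1-\beta)S>0$. The sequence is bounded in $\h\times\h$ (the estimate of Step~1 of Proposition~\ref{prop0001} controls the finite-dimensional indefinite part, and $\mathcal{D}_{b,\lambda}$ is definite), so up to a subsequence $(u_n,v_n)\rightharpoonup(u_0,v_0)$; as in Lemmas~\ref{lem1003} and~\ref{lem7001}, the deepening wells force $\int_{\bbr^4}a(x)u_n^2\,dx\to0$ and $\int_{\bbr^4}b(x)v_n^2\,dx\to0$, so that $(u_0,v_0)\in H_0^1(\Omega_a)\times H_0^1(\Omega_b)$ with $u_0,v_0$ critical points of $I_a,I_b$.

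The decisive step, and the point I expect to be the main obstacle, is to show that the interaction decouples in the limit, i.e. $\int_{\bbr^4}u_n^2v_n^2\,dx\to0$; purely algebraic manipulation of the bounds above is not enough. I would obtain it from strong convergence $(u_n,v_n)\to(u_0,v_0)$ in $L^4(\bbr^4)\times L^4(\bbr^4)$: because $c_{\lambda_n,\beta}<\frac14S^2$ lies below the energy $\frac14S^2$ of a single Aubin--Talenti bubble in $\bbr^4$, a Brez\'is--Lieb splitting (cf. \cite[Lemma~2.3]{CZ14}) applied to $w_n=v_n-v_0$ shows that any nonvanishing piece would carry at least $\frac14S^2$ of energy and violate the threshold, so no concentration occurs and the convergence is strong. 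Since $\overline{\Omega}_a\cap\overline{\Omega}_b=\emptyset$ this yields $\int_{\bbr^4}u_n^2v_n^2\,dx\to\int_{\bbr^4}u_0^2v_0^2\,dx=0$. Then $\|v_0\|_{L^4(\bbr^4)}^2=\lim_n\|v_n\|_{L^4(\bbr^4)}^2\geq(1-\beta)S>0$, so $v_0\neq0$, and, $\Omega_b$ being a bounded domain with $b_0\geq0$, $I_b(v_0)\geq m_b=\frac14S^2$; together with $I_a(u_0)\geq0$ this gives $\lim_n c_{\lambda_n,\beta}=I_a(u_0)+I_b(v_0)\geq\frac14S^2$, contradicting Lemma~\ref{lem0009}. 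Hence $v_{\lambda,\beta}=0$ for every $\lambda$ beyond some $\Lambda_\beta^*\geq\Lambda_0^*$, the ground state is semi-trivial of type $(u_{\lambda,\beta},0)$, and $u_{\lambda,\beta}$ solves \eqref{eq5001}, as asserted. All the remaining work is the energy bookkeeping recorded above; the technical heart is the no-bubbling argument in the coupled, indefinite setting.
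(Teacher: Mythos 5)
Your proposal is correct, and its compactness core is the same as the paper's: argue by contradiction along $\lambda_n\to+\infty$, get uniform bounds as in Step~1 of the proof of Proposition~\ref{prop0001} (uniform in $\lambda$ because $|\mathcal{A}_{\lambda_n}|$ is nonincreasing), let the deepening wells force $(u_n,v_n)\to(u_0,v_0)\in H_0^1(\Omega_a)\times H_0^1(\Omega_b)$ strongly in $L^2$ with $u_0,v_0$ critical points of $I_a,I_b$, split the energy via Brez\'is--Lieb and \cite[Lemma~2.3]{CZ14}, and play everything against the threshold $\limsup_{\lambda\to+\infty}c_{\lambda,\beta}<\frac14S^2$ of Lemma~\ref{lem0009} (your part $(1)$ is essentially the paper's reduction to the argument of Lemma~\ref{lem1002}, and your observation that any nonzero critical point $u$ of $I_{a,\lambda}$ gives $(u,0)\in\mathcal{G}_{\lambda,0}$ is exactly what the least-energy claim needs). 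Where you genuinely diverge is the endgame of part $(2)$: the paper first uses the threshold to conclude $v_{0,\beta_0}=0$, then invokes \cite{CSZ12} — this is where $-a_0\notin\sigma(-\Delta,H_0^1(\Omega_a))$ enters directly — to identify $u_{0,\beta_0}$ as a least energy critical point of $I_a$, deduces strong $L^4$ convergence from \eqref{eq1201}, and reaches the contradiction $S\leq\beta_0\|u_{0,\beta_0}\|_{L^4(\bbr^4)}^2$, i.e.\ $I_a(u_{0,\beta_0})\geq S^2/(4\beta_0^2)>\frac14S^2$; you instead keep $v_0\not=0$, passing the uniform bound $\|v_n\|_{L^4(\bbr^4)}^2\geq(1-\beta)S$ (the indefinite-case analogue of Lemma~\ref{lem5010}) through the strong $L^4$ limit obtained by your no-bubbling step, and contradict the threshold via $I_b(v_0)\geq m_b=\frac14S^2$. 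This is slightly leaner, since it avoids the least-energy identification of $u_0$ inside the proof of $(2)$ (the spectral hypothesis is then consumed only through Lemma~\ref{lem0009}). One step you should write out, as it is the only place where your sketch is thinner than it looks: the claim that a nonvanishing remainder carries at least $\frac14S^2$. You must apply the splitting to \emph{both} components $(w_n,\sigma_n)=(u_n-u_0,v_n-v_0)$, not only to $v_n-v_0$, and estimate the remainder energy \emph{linearly}, namely
\begin{equation*}
J_{\lambda_n,\beta}(w_n,\sigma_n)=\tfrac14\big(\mathcal{D}_{a,\lambda_n}(w_n,w_n)+\mathcal{D}_{b,\lambda_n}(\sigma_n,\sigma_n)\big)+o_n(1)
\geq\tfrac{S}{4}\big(\|w_n\|_{L^4(\bbr^4)}^2+\|\sigma_n\|_{L^4(\bbr^4)}^2\big)+o_n(1);
\end{equation*}
combined with $S\leq\|w_n\|_{L^4(\bbr^4)}^2+\beta\|\sigma_n\|_{L^4(\bbr^4)}^2+o_n(1)$ (and symmetrically) when the respective component does not vanish, this yields at least $\frac14S^2+o_n(1)$ if one component vanishes and at least $\frac{S^2}{2(1+\beta)}>\frac14S^2$ if both survive, where $\beta<1$ is essential. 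Estimating quadratically via $\mathcal{L}_\beta(w_n,\sigma_n)\geq\|w_n\|_{L^4(\bbr^4)}^4+\|\sigma_n\|_{L^4(\bbr^4)}^4$ would only give $\frac{S^2}{2(1+\beta)^2}$, which drops below $\frac14S^2$ as $\beta\to1^-$, so the linear bookkeeping is not optional. With that made explicit, your argument is complete and valid.
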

\begin{proof}
$(1)$\quad Since Lemma~\ref{lem0009} holds, by a similar argument as used in the proof of Lemma~\ref{lem1002}, we can get the conclusion.%

$(2)$\quad Suppose the contrary, $(u_{\lambda_n,\beta_0}, v_{\lambda_n,\beta_0})$ is non-trivial for
 some $\beta_0\in[0, 1)$ and $\{\lambda_n\}$ with $\lambda_n\to+\infty$ as $n\to\infty$.
 Since Lemma~\ref{lem0009} holds and $\beta_0\in[0, 1)$, by a similar argument as used in Step 1 of the
 proof to Proposition~\ref{prop0001}, we can see that $\{(u_{\lambda_n,\beta_0}, v_{\lambda_n,\beta_0})\}$ is
 bounded in $L^4(\bbr^4)\times L^4(\bbr^4)$.  Without loss of generality,
  we assume $(u_{\lambda_n,\beta_0}, v_{\lambda_n,\beta_0})\rightharpoonup(u_{0,\beta_0}, v_{0,\beta_0})$
  weakly in $L^4(\bbr^4)\times L^4(\bbr^4)$ and $(u_{\lambda_n,\beta_0}, v_{\lambda_n,\beta_0})\to(u_{0,\beta_0}, v_{0,\beta_0})$
   a.e. in $\bbr^4\times\bbr^4$ as $n\to\infty$.  Since $|\mathcal{A}_{\lambda_n}|<+\infty$ and is nonincreasing
   for $n\in\bbn$ due to $\lambda_n\to+\infty$, by similar arguments as used in \eqref{eq1002} and \eqref{eq0062}, we can obtain that%
\begin{equation}\label{eq1102}
\int_{\bbr^4}(a(x)+\frac{a_0}{\lambda_n})^+u_{\lambda_n,\beta_0}^2+b(x)v_{\lambda_n,\beta_0}^2dx\to0\quad\text{as }n\to\infty,%
\end{equation}
where $\mathcal{A}_{\lambda_n}$ is given by \eqref{eq0055}.  By the Fatou lemma, we have from \eqref{eq1102} that $(u_{0,\beta_0}, v_{0,\beta_0})\in H_0^1(\Omega_a)\times H_0^1(\Omega_b)$ with $u_{0,\beta_0}=0$ outside $\Omega_a$ and $v_{0,\beta_0}=0$ outside $\Omega_b$.  It follows from the Sobolev embedding theorem, the condition $(D_2)$ and \eqref{eq1102} once more that $(u_{\lambda_n,\beta_0}, v_{\lambda_n,\beta_0})\to(u_{0,\beta_0}, v_{0,\beta_0})$ strongly in $L^2(\bbr^4)\times L^2(\bbr^4)$ as $n\to\infty$.  Since $H_0^1(\Omega_a)\times H_0^1(\Omega_b)\subset E_\lambda$ and $(u_{\lambda_n,\beta_0}, v_{\lambda_n,\beta_0})$ is the general ground state solution of $(\mathcal{P}_{\lambda_n,\beta_0})$ for all $n\in\bbn$, by the condition $(D_3)$, it is easy to see that $I_a'(u_{0,\beta_0})=0$ and $I_b'(v_{0,\beta_0})=0$ in $H^{-1}(\Omega_a)$ and $H^{-1}(\Omega_b)$, respectively.  Thanks to Lemma~\ref{lem0009} and a similar argument as used in \eqref{eq1000}, we must have $v_{0,\beta_0}=0$.  Let $w_{\lambda_n,\beta_0}=u_{\lambda_n,\beta_0}-u_{0,\beta_0}$.  Then $w_{\lambda_n,\beta_0}\to0$ strongly in $L^2(\bbr^4)$ and $w_{\lambda_n,\beta_0}\rightharpoonup0$ weakly in $L^4(\bbr^4)$ as $n\to\infty$.  Furthermore, by the condition $(D_3)$, the Brez\'is-Lieb lemma and \cite[Lemma~2.3]{CZ14}, we also have%
\begin{equation}  \label{eq1201}
J_{\lambda_n,\beta_0}(u_{\lambda_n,\beta_0}, v_{\lambda_n,\beta_0})=J_{\lambda_n,\beta_0}(w_{\lambda_n,\beta_0},v_{\lambda_n,\beta_0})+I_a(u_{0,\beta_0})+o_n(1).%
\end{equation}
Now,  similar to  \eqref{eq1000}, we can get that $u_{0,\beta_0}\not=0$.
Due to $-a_0\not\in\sigma(-\Delta, H_0^1(\Omega_a))$ and the result of \cite{CSZ12},
$u_{0,\beta_0}$ is a least energy critical point of $I_a(u)$.  This together
with \eqref{eq1201}, implies $(w_{\lambda_n,\beta_0},v_{\lambda_n,\beta_0})\to(0, 0)$ strongly
in $L^4(\bbr^4)\times L^4(\bbr^4)$ as $n\to\infty$.  It follows from the condition $(D_1)$ and
the fact that $(u_{\lambda_n,\beta_0}, v_{\lambda_n,\beta_0})$ is a non-trivial solution
of $(\mathcal{P}_{\lambda_n,\beta_0})$ that $\|u_{0,\beta_0}\|_{L^4(\bbr^4)}^2\geq \frac{S}{\beta_0}$,
 which is impossible.  It remains to show that $v_{\lambda,\beta}\not=0$ for $|\beta|<1$ and $\lambda>\Lambda_\beta^*$.
 Indeed, if $u_{\lambda,\beta}=0$, then by $b_0\geq0$ and the condition $(D_1)$,
 we can see that $c_{\lambda,\beta}\geq\frac14 S^2$, which contradicts to Lemma~\ref{lem0004} once more.%
\end{proof}

We close this section by%

\noindent\textbf{Proof of Theorem~\ref{thm0004}:}\quad It follows immediately from Propositions~\ref{prop0001}--\ref{prop0002}.%
\qquad\raisebox{-0.5mm}{%
\rule{1.5mm}{4mm}}\vspace{6pt}

\section{Concentration behaviors}
This section is devoted to the concentration behaviors of the ground state solutions and the general
ground state solutions to $(\mathcal{P}_{\lambda,\beta})$ obtained by Theorems~\ref{thm0001} and \ref{thm0004}.
 We first study such a property as  $\lambda\to+\infty$.%

\vskip0.11in

\noindent\textbf{Proof of Theorem~\ref{thm0002}:}\quad Let $(u_{\lambda_n,\beta}, v_{\lambda_n,\beta})$ be the solution obtained by Theorems~\ref{thm0001} and \ref{thm0004} with $\lambda_n\to+\infty$ as $n\to\infty$.  Then by a similar argument as used in the proof of $(3)$ to Proposition~\ref{prop0002}, we can see that $(u_{\lambda_n,\beta}, v_{\lambda_n,\beta})\to(u_{0,\beta}, v_{0,\beta})\in H_0^1(\Omega_a)\times H_0^1(\Omega_b)$ strongly in $(L^4(\bbr^4)\times L^4(\bbr^4))\cap (L^2(\bbr^4)\times L^2(\bbr^4))$ as $n\to\infty$ up to a subsequence if $(u_{\lambda_n,\beta}, v_{\lambda_n,\beta})$ is the ground state solution with $\beta<\beta_0$ in the case of $a_0\leq b_0<0$ or $(u_{\lambda_n,\beta}, v_{\lambda_n,\beta})$ is the general ground state solution in the case of $a_0<0$.  Furthermore, $(u_{0,\beta}, v_{0,\beta})$ is a solution of the equations \eqref{eq9001}.  It follows from $D[J_{\lambda,\beta}(u_{\lambda,\beta}, v_{\lambda,\beta})]=0$ in $E_\lambda^*$ that $(u_{\lambda_n,\beta}, v_{\lambda_n,\beta})\to(u_{0,\beta}, v_{0,\beta})$ in $D^{1,2}(\bbr^4)\times D^{1,2}(\bbr^4)$ as $n\to\infty$ up to a subsequence.  Thus, $(u_{\lambda_n,\beta}, v_{\lambda_n,\beta})\to(u_{0,\beta}, v_{0,\beta})$ in $\h\times\h$ as $n\to\infty$ up to a subsequence.  If $(u_{\lambda_n,\beta}, v_{\lambda_n,\beta})$ is the ground state solution with $\beta<\beta_0$ in the case of $a_0\leq b_0<0$, then by Lemmas~\ref{lem0020} and \ref{lem5010}, $(u_{0,\beta}, v_{0,\beta})$ is non-trivial.  Thanks to lemma~\ref{lem5003}, $(u_{0,\beta}, v_{0,\beta})$ must be the ground state solution of \eqref{eq9001}.  If $(u_{\lambda_n,\beta}, v_{\lambda_n,\beta})$ is the general ground state solution in the case of $a_0<0$, then by Lemma~\ref{lem5003} once more, we have that either $(u_{0,\beta}, v_{0,\beta})$ is semi-trivial or $(u_{0,\beta}, v_{0,\beta})=(0, 0)$.  Note that Lemma~\ref{lem0008} holds, thus, it is easy to show that $m_{\lambda_n,\beta}$ is nondecreasing for $n$ and $m_{\lambda_n,\beta}>0$ for all $n\in\bbn$.  Therefore, we must have $(u_{0,\beta}, v_{0,\beta})$ is semi-trivial.  Due to lemma~\ref{lem5003}, we can also see that $(u_{0,\beta}, v_{0,\beta})$ is a general ground state solution to \eqref{eq9001}, which completes the proof.%
\qquad\raisebox{-0.5mm}{%
\rule{1.5mm}{4mm}}\vspace{6pt}

\vskip0.12in

We next study the concentration behaviors when  $\beta\to-\infty$.%

\noindent\textbf{Proof of Theorem~\ref{thm0003}:}\quad Let $(u_{\lambda,\beta_n}, v_{\lambda,\beta_n})$ be the ground state solution of $(\mathcal{P}_{\lambda,\beta_n})$ obtained by Theorem~\ref{thm0001} with $\beta_n\to-\infty$ as $n\to\infty$.  By Lemma~\ref{lem5003}, we have $\|(u_{\lambda,\beta_n}, v_{\lambda,\beta_n})\|_\lambda^2\leq4(m_a+m_b)<S^2$.  It follows from \eqref{eq0001} that $\{(u_{\lambda,\beta_n}, v_{\lambda,\beta_n})\}$ is bounded in $E_\lambda$ and $\mathcal{H}$.  Without loss of generality, we assume $(u_{\lambda,\beta_n}, v_{\lambda,\beta_n})\rightharpoonup(u_{\lambda,\infty}, v_{\lambda,\infty})$ weakly in $E_\lambda$ and $\mathcal{H}$ and $(u_{\lambda,\beta_n}, v_{\lambda,\beta_n})\to(u_{\lambda,\infty}, v_{\lambda,\infty})$ a.e. in $\bbr^4\times\bbr^4$ as $n\to\infty$.  For the sake of clarity, the following proof will be further performed by several steps.%

\vskip0.13in

\noindent {\it  Step 1. }We prove that $(u_{\lambda,\beta_n}, v_{\lambda,\beta_n})\to(u_{\lambda,\infty}, v_{\lambda,\infty})$
 strongly in $\mathcal{H}$ as $n\to\infty$ with $u_{\lambda,\infty}\not=0$ and $v_{\lambda,\infty}\not=0$. Indeed, one of the following two cases must happen:%
\begin{enumerate}
\item[$(1)$] $u_{\lambda,\infty}=0$ or $v_{\lambda,\infty}=0$;%
\item[$(2)$] $u_{\lambda,\infty}\not=0$ and $v_{\lambda,\infty}\not=0$.%
\end{enumerate}
If the  Case~$(1)$ happens, then without loss of generality, we may assume $u_{\lambda,\infty}=0$.  By a similar argument as used in Proposition~\ref{prop0003}, we can see that $\|u_{\lambda,\beta_n}\|_{L^4(\bbr^4)}^4\\
\geq S^2+o_n(1)$.  On the other hand, since $\beta_n<0$, there exists $0<s_n\leq1+o_n(1)$ such that $s_nv_{\lambda,\beta_n}\in\mathcal{N}_{b,\lambda}$.  Now, by Lemma~\ref{lem0002}, we have
\begin{equation}\label{eq5020}
J_{\lambda,\beta_n}(u_{\lambda,\beta_n}, v_{\lambda,\beta_n})\geq J_{\lambda,\beta_n}(u_{\lambda,\beta_n}, s_nv_{\lambda,\beta_n})\geq\frac14 S^2+m_{b,\lambda}+o_n(1).%
\end{equation}
Thanks to Lemma~\ref{lem7001}, $m_{b,\lambda}\to m_b$ as $\lambda\to\infty$.  Thus,
there exists $\Lambda_3>0$ such that \eqref{eq5020} is impossible for $\lambda>\Lambda_3$ due
 to Lemma~\ref{lem5003}.  Therefore, we must have the Case~$(2)$.  In this case, it is easily
 see from the Fatou's lemma that $\int_{\bbr^4}u_{\lambda,\infty}^2v_{\lambda,\infty}^2=0$.
 On the other hand, since $\beta_n<0$, multiplying $(\mathcal{P}_{\lambda,\beta_n})$ with $(u_{\lambda,\infty}, v_{\lambda,\infty})$,
 we obtain that%
\begin{equation*}
\mathcal{D}_{a,\lambda}(u_{\lambda,\infty},u_{\lambda,\infty})\leq\|u_{\lambda,\infty}\|_{L^4(\bbr^4)}^4\text{ and }\mathcal{D}_{b,\lambda}(v_{\lambda,\infty},v_{\lambda,\infty})\leq\|v_{\lambda,\infty}\|_{L^4(\bbr^4)}^4.%
\end{equation*}
Thanks to Lemma~\ref{lem0002}, there exists $0<t_n^*,s_n^*\leq1$ such that
 $(t_n^*u_{\lambda,\infty}, s_n^*v_{\lambda,\infty})\in\mathcal{N}_{\lambda,\beta_n}$ for all $n$,
 which, together with the Sobolev embedding theorem, implies
\begin{equation*}
\mathcal{D}_\lambda(t_n^*u_{\lambda,\infty}, s_n^*v_{\lambda,\infty})\geq\mathcal{D}_\lambda(u_{\lambda,\beta_n}, v_{\lambda,\beta_n})\geq\mathcal{D}_\lambda(u_{\lambda,\infty}, v_{\lambda,\infty})+o_n(1).%
\end{equation*}
It follows from Lemma~\ref{lem0110} and \eqref{eq0001} that $(u_{\lambda,\beta_n}, v_{\lambda,\beta_n})\to(u_{\lambda,\infty}, v_{\lambda,\infty})$ strongly in $\mathcal{H}$ as $n\to\infty$.%

\vskip0.11in
\noindent{\it Step 2.} We prove that $u_{\lambda,\infty}\in H^1_0(\{u_{\lambda,\infty}>0\})$ and $v_{\lambda,\infty}\in H^1_0(\{v_{\lambda,\infty}>0\})$.%

Indeed, since the conditions $(D_1)$--$(D_3)$ hold, $u_{\lambda,\beta_n}$ and
 $v_{\lambda,\beta_n}$ are both positive in $\bbr^4$ by the maximum principle and
 Lemma~\ref{lem5005}.  It follows that $u_{\lambda,\infty}$ and $v_{\lambda,\infty}$
 are both nonnegative in $\bbr^4$.  Thanks to Step 1 and \cite[Propositions~3.8 and 3.9]{AFS09},
  $\{(u_{\lambda,\beta_n}, v_{\lambda,\beta_n})\}$ is also bounded in $L^\infty(\bbr^4)\times L^\infty(\bbr^4)$.
   By \cite[Theorem~1.7]{SZ14}, $\{(\nabla u_{\lambda,\beta_n}, \nabla v_{\lambda,\beta_n})\}$
   is bounded in $L^\infty(\bbr^4)\times L^\infty(\bbr^4)$.
    It follows from $\beta_n<0$ and the classical elliptic
    regularity theory (cf. \cite[Corollary~3.5]{SZ02})
    that $u_{\lambda,\beta_n}$ and $v_{\lambda,\beta_n}$ are both local Lipschitz.
     Thus, $\partial\{u_{\lambda,\infty}>0\}$ and $\partial\{v_{\lambda,\infty}>0\}$ are
      both local Lipschitz and $H^1_0(\{u_{\lambda,\infty}>0\})$
      and $H^1_0(\{v_{\lambda,\infty}>0\})$ are both well defined.
       Furthermore, due to the extension theorem, we also
        have $u_{\lambda,\infty}\in H^1_0(\{u_{\lambda,\infty}>0\})$ and
         $v_{\lambda,\infty}\in H^1_0(\{v_{\lambda,\infty}>0\})$.
Now, we can use similar arguments as used in the proof of \cite[Theorem~1.4]{CZ121} (see also \cite[Theorem~1.3]{WWZ15}) to show the conclusions.%
\qquad\raisebox{-0.5mm}{%
\rule{1.5mm}{4mm}}\vspace{6pt}

We close this section by

\noindent\textbf{Proof of Theorem~\ref{thm0007}:}\quad By Lemmas~\ref{lem5003} and \ref{lem5002},
 we have $m_{\lambda,\beta}\in[m_{a,\lambda}+m_{b,\lambda}, m_a+m_b]$ for all $\lambda>\Lambda_{0}$
  and $\beta\leq0$ in the case of $a_0\leq b_0<0$.  Furthermore, by Lemma~\ref{lem7001},
  we have $m_{a,\lambda}+m_{b,\lambda}\to m_a+m_b$ as $\lambda\to+\infty$.  Thus,
  for every $\{(\lambda_n,\beta_n)\}$ satisfying $\lambda_n\to+\infty$ and $\beta_n\to-\infty$ as $n\to\infty$,
   by a similar argument as used in Theorem~\ref{thm0002}, we can see that the ground state solution of
    $(\mathcal{P}_{\lambda_n,\beta_n})$ obtained in Theorem~\ref{thm0001} in the case of $a_0\leq b_0<0$ has
    the same concentration behaviors as described in Theorem~\ref{thm0002}.
\qquad\raisebox{-0.5mm}{%
\rule{1.5mm}{4mm}}\vspace{6pt}


\begin{thebibliography}{999}

\bibitem{AA99}
N. Akhmediev, A. Ankiewicz, Partially coherent solitons on a finite background,  {\it  Phys. Rev. Lett., }  {\bf82}(1999), 2661-2664.

\bibitem{AFP09}
B. Abdellaoui, V. Felli, I. Peral, Some remarks on systems of elliptic
equations doubly critical the whole $\bbr^N$, {\it  Calc. Var. Partial Differential Equations, } {\bf34}(2009), 97-137.

\bibitem{AFS09}
C. Alves, D. Filho and M. Souto, Multiplicity
of positive solutions for a class of problems with critical growth in $\bbr^N$,  {\it Proc. Roy.
Soc. Edinburgh Sect.,} {\bf52}(2009), 1-21.

\bibitem{BW95}
T. Bartsch, Z.-Q. Wang, Existence and multiplicity results for superlinear elliptic problems on $\bbr^n$,   {\it  Comm. Partial Differential Equations, }  {\bf20}(1995),  1725-1741.


\bibitem{BDW10}
T. Bartsch, N. Dancer, Z.-Q. Wang, A Liouville theorem, a-priori bounds, and bifurcating branches of positive solutions for a nonlinear elliptic system,  {\it  Calc. Var. Partial Differential Equations, }  {\bf37}(2010),  345-361.



\bibitem{Benci-Cerami} V. Benci,  G. Cerami,  Existence of positive solutions of the equation $-\Delta u+a(x)u=u^{(N+2)/(N-2)}$
   in   $\bbr^N$, {\it  J. Funct. Anal., }{\bf  88}(1990),  no. 1, 90-117.



\bibitem{BR09}
D. Bonheure, M. Ramos, Multiple critical points of perturbed symmetric strongly
indefinite functionals, {\it  Ann. Inst. H. Poincar\'e Anal. Non Lin\'eaire, } {\bf 26}(2009), 675--688.

\bibitem{BSR11}
D. Bonheure, E. Santos, M. Ramos, Ground state and non-ground state solutions of some strongly coupled elliptic systems, {\it Trans. Amer. Math. Soc.,} {\bf364}(2012), 447--491.

\bibitem{BT13}
T. Bartsch, Z. Tang, Multibump solutions of nonlinear Schr\"odinger equations with steep potential well and indefinite potential,  {\it  Discrete Contin. Dyn. Syst., }  {\bf33}(2013),  7-26.



\bibitem{BN=1983} H.~Br{\'e}zis and L.~Nirenberg, \newblock Positive solutions of nonlinear elliptic equations involving critical {S}obolev exponents, \newblock {\it Comm. Pure Appl. Math.}, 36(4):437--477, 1983.


\bibitem{caf-1} L. A. Caffarelli, F.-H. Lin, Singularly perturbed elliptic systems and multi-valued harmonic functions
with free boundaries, {\it J. Amer. Math. Soc.,} {\bf 21} (2008), 847-862.

\bibitem{caf-2} L. A. Caffarelli, J. M. Roquejoffre, Uniform H\"{o}der estimates in a class of elliptic systems and applications
to singular limits in models for diffusion flames, {\it Arch. Ration. Mech. Anal.,} {\bf 183} (2007), 457-487.

\bibitem{con-1} M. Conti, S. Terracini, G. Verzini, Asymptotic estimates for the spatial segregation of competitive systems,
{\it Adv. Math.,} {\bf 195} (2005), 524-560.

\bibitem{CSZ12}
Z. Chen, N. Shioji, W. Zou, Ground state and multiple solutions for a critical exponent problem, {\it  NoDEA Nonlinear Differential Equations
Appl., }  {\bf19}(2012),  253-277.

\bibitem{CZ121}
Z. Chen, W. Zou, Positive least energy solutions and phase separation for coupled Schr\"odinger equations with critical exponent,   {\it  Arch. Rational Mech. Anal., }  {\bf205} (2012), 515-551.


\bibitem{CZ131}
Z. Chen, W. Zou, Positive least energy solutions and phase separation for coupled Schr\"odinger equations with critical exponent: higher dimensional case, arXiv:1209.2522v2 [math.AP];  {\it  Calc. Var. Partial Differential Equations, }  accepted.

\bibitem{CZ14}
Z. Chen, W. Zou, Existence and symmetry of positive ground states for a doubly critical Schr\"odinger system, arXiv:1211.1514v2 [math.AP]; {\it Trans. Amer. Math. Soc.,} accepted.

\bibitem{CLZ14}
Z. Chen, C.-S. Lin, W. Zou, Infinitely many sign-changing and semi-nodal solutions for a nonlinear Schr\"odinger system, arXiv:1212.3773v1 [math.AP].

\bibitem{CLZ141} Z. Chen, C. S. Lin, W. Zou,   Sign-changing solutions and phase separation for an
elliptic system with critical exponent,    {\it Comm. Partial Differential Equations, } {\bf 39}(2014), 1827-1859.

\bibitem{DT03}
Y. Ding, K. Tanaka, Multiplicity of positive solutions of a nonlinear Schr\"odinger equation,  {\it  manuscripta math., } {\bf 112}(2003), 109-135.

\bibitem{DS07}
Y. Ding, A. Szulkin, Bound states for semilinear Schr\"odinger equations with sign-changing potential, {\it Calc. Var. Partial
Differential Equations,} {\bf29} (2007), 397--419.


\bibitem{EGBB97}
B. Esry, C. Greene, J. Burke, J. Bohn, Hartree-Fock theory for double condesates,
{\it   Phys. Rev. Lett., } {\bf78}(1997),  3594-3597.

\bibitem{EW11}
G. Evequoz, T. Weth, Entire solutions to nonlinear scalar field equations with indefinite linear part, arXiv:1109.4550v1 [math.AP].

\bibitem{FSX10}
M. Furtado, E. Silva, M. Xavier, Multiplicity and concentration of solutions for elliptic systems with vanishing potentials,   {\it  J. Differential Equations, }  {\bf249}(2010),  2377-2396.

\bibitem{GT121}
Y. Guo, Z. Tang, Multibump bound states for quasilinear
Schr\"odinger systems with critical
frequency,   {\it  J. Fixed Point Theory Appl., } {\bf12}(2012),  135-174.

\bibitem{HMEWC98}
D. Hall, M. Matthews, J. Ensher, C. Wieman, E. Cornell, Dynamics of
component separation in a binary mixture of Bose-Einstein condensates,
{\it Phys. Rev. Lett.,} {\bf81}(1998), 1539-1542.

\bibitem{JZ11}
Y. Jiang, H.-S. Zhou, Schr\"odinger-Poisson system with steep potential well,  {\it  J. Differential Equations, } {\bf251} (2011), 582-608.

\bibitem{L83}
P. Lions, The concentration-compactness principle in the calculus of variation. The locally
compact case. II, {\it Ann. Inst. H. Poincar\'e Anal. Non Lin\'eaire,} {\bf1}(1984), 223-283.

\bibitem{LW05}
T.C. Lin, J. Wei, Spikes in two coupled nonlinear Schr\"odinger equations,   {\it  Ann. Inst. H. Poincar\'e Anal. Non Lin\'eaire, }  {\bf22}(2005),  403-439.

\bibitem{LW06}
T.C. Lin, J. Wei, Spikes in two-component systems of nonlinear Schr\"odinger equations with trapping potentials,
{\it J. Differential Equations, }  {\bf229}(2006),  538-569.


\bibitem{LHL11}
X. Liu, Y. Huang, J. Liu, Sign-changing solutions for an asymptotically linear Schr\"odinger equations with deepening potential well,    {\it  Adv. Differential Equations, }  {\bf16}(2011), 1-30.

\bibitem{NTTV10}
B. Noris, H. Tavares, S. Terracini, G. Verzini, Uniform H\"older bounds
for nonlinear Schr\"odinger systems with strong competition,  {\it  Comm. Pure
Appl. Math., } {\bf63}(2010),  267-302.

\bibitem{PK74}
B. Panda and O. Kapoor, On equidistant sets in normed linear
spaces, {\it Bull. Austral. Math. Soc.,} {\bf 11}(1974), 443-454.

\bibitem{P05}
A. Pankov, Periodic nonlinear Schr\"odinger equation with application to photonic crystals,
{\it Milan J.Math.,} {\bf73} (2005), 563-574.

\bibitem{R14}
J. Royo-Letelier, Segregation and symmetry breaking of strongly coupled two-component Bose-Einstein condensates in a harmonic trap,   {\it  Calc. Var. Partial Differential Equations, }  {\bf 49}(2014),  103-124.

\bibitem{S96}
M. Struwe, Variational Methods, Springer-Verlag, Berlin-Heidelberg, Second edition, 1996.

\bibitem{S07}
B. Sirakov, Least energy solitary waves for a system of nonlinear Schr\"odinger equations in $\bbr^N$, {\it Comm. Math. Phys.,} {\bf 271} (2007), 199-221.

\bibitem{ST09}
Y. Sato, K. Tanaka, Sign-changing multi-bump solutions nonlinear Schr\"odinger equations with steep potential wells,   {\it  Trans. Amer. Math. Soc., } {\bf 361}(2009),  6205-6253.

\bibitem{SW09}
A. Szulkin and T. Weth, Ground state solutions for some indefinite variational problems, {\it J.
Funct. Anal.,} {\bf257} (2009), 3802-3822.

\bibitem{SWW09}
A. Szulkin, T. Weth, M. Willem, Ground state solutions for a semilinear
problem with critical exponent, {\it Differ. Int. Equ.,} {\bf22} (2009), 913-926.

\bibitem{SW14}
J. Sun, T.-F. Wu, Ground state solutions for an indefinite Kirchhoff type problem with steep potential well,   {\it  J. Differential Equations, }  {\bf 256}(2014),  1771-1792.

\bibitem{SZ14}
N. Soave, A. Zilio, Uniform bounds for strongly competing systems: the optimal Lipschitz case, {\it Arch. Ration. Mech. Anal.,} in press. DOI: 10.1007/s00205-015-0867-9.



 




\bibitem{SZ02}
N. Stavrakakis, N. Zographopoulos, Multiplicity and regularity results for some
quasilinear elliptic systems on $\bbr^N$, {\it Nonlinear Anal. TMA,} {\bf 50}(2002), 55-69.

\bibitem{T76}
G. Talenti, Best constant in Sobolev inequality, {\it Ann. Mat. Pure Appl.,}
{\bf110} (1976), 352-372.

\bibitem{TV09}
S. Terracini, G. Verzini, Multipulse phases in k-mixtures of Bose-Einstein condensates,
{\it Arch. Ration. Mech. Anal., } {\bf194}(2009), 717-741.

\bibitem{W95}
M. Willem, Analyse Harmonique R\'eelle, Hermann, Paris, 1995.

\bibitem{WW08}
J. Wei, T. Weth, Radial solutions and phase separation in a system of two
coupled Schr\"odinger equations,   {\it  Arch. Ration. Mech. Anal., }  {\bf190}(2008),
83-106.

\bibitem{WZ09}
Z. Wang, H.-S. Zhou, Positive solutions for nonlinear Schr\"odinger equations with deepening potential well,
 {\it J. Eur.
Math. Soc., }  {\bf11}(2009),  545-573.

\bibitem{WWZ15}
Y. Wu, T.-F. Wu, W. Zou, On a two-component Bose-Einstein condensate with steep potential wells, arXiv:1412.7881v1  [math.AP].




\end{thebibliography}
\end{document}